\documentclass[12pt]{article}
\usepackage[utf8]{inputenc}
\usepackage[title]{appendix}
\usepackage{comment}

\title{Singularity of Cannon--Thurston maps}
\author{Vaibhav Gadre, Joseph Maher, Catherine Pfaff, Caglar Uyanik }

\date{\today}

\usepackage[top=1.25in,bottom=1.25in,left=1in,right=1in]{geometry}
\usepackage[dvipsnames]{xcolor}
\usepackage{amsfonts, amsmath, amssymb, amscd, graphicx}
\usepackage{amsthm}
\usepackage{thmtools}
\usepackage[final, 
colorlinks=true, linkcolor=purple, citecolor=blue, unicode]{hyperref}
\usepackage{cleveref}
\usepackage{enumitem}

\usepackage[nobysame,alphabetic]{amsrefs}
\usepackage{xspace}

\usepackage{tikz}
\usetikzlibrary{matrix,arrows,decorations.markings,decorations.pathmorphing,patterns}
\usetikzlibrary{calc}

\theoremstyle{plain}
\newtheorem{theorem}{Theorem}
\newtheorem{lemma}[theorem]{Lemma}
\newtheorem{proposition}[theorem]{Proposition}
\newtheorem{corollary}[theorem]{Corollary}

\newtheorem*{claim*}{Claim}

\theoremstyle{definition}
\newtheorem{definition}[theorem]{Definition}

\newtheorem{remark-convention}[theorem]{Remark-Convention}

\newcommand{\fakeenv}{} 

{ 
 \renewcommand{\fakeenv}{#2} 
 \theoremstyle{plain} 
 \newtheorem*{\fakeenv}{#1~\ref{#2}} 
 \begin{\fakeenv}
}
{
 \end{\fakeenv}
}

\newcommand{\EE}{\mathbb{E}}
\newcommand{\HH}{\mathbb{H}} 
\newcommand{\NN}{\mathbb{N}} 
\newcommand{\RR}{\mathbb{R}}
\newcommand{\PP}{\mathbb{P}}
\newcommand{\ZZ}{\mathbb{Z}}

\newcommand{\calP}{\mathcal{P}}

\newcommand{\oalpha}{{\overline{\alpha}}}
\newcommand{\ogamma}{{\overline{\gamma}}}
\newcommand{\LL}{\Lambda}
\newcommand{\oLambda}{{\overline{\Lambda}}}

\renewcommand{\ge}{\geqslant}
\renewcommand{\le}{\leqslant}

\newcommand{\e}{\epsilon}
\newcommand{\ws}{{\widetilde{S}}}
\newcommand{\wsr}{{\widetilde{S}_h \times \RR}}

\newcommand{\GP}[2]{\left( #1 \, . \, #2 \right)}

\DeclareFontFamily{U}{mathx}{}
\DeclareFontShape{U}{mathx}{m}{n}{<-> mathx10}{}
\DeclareSymbolFont{mathx}{U}{mathx}{m}{n}
\DeclareMathAccent{\widecheck}{0}{mathx}{"71}

\newcommand{\rnu}{\widecheck{\nu}}

\newcommand{\PSL}{\text{PSL}}

\newcommand{\pa}{f\xspace}

\newcommand{\diam}{\textup{diam}}

\newcommand{\param}%
	{{\mathchoice{\mkern1mu\mbox{\raise2.2pt\hbox{$\centerdot$}}\mkern1mu}%
	{\mkern1mu\mbox{\raise2.2pt\hbox{$\centerdot$}}\mkern1mu}%
	{\mkern1.5mu\centerdot\mkern1.5mu}{\mkern1.5mu\centerdot\mkern1.5mu}}}

\newtheorem*{proposition:ft}{Proposition \ref{prop:fellow travel}}
\newtheorem*{proposition:pi}{Proposition \ref{prop:projection interval}}
\newtheorem*{theorem:hitting}{Theorem \ref{theorem:hitting}}
\newtheorem*{theorem:lebesgue}{Theorem \ref{theorem:lebesgue}}

\newtheorem*{lemma:bounded distance}{Lemma \ref{lemma:bounded distance}}
\newtheorem*{lemma:intersection interval qg}{Lemma \ref{lemma:intersection interval qg}}
\newtheorem*{prop:union qg}{Proposition \ref{prop:union qg}}
\newtheorem*{lemma:straight qg}{Lemma \ref{lemma:straight qg}}
\newtheorem*{cor:corner distance}{Lemma \ref{cor:corner distance}}
\newtheorem*{prop:vertical flow distance decreasing}{Proposition \ref{prop:vertical flow distance decreasing}}

\newlist{thmenum}{enumerate}{1} 
\setlist[thmenum]{leftmargin=3\parindent, label=\textup{(\arabic*)},
                  ref=\textup{\thetheorem.\arabic*}}
\crefname{thmenumi}{proposition}{propositions}

\begin{document}

\maketitle

\begin{abstract}
    
In a closed fibered hyperbolic 3-manifold $M$, the inclusion of a fiber $S$, with $S$ and $M$ lifted to the universal covers $\widetilde{S}$ and $\widetilde{M}$, gives an exponentially distorted embedding of the hyperbolic plane into hyperbolic 3-space. Nevertheless, Cannon and Thurston showed that there is a map from the circle at infinity of the hyperbolic plane to the 2-sphere at infinity of hyperbolic 3-space. The Cannon--Thurston map is surjective, finite-to-one, and gives a space-filling curve.

\medskip
Here we use properties of geodesics to prove that many natural
measures on the circle when pushed forward by the Cannon--Thurston map
become singular with respect to many natural measures on the
2-sphere. The circle measures we consider are the Lebesgue measure and
stationary measures that arise from fully supported random walks on
the surface group.  The measures on the sphere we consider are the Lebesgue measure and stationary measures that arise from geometric random walks on the 3-manifold group.

\medskip
We obtain the singularity of measures from the following properties
of typical geodesics. We prove that a hyperbolic geodesic sampled with respect to a pushforward measure asymptotically spends a definite proportion of its time close to a fiber. On the other hand, we show that a hyperbolic geodesic sampled with respect to a natural measure on the sphere spends an asymptotically negligible proportion of its time close to a fiber. For a more restricted class of circle measures, namely the Lebesgue measure and stationary measures from geometric random walks on the surface group, we also prove an effective result for the proportion of time spent close to a fiber. 
\end{abstract}

\tableofcontents

\section{Introduction}

Let $M$ be a closed 3-manifold that fibers over the circle.  Suppose
that the fiber $S$ is a closed orientable surface.  Fixing a fiber,
$M$ is homeomorphic to a mapping torus $S \times [0,1]/ \sim$, where
$\sim$ is the identification of $S \times \{0 \}$ with
$S \times \{1\}$ by a diffeomorphism $f \colon S \to S$.  The topology
of $M$ depends only on the isotopy class of $f$, that is, only on $f$
as a mapping class.

\medskip Thurston's hyperbolization theorem states that such a
manifold $M$ admits a complete hyperbolic metric if and only if $S$
has negative Euler characteristic and the monodromy $f: S \to S$ is
pseudo-Anosov.  In this case, the universal covers of $S$ and $M$ can
be identified with $\mathbb{H}^2$ and $\mathbb{H}^3$ respectively.
The natural inclusion of a fiber as $S \times \{0\}$ induces a map
$\iota: \widetilde{S} = \mathbb{H}^2 \hookrightarrow \mathbb{H}^3 =
\widetilde{M}$ between the universal covers.  The inclusion $\iota$ is
exponentially distorted for the hyperbolic metrics on the source and
the target.  Cannon--Thurston proved that despite the distortion, the
inclusion extends to a $\pi_1(S)$-equivariant, continuous map at
infinity, known as the Cannon-Thurston map, which we shall also call
$\iota$, i.e.
$\iota \colon \partial \mathbb{H}^2 = S^1_\infty \to S^2_\infty =
\partial \mathbb{H}^3$ \cite{cannon-thurston}. We maintain this notation of $M$, $S$, $f$, and 
$\iota$
throughout this paper. In particular, we assume throughout that $\chi (S) < 0$ and $f$ is pseudo-Anosov.

\medskip We may compare measures on $S^1_\infty$ with measures on
$S^2_\infty$, by taking the pushforward of the measures on
$S^1_\infty$ under the Cannon-Thurston map. We now describe the
collections of measures that we will consider.  The first collection, which we refer to as \emph{surface measures}, are measures on the
boundary of the universal cover of the surface $S$, namely $S^1_\infty$.
We write $S_h$ for $S$ endowed with a particular choice of hyperbolic
metric. That is, $S_h$ is a hyperbolic surface.

\medskip Suppose that $G$ is a group. 
A probability measure $\mu$ on $G$ generates a random
walk on $G$.  The steps of the random walk are independent identically
$\mu$-distributed random variables $(g_n) \in (G, \mu)^{\NN}$, and
the location of the random walk at time $n$ is given by
$w_n = g_1 \ldots g_n$.  If $G$ acts on a metric space $(X, d)$ with
a basepoint $x_0$, we say that $\mu$ has \emph{finite exponential moment} if there is a constant $c > 1$ such that $\sum_{g \in G} \mu(g) c^{d(x_0, g x_0)}$
is finite.  We say a probability measure $\mu$ on a group $G$ is
\emph{geometric} if $\mu$ has finite exponential moment with respect
to a word metric on $G$ and if the support of $\mu$ generates $G$ as a
semigroup. We denote the set of
geometric probability measures on $G$ by $\calP(G)$. In this paper, the group $G$ will be either $\pi_1(S)$ or $\pi_1(M)$. In these cases, for any
basepoint $x_0$, almost every sample path $w_n x_0$ of the random walk
converges to the boundary, and the resulting boundary measure is known as the \emph{hitting measure} determined by $\mu$. For any two
basepoints $x_0$ and $x_1$, the distance between $w_n x_0$ and
$w_n x_1$ is constant, so the hitting measure does not depend on the
choice of basepoint.

\begin{definition}\label{def:surface}
Let $S_h$ be a closed hyperbolic surface of genus at least $2$.  Let
$S^1_\infty$ be the Gromov boundary of the universal cover, with
basepoint $x_0$. We will refer to the following measures on $S^1_\infty$ as
\emph{geometric surface measures}.

\begin{itemize}

\item\label{def:leb} The Lebesgue measure on $S^1_\infty$ determined by the visual
measure at $x_0$.

\item The hitting measure on $S^1_\infty$ for a random walk determined
by a geometric probability distribution $\mu \in \calP(\pi_1(S))$.

\end{itemize}

\end{definition}

The Lebesgue measures that arise as the visual measures from different
marked hyperbolic metrics are mutually singular, see for example Agard
\cite{agard}.  However, for a fixed hyperbolic metric, different
choices of basepoints give absolutely continuous measures.  Similarly,
different choices of the probability measure $\mu$ on $\pi_1(S)$ are
expected to usually give mutually singular hitting measures, whereas
for a fixed $\mu$, the hitting measures arising from different
basepoint choices are absolutely continuous with respect to each
other.

\medskip
\Cref{def:surface} covers a wide class of measures. In fact, it is a long standing conjecture of Guivarc'h--Kaimanovich--Ledrappier
(see \cite{dkn2009}*{Conjecture 1.21}) that a hitting measure $\nu$
that arises from any finitely supported random walk on $\pi_1(S)$ is
singular with respect to the Lebesgue measure on $S^1_\infty$. 

\medskip
It will also be convenient to consider random walks on surface groups
generated by more general probability measures.  We say a probability
measure $\mu$ on $\pi_1(S)$ is \emph{non-elementary} if the semigroup
generated by the support of $\mu$ contains a pair of non-trivial
elements with disjoint fixed points on the boundary $S^1_\infty$.  We
say a probability measure $\mu$ on $\pi_1(S)$ is \emph{full} if every
open set in $S^1_\infty$ has positive hitting measure.

\begin{definition}\label{def:surface general}
Let $S_h$ be a closed hyperbolic surface of genus at least $2$.  Let
$S^1_\infty$ be the Gromov boundary of the universal cover. We refer to the larger collection of measures on $S^1_\infty$ which contains in addition to geometric surface measures,

\begin{itemize}

\item the hitting measure on $S^1_\infty$ for a random walk determined
by a non-elementary, full probability measure $\mu$ on $\pi_1(S)$.

\end{itemize}
as the \emph{full surface measures}.
\end{definition}

The final collection of measures which we shall refer to as
\emph{$3$-manifold measures}, are measures on the boundary of
the universal cover of the $3$-manifold $M$, namely $S^2_\infty$.

\begin{definition}\label{def:3-manifold}
Let $M$ be a closed hyperbolic $3$-manifold.  Let $S^2_\infty$ be the
Gromov boundary of the universal cover, with basepoint $x_0$.  We
shall call the following measures \emph{$3$-manifold measures}.

\begin{itemize}
    
\item The Lebesgue measure on $S^2_\infty$ determined by the visual
measure at $x_0$.
\item The hitting measure on $S^2_\infty$ determined by a geometric
random walk generated by $\mu \in \calP(\pi_1(M))$.

\end{itemize}

\end{definition}

As with the Lebesgue measures on $S^1_\infty$, Lebesgue measures on $S^2_\infty$ are expected to be mutually
singular with respect to the hitting measures arising from finitely
supported random walks on $\pi_1(M)$.  The Lebesgue measures arising
from different basepoints are absolutely continuous with
respect to each other.  
The hitting measures
arising from different probability measures $\mu$ are typically expected to be mutually singular.

\medskip
Let $M$ be a closed hyperbolic $3$-manifold which fibers over the
circle.  As mentioned before, this determines a collection of $3$-manifold measures on
$S^2_\infty$ given by \Cref{def:3-manifold}.  Let $S$ be a
fiber and let $S_h$ be a hyperbolic structure on $S$.  These choices of $M$ and $S_h$ determine a collection of surface measures on $S^1_\infty$, given by \Cref{def:surface general}.  The pushforwards of these surface
measures by the Cannon-Thurston map $\iota$ give measures on
$S^2_\infty$, so that we may now compare the surface measures to the
$3$-manifold measures:

\begin{theorem}\label{theorem:singularity}
Suppose that $M$ is a closed hyperbolic $3$-manifold that fibers over the circle. Then pushforwards to $S^2_\infty$ under the Cannon-Thurston map of any full surface measure from \Cref{def:surface general} (using any hyperbolic structure $S_h$ on the fiber $S$) is mutually singular with respect to any $3$-manifold measure from \Cref{def:3-manifold}.
\end{theorem}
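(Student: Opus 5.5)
We separate the measures by the proportion of time a geodesic ray spends near a lift of the fiber, as the abstract indicates. Fix the basepoint $x_0 \in \widetilde M$ on the lifted fiber $\iota(\widetilde S)$. For $\xi \in S^2_\infty$, let $\gamma_\xi$ be the unit speed hyperbolic geodesic ray from $x_0$ to $\xi$. Fix a constant $R>0$ and define
\[
\Theta_T(\xi) = \frac{1}{T}\,\bigl|\{ t \in [0,T] : d(\gamma_\xi(t), g\,\iota(\widetilde S)) \le R \text{ for some } g \in \pi_1(M)\}\bigr| .
\]
Since $\pi_1(M)$ acts cocompactly, the union of translates $\pi_1(M)\iota(\widetilde S)$ is not the right object. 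Instead we use a single fiber, or rather its lift $\iota(\widetilde S)$ and its $\pi_1(S)$-translates, which all equal $\iota(\widetilde S)$. Concretely, we measure the time $\gamma_\xi$ spends within $R$ of the fixed lift $\iota(\widetilde S)$, or of boundedly many of its $\langle f\rangle$-translates near $x_0$.

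The goal is two complementary statements.

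\emph{(A)} For $\nu$ any full surface measure, for $\iota_*\nu$-almost every $\xi$ we have $\liminf_{T\to\infty}\Theta_T(\xi) \ge c > 0$.

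\emph{(B)} For $\lambda$ any $3$-manifold measure, for $\lambda$-almost every $\xi$ we have $\Theta_T(\xi)\to 0$.

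Then the Borel sets $\{\liminf \Theta_T \ge c\}$ and $\{\lim \Theta_T = 0\}$ are disjoint and of full measure for $\iota_*\nu$ and $\lambda$ respectively, which gives mutual singularity. Measurability is routine, since $\xi\mapsto\gamma_\xi$ is continuous.

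For (B), we use ergodicity. For Lebesgue measure, $\gamma_\xi$ is a typical geodesic for the Liouville measure on $T^1M$, which is ergodic for the geodesic flow. The set of unit vectors whose basepoint lies within $R$ of the fixed lift is not a finite-measure subset of $T^1M$, so we cannot simply average. Instead we argue that staying near a fixed lift is a tail event. The ray $\gamma_\xi$ is within $R$ of $\iota(\widetilde S)$ for a positive proportion of time only if $\xi$ lies in the limit set $\iota(S^1_\infty)=S^2_\infty$, so this alone gives nothing, and we need a quantitative version. By exponential distortion, the ray stays near $\iota(\widetilde S)$ while its projection to the fiber-line $\widetilde M \to \RR$ stays bounded. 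A Liouville-typical geodesic projects under the fibration $M\to S^1$ to a path on $\RR$ whose increments are Birkhoff sums of the closed $1$-form $d\theta$. By the ergodic theorem and the central limit theorem for the geodesic flow (the mean is zero by flip symmetry), this path has sublinear but unbounded oscillation, like a recurrent random walk on $\RR$. The proportion of time it spends in a bounded window therefore tends to $0$, by the arcsine-type law or local limit estimate, which gives $\Theta_T\to 0$. For a geometric random walk on $\pi_1(M)$, we replace the geodesic by the sample path $w_n x_0$. It fellow travels the geodesic to its limit point with linear progress and sublinear deviations, a standard result under an exponential moment. Its height coordinate is a mean-zero-or-drifting real random walk given by the homomorphism $\pi_1(M)\to\ZZ$. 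In either case the time spent at bounded height is $o(n)$ almost surely, which transfers to the geodesic.

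For (A), let $\xi=\iota(\eta)$ with $\eta$ $\nu$-typical. We compare $\gamma_\xi$ with the ray in $\widetilde S_h$ towards $\eta$, using a ladder or hierarchy construction in the style of Mitra and Minsky. The hyperbolic geodesic to $\iota(\eta)$ fellow travels a path built from the fiber geodesic. When the surface ray passes a segment whose subsurface/Teichm\"uller-type complexity is bounded, that is, a segment along which the fiber is ``thick'' and not flowed far in the suspension direction, the $3$-dimensional geodesic stays within $R$ of $\iota(\widetilde S)$. We show that a $\nu$-typical ray contains such segments with positive frequency. For hitting measures, we use the ergodicity of the shift and positive probability of a fixed ``good'' pattern, guaranteed by fullness. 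For Lebesgue measure, we use ergodicity of the geodesic flow on $T^1S_h$. A Birkhoff/Kingman argument then gives linear density $c>0$.

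The main obstacle is the geometric lemma underlying (A): that bounded-complexity segments of the surface ray force the $\mathbb H^3$ geodesic to lie near the fiber for time proportional to their length, uniformly. I would first try to prove this via the flow-space / ladder quasiconvexity and a ``corner distance'' estimate.
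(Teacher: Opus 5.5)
Your overall architecture, separating the two families by the asymptotic proportion of time a geodesic spends near $S_0$, is the paper's. But part (A), which carries the real content of the theorem, is not proved: you state the geometric lemma it needs and defer it as ``the main obstacle''.

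\textbf{The criterion in (A).} The criterion you propose does not work as stated. For a closed fibered $M$ the geometry is uniformly bounded, since the Teichm\"uller axis of $f$ stays in a compact part of moduli space. So in any hierarchy or thick--thin sense, every segment of every fiber geodesic is ``thick'' with bounded subsurface complexity. That notion cannot detect where $\overline{\gamma}$ is near $S_0$, and ``not flowed far in the suspension direction'' just restates the conclusion. What pushes $\overline{\gamma}$ away from $S_0$ is $\gamma$ fellow travelling leaves of $\Lambda_\pm$: their images are horocycles, so $\overline{\gamma}$ shortcuts vertically. What is missing is a local-to-global argument forcing the globally determined geodesic $\overline{\gamma}$ back near $S_0$.

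\textbf{How the paper supplies it.} The paper fixes one nontrivial $g \in \pi_1(S)$ with axis $\alpha$. Since $\iota(\alpha)$ is $g$-invariant and $g$ is loxodromic on $\widetilde S_h \times \RR$, $\iota(\alpha)$ is quasigeodesic and its geodesic $\overline{\alpha}$ lies in $N_{K_\alpha}(S_0)$ (Proposition~\ref{prop:alphaQG}).

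The key lemma is Proposition~\ref{prop:axis projection}: points whose projections to $\alpha$ are far apart have images whose projections to $\overline{\alpha}$ are far apart. Its proof runs as follows.
\begin{enumerate}
\item Take leaves $\ell_1, \ell_2$ of the invariant laminations crossing $\alpha$ near the two ends of the projection interval; by nesting of projection intervals they also cross $\gamma$.
\item The convex ladders $F(\ell_i)$ separate $\iota(x)$ from $\iota(y)$.
\item The endpoints of $\overline{\alpha}$ avoid the limit sets of ladders, and every ladder over a leaf has uniformly bounded projection to $\overline{\alpha}$. These two facts rest on the distance-gap and limit-set results for ladders.
\end{enumerate}
Hyperbolicity then makes $\overline{\gamma}$ fellow travel $\overline{\alpha}$ for a definite length.

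``$\gamma$ fellow travels a translate of $\alpha$ for length $L$'' is an open condition. So it has positive measure for Lebesgue measure and for any full hitting measure, and it recurs with positive frequency by ergodicity. No uniform control over arbitrary ``good'' segments is needed. The uniform statement you are reaching for is essentially the height-function quasigeodesic theorem, Theorem~\ref{theorem:quasigeodesic-h2}, which the paper uses only for the effective bounds and proves in a separate paper.

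\textbf{Part (B), random walk case.} You have the right ingredients, but the transfer from sample path to geodesic does not go through as stated. A point of $\overline{\gamma}$ within $R$ of $S_0$ only tells you that a nearby location $w_n x_0$ is within $R$ plus the tracking error of $S_0$. With only $o(n)$ tracking this window can be far larger than $\sqrt{n}$, so the height walk sits in it essentially all the time. Knowing that the walk spends $o(n)$ time in \emph{bounded} windows says nothing about that. You need three inputs:
\begin{itemize}
\item the exponential-moment bound $d(w_n x_0, \overline{\gamma}) = O(\log n)$;
\item the gap bound $|t_{n+1} - t_n| = O(\log n)$;
\item the local limit bound $\PP(|\phi(w_n)| \le C \log n) = O(\log n / \sqrt{n})$.
\end{itemize}
Then the expected number of $n \le N$ at height $O(\log n)$ is $O(\sqrt{N} \log N)$, and the geodesic length they account for is $o(N)$. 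This is the mechanism of the proof of Proposition~\ref{prop:zero limit}.

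\textbf{Part (B), Lebesgue case.} A CLT only gives convergence of the occupation proportion in measure. To upgrade to almost sure convergence you need either a rate, such as the $1/\sqrt{t}$ decay from Oh--Pan's local mixing combined with Borel--Cantelli along dyadic times, or the pointwise ergodic theorem for the infinite-measure geodesic flow on the $\ZZ$-cover.
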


Suppose now that $M$ is a compact hyperbolic 3-manifold and $S$ is an
incompressible surface in $M$.  As an immediate corollary of
\Cref{theorem:singularity}, we obtain:

\begin{corollary}
Suppose that $\mu$ is a geometric probability measure on the fundamental group
of any incompressible surface $S$ of a closed hyperbolic 3-manifold
$M$.  Then the hitting measure on $S^2_\infty$ arising from $\mu$ is
mutually singular with respect to both the Lebesgue measure on
$S^2_\infty$ and any hitting measure arising from a geometric random
walk on $\pi_1(M)$.
\end{corollary}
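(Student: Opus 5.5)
The plan is to reduce the corollary to \Cref{theorem:singularity} by splitting according to the geometry of the incompressible surface. By the tameness and covering theorems (Thurston, Bonahon, Canary), the lift of $S$ to the cover of $M$ corresponding to $\pi_1(S)$ is either geometrically finite, so that $\pi_1(S)$ is quasi-Fuchsian, or geometrically infinite. In the second case $S$ is a virtual fiber: there is a finite cover $M' \to M$ that fibers over the circle with fiber $S$ (after possibly passing to a finite cover of $S$, which changes neither the limit set nor, after inducing, the class of hitting measures).

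\emph{Quasi-Fuchsian case.} Here $\pi_1(S)$ is quasiconvex of infinite index in $\pi_1(M)$. Its limit set $\Lambda$ is a quasicircle, and the hitting measure of $\mu$ on $S^2_\infty$ is supported on $\Lambda$. It therefore suffices to show that $\Lambda$ is null for each $3$-manifold measure.
\begin{itemize}
\item For Lebesgue measure, this holds because a quasicircle has Hausdorff dimension $<2$; equivalently, by Sullivan and Bowen, $\Lambda$ has Lebesgue measure zero since the group is geometrically finite of infinite covolume.
\item For a hitting measure $\nu$ of a geometric walk on $\pi_1(M)$, I would use stationarity. Suppose $\nu(\Lambda)>0$. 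Distinct translates $g\Lambda$, for distinct cosets of the commensurator of $\pi_1(S)$, meet in the limit set of an intersection of quasiconvex subgroups, which is a proper lower-dimensional limit set and hence $\nu$-null (a finite set or a quasicircle of a finite-index subgroup, handled inductively). So only finitely many translates can have measure $\ge \epsilon$, and the supremum $m$ of $\nu(g\Lambda)$ is attained.
\item The maximum principle for the stationary equation $\nu(g\Lambda)=\sum_h \mu(h)\,\nu(h^{-1}g\Lambda)$ then forces $\nu(g'\Lambda)=m$ on the whole semigroup orbit, i.e. on all of $\pi_1(M)$, since $\mu$ is geometric. This gives infinitely many almost disjoint sets of measure $m>0$, a contradiction.
\end{itemize}
Hence $\nu(\Lambda)=0$, and mutual singularity follows.

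\emph{Virtual fiber case.} Let $\Gamma'=\pi_1(M')$, a finite-index subgroup of $\pi_1(M)$. Both groups have the same boundary $S^2_\infty$, so Lebesgue measure is unchanged.
\begin{itemize}
\item For a geometric $\mu$ on $\pi_1(M)$, take the first-return (induced) measure $\mu'$ on $\Gamma'$. The random walk on $\pi_1(M)$ returns to $\Gamma'$ infinitely often, and the subsequence of return times converges to the same boundary point. So $\mu'$ has the same hitting measure (with the same basepoint), up to the choice of coset, which only translates the measure. A standard estimate (e.g.\ Kaimanovich's argument for finite-index subgroups) shows that $\mu'$ has finite exponential moment, because return times to a finite-index subgroup have exponential tails; and $\mu'$ generates $\Gamma'$ as a semigroup. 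Thus $\mu'\in\calP(\Gamma')$.
\item On the surface side, the sample paths $w_n x_0$ of $\mu$ on $\pi_1(S)$ converge in $\widetilde S$ to a point $\xi\in S^1_\infty$. By continuity of the Cannon--Thurston extension of the orbit map, the same paths converge in $\HH^3$ to $\iota(\xi)$. Hence the hitting measure on $S^2_\infty$ is exactly the pushforward $\iota_*$ of the geometric surface hitting measure.
\end{itemize}
\Cref{theorem:singularity}, applied to $M'$ with fiber $S$, then gives mutual singularity.

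I expect the main obstacle to be the quasi-Fuchsian hitting-measure step: controlling intersections of translates of $\Lambda$ and making the maximum-principle argument rigorous. If that becomes delicate, I would instead try a drift argument. A walk whose limit lies in $g\Lambda$ must track a quasiconvex coset at linear rate, and the probability of tracking a fixed quasiconvex infinite-index coset decays exponentially. This uses the exponential moment together with the growth gap between $\pi_1(S)$ and $\pi_1(M)$.
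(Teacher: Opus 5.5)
Your structure is the paper's: split into the quasi-Fuchsian and virtual-fiber cases, pass to finite-index subgroups via first-return measures, and apply \Cref{theorem:singularity}. The virtual-fiber half is correct, and more careful than the paper. The paper never induces the $\pi_1(M)$-walk to $\pi_1(M')$, although \Cref{theorem:singularity} only concerns walks on the group of the fibered manifold. Your exponential-tail bound on return times also makes the induced surface walk geometric, not merely full. The Lebesgue half of the quasi-Fuchsian case is fine.

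\textbf{The gap.} It is in your argument that $\nu(\Lambda)=0$ for the hitting measure $\nu$ of a geometric walk on $\pi_1(M)$, a step the paper only asserts. Your maximum-principle argument needs distinct translates of $\Lambda$ to be pairwise $\nu$-almost disjoint, and the justification you give fails.
\begin{itemize}
\item Write $H=\pi_1(S)$. For non-commensurable conjugates, $gHg^{-1}\cap g'Hg'^{-1}$ is an infinite-index quasiconvex subgroup of a surface group, hence free, and it need not be elementary. For example, an essential product region $\Sigma\times I$ in $M$ cut along $S$ with $\chi(\Sigma)<0$ puts $\pi_1(\Sigma)$ into such an intersection. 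Then $g\Lambda\cap g'\Lambda$ is a Cantor set, not a finite set or the quasicircle of a finite-index subgroup.
\item ``Lower-dimensional, hence $\nu$-null'' is not a valid inference for hitting measures, which can have small dimension. If it were valid, it would give $\nu(\Lambda)=0$ immediately, since $\dim\Lambda<2$.
\item So almost-disjointness is exactly as hard as the original claim. Your proposed induction has no decreasing parameter, since infinite-index subgroups of a free group can have larger rank.
\item The growth-gap fallback is also incomplete. Nothing compares the shadow-decay rate of $\nu$ with the growth exponent of $H$ without further input, such as a spectral gap for the induced walk on $H\backslash\pi_1(M)$.
\end{itemize}

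\textbf{The repair.} You do not need intersections at all.
\begin{enumerate}
\item The translates $g\Lambda$ are limit sets of the uniformly quasiconvex sets $gHx_0$. So $g\Lambda$ has visual diameter $O(e^{-d(x_0,gHx_0)})$ seen from $x_0$.
\item Since $\nu$ is non-atomic and $S^2_\infty$ is compact, $\nu(g\Lambda)\to 0$ as $d(x_0,gHx_0)\to\infty$. Only finitely many cosets $gH$ meet any given ball.
\item Hence, if $\nu(\Lambda)>0$, the supremum $m$ of $\nu(g\Lambda)$ is attained. Your maximum principle then gives $\nu(g\Lambda)=m>0$ for every $g\in\pi_1(M)$. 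Since $H$ has infinite index, infinitely many cosets have $d(x_0,gHx_0)\to\infty$, contradicting step 2.
\end{enumerate}
Alternatively, martingale convergence gives $\nu(w_n^{-1}\Lambda)\to\mathbf{1}_{\Lambda}(\xi)$ almost surely. The same near/far dichotomy, together with full support of $\nu$, keeps $\nu(w_n^{-1}\Lambda)$ uniformly below $1$, so $\nu(\Lambda)=0$.
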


\begin{proof}
An incompressible surface $S$ in $M$ is either quasi-Fuchsian or a
virtual fiber, by the Tameness Theorem \cite{calegari-gabai} and the Covering
Theorem \cite{canary}.  If it is quasi-Fuchsian then the hitting
measure is supported on the limit set of the surface subgroup.  This limit
set has measure zero for both the Lebesgue measure on $S^2_\infty$ and for
any hitting measure arising from a geometric random walk on
$\pi_1(M)$.

\medskip On the other hand, if $S$ is a virtual fiber then $M$ admits
a fibered finite cover in which the fiber $F$ is a finite cover of
$S$.  As $\pi_1(F)$ is a finite index subgroup of $\pi_1(S)$, a
geometric random walk on $\pi_1(S)$ is recurrent on $\pi_1(F)$, and
the restriction of the random walk on $\pi_1(S)$ to $\pi_1(F)$ is the
random walk on $\pi_1(F)$ generated by the first hitting measure
$\mu'$ of the random walk of $\pi_1(S)$ on $\pi_1(F)$.  The
probability measure $\mu'$ is not finitely generated, but is
non-elementary, and has the same hitting measure as $\mu$, so is full.
Therefore, \Cref{theorem:singularity} implies that the hitting measure
is mutually singular with respect to either of the $3$-manifold
measures on $S^2_\infty$.
\end{proof}

Singularity of the pushforward of the Lebesgue measure on $S^1_\infty$
with respect to the Lebesgue measure on $S^2_\infty$ was previously
shown by Tukia (see \cite{tukia}*{page 430}) using conformal
techniques, such as cross-ratios.  This result also follows from work
of Kim and Oh \cite{KimOh}, showing singularity of conformal measures
for representations of divergence type and Anosov representations, see
their discussion of Cannon-Thurston maps in \cite{KimOh}*{Section
  1}. Kim and Zimmer give further such rigidity results
\cite{kim-zimmer}, which also imply this result.  The singularity of
the push forward of the surface random walk hitting measure and the Lebesgue measure on $S^2_\infty$ follows from Blach\`ere, Ha\"issinsky
and Mathieu, \cite{bhm}*{Proposition 5.5}, which implies that the hitting
measure has strictly smaller Hausdorff dimension, as the fundamental
group of the surface is not quasi-isometrically embedded in $\HH^3$.
In light of the Guivarc'h--Kaimanovich--Ledrappier
conjecture, we do not expect hitting measures to be conformal.

\medskip
As we describe in the next section, \Cref{subsection:geodesics}, our approach uses the behavior of typical geodesics chosen according to the measures on the boundary which enables us to provide a unified (geometric) context in which the different types of surface measures can be compared to the $3$-manifold measures.

\subsection{Statistics for typical geodesics}\label{subsection:geodesics}

An oriented geodesic in $\HH^n$ is determined by its (ordered)
endpoints in
$( S^{n-1}_\infty \times S^{n-1}_\infty ) \setminus \Delta$, where
$\Delta$ is the diagonal.  A probability measure $\nu$ on
$S^{n-1}_\infty$ gives rise to a probability measure
$\nu \times \nu$ on the space of oriented geodesics in $\HH^n$, as long as the diagonal has measure zero.  

\medskip
For random walks, if $\mu$ is not symmetric, we will
use the measure $\nu \times \widecheck \nu$ instead of the product
measure, where $\widecheck \nu$ is the hitting measure corresponding
to the limits $\lim_{n \to -\infty} w_n x_0$ of the sample paths. To prove Theorem \ref{theorem:singularity}, we show the desired measures are mutually singular by showing that they give rise to typical geodesics possessing different behavior.

\medskip
The fibration of $M$ by closed surfaces lifts to a fibration of the
universal cover by the universal covers of the fibers.  We will call
the image of $\widetilde S$ under the Cannon-Thurston map the \emph{base
fiber} $S_0 := \iota(\widetilde S)$.

\begin{itemize}

\item With respect to the pushforwards of the surface measures to $S^2_\infty$,
almost all geodesics in $\HH^3$ spend a positive proportion of time
close to the base fiber $S_0$.

\item With respect to the $3$-manifold measures on $S^2_\infty$, for almost all
geodesics in $\HH^3$, the proportion of time the geodesic spends close to the
base fiber $S_0$ tends to zero.

\end{itemize}

We now give precise versions of these statements.  Let $\gamma(t)$ be
a geodesic with unit speed parametrization.  We will write
$\gamma([0, T])$ for the segment of $\gamma$ between $\gamma(0)$ and
$\gamma(T)$.  For any subset $A$ of $\gamma$, we will write $|A|$ for
the standard Lebesgue measure of $A$. 
As $M$ fibers over the circle,
the universal cover also fibers as $\widetilde{S} \times \RR$.  As we explain in detail in \Cref{section:pseudometric}, Cannon--Thurston define a
pseudometric on $\wsr$ called the \emph{Cannon--Thurston} metric. 
The Cannon--Thurston metric depends on a choice of hyperbolic metric $S_h$ on $S$ and on the pseudo-Anosov $\pa$ in terms of its pair of invariant measured laminations $\Lambda_+$ and $\Lambda_-$.
We will write $(S_h, \Lambda)$ to denote a hyperbolic metric on $S$ and the corresponding pair $\Lambda = (\Lambda_+, \Lambda_-)$ of invariant measured
laminations on $S_h$.  For full surface measures we show:

\begin{theorem}\label{theorem:surface measures}
Suppose that $\pa \colon S \to S$ is a pseudo-Anosov map and $(S_h, \Lambda)$ is a hyperbolic structure on $S$ together with a pair of invariant measured laminations.  Let $\widetilde S_h \times \RR$ be the universal cover of the corresponding mapping torus with the Cannon--Thurston metric, and let $\iota$ be the Cannon--Thurston map.  Let $\nu$ be a full surface measure from \Cref{def:surface general}.

\medskip
Then there are constants $R \ge 0$ and $\epsilon > 0$ (that depend on $f$ and $\nu$) such that for $\iota_* \nu$-almost all geodesics
$\gamma$ in $\widetilde S_h \times \RR$, for any unit speed parametrization $\gamma(t)$,
\[ \lim_{T \to \infty} \frac{1}{T} | \gamma([0, T]) \cap N_R(S_0) | \ge \epsilon. \]
\end{theorem}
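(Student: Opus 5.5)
The plan is to reduce the statement to an ergodic statement on the surface side and then transfer it to $\widetilde S_h \times \RR$ using the geometry of the Cannon--Thurston metric. For $\nu\times\rnu$-almost every pair $(\xi_-,\xi_+)$ of distinct points of $S^1_\infty$, let $\alpha$ be the hyperbolic geodesic in $\widetilde S_h$ with these endpoints. Let $\gamma$ be the geodesic in $\widetilde S_h\times\RR$ with endpoints $\iota(\xi_\pm)$; it is a Cannon--Thurston (pseudo)quasigeodesic. The key object is the \emph{ladder} $L_\alpha$ of Mitra. It is built from $\alpha$ by flowing in the vertical direction, and in each fiber $\widetilde S\times\{s\}$ we take the geodesic realization of $f^s$ applied to $\alpha$. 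This ladder is quasiconvex, and $\gamma$ fellow travels a path in $L_\alpha$. Heuristically, $\gamma$ returns to $S_0$ exactly when $\alpha$ contains long stretches that are \emph{not} stretched by the flow in either direction. These are stretches along which $\alpha$ crosses both laminations $\Lambda_+$ and $\Lambda_-$ in a balanced, controlled way, for example segments closely fellow travelling a fixed compact "test" segment transverse to both laminations.

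The first step is to fix such a test segment $\sigma\subset\widetilde S_h$ of length $\ell$. Its lift must be chosen so that any geodesic fellow travelling a translate $g\sigma$ has the following property: the corresponding part of the ladder forces $\gamma$ to pass within $R$ of $gS_0$, and since $g\in\pi_1(S)$ preserves $S_0$, within $R$ of $S_0$. The natural way to get this is a "bounded projection" statement. If $\alpha$ passes close to both the $\Lambda_+$-leaf and the $\Lambda_-$-leaf through $g\sigma$, then the vertical flow lines from $\alpha$ split there. One side must flow up and the other down to become geodesic, so the ladder has a "waist" at height $\approx 0$ near $g\sigma$, and quasiconvexity forces $\gamma$ through that waist. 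I would prove this using the lemmas on bounded distance and the corner distance estimates for the Cannon--Thurston metric that the paper develops later. I expect this step to be the main obstacle. It requires showing that the waist at the fiber level really is a coarse cut point for geodesics in the pseudometric, uniformly in the configuration.

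The second step is counting. Pass to the unit tangent bundle $T^1 S_h$ and use the geodesic flow, with the measure on geodesics induced by $\nu\times\rnu$. For Lebesgue, this is Liouville measure and the flow is ergodic. For hitting measures, I would instead use the random walk directly. By fullness, every open set of $S^1_\infty$ has positive measure, so the cylinder set $U$ of pairs whose geodesic fellow travels $\sigma$ has positive $\nu\times\rnu$ measure. By ergodicity of the shift, or of the bi-infinite walk acting on $S^1_\infty\times S^1_\infty$, almost every $\alpha$ meets translates $g\sigma$ with positive frequency $c>0$ per unit hyperbolic length. To compare frequencies we need a reference parametrization, and I would use the sample path $w_n$ as the clock. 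Almost surely $w_n x_0$ tracks $\alpha$ sublinearly. The translates $w_n U$ occur with positive frequency, each giving a visit of $\gamma$ to $N_R(S_0)$ at a point near the ladder over $w_n x_0$.

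The final step converts visits into a positive proportion of time, and here the third and remaining obstacle is the spacing. We must show that the $\gamma$-length between consecutive forced visits grows at most linearly in $n$. The distance in $\widetilde S_h\times\RR$ between the points of $\gamma$ near $w_nx_0$ and $w_{n+1}x_0$ is bounded by the Cannon--Thurston distance, which is at most the $\widetilde S_h$ distance (the vertical flow fixes heights). Hence the $\gamma$-time up to visit $n$ is $O(n)$ by the finite first moment/linear progress, or by Kingman applied to $d(x_0,w_nx_0)$. Meanwhile each visit contributes a definite length, at least a uniform constant, inside $N_R(S_0)$, and the number of visits is $\ge cn$. Together these give $\liminf_T \frac1T|\gamma([0,T])\cap N_R(S_0)|\ge\epsilon$. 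Changing the parametrization of $\gamma$ only shifts $t$, so the bound holds for any unit speed parametrization.
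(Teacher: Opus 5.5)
There is a genuine gap in your first step, which carries all the geometry. You need the following: there are a fixed segment $\sigma$ and a constant $R$ such that whenever the surface geodesic (your $\alpha$) fellow travels a $\pi_1(S)$-translate of $\sigma$, the geodesic in $\widetilde S_h \times \RR$ (your $\gamma$) spends a definite length in $N_R(S_0)$.

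The waist argument does not establish this. Quasiconvexity of the ladder $F(\alpha)$ only keeps $\gamma$ near $F(\alpha)$. But $F(\alpha)$ contains points of every height above every point of $\alpha$, so it says nothing about the height at which $\gamma$ passes over the translate of $\sigma$. Likewise, knowing that $\gamma$ must cross the ladders over a $\Lambda_+$-leaf and a $\Lambda_-$-leaf through that translate does not say where, or at what height, it crosses them. That the fiber-level waist is a uniform coarse cut point is exactly what has to be proved. The bounded-distance and corner-distance lemmas you want to defer to are not established in this paper, and you do not indicate how they would produce a cut point.

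The claim is in fact true. It follows from \Cref{theorem:quasigeodesic-h2}, since $h_\theta$ vanishes where the tangent vector is far from both extended laminations, so the test path meets $S_0$ there. But that is companion-paper machinery, which the proof of this theorem is designed to avoid.

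The idea you are missing is to replace the transverse segment by a long piece of the axis $\alpha_g$ of a nontrivial $g \in \pi_1(S)$. Since $g$ acts loxodromically on $\widetilde S_h \times \RR$ and $\iota(\alpha_g) \subset S_0$ is $g$-invariant, $\iota(\alpha_g)$ is a quasigeodesic. Hence the geodesic $\overline{\alpha}_g$ with the same endpoints lies in $N_{K_\alpha}(S_0)$ (\Cref{prop:alphaQG}), and no analysis of heights is needed.

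What remains is to turn a long projection of the surface geodesic onto $\alpha_g$ into a long projection of $\gamma$ onto $\overline{\alpha}_g$; this is \Cref{prop:axis projection}. Its proof chooses leaves crossing both $\alpha_g$ and the surface geodesic near the two ends of the projection interval. Their ladders are disjoint convex sets separating $\widetilde S_h \times \RR$. It then uses two further facts:
\begin{itemize}
\item the fixed points of $g$ avoid the limit sets of ladders (\Cref{prop:limit disjoint from ladder}, which rests on the separation results for ladders);
\item ladders have uniformly bounded projection to $\overline{\alpha}_g$ (\Cref{cor:ladder projection}).
\end{itemize}
\Cref{prop:neighbourhood} then places a segment of $\gamma$ of definite length within $K_\alpha + 6\delta_3$ of $S_0$, and $\pi_1(S)$-translates preserve $S_0$.

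Separately, your spacing step has two problems. First, it invokes a finite first moment (or Kingman), but the measures in \Cref{def:surface general} come from full non-elementary $\mu$ with no moment hypothesis, so as written it covers only a subclass. Second, you must check that distinct visits land at separated points of $\gamma$, since $cn$ visits counted with multiplicity do not by themselves give length of order $n$.
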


\medskip In fact, the constant $\epsilon$ tends to one as $R$ tends to
infinity. By restricting to geometric surface measures from \Cref{def:surface},  we prove the following effective bound on the rate of convergence.

\begin{theorem}\label{theorem:effective}
Suppose that $\pa \colon S \to S$ is a pseudo-Anosov map with stretch
factor $k > 1$, and $(S_h, \Lambda)$ is a hyperbolic structure on $S$
together with a pair of invariant measured laminations.  Let
$\widetilde S_h \times \RR$ be the universal cover of the
corresponding mapping torus with the Cannon--Thurston metric, and let $\iota$ be the Cannon--Thurston
map.  Let $\nu$ be a geometric surface measure from \Cref{def:surface}.

\medskip
Then there are constants $K > 0$ and $\alpha > 0$ such that for
$\iota_* \nu$-almost all geodesics $\gamma$ in
$\widetilde S_h \times \RR$ and for any unit speed parametrization
$\gamma(t)$, for any $R \ge 0$,
\[ \lim_{T \to \infty} \frac{1}{T} | \gamma([0, T]) \cap N_R(S_0) | \ge 1 - K e^{- \alpha k^{R}}. \]
\end{theorem}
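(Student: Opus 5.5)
We reduce to a statement about typical hyperbolic geodesics in $\widetilde S_h$, and then use ergodicity to turn it into an almost-sure time average.

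\textbf{Step 1: from $\HH^3$ to $\widetilde S_h$.} A $\iota_*\nu$-typical geodesic in $\widetilde S_h\times\RR$ has endpoints $\iota(\xi),\iota(\eta)$ with $(\xi,\eta)$ distributed as $\nu\times\nu$ (or $\nu\times\rnu$). Let $\ogamma$ be the hyperbolic geodesic in $\widetilde S_h$ from $\xi$ to $\eta$. In the Cannon--Thurston metric the fiber $S_0$ is exponentially distorted. The geodesic $\gamma$ stays close to $S_0$ exactly along those stretches of $\ogamma$ that are not long in the direction of either lamination. Concretely, we use the structure from \Cref{section:pseudometric}: quasigeodesics in $\wsr$ are built by flowing up or down in the $\RR$ direction to shorten segments that nearly follow leaves of $\Lambda_+$ or $\Lambda_-$.

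A subsegment of $\ogamma$ that fellow travels a leaf of $\Lambda_\pm$ for hyperbolic length $L$ forces $\gamma$ to leave $N_R(S_0)$ only if $L$ is comparable to $k^R$, since flowing distance $R$ contracts by $k^{\pm R}$. Segments of $\ogamma$ that make definite angle with both laminations, at unit scale, are tracked by $\gamma$ within bounded distance of $S_0$. I would formalize this as a comparison: $|\gamma([0,T])\setminus N_R(S_0)|$ is bounded, up to a multiplicative constant and sublinear error, by the total length of maximal subsegments of $\ogamma([0,T'])$ that $\delta$-fellow travel a leaf of $\Lambda_\pm$ for length at least $c\,k^{R}$. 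Here $T'$ is comparable to $T$, since the time parametrizations are linearly related on the typical part.

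\textbf{Step 2: excursion statistics.} Next we bound the frequency of long fellow-travelling segments for $\nu$-typical geodesics. The laminations have measure zero and are uniquely ergodic, so the set of unit tangent vectors whose geodesic $\delta$-fellow travels a leaf for time at least $L$ has Liouville measure at most $K'e^{-\alpha' L}$. This is where we expect to use that geodesic flow trajectories are exponentially mixing and that leaves are not shadowed by generic geodesics.

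For the Lebesgue measure, the Birkhoff ergodic theorem for geodesic flow on $T^1S_h$, applied to the indicator of this set, gives proportion at most $\sum_{L\ge ck^R}$ of these tails, which is $\le Ke^{-\alpha k^R}$.

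For hitting measures of geometric random walks, we use the tracking of sample paths by geodesics with exponential tails. We also use the shift-invariant ergodic measure on bi-infinite paths, whose bi-infinite geodesic has stationary increments along the random walk. The probability that the segment between $w_n x_0$ and $w_{n+1}x_0$ lies in a long leaf-excursion decays exponentially in $L$. The key input here is that the random walk has exponential decay for staying near a fixed lamination leaf, which follows from the exponential moment and non-elementarity, via linear progress and deviation estimates. The ergodic theorem for the shift then yields the bound.

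\textbf{Step 3: assembling.} Combining the two steps gives $\liminf \frac1T|\gamma([0,T])\cap N_R(S_0)|\ge 1-Ke^{-\alpha k^R}$ almost surely. Independence from the parametrization follows because reparametrization shifts only by a constant. We expect Step 1 to be the main obstacle: converting fellow-travelling data in $\widetilde S_h$ into quantitative distance-to-$S_0$ control, with the exact $k^R$ threshold, in the Cannon--Thurston pseudometric. We would attack it by explicitly constructing the ladder-like quasigeodesic and comparing it to $\gamma$ via Morse stability.
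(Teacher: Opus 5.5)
Your architecture matches the paper's. An excursion of $\widetilde S_h$-length $L$ along a leaf corresponds to height about $\log_k L$; the paper builds this into $h_\theta(v)\approx \log_k\log\frac{1}{d(v,\oLambda^1)}$, so $|h|\ge R$ roughly when $d\le e^{-k^R}$. The test path in the ladder is a uniform quasigeodesic tracked by the $\wsr$-geodesic, and an ergodic average (geodesic flow, or the shift for the random walk) turns a tail estimate into the time-average bound. Deferring Step 1 is also in line with the paper, which imports \Cref{theorem:quasigeodesic-h2} and \Cref{prop:vertical flow distance decreasing} from the companion paper.

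The genuine gap is in Step 2, which is where the rate $e^{-\alpha k^R}$ comes from, and the reasons you give do not produce any rate. Measure zero only gives $\mathbf{vol}(N_r(\Lambda^1))\to 0$ as $r\to 0$, with no rate. Unique ergodicity is irrelevant. A two-point exponential mixing bound does not control the measure of orbits that remain in a set for time $L$. The missing ingredient is the thinness of geodesic laminations due to Birman--Series: up to $e^{-L}$-closeness there are only polynomially many leaf segments of length $L$. This gives $\mathbf{vol}(N_r(\oLambda^1))\le A r^2(\log\frac1r)^{6g-6}$ (\Cref{cor:volume}), and hence your tail $K'e^{-\alpha' L}$. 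For Lebesgue alone, a large-deviation principle for the geodesic flow would also give some exponential tail, applied to the time spent in $N_\delta(\Lambda^1)$, a set of volume less than one. But that is a different input from mixing, and it says nothing about hitting measures.

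For hitting measures the gap is sharper. Exponential decay of shadows controls a \emph{fixed} leaf: the endpoint pair lies in a given $r$-square with probability $\lesssim r^{2\beta}$. However, the event that $\gamma$ near $\gamma(t_n)$ fellow travels \emph{some} leaf for length $L=\log\frac1r$ is a union over uncountably many leaves. Since $\beta$ depends on $\mu$ and may be small, the union bound closes only because $\partial\oLambda$ can be covered by $O((\log\frac1r)^{6g-6})$ squares of side $r$ (\Cref{theorem:bs}). This gives $\nu\times\rnu(N_r(\partial\oLambda))\lesssim r^{2\beta}(\log\frac1r)^{6g-6}$ (\Cref{cor:bscor}), which together with \Cref{prop:small angle} yields $\PP(|h_\gamma(t_n)|\ge R)\lesssim e^{-\beta k^R}$. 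Linear progress and deviation estimates do not supply this count.

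A smaller point: ``the time parametrizations are linearly related on the typical part'' is not a formality. To get $T'\lesssim T$, the paper uses Gadre--Hensel's linear growth of $d_{\wsr}(\iota\gamma(0),\iota\gamma(T))$ (\Cref{theorem:gh-linear}) in the Lebesgue case. In the hitting case it uses linear progress of the walk in $\wsr$ together with exponential tails for $d(w_nx_0,\gamma(t_n))$ and $h_\gamma(t_n)$. Bounding the $\wsr$-time of an excursion by its $\widetilde S_h$-length needs the coarse Lipschitz property of vertical projection onto the test path. The paper stresses that this is exactly the property not every quasigeodesic construction has.
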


\medskip
For $3$-manifold measures we show:

\begin{theorem}\label{theorem:3-manifold measures}
Suppose that $\pa \colon S \to S$ is a pseudo-Anosov map and
$(S_h, \Lambda)$ is a hyperbolic structure on $S$ together with a pair of
invariant measured laminations.  Let $\widetilde S_h \times \RR$ be
the universal cover of the corresponding mapping torus with the Cannon--Thurston metric and let
$\iota$ be the Cannon--Thurston map.  Let $\nu$ be a $3$-manifold measure from \Cref{def:3-manifold}.

\medskip
Then for $\nu$-almost all geodesics $\gamma$ in
$\widetilde S_h \times \RR$, for any unit speed parametrization
$\gamma(t)$ and any $R > 0$,
\[ \lim_{T \to \infty} \frac{1}{T} | \gamma([0, T]) \cap N_R( S_0 ) | = 0.  \]
\end{theorem}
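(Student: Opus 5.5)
The plan is to reduce the statement to an ergodic-theoretic statement about a flow on a finite-volume quotient. The key observation is that time spent near $S_0$ along a typical geodesic should be controlled by an equidistribution or ratio-ergodic principle.

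\emph{Step 1: reduction to the Lebesgue case.} First treat the Lebesgue (visual) measure on $S^2_\infty$. The measure $\nu\times\nu$ on pairs of boundary points is absolutely continuous with respect to the $\pi_1(M)$-invariant Liouville measure on the unit tangent bundle of $\HH^3$. Since $M$ is closed, this measure descends to the finite Liouville measure on $T^1M$, which is ergodic for the geodesic flow. The Cannon--Thurston pseudometric on $\wsr$ is quasi-isometric to the hyperbolic metric, and geodesics for it stay within bounded distance of hyperbolic geodesics. It therefore suffices to prove the statement for hyperbolic geodesics with $N_R(S_0)$ replaced by a slightly larger neighbourhood $N_{R'}(S_0)$.

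\emph{Step 2: choose a translation-invariant counting scheme.} The set $N_{R'}(S_0)$ is not $\pi_1(M)$-invariant; it is only $\pi_1(S)$-invariant. I would therefore pass to the infinite cyclic cover $\widehat M = \wsr/\pi_1(S) \cong S\times\RR$. Here $N_{R'}(S_0)$ projects to a compact set $K = S\times[-c,c]$, and the geodesic flow on $T^1\widehat M$ preserves an infinite, $\ZZ$-invariant Liouville measure. The quantity in question is the proportion of time a flow orbit spends in the compact set $K$. This flow is the $\ZZ$-extension of the geodesic flow on $T^1M$ by the cocycle given by the fibration class $\phi\in H^1(M;\ZZ)$. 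It is recurrent and ergodic, since the geodesic flow on a closed negatively curved manifold is mixing with a local CLT for the cocycle, as in Guivarc'h/Rees. By the Hopf ratio ergodic theorem, for a conservative ergodic infinite-measure flow, the time spent in a finite-measure set divided by $T$ tends to $0$ almost surely. This gives the Lebesgue case. A more elementary route avoids the infinite-measure theory: the height along $\gamma$ is the Birkhoff integral of a cocycle cohomologous to the closed $1$-form $\phi$, which has mean zero by flip-invariance. A CLT then gives $|{\rm height}| \sim \sqrt T$, and a local limit theorem bounds the expected time in $K$ up to $T$ by $O(\sqrt T)$.

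\emph{Step 3: hitting measures.} For $\nu$ a hitting measure of $\mu\in\calP(\pi_1(M))$, I would replace the flow by the random walk itself. Geodesics sampled by $\nu\times\rnu$ are sublinearly tracked by bi-infinite sample paths $(w_n x_0)$. This holds because the exponential moment gives linear progress and sublinear tracking with logarithmic deviations. The time $\gamma$ spends in $N_R(S_0)$ is then comparable, up to $o(T)$, to the number of $n\le T/\ell$ with $w_nx_0$ within bounded distance of $S_0$, where $\ell$ is the drift. Projecting to $\ZZ$ via $\phi$ makes $\phi(w_n)$ a random walk on $\ZZ$ with exponential moment. If its mean is nonzero, the walk escapes linearly and visits a bounded set only finitely often. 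If the mean is zero, the walk is recurrent but the local limit theorem gives $P(|\phi(w_n)|\le C)=O(n^{-1/2})$, so the expected number of visits up to $N$ is $O(\sqrt N)=o(N)$. Almost-sure convergence along a subsequence, followed by Borel--Cantelli, upgrades this to a limit. Being within $R$ of $S_0$ forces $|\phi(w_n)|$ to be bounded in terms of $R$, since the height function is coarsely Lipschitz. That implication is the only direction needed.

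\emph{Main obstacle.} The main difficulty is the comparison step: turning "time $\gamma$ spends near $S_0$" into "time the tracking process (flow orbit in $T^1\widehat M$, or sample path) has bounded height," with only $o(T)$ error. This requires the sublinear tracking estimates to hold uniformly over $[0,T]$, and it requires the parametrization-independence asserted in the statement, which follows because shifting the parametrization changes the count by $O(1)$. I would first try to obtain the tracking from the Gromov-product deviation bounds for geometric random walks, and from the trivial fact that a hyperbolic geodesic is its own flow orbit in the Lebesgue case.
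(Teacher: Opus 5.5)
Your proposal is correct. For hitting measures it follows the paper's proof of \Cref{prop:zero limit}; for Lebesgue measure it takes a genuinely different route.

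\textbf{Hitting measures.} You project to $\ZZ$ via $\phi$, bound returns by the local limit estimate \Cref{prop:lclt}, and transfer to $\gamma$ using logarithmic tracking (\Cref{cor:deviation}), logarithmic gaps $|t_{n+1}-t_n|\le D\log n$ (\Cref{prop:segment bound}) and linear progress. This is the paper's argument. One correction: $w_n x_0$ is only within $O(\log n)$ of $\gamma$, not within bounded distance. So the count you need is of $n\le N$ with $|\phi(w_n)|\le C\log n$, exactly as in the paper. Its expectation is $O(\sqrt N\log N)$, which is still $o(N)$. Your subsequence-plus-Borel--Cantelli step, using monotonicity of the counts in $N$, then gives the almost-sure limit the statement asks for. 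The paper's write-up stops at a Markov-inequality bound, which is only convergence in probability.

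\textbf{Lebesgue measure.} The paper applies Oh--Pan's local mixing theorem for the $\ZZ$-cover (\Cref{thm:oh-pan}) to a bump at a basepoint and an approximate indicator of the fiber neighbourhood. This gives a quantitative $t^{-1/2}$ decay for the proportion of geodesics near $S_0$ at time $t$, hence $O(\sqrt T)$ expected occupation time.

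You use only qualitative infinite ergodic theory:
\begin{itemize}
\item the Liouville measure on $T^1(S\times\RR)$ is infinite;
\item the geodesic flow there is conservative and ergodic (Rees, Guivarc'h, or Hopf--Tsuji--Sullivan plus recurrence of $\ZZ$-covers);
\item the image of $N_{R'}(S_0)$ has finite measure;
\item Hopf's ratio ergodic theorem then forces the time averages to vanish almost everywhere.
\end{itemize}
Ergodicity is a strictly weaker input than local mixing, since local mixing implies it. Your route gives the almost-sure statement directly but loses the rate. The paper's route gives the $\sqrt T$ rate on average, but needs an additional, unstated Borel--Cantelli step to become almost sure. Your ``elementary'' alternative via a CLT/LLT for the height cocycle is really the paper's route in other language. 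A local limit theorem for that geodesic-flow cocycle has the same depth as Oh--Pan's theorem, and it likewise needs the subsequence upgrade from an expectation bound.
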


The mutual singularity of the surface measures and the $3$-manifold
measures, given in \Cref{theorem:singularity}, is an immediate
consequence of \Cref{theorem:surface measures} and
\Cref{theorem:3-manifold measures}.  Sets exhibiting the mutual
singularity may be explicitly described as sets of geodesics which
spend a positive proportion of time close to the base fiber $S_0$, and
sets of geodesics whose proportion of time close to $S_0$ tends to
zero.

\medskip Compared to \Cref{theorem:surface measures}, the effective
version, namely \Cref{theorem:effective}, relies on an explicit construction
of certain quasi-geodesics which we now briefly describe, see \cite{gmpu}*{Section 3}
for more details.  We discuss how this is related to previous constructions of quasigoedesics in \Cref{section:solv}.

\medskip
For a compact hyperbolic $3$-manifold $M$ which fibers over the
circle, the universal cover $\widetilde M$ has two natural structures.
First, the hyperbolic metric on $M$ lifts to a hyperbolic metric on
$\widetilde M$, which is isometric to $\HH^3$.  Any
$\pi_1(M)$-invariant metric on $\widetilde M$ will be quasi-isometric to this hyperbolic metric. Second, as $M$ fibers over the circle, the fibration lifts to a product structure $\widetilde S \times \RR$ on $\widetilde M$.  This is only a topological product structure, as no product metric on $\widetilde S \times \RR$ can be quasi-isometric to the hyperbolic metric.

\medskip
Given a hyperbolic structure $S_h$, 
the pseudo-Anosov mapping class $\pa$ determines a pair of invariant measured laminations.  A choice of a hyperbolic structure $S_h$ lifts to a hyperbolic structure $\widetilde S_h$ on the universal cover
$\widetilde S$, and the invariant measured laminations can be realized
as measured geodesic laminations in this metric.  Cannon--Thurston
\cite{cannon-thurston} used the invariant measured geodesic
laminations to construct a $\pi_1(M)$-invariant pseudo-metric on
$\widetilde S_h \times \RR$, which is therefore quasi-isometric to the
hyperbolic metric on $\widetilde M$, see \Cref{section:pseudometric}
for further details. We call this pseudo-metric the
\emph{Cannon--Thurston metric} on $\widetilde S_h \times \RR$.

\medskip 
Lifting an invariant lamination to $\widetilde{S}_h$, any leaf $\ell$ of the lift is geodesic with respect to the hyperbolic metric on $\widetilde{S}_h$.  However, its image $\iota(\ell)$ in $\widetilde S_h \times \RR$ is embedded metrically as a horocycle.  In
particular, $\iota(\ell)$ is not quasigeodesic, and a segment of
$\iota(\gamma)$ of length $L$ has endpoints distance roughly $\log L$
apart in the Cannon--Thurston metric on $\widetilde S_h \times \RR$.
For any surface measure, almost all geodesics $\gamma$ in
$\widetilde S_h$ have arbitrarily long subsegments that fellow travel
leaves of the invariant laminations.  The image $\iota(\gamma)$ in
$\widetilde S_h \times \RR$ is thus not quasigeodesic, even up to
reparametrization.  Our basic idea is to straighten ``horocyclic''
$\iota(\gamma)$ segments, that is, replace $\gamma$-subsegments that
fellow-travel leaves of the invariant laminations by shortcuts.  To do
so, we define a \emph{height function} $h_\gamma(t)$ along the
geodesic, which is roughly $\log \log$ of the distance (in the unit
tangent bundle) from the geodesic to the invariant laminations.  We
then show that the paths $( \gamma(t), h_\gamma(t) )$ in
$\widetilde S_h \times \RR$ are uniformly quasigeodesic, i.e. their
quasigeodesic constants do not depend on the choice of the geodesic
$\gamma$ in $\widetilde S_h$.

\medskip
In \Cref{section:background}, we review some previous results
we use, and define some notation.  In \Cref{section:singularity}, we prove Theorems \ref{theorem:surface
  measures} and \ref{theorem:3-manifold measures}, which immediately
imply the singularity of measures result, \Cref{theorem:singularity}. In \Cref{section:effective}, we prove the effective
bounds in \Cref{theorem:effective}, by assuming the main result of \cite{gmpu}*{Section 3}.

\subsection{Quasigeodesics in Cannon--Thurston metrics}\label{section:solv}

In this section, we discuss earlier constructions of quasigeodesics in
Cannon--Thurston metrics, and some technical issues that led us to the
variant that we use in this paper.

\medskip

Given a subset $A$ of the fiber, the union of all flow lines
intersecting $A$ is called the \emph{ladder} over $A$.  Cannon--Thurston \cite{cannon-thurston} showed that ladders over leaves of
the stable and unstable laminations are quasiconvex. Mitra
\cite{mitra} extended this to show that ladders over geodesics in the
fiber are quasiconvex, even in the more general context of hyperbolic
group extensions.  This implies that given a geodesic $\gamma$ in the
universal cover of the fiber surface, the geodesic in the universal cover of the $3$-manifold connecting the endpoints of 
$\iota(\gamma)$ will be coarsely contained in the ladder over
$\iota(\gamma)$, and has a parameterization in $\wsr$ as
$(\iota(\gamma(t)), h_\gamma(t))$, for some function
$h_\gamma \colon \RR \to \RR$.  Note that \emph{a priori} this
function may depend on $\gamma$, and not necessarily arise from a
function on the unit tangent bundle.

\medskip
McMullen \cite{mcmullen} constructed quasigeodesics in the classical
Cannon--Thurston case, using saddle connections in the flat metric.
Hamenstaedt \cite{hamenstaedt} considered the more general case of
word hyperbolic extensions of surface groups, and Mj and Sardar
\cite{mj-sardar} and Kapovich and Sardar \cite{kapovich-sardar}
consider the general case of hyperbolic groups arising as hyperbolic-by-hyperbolic groups.

\medskip
In all the above cases, the authors construct
quasigeodesics by connecting segments of vertical flow lines by horizontal paths\footnote{We warn
  the reader that other authors, for example Kapovich and Sardar
\cite{kapovich-sardar}, use the opposite convention for which
directions are horizontal or vertical.}.  However, they do not prove an appropriate length upper bound for the vertical segments and the projection from the fiber geodesic to the quasi-geodesic by vertical flow to be coarsely distance non-increasing. 

\medskip
In proving the effective statements in \Cref{section:effective}, we need these properties, particularly the coarse distance non-increasing property. See \Cref{prop:vertical flow distance decreasing}.  As we note in the example below, this property does not hold for all
quasigeodesics, and depends on the details of the construction.  
For the example, consider a geodesic $\gamma$ in the hyperbolic plane intersecting a horocycle $h$. 
See the illustration in in \Cref{fig:qg} in the upper half space model of $\HH^2$.

\begin{figure}[h]
\begin{center}
\begin{tikzpicture}[scale=0.85]

\tikzstyle{point}=[circle, draw, fill=black, inner sep=1pt]

\draw (0, 0) -- (8, 0);

\draw (0, 1) -- (8, 1) node [label=above:$h$] {} ;

\draw (1, 0) arc (180:0:3) node [label=above right:$\gamma$] {};

\draw (1, 0) -- (1, 3) node [midway, label=left:$\alpha$] {} -- (7, 3) -- (7, 0);

\draw (1, 0) -- (4, 3) node [midway, label=left:$\beta$] {} -- (7, 0);

\end{tikzpicture}
\end{center}
\caption{Two quasigeodesics in the upper half space model of $\HH^2$.} \label{fig:qg}
\end{figure}

\medskip
The two paths $\alpha$ and $\beta$ in \Cref{fig:qg} are quasigeodesics. For all horocycles $h$ intersecting $\gamma$, the
vertical projection from $h$ onto $\beta$ is uniformly
coarsely distance non-increasing, that is, there are constants $D, K, C > 0$ such that for any horocycle $h$ and any pair of points distance at least $D$ apart in the path metric on $h$, their projections to $\beta$ are distance at most $K$ times their distance along $h$ plus $C$. However, the
vertical projection from $h$ to $\alpha$ does not have this property since a uniform $D$ for $h$ as its height (imaginary part) goes to zero. 

\medskip
In order to apply the ergodicity of the geodesic flow, we need to
relate properties of quasigeodesics to a function on the unit tangent
bundle of the surface.  We expect it to be possible to modify the
previous constructions of quasigeodesics to have the desired
properties, though we can also show directly that the function we give
on the unit tangent bundle gives quasigeodesic paths. In this paper we give detailed proofs of the singularity results and their effective versions, assuming the existence of quasigeodesics with the desired properties, stated precisely in \Cref{theorem:quasigeodesic-h2} and \Cref{prop:vertical flow distance decreasing}.
As verifying these properties
is somewhat technical, we give the details in a separate paper \cite{gmpu}.

\subsection{Remarks on Related Work}

It is interesting to compare Cannon--Thurston maps with the more classical space filling curves, such as the Peano curve.
In contrast to our results here, the Peano curve is absolutely continuous.

\medskip
The Peano curve is also H\"older with exponent $1/2$.
In contrast, Miyachi proved that Cannon--Thurston maps are not H\"older, see \cite{Miy}*{Theorem 1.1}.
Since non-H\"older maps can be absolutely continuous, the absence of regularity does not imply singularity for the pushforward of the Lebesgue measure on the circle.

\medskip
There are many Cannon--Thurston type phenomena generalizing the setup from fibered hyperbolic 3-manifolds. 
A natural generalization is given by Gromov hyperbolic groups that are extensions of surface groups, a fundamental group of a fibered hyperbolic 3-manifold being an extension by $\ZZ$.
The existence of a Cannon--Thurston map is proved in \cite{mitra}, and for more about the structure of the map see \cite{Mj-Rafi}. 
In this case, we expect pushforwards of stationary measures on the circle to be singular with respect to geometric/fully supported stationary measures on the Gromov boundary of the extension.
On the one hand, we expect geodesics in the extension sampled by the pushforward measure to spend a definite proportion of their time in a neighborhood of the surface subgroup.
On the other hand, we expect geodesics sampled by stationary measures resulting from geometric/fully supported random walks to spend asymptotically negligible time in a neighborhood of the surface subgroup.
While the reasons for our expectations are similar, our methods here are specific for hyperbolic 3-manifolds and these questions are left for now to future work.
For a survey of other Cannon--Thurston type examples, see \cite{Gad-Hen}.

\medskip
For Kleinian surface groups, one obtains Cannon-Thurston maps more generally from doubly degenerate surface group representations in $\PSL(2,\mathbb{C})$ \cite{MjCT}. Without the $\ZZ$-periodicity, it makes good sense only to compare the pushforward of the Lebesgue measure on $S^1_\infty$ with the Lebesgue measure on $S^2_\infty$, and here the singularity of the pushforward is again covered by Tukia's work. We indicate which of our techniques underlying  \Cref{theorem:surface measures} and \Cref{theorem:3-manifold measures} hold when the $S \times \mathbb{R}$ has bounded geometry; we leave the full discussion of our perspective in the bounded geometry case to future work. We also leave more general Cannon--Thurston situations to future work.

\subsection{Acknowledgments}

This work would not have been possible without the wonderful working
conditions of the American Institute of Mathematics. We would like to
thank them for their hospitality during the April 2022 workshop
“Random walks beyond hyperbolic groups” where this work started.  We
thank Dongryul Kim for pointing out Tukia's work and his interest in
this work. We thank Fran\c{c}ois Ledrappier, Chi Cheuk Tsang, and
Chris Leininger for helpful discussions. We thank Thomas Haettel for
useful discussions at the early stages of this work.  We thank Mahan
Mj and Hee Oh for their comments on the first draft, and the referees for useful advice.

\medskip
The first author acknowledges the support of the Institut Henri
Poincaré (UAR 839 CNRS-Sorbonne Université) and LabEx CARMIN
(ANR-10-LABX-59-01). The first author thanks Richard Canary and Yair
Minsky for discussions related to this work during the programme
\texttt{"}Higher rank geometric structures\texttt{"} at the IHP.  The
second author thanks PSC-CUNY and the Simons Foundation for their
support.  The third author is grateful to the Institute for Advanced
Study for their hospitality and the Bob Moses fund for funding her
2024--2025 membership, she is funded by an NSERC Discovery Grant. The
last author gratefully acknowledges support from NSF DMS--2439076.


\section{Background}\label{section:background}

In this section, we review some background and fix notation.

\medskip Let $S$ be a closed orientable surface of genus at least two and suppose that $\pa \colon S \to S$ is an orientation preserving
diffeomorphism.  Then $\pa$ determines a mapping
torus $M_\pa = S \times [0,1] / \sim$, which is the quotient of
$S \times [0,1]$ by the relation $(p, 0) \sim (f(p), 1)$.  The map
$\pa$ is often referred to as the \emph{monodromy map} for the mapping
torus.  We say a closed orientable $3$-manifold $M$ \emph{fibers over
  the circle} if $M$ is homeomorphic to a mapping torus $M_\pa$ of
some orientation preserving surface diffeomorphism
$\pa \colon S \to S$.  Thurston \cite{thurston} showed that the
$3$-manifold $M_\pa$ is hyperbolic if and only if $\pa$ is
pseudo-Anosov.  We will always assume that $\pa$ is pseudo-Anosov and
so $M_\pa$ is hyperbolic.  For simplicity of notation, we will just
write $M$ for $M_\pa$.

\medskip
The universal cover $\widetilde M$ may be thought of as $\RR^2 \times \RR$, where 
\begin{itemize}
\item the universal
cover $\widetilde{S}$ of the fiber $S$ is identified with $\RR^2$ and
\item the monodromy map $\pa$ acts by unit
translation in the $\RR$-factor direction.
\end{itemize}
Since $\pa$ is pseudo-Anosov, the action of $f$ on the Teichmüller
space (of marked complete hyperbolic metrics on $S$) has an invariant
axis on which $f$ acts by translation.  By identifying the $\RR$
factor in $\widetilde{S} \times \RR$ with the Teichmüller axis, we
obtain a marked hyperbolic metric on $S$ corresponding to
$\widetilde{S} \times \{0 \}$.  We shall denote this metric by $S_h$.
The monodromy map $\pa$ acts by a change of marking of $S_h$.  The
universal cover $\widetilde S_h$ is isometric to the hyperbolic plane
$\HH^2$.  We will write $d_{\HH^2}$ for the hyperbolic metric on
$\widetilde S_h$.  The inclusion map $\iota$ sends $\widetilde S_h$ to
$\widetilde S_h \times \{ 0 \}$ in the universal cover
$\widetilde S_h \times \RR$ of $M$.  We denote $\iota(\widetilde S_h)$
by $S_0$.

\medskip As in Section \ref{subsection:geodesics}, the action of $\pa$ on the fiber $S_h$ has a pair of
invariant measured geodesics laminations.  The transverse measures are
uniquely ergodic and the laminations are transverse to each other.
The monodromy map $\pa$ acts by stretching the leaves of one
lamination (called the \emph{unstable lamination}) and by contracting
the leaves of the transverse lamination (called the \emph{stable
  lamination}). The laminations can be lifted to
$\pi_1(S)$-equivariant laminations of $\widetilde S_h$. These laminations reappear with further details in \Cref{sec:measured laminations}.

\medskip Using the lifted laminations, Cannon and Thurston
\cite{cannon-thurston} constructed a $\pi_1(M)$-invariant pseudometric
on the universal cover $\widetilde S_h \times \RR$ of $M$.  See \Cref{section:pseudometric} for further details about the
\emph{Cannon--Thurston metric}.

\medskip
The fiber lying on the invariant Teichmüller axis also carries a singular flat metric given by the associated quadratic differential on the underlying marked conformal surface.
The monodromy map $\pa$ acts affinely on the singular flat metric stretching the horizontal foliation and contracting (by the reciprocal of the stretch factor) the vertical foliation.
We call the singular flat metric the \emph{invariant flat metric} for $\pa$.

\medskip
In an analogous way to Cannon--Thurston, we can use the
invariant flat metric to construct a $\pi_1(M)$-equivariant metric on
the universal cover $\widetilde S_q \times \RR$ of $M$.  This metric
is known as the \emph{singular solv metric}.

\medskip
By the Švarc--Milnor lemma, both metrics, and also the hyperbolic metric, on $\HH^3$ are quasi-isometric to $\pi_1(M)$ and so quasi-isometric to each other.

\medskip
We do not use the singular solv metric directly, but we do discuss the relation between the Cannon-Thurston metric and the singular solv metric in \Cref{section:flat}.  
We shall always write $\widetilde S_h \times \RR$ for the universal cover of $M$ to emphasize that we are using the Cannon-Thurston metric and not the others.

\subsection{Measured laminations}
\label{sec:measured laminations}

The properties of measured laminations that we present below are standard, see e.g.  \cite{Casson-Bleiler}. 
We present them in detail to keep our discussion self-contained.

\medskip
A (possibly bi-infinite) geodesic on $S_h$ is \emph{simple} if it has no
self-intersections.  
A \emph{geodesic lamination} on $S_h$ is a closed
union of simple pairwise disjoint geodesics.  
A \emph{transverse measure} on a geodesic
lamination $\Lambda$ is a positive measure $dm$ defined on local transverse arcs to the leaves of $\Lambda$ that 
\begin{itemize}
    \item is invariant under any isotopy preserving the transverse
    intersections with the leaves of $\Lambda$, and
\item is positive and finite on any
nontrivial compact transversals.
\end{itemize}
Such a measure lifts to a $\pi_1(S)$-invariant transverse measure on
the pre-image of $\Lambda$ in $\HH^2$.  By abuse of notation, we will
denote the pre-image also by $\Lambda$ and the lifted measure also by
$dm$.  A geodesic lamination equipped with a transverse measure is
called a \emph{measured lamination}.  We will only consider measured
laminations, and so we will often just write lamination to mean
measured lamination.

\medskip

We say a measured lamination is \emph{filling} if there are no
essential simple closed curves disjoint from the lamination.  The
complement of a filling lamination is a union of ideal polygons with
finitely many sides. We say a leaf of the lamination is a
\emph{boundary leaf} if it is the boundary of an ideal polygon.  There
are finitely many ideal complementary regions in the compact
surface $S_h$, and so there are only finitely many boundary leaves in
$S_h$. This implies that there are countably many boundary leaves in
the universal cover $\widetilde S_h$.

\medskip

We say a measured lamination is
\emph{minimal} if every leaf is dense in the lamination.  A minimal
filling lamination has the following properties:
\begin{itemize}
    \item there are uncountably many leaves,
    \item no leaf is isolated, and
    \item the transverse measure is non-atomic.
\end{itemize}

We say a pair of measured laminations $\Lambda_+$ and $\Lambda_-$
\emph{bind} $S_h$ if each geodesic ray on $S_h$ crosses a leaf of
$\Lambda_+ \cup \Lambda_-$.

\begin{definition}
We shall write $(S_h, \Lambda)$ for a triple consisting of a
hyperbolic metric on a compact surface $S$, together with a pair of
minimal filling measured laminations $\Lambda_+$ and $\Lambda_-$ which
bind the surface.  We refer to such a triple $(S_h, \Lambda)$ as a
\emph{hyperbolic surface and a full pair of laminations}.
\end{definition}

A pseudo-Anosov map $\pa$ determines a pair of invariant measured
laminations called stable and unstable
laminations, which we shall denote
$(\Lambda_+, dx)$ and $(\Lambda_-, dy)$ respectively, where $dx$ and $dy$ are the transverse measures.  In particular, 
\begin{itemize}
    \item $f (\Lambda_+) = \Lambda_+$ and $f_\ast dx = k dx$, and
    \item $f(\Lambda_-) = \Lambda_-$ and $f_\ast dy = k^{-1} dy$,
\end{itemize}
where $k = k_\pa > 1$ is known as the \emph{stretch factor} of the pseudo-Anosov map.
We shall often write $\Lambda_+$ or $\Lambda_-$ to refer to the measured laminations if we
do not need to refer to the respective measures.  
In the coordinate system described in \Cref{section:pseudometric}, leaves of $\Lambda_-$ correspond to lines parallel to the $x$-axis, and leaves of $\Lambda_+$ correspond to lines parallel to the $y$-axis.

\medskip

Cannon and Thurston \cite{cannon-thurston}*{Theorem 10.1} showed that
the invariant measured laminations $\Lambda_+$ and $\Lambda_-$ are
each minimal and filling, and together they \emph{bind} the surface
$S_h$.  In fact, pseudo-Anosov invariant laminations are uniquely
ergodic, that is, the transverse measures are unique up to scale.  In
particular, the invariant measured laminations $\LL_+$ and $\LL_-$
form a \emph{full pair} of laminations for $S_h$.

\medskip We now record some useful properties of full pairs of laminations.

\begin{proposition}\label{prop:non common leaves}
Let $(S, \LL)$ be a hyperbolic surface and a full pair of laminations.
Then $\Lambda_+$ and $\Lambda_-$ have no leaf in common.
\end{proposition}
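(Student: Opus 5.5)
We argue by contradiction, using the binding condition together with minimality. Suppose $\ell$ is a leaf lying in both $\Lambda_+$ and $\Lambda_-$. Since both laminations are minimal, every leaf of $\Lambda_+$ is dense in $\Lambda_+$, so the closure of $\ell$ equals $\Lambda_+$. By the same reasoning applied to $\Lambda_-$, the closure of $\ell$ also equals $\Lambda_-$. Laminations are closed sets, so we conclude $\Lambda_+ = \Lambda_-$ as subsets of $S_h$.

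The second step is to produce a geodesic ray that crosses no leaf of $\Lambda_+ \cup \Lambda_- = \Lambda_+$, which contradicts the binding hypothesis. The simplest choice is a half-leaf of $\ell$ itself: a geodesic ray contained in a leaf does not cross that leaf. It also does not cross any other leaf, because the leaves of a geodesic lamination are pairwise disjoint. For this to work, "crosses" must mean a transverse intersection, which is the only sensible reading, since otherwise every ray contained in a leaf would trivially "cross" a leaf. If the convention in the definition of binding is unclear, a more robust choice is a ray lying in a complementary ideal polygon of the now single lamination $\Lambda_+$. Such a polygon exists because $\Lambda_+$ is filling. A ray inside the polygon running out a cusp between two adjacent boundary leaves is asymptotic to those boundary leaves and never meets the lamination at all, so it certainly crosses no leaf.

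An alternative route avoids using minimality to identify the two laminations entirely. One can argue that if $\ell$ is a common leaf, then leaves of $\Lambda_+$ accumulate on $\ell$, since $\ell$ is not isolated. Then a nearby leaf of $\Lambda_-$ near $\ell$ would have to cross leaves of $\Lambda_+$ at very small angles, and this leads to a similar contradiction. I would not follow this route, since the closure argument is cleaner.

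The main obstacle is interpreting the binding definition precisely, namely whether a ray contained in a leaf counts as crossing it. I would handle this by using the cusp-ray construction in the complementary ideal polygon, which sidesteps the issue because that ray is disjoint from the lamination.
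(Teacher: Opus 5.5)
Your proposal is correct and follows the paper's proof: minimality makes the common leaf dense in both laminations, so $\Lambda_+ = \Lambda_-$. A geodesic ray inside an ideal complementary region is then disjoint from both laminations, which contradicts binding. Your decision to use the complementary-region ray rather than a half-leaf is the right one, since it is the version the paper uses and it does not depend on reading ``crosses'' as transverse intersection.
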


\begin{proof}
Suppose that $\Lambda_+$ and $\Lambda_-$ share a leaf $\ell$.
Since each lamination is minimal, the common leaf $\ell$ is dense in
both laminations.  This implies that $\Lambda_+ = \Lambda_-$.
But then any ideal complementary region contains a geodesic ray
disjoint from both laminations, contradicting the fact that
$\Lambda_+$ and $\Lambda_-$ bind the surface.
\end{proof}

In \Cref{prop:multi}, we combine the compactness of $S$ with the absence of common leaves to deduce the following properties: first, there is a lower bound on the angle at any point of intersection of a leaf of $\Lambda_+$ with a leaf of $\Lambda_-$, second, if a leaf of $\Lambda_+$ in $\widetilde{S}_h$ is disjoint from a lift of leaf of $\Lambda_-$ in $\widetilde{S}_h$, then there is a lower bound on
the distance between them.  The second property implies
that the ideal complementary regions of one lamination do not share
(ideal) vertices with the ideal complementary regions of the
other lamination.  Finally, there is an upper bound on the length of
a segment of a leaf which does not intersect the other lamination.

\medskip Suppose $\ell$ and $\ell'$ are leaves of 
$\Lambda_+$ and $\Lambda_-$ (in $\widetilde{S}_h$) that intersect, creating two
pairs of complementary angles.  We define the \emph{angle of intersection} to be the smallest of the two angles at the point of intersection.

\begin{proposition}\label{prop:multi}
Suppose that $(S_h, \LL)$ is a hyperbolic surface together with a full
pair of measured laminations.  Then there exist constants
$\alpha_\LL, \epsilon_\LL, L_\LL > 0$ such that each of the following
properties holds for any pair of leaves $(\ell_+,\ell_-)$ with
$\ell_+$ in $\Lambda_+$ and $\ell_-$ in $\Lambda_-$:

\begin{thmenum}[label={(\ref{prop:multi}.\arabic*)}]

\item \label{prop:angle bound} If $\ell_+$ and $\ell_-$ intersect, then their angle of
intersection is at least $\alpha_\LL$.

\item \label{prop:disjoint leaves} 
If $\ell_+$ and $\ell_-$ are disjoint,
the distance between any two lifts of $\ell_+$ and $\ell_-$ in
$\widetilde S_h$ is at least $\epsilon_\LL$.

\item \label{prop:cobounded intersections}\label{c:L_pa} Any segment of a leaf of one of the
laminations of length at least $L_\LL$ intersects a leaf of the other
lamination.

\item \label{prop:complementary regions don't share} No ideal
complementary region in $\widetilde S_h \setminus \Lambda_+$ has an
ideal vertex in common with an ideal complementary region of
$\widetilde S_h \setminus \Lambda_-$.

\end{thmenum}

\end{proposition}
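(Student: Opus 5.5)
The overall strategy is a compactness argument on the compact surface $S_h$, run separately for each of the four properties. The key input is \Cref{prop:non common leaves}, together with minimality and the binding property.

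For \ref{prop:angle bound}, I argue by contradiction. Suppose there are intersecting pairs $(\ell_+^n, \ell_-^n)$ whose angles at the intersection points $p_n$ tend to zero. Translate by $\pi_1(S)$ so that every $p_n$ lies in a fixed compact fundamental domain, and pass to a subsequence with $p_n \to p$. Geodesic laminations are closed subsets of the unit tangent bundle (up to orientation), so the unit tangent vectors of $\ell_\pm^n$ at $p_n$ converge to tangent vectors of leaves $\ell_\pm$ of $\Lambda_\pm$ through $p$. Since the angles tend to zero, these limiting tangent lines coincide. A geodesic is determined by a point and a tangent direction, so $\ell_+ = \ell_-$, contradicting \Cref{prop:non common leaves}.

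For \ref{prop:disjoint leaves}, I again argue by contradiction. Suppose there are disjoint lifts $\ell_+^n, \ell_-^n$ with $d(\ell_+^n, \ell_-^n) \to 0$. Pick nearly closest points, translate them into the fundamental domain, and extract limits. The limiting leaves $\ell_+, \ell_-$ pass through a common point $p$.
\begin{itemize}
\item If $\ell_+ \neq \ell_-$, they cross transversally at $p$ with a positive angle. Transverse intersection is an open condition, so $\ell_+^n$ and $\ell_-^n$ would intersect for large $n$, a contradiction.
\item If $\ell_+ = \ell_-$, this contradicts \Cref{prop:non common leaves}.
\end{itemize}
I expect the first case to be the main subtlety: I need transversal crossing to be stable under convergence of leaves in the $C^1$ sense on compact sets, which holds for geodesics converging in the unit tangent bundle.

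For \ref{prop:cobounded intersections}, suppose there are segments $\sigma_n$ of leaves of $\Lambda_+$ (say) with lengths tending to infinity that miss $\Lambda_-$. Recentre each $\sigma_n$ at its midpoint in the fundamental domain and extract a limit. This gives a full leaf $\ell$ of $\Lambda_+$ whose interior meets no leaf of $\Lambda_-$ transversally. The reason is that $\Lambda_-$ is closed: if $\ell$ crossed some leaf of $\Lambda_-$ transversally, then by the openness argument above so would $\sigma_n$ for large $n$. Then $\ell$ either is disjoint from $\Lambda_-$ or is a leaf of $\Lambda_-$. The second option is excluded by \Cref{prop:non common leaves}. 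In the first case, the geodesic ray along $\ell$ meets neither lamination transversally. Taking a ray in a complementary region of $\Lambda_-$ that runs parallel to $\ell$ and avoids $\Lambda_+$ then contradicts binding. Alternatively, $\ell$ lies inside a complementary ideal polygon of the filling lamination $\Lambda_-$. But such polygons have finite area and contain no complete geodesic other than diagonals, and a diagonal would force $\Lambda_+$ to share endpoints with $\Lambda_-$ in a way incompatible with minimality. I would choose whichever of these two routes is cleaner in writing.

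For \ref{prop:complementary regions don't share}, suppose a complementary polygon $P_+$ of $\Lambda_+$ and a complementary polygon $P_-$ of $\Lambda_-$ share an ideal vertex $\xi$. Then a boundary leaf of $P_+$ and a boundary leaf of $P_-$ are both asymptotic to $\xi$, so their distance tends to zero along rays towards $\xi$. These two leaves are distinct by \Cref{prop:non common leaves}. If they are disjoint, this contradicts \ref{prop:disjoint leaves}. If they intersect, then two distinct geodesics sharing an ideal endpoint cannot cross, since geodesics meet at most once, so they must in fact be disjoint, and \ref{prop:disjoint leaves} applies again.

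Finally, take $\alpha_\LL$, $\epsilon_\LL$, $L_\LL$ to be the constants produced by these compactness arguments.
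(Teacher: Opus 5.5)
Your proposal is correct and follows essentially the same route as the paper: contradiction-by-compactness arguments for (1)--(3) using closedness of the laminations and \Cref{prop:non common leaves}, with (4) deduced from (2). In (2), your case split (the limit leaves either coincide or cross transversally, and crossing is an open condition) is a slightly more careful version of the paper's limiting step. In (3), note two things. First, the ray along $\ell$ already crosses neither lamination, which contradicts binding directly, so the extra ``parallel ray'' is unnecessary. Second, in your ideal-polygon alternative, the contradiction comes from (2) applied to the asymptotic pair formed by the diagonal $\ell$ and an adjacent side of the polygon, not from minimality.
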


\Cref{prop:complementary regions don't share} follows directly from 
\Cref{prop:disjoint leaves}; we prove the remaining
statements.

\begin{proof}[Proof of \Cref{prop:angle bound}]
Suppose that there is a sequence $(\ell^-_n, \ell^+_n)$ of pairs of intersecting leaves whose angles of intersection tend to zero.  
By compactness of the unit tangent bundle $T^1(S_h)$, we may pass to a subsequence of pairs to assume that 
\begin{itemize}
    \item the points of intersection converge in $S_h$, and
    \item the angles of intersections at these points go to zero.
\end{itemize}
Since laminations are closed subsets, we deduce that the laminations contain a common leaf, a contradiction to \Cref{prop:non common leaves}.
\end{proof}

\begin{proof}[Proof of \Cref{prop:disjoint leaves}]
Suppose there is a sequence of pairs $(\ell^+_n, \ell^-_n)$ of disjoint leaves in $\Lambda_+$ and $\Lambda_-$ such that the distance between them tends to
zero.  
By using cocompactness of the $\pi_1(S)$ action on $\widetilde{S}_h$, we may assume that the closest points between the leaves lie in a compact region in $\widetilde{S}_h$. 
Hence, we may pass to a convergent subsequence of pairs and further assume that the tangent vectors (to the leaves) at these pairs also converge.  
Since laminations are closed subsets, the sequence of leaves limit to a common leaf of $\Lambda_+$ and $\Lambda_-$, a contradiction by \Cref{prop:non common leaves}.
\end{proof}

\begin{proof}[Proof of \Cref{prop:cobounded intersections}]
Suppose there is a sequence of leaves $\ell_n$ in $\Lambda_+$,
containing subsegments $\sigma_n \subset \ell_n$ of length
$|\sigma_n| = L_n \to \infty$ of $\Lambda_+$, such that the segments
$\sigma_n$ do not intersect $\Lambda_-$.  By cocompactness, and the
fact that laminations are closed, the subsegments $\sigma_n$ limit to
a leaf $\ell$ of $\Lambda_+$ which does not intersect $\Lambda_-$.
This implies that there is a geodesic ray asymptotic to $\ell$, which
is disjoint from both laminations, contradicting the fact that the
laminations bind the surface $S_h$. The exact same argument works
with $\Lambda_+$ and $\Lambda_-$ interchanged.
\end{proof}

\subsection{Flow sets and ladders}
\label{sec:flow sets}

The mapping torus construction determines a flow on the universal
cover $\widetilde S_h \times \RR$, i.e. a continuous $1$-parameter
family of homeomorphisms given by $F_t(p, s) = (p, s+t)$, which we
will call the \emph{suspension flow}.  We shall consider the $\RR$
component of $\widetilde S_h \times \RR$ as ``vertical''.

\medskip
Suppose that $A$ is a subset of $\widetilde S_h$.
We define the \emph{suspension flow set $F(A)$} to be
\[
F(A) = \bigcup_{z \in \RR} F_z(A \times \{ 0\} ).
\]
If $A = \{p \}$ is a point, then $F(p)$ is just the suspension flow
line through $p$.  If $A$ is a hyperbolic geodesic $\gamma$ in
$\widetilde S_h$, then the suspension flow set $F(\gamma)$ is called
the \emph{ladder} of $\gamma$.  We will refer to $\gamma$ as the
\emph{base} of the ladder.

\subsection{The Cannon-Thurston metric}\label{section:pseudometric}

Let $(S_h, \LL)$ be a hyperbolic surface together with a full pair of
laminations, and let $\ws_h$ be the universal cover of $S_h$.
Following \cite{cannon-thurston}, we define an infinitesimal
pseudo-metric on $\ws_h \times \RR$ by
\begin{equation}\label{eq:ct metric}
ds^2 = k^{2z} dx^2 + k^{-2z} dy^2 + (\log k)^2
dz^2. 
\end{equation} 
Here $\log$ will mean the natural log base $e$ and we will write
$\log_k$ for $\log$ base $k$.  Throughout this paper, we will use
$k > 1$ exclusively to refer to the constant in the definition of the
Cannon-Thurston metric.  If the pair of laminations is the pair of
invariant laminations determined by a pseudo-Anosov map $\pa$, we will
choose $k > 1$ to be the stretch factor of $\pa$.  With this choice,
the monodromy map acts on the universal cover by vertical translation
by one unit.  For an arbitrary full pair of laminations, we may choose
$k = e$.

\medskip The infinitesimal pseudometric gives rise to a
pseudometric $d_{\widetilde S_h \times \RR}$ on
$\widetilde{S}_h \times \RR$ in the standard manner:
\begin{itemize}
    \item integrating the pseudometric along rectifiable paths gives a pseudo-distance; and then
    \item defining the distance between two points as the infimum of the length over all rectifiable paths connecting the two points.
\end{itemize}
The resulting pseudo-metric on $\widetilde{S}_h \times \RR$ is called
the \emph{Cannon--Thurston metric}.  In the mapping torus case, the pseudo-metric is
$\pi_1(M)$-invariant by construction.  It is genuine pseudo-metric since if $p$ and $q$ are points in the same compact
complementary region of
$\widetilde S_h \setminus (\Lambda_+ \cup \Lambda_-)$, then for any
$z_0 \in \RR$, the corresponding points $(p,z_0)$ and $(q, z_0)$, with the
same $z$-coordinate $z_0$, are distance zero apart in
$\widetilde S_h \times \RR$.

\begin{theorem}\cite{cannon-thurston}*{Theorem 5.1}
Let $f \colon S \to S$ be a pseudo-Anosov map, and let $(S_h, \LL)$ be
a hyperbolic metric on $S$ together with a pair of invariant measured
laminations for $\pa$, and let $M$ be the corresponding fibered $3$-manifold. 
Then the hyperbolic metric $d_{\HH^3}$ and the $\pi_1(M)$–invariant global
pseudometric
$d_{\widetilde S_h \times \RR} = \inf_\gamma \int_\gamma ds$ are
quasi-isometric.
\end{theorem}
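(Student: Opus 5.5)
The plan is to follow the Švarc--Milnor route; this is exactly how Cannon and Thurston argue. Both metrics are $\pi_1(M)$-invariant, the action is cocompact and properly discontinuous, and both spaces are (coarsely) geodesic, so quasi-isometry follows once the Cannon--Thurston pseudometric is shown to be a proper length pseudometric. Concretely, I would verify three properties for $d_{\widetilde S_h \times \RR}$:
\begin{enumerate}
\item it is a length (path) pseudometric;
\item $\pi_1(M)$ acts isometrically, with compact quotient in the sense that some compact set $K$ has translates covering everything;
\item it is \emph{proper up to bounded error}: metric balls are contained in compact sets, and compact sets have finite diameter.
\end{enumerate}
Then the orbit map $g \mapsto g x_0$ is a quasi-isometry from $\pi_1(M)$ with a word metric to $(\widetilde S_h \times \RR, d_{\widetilde S_h\times\RR})$. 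The same holds for $\HH^3$, and composing the two quasi-isometries gives the result.

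\textbf{Invariance.} Since $\pi_1(S)$ acts by isometries of $\widetilde S_h$ preserving the lifted measured laminations, it preserves $dx$, $dy$ and $z$, hence $ds^2$. The lift of $\pa$ acts by $(p,z)\mapsto(\tilde\pa(p),z+1)$, with $\tilde\pa_* dx = k\,dx$ and $\tilde\pa_* dy = k^{-1}dy$. The factors $k^{2z}$ and $k^{-2z}$ in \eqref{eq:ct metric} exactly compensate for this shift. Cocompactness is immediate, since $K$ can be taken to be a compact fundamental domain of $S_h$ times $[0,1]$.

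\textbf{Upper bound.} Continuous paths can be approximated by rectifiable ones whose $x$- and $y$-measures are finite, because the transverse measures are finite on compact transversals. Hence any two points of a compact set are joined by a path of bounded $ds$-length, so compact sets have finite diameter and $d<\infty$.

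\textbf{Lower bound (main obstacle).} The hard part is showing that a $d$-ball of radius $r$ about $(p,z)$ lies in a compact set. The $z$-coordinate is controlled directly, since $ds \ge (\log k)\,|dz|$, so it changes by at most $r/\log k$. For the horizontal direction I need that bounded $dx$- and $dy$-measure forces bounded hyperbolic displacement in $\widetilde S_h$. I would argue this way. At height $|z|\le C$, a path of length $r$ has total $x$-measure and total $y$-measure each at most $k^{C}r$. By \Cref{prop:multi}, \Cref{prop:cobounded intersections} and \Cref{prop:angle bound}, together with minimality and filling, every hyperbolic segment in $\widetilde S_h$ of length $L_0$ crosses a set of leaves of $\Lambda_+$ or $\Lambda_-$ of measure at least some $\delta>0$, uniformly by compactness. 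I would prove this by contradiction: a limit segment crossing neither lamination would contradict the binding property.

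The difficulty is that a path need not be geodesic, and it can wander without crossing leaves essentially. To handle this I would use the fact that the leaves are geodesics in $\widetilde S_h$: any path from $p$ to $q$ must cross every leaf separating $p$ from $q$. So its $x$- and $y$-measures bound from above the transverse measure of the leaves separating $p$ from $q$. By the uniform $\delta$ estimate, that separating measure grows at least linearly in $d_{\HH^2}(p,q)$. This gives $d_{\HH^2}(p,q) \lesssim k^{C} r$ at bounded heights, which is the properness required, and Švarc--Milnor then concludes. I expect this linear-growth estimate for separating measure to be the step needing the most care, since it is where the full-pair hypotheses enter.
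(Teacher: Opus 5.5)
Your proposal is correct and takes essentially the paper's route: the paper does not prove this theorem itself but cites Cannon--Thurston, and in Section 2 it simply invokes the \v{S}varc--Milnor lemma to say that both $\pi_1(M)$-invariant metrics are quasi-isometric to $\pi_1(M)$, hence to each other. Your proposal supplies the hypotheses the paper leaves implicit, namely that a fundamental domain is bounded and that the action is metrically proper; the linear-growth step is in fact routine, because any leaf crossing the geodesic $[p,q]$ separates $p$ from $q$, so the crossing measure of $[p,q]$ adds up over subsegments of length $L_0$ and bounds the separating measure from below.
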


If $(S_h, \LL)$ is a full pair of laminations, then Cannon-Thurston
metric on $\wsr$ is Gromov hyperbolic, as the vertical flow lines
satisfy the flaring condition from the Bestvina-Feighn Combination
Theorem \cite{bestvina-feighn}.

\begin{theorem}\cite{bestvina-feighn}*{page 88}
Let $(S, \LL)$ be a hyperbolic metric on $S$ together with a full pair
of laminations.  Then the Cannon-Thurston metric on $\wsr$ is Gromov
hyperbolic.
\end{theorem}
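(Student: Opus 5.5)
The plan is to check the hypotheses of the Bestvina--Feighn Combination Theorem for the tree of spaces $\wsr \to \RR$, whose vertex spaces are the fibers $\widetilde S_h \times \{n\}$ and whose edge spaces are the slabs $\widetilde S_h \times [n,n+1]$. For an arbitrary full pair of laminations there is no monodromy, so I would instead work with the slab decomposition of $\RR$ into unit intervals and use only the explicit form of the metric \eqref{eq:ct metric}. Three conditions are needed.

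First, each fiber $\widetilde S_h \times \{z\}$ must be uniformly hyperbolic. With its induced path pseudometric $k^{2z}dx^2 + k^{-2z}dy^2$, the fiber is the tree-graded/$\RR$-tree-like space obtained by collapsing complementary regions. More concretely, it is quasi-isometric to the fiber at height $0$ with constants depending on $|z|$. Second, the inclusions of slabs must be uniformly quasi-isometric embeddings; this holds for unit-height slabs because $k^{\pm 2z}$ varies by at most $k^{2}$ across a slab. The cleanest route is to first show that the pseudometric $dx^2+dy^2$ on $\widetilde S_h$ is quasi-isometric to $d_{\HH^2}$. This uses \Cref{prop:multi}: by \Cref{prop:angle bound} and \Cref{prop:cobounded intersections}, every hyperbolic segment of length $L_\LL$ crosses leaves of $\Lambda_+\cup\Lambda_-$ with a definite amount of transverse measure, since the measures are non-atomic and fully supported on the laminations, compactness of $S$ gives a uniform lower bound, and binding is used here. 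Conversely, the transverse measure of a bounded segment is bounded above. Hence each fiber, and each slab, is $\pi_1(S)$-equivariantly quasi-isometric to $\HH^2$, uniformly over $z$ in a compact range. Translating $z$ by $1$ corresponds to replacing $(dx,dy)$ by $(k\,dx, k^{-1}dy)$, which is again a full pair with the same leaves. I would handle this either by noting that the constants only depend on the laminations up to scaling, so they are uniform, or by restricting to the pseudo-Anosov case where translation is an isometry.

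The third condition, the hallway flaring condition, is the main obstacle. The task is to show that for two flow lines $F(p)$ and $F(q)$ with $d_{z}(p,q)$ the fiber distance at height $z$, the function $z\mapsto d_z(p,q)$ grows exponentially in at least one direction once it exceeds a threshold. I would try to control $d_z(p,q)$ from below by $\max(k^{z}m_x, k^{-z}m_y)$ and from above by $C(k^z m_x + k^{-z} m_y)+C$, where $m_x$ and $m_y$ are the $dx$- and $dy$-transverse measures of the hyperbolic geodesic $[p,q]$. The lower bound holds because any path from $p$ to $q$ must cross the same leaves, since leaves separate $\widetilde S_h$. The upper bound comes from following $[p,q]$ itself, using the comparison from the previous step to bound the transverse measure by the length. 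Given these bounds, the function $\max(k^z m_x, k^{-z}m_y)$ is convex-exponential in $z$, so over three consecutive slabs it satisfies $\lambda\, d_{z}\le \max(d_{z-N},d_{z+N})$ whenever $d_z \ge M$, for suitable $\lambda>1$ and $N$. I expect the delicate point to be the additive and multiplicative error constants, which must not swamp the factor $k^{N}$; taking $N$ large should absorb them. For general hallways, that is, discs in the tree of spaces, the ladder structure is trivial because the base is a line, so flaring for pairs of flow lines suffices. Bestvina--Feighn then gives Gromov hyperbolicity.
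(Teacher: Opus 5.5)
Your route is the paper's: its whole justification is the citation of Bestvina--Feighn plus the remark that vertical flow lines flare. Your flaring step is fine, and cleaner than you fear. A hyperbolic geodesic $[p,q]$ meets each leaf at most once, so the leaves it crosses are exactly the leaves separating $p$ from $q$. Hence $\max(k^z m_x, k^{-z}m_y) \le d_z(p,q) \le k^z m_x + k^{-z}m_y$ with no constants at all. The bounded errors coming from $\rho$-thin hallways of fixed length are absorbed by the girth threshold.

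The genuine gap is the other hypothesis of the combination theorem. The vertex spaces $\widetilde S_h \times \{n\}$, $n \in \ZZ$, must be \emph{uniformly} hyperbolic, and you only get this for $n$ in a bounded range.

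\begin{itemize}
\item Your first fix is false. Passing from height $0$ to height $n$ replaces $(dx,dy)$ by $(k^n dx, k^{-n}dy)$, which is not a common rescaling. The constants of \Cref{prop:qi} really degenerate: a segment of a leaf of $\Lambda_+$ of hyperbolic length $L_\LL$ has length comparable to $k^{-n}$ in the fiber at height $n$.
\item No argument can supply uniformity for an arbitrary full pair. After collapsing, the fiber at height $z$ is the universal cover of the flat surface at parameter $z$ along the Teichm\"uller geodesic determined by $(\Lambda_+,\Lambda_-)$. This is a CAT(0) space, not anything tree-like.
\item If that geodesic is not cobounded, the hypothesis fails. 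An example is the vertical and horizontal foliations of a Veech surface in a generic direction, which form a full pair. These flat surfaces acquire cylinders of unbounded modulus, whose lifts are convex Euclidean strips of unbounded width. So the hyperbolicity constants of the vertex spaces are unbounded and Bestvina--Feighn does not apply.
\item The conclusion itself appears to fail there too. Over such a strip the total space contains exact pieces of Sol. These carry quasigeodesic bigons whose width grows like the logarithm of the strip width, with lower bounds supplied by the same separating-leaf argument you use for flaring.
\end{itemize}

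Your second option is the correct repair. When $(\Lambda_+,\Lambda_-)$ are the invariant laminations of a pseudo-Anosov $f$ with stretch factor $k$, a lift of $f$ composed with a unit vertical translation is an isometry of $\wsr$ taking each integer fiber to the next. So every vertex space is isometric to the one at height $0$, which is quasi-isometric to $\HH^2$ by \Cref{prop:qi}; this is the periodicity remark after that proposition. With this restriction, which covers every use of the statement in the paper, your argument is complete.

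It does not, however, prove the statement as written for an arbitrary full pair. That needs an extra hypothesis such as bounded geometry, where the fibers are uniformly hyperbolic. The paper's one-line justification, which also checks only flaring, has the same omission.
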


The Cannon-Thurston metric on $\wsr$ is quasi-isometric to $\HH^3$ if
and only if the pair of laminations have bounded geometry, by work of
Rafi \cite{rafi}.  In the bounded geometry case, the Cannon-Thurston
metric is $\pi_1 S$-equivariantly quasi-isometric to $\HH^3$, by the
proof of the Ending Lamination Conjecture due to Minsky \cites{MinskyJAMS, minsky}
and Brock, Canary and Minsky \cite{brock-canary-minsky}.

\medskip
The restriction of the infinitesimal pseudo-metric to the base fiber $S_0$ defines a pseudo-metric on $\widetilde S_h$.
This metric is quasi-isometric to the hyperbolic metric $d_{\HH^2}$, see \Cref{section:flat} for further details.

\medskip
Moving up in the $z$-direction expands distances in the
$x$-direction and contracts them (by the reciprocal) in the $y$-direction.  
This is illustrated in \Cref{fig:solv rectangles}, where the horizontal lines are leaves of $\Lambda_-$ and vertical lines are leaves of
$\Lambda_+$.  

\medskip
The constant $\log k$ is chosen so that the map
given by $(u, v) \mapsto (u, k^{-v})$ is an isometry from $\RR^2$ with the metric $ds^2 = k^{2v} du^2 + (\log k)^2 dv^2$ to the upper half space model of $\HH^2$ with the hyperbolic metric.  

\begin{figure}[h]
\begin{center}
\begin{tikzpicture}[scale=0.7]

\tikzstyle{point}=[circle, draw, fill=black, inner sep=1pt]

\draw (0, 0) rectangle (2, 2);
\draw (4, 1) node {$z = 0$};

\draw (1, 0) node [label=above:$1$] {};
\draw (0, 1) node [label=left:$1$] {};

\draw (-1, 0) -- (3, 0) node [label=below:$\ell_-$] {};
\draw (0, -1) -- (0, 3) node [label=left:$\ell_+$] {};

\begin{scope}[xshift=-1cm, yshift=+4cm, yscale=0.5, xscale=2]
\draw (0, 0) rectangle (2, 2);
\draw (3, 1) node {$z = 1$};

\draw (1, -0.5) node [label=above:$k$] {};
\draw (0, 1) node [label=left:$\frac{1}{k}$] {};
\end{scope}

\begin{scope}[xshift=+0.5cm, yshift=-5cm, yscale=2, xscale=0.5]
\draw (0, 0) rectangle (2, 2);
\draw (5, 1) node {$z = -1$};

\draw (1, -0.1) node [label=above:$\frac{1}{k}$] {};
\draw (0, 1) node [label=left:$k$] {};
\end{scope}

\end{tikzpicture}
\end{center}
\caption{Rescaling arising from the vertical flow in the Cannon-Thurston metric.} \label{fig:solv rectangles}
\end{figure}

\medskip Suppose that $\ell_+$ is a leaf of $\Lambda^+$.  The ladder
$F(\ell_+)$ is then parametrized by the coordinates $(y, z)$ in
$\widetilde S_h \times \RR$.  By the definition of the pseudometric,
$F(\ell_+)$ is a convex subset of $\widetilde S_h \times \RR$.
Moreover, $F(\ell_+)$ is quasi-isometric to $\HH^2$, where in the
upper half space model the quasi-isometry is given by
$(y, z) \mapsto (y, k^{-z})$.  The leaf $\ell_+$ is a coarse
horocycle, as is each image $F_z(\ell_+)$.  The suspension flow lines
are geodesics and the distance between two suspension flow lines
decreases exponentially as the $z$-coordinate increases.  Thus, as
$z \to +\infty$, all suspension flow lines converge to the same limit
point at infinity, as illustrated in \Cref{fig:ladders}.

\medskip Similarly, if $\ell_-$ is a leaf of $\Lambda_-$, then the
ladder $F(\ell_-)$ is parametrized by coordinates $(x, z)$ in
$\widetilde S_h \times \RR$.  The ladder $F(\ell_-)$ is again a convex
subset of $\widetilde S_h \times \RR$ quasi-isometric to $\HH^2$,
though in this case the quasi-isometry to the upper half space is
given by $(x, z) \mapsto (x, k^{z})$.  The images of the leaf
under the suspension flow, namely the $F_z(\ell_-)$, are coarse horocycles.  The
suspension flow lines are geodesics, and the distance between two
suspension flow lines decreases exponentially as the $z$-coordinate
decreases.  Thus, as $z \to - \infty$, all suspension flow lines
converge to the same limit point at infinity.

\begin{figure}[h]
\begin{center}
\begin{tikzpicture}[scale=0.85]

\tikzstyle{point}=[circle, draw, fill=black, inner sep=1pt]

\draw (0, 0) -- (0, 4);

\draw (-1, 1) -- (3, -1) node [label=left:${\ell_+ \in (\Lambda_+, dx)}$] {};

\draw (0, 3) -- (2, 2) -- (2, -0.5) node [midway, label=right:$F(\ell_+)$] {};

\begin{scope}[decoration={
    markings,
    mark=at position 0.5 with {\arrow{>}}}]
\draw [color=ForestGreen, postaction={decorate}] (0.5, 0.25) -- (0.5, 2.75);
\draw [color=ForestGreen, postaction={decorate}] (1, 0) -- (1, 2.5);
\draw [color=ForestGreen, postaction={decorate}] (1.5, -0.25) -- (1.5, 2.25);

\begin{scope}[xscale=-1]
\draw [color=red, postaction={decorate}] (0.5, 0.25) -- (0.5, 2.75);
\draw [color=red, postaction={decorate}] (1, 0) -- (1, 2.5);
\draw [color=red, postaction={decorate}] (1.5, -0.25) -- (1.5, 2.25);
\end{scope}

\draw (7, 1) circle (2cm);
\draw [color=ForestGreen, postaction={decorate}] ([shift=(270:2cm)]5,3) arc (270:360:2cm);
\draw [color=ForestGreen, postaction={decorate}] ([shift=(270:2cm)]9,3) arc (270:180:2cm);
\draw [color=ForestGreen, postaction={decorate}] (7, -1) -- (7, 3);

\draw (1, 1) -- (-3, -1) node [label=right:${\ell_- \in (\Lambda_-, dy)}$] {};

\draw (0, 3) -- (-2, 2) -- (-2, -0.5) node [midway, label=left:$F(\ell_-)$] {};

\begin{scope}[xshift=-14cm, yscale=-1, yshift=-2cm]

\draw (7, 1) circle (2cm);
\draw [color=red, postaction={decorate}] (7, 3) -- (7, -1);
\draw [color=red, postaction={decorate}] ([shift=(360:2cm)]5,3) arc (360:270:2cm);
\draw [color=red, postaction={decorate}] ([shift=(180:2cm)]9,3) arc (180:270:2cm);

\end{scope}

\end{scope}

\draw (9, -1) node {$F(\ell_+)$};

\draw (-5, -1) node {$F(\ell_-)$};

\end{tikzpicture}
\end{center}
\caption{Ladders over leaves are quasi-isometric to $\HH^2$.} \label{fig:ladders}
\end{figure}

\medskip
In fact, ladders over arbitrary geodesics in $\widetilde S_h$ are quasiconvex, a special case of a more general result of
Mitra \cite{mitra}. 
We will state Mitra's result using the notation of this paper.

\begin{theorem}\cite{mitra}*{Lemma 4.1}
Suppose that $\pa$ is a pseudo-Anosov map and $\widetilde S_h$ is a hyperbolic metric. Then, there
is a constant $K$, such that for any geodesic $\gamma$ in
$\widetilde S_h$, the ladder $F(\gamma)$ is $K$-quasiconvex in
$\widetilde S_h \times \RR$.
\end{theorem}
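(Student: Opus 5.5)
The plan is to follow the standard ladder-retraction argument of Mitra, specialized to the Cannon--Thurston metric on $\wsr$. The first step is to construct a coarsely Lipschitz retraction $\Pi_\gamma \colon \wsr \to F(\gamma)$. This gives quasiconvexity because $\wsr$ is Gromov hyperbolic (Bestvina--Feighn). Indeed, if there is a $C$-coarsely Lipschitz retraction onto a connected subset $Y$, then any geodesic between points of $Y$ stays uniformly close to $Y$. To see this, project the geodesic by $\Pi_\gamma$ to get a path in $Y$ with the same endpoints. That path is a coarse quasigeodesic of length comparable to the original, so by stability of quasigeodesics it fellow travels the geodesic. The constants depend only on $C$ and the hyperbolicity constant.

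To define $\Pi_\gamma$, I would work fiberwise. For each $z$, the fiber $\widetilde S_h \times \{z\}$ with the restricted pseudometric $k^{2z}dx^2 + k^{-2z}dy^2$ is quasi-isometric to $\HH^2$. This is the case $z=0$ pushed forward by the equivariant map $f^n$, up to the bounded discrepancy for non-integer $z$, so it holds with uniform constants. The image $F_z(\gamma)$ is a quasigeodesic in that fiber, since it is the image of $\gamma$ under a uniform quasi-isometry. Let $\pi_z$ be a nearest point projection in fiber $z$ onto $F_z(\gamma)$, and set $\Pi_\gamma(p,z) = (\pi_z(p), z)$. It is convenient to first replace $\gamma$ by the $\widetilde S_h$-geodesic and use hyperbolic nearest point projection $\pi$ in $\widetilde S_h$, defining $\Pi_\gamma(p,z) = (\pi(p), z)$ independently of $z$. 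The key claim is then that $\pi$, viewed at level $z$, is coarsely Lipschitz for the level-$z$ metric, with constants independent of $z$.

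The Lipschitz check splits into two cases. For a horizontal move, with two points in the same fiber at bounded level-$z$ distance, the result follows from the claim above. For a vertical move, from $(p,z)$ to $(p,z+t)$ with $|t|\le 1$, the projection commutes with the flow exactly, so distances stay bounded. Every path can be chopped into such pieces, giving the coarse Lipschitz bound.

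The main obstacle is the claim that hyperbolic nearest point projection in $\widetilde S_h$ is uniformly coarsely Lipschitz for every level-$z$ metric. Here I would use that the level-$z$ metric on $\widetilde S_h$ equals $f^{n}$ composed with the level-$(z-n)$ metric, up to a bounded factor. Under this identification, projection onto $\gamma$ corresponds to projection onto $f^{-n}\gamma$ in a hyperbolic metric quasi-isometric to the original with uniform constants, since $f^n$ is a quasi-isometry of a fixed metric space via the deck group identification. Nearest point projections onto quasigeodesics in a hyperbolic space are coarsely equivariant and coarsely Lipschitz with constants depending only on the quasi-isometry constants. The subtle point is that $f^n$ is not a uniform quasi-isometry of the $z=0$ metric. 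The argument must therefore be phrased entirely in terms of the level-$z$ metric being uniformly hyperbolic and $F_z(\gamma)$ being a uniform quasigeodesic in it, which is where I expect the real work to lie.
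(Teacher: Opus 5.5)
The paper does not prove this statement; it cites Mitra, whose argument is indeed a coarse Lipschitz retraction onto a ladder. Your proposal has a genuine gap at exactly the step you flag. Worse, for the retraction you actually define, $\Pi_\gamma(p,z)=(\pi(p),z)$ with $\pi$ the hyperbolic nearest point projection, the key claim is false rather than merely unproved.

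Here is a counterexample. Take a leaf $\ell$ of $\Lambda_+$ that is not a boundary leaf, so leaves of $\Lambda_+$ accumulate on it from both sides. Take $p\in\ell$, and let $\gamma$ be the geodesic through $p$ meeting $\ell$ at an angle $\theta\in(0,\pi/2)$.
\begin{itemize}
\item Then $p_\gamma(\ell)=\gamma([-T,T])$ with $T>0$. Let $p_L\in\ell$ be at distance $L$ from $p$ on the appropriate side. For large $L$, $\pi(p_L)$ lies beyond $\gamma(T/2)$.
\item So $\pi(p_L)$ is separated from $\pi(p)=p$ by the leaves of $\Lambda_+$ crossing $\gamma((0,T/2])$. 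These have some fixed $dx$-measure $\eta>0$.
\item Work at height $z=\log_k L$. The segment of $\ell\times\{z\}$ from $(p,z)$ to $(p_L,z)$ has length $k^{-z}$ times its $dy$-measure. That measure is $O(L)$, so the length is bounded.
\item On the other hand, $d_z(p,\pi(p_L))\ge k^{z}\eta=L\eta$.
\item Even in $\widetilde S_h\times\RR$ the distance blows up. A path from $(p,z)$ to $(\pi(p_L),z)$ that dips to height $z-h$ has length at least a constant times $2h\log k+k^{z-h}\eta$. This is of order $\log(L\eta)\to\infty$.
\end{itemize}
So $\Pi_\gamma$ is not coarsely Lipschitz, even for this single $\gamma$.

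Your first variant, projecting fiberwise for $d_z$, does not repair this as stated.
\begin{itemize}
\item The identity $(\widetilde S_h,d_0)\to(\widetilde S_h,d_z)$ has quasi-isometry constants of order $k^{|z|}$. The uniform quasi-isometries, lifts of $f^{n}$, move $\gamma$. So $\gamma\times\{z\}$ is not known to be a uniform $d_z$-quasigeodesic.
\item Once the projection depends on $z$, the vertical step is no longer exact commutation with the flow.
\end{itemize}
In Mitra's proof, the projection in the fiber at height $z$ is nearest point projection for $d_z$ onto the $d_z$-geodesic $\lambda_z$ joining the endpoints of $\gamma$. The vertical step is his lemma that nearest point projections onto geodesics coarsely commute with uniform quasi-isometries. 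Here that quasi-isometry is the $k$-bi-Lipschitz identity between adjacent fibers. This proves quasiconvexity of $\bigcup_z\lambda_z\times\{z\}$. Since $\lambda_z$ is in general not $\gamma$, you still need to show this set is uniformly Hausdorff close to $F(\gamma)=\gamma\times\RR$, for instance by showing $\gamma\times\{z\}$ is a uniform $d_z$-quasigeodesic. That is precisely the work you defer, and it is not supplied.

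Finally, your passage from a coarse Lipschitz retraction to quasiconvexity is misstated: the retracted image of a geodesic need not be a quasigeodesic. The standard argument goes as follows. Take a point of the geodesic at maximal distance $r$ from the ladder, and retract a subsegment of length about $4r$ around it. Then use that a geodesic lies within $\delta\log_2(\text{length})+1$ of any path with the same endpoints; this bounds $r$.
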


Ladders over arbitrary geodesics are also quasi-isometric to $\HH^2$, though we do not use this fact directly.

\subsection{Separation for ladders}\label{section:separation}

Given two leaves $\ell$ and $\ell'$ of an invariant lamination, we define the distance
between their ladders to be
\[ d_{\widetilde S_h \times \RR} ( F(\ell), F(\ell') ) = \inf
\left\{ d_{\widetilde S_h \times \RR}(p, p') \mid p \in F(\ell), p'
\in F(\ell') \right\} .  \]
In this section, we show that there is
an $\epsilon = \epsilon_\LL > 0$ such that for any pair of ladders, the Cannon--Thurston distance
between them is either zero, or at
least $\epsilon$.  Furthermore, we prove that the limit sets in
$S^2_\infty = \partial \HH^3$ of any two ladders $F(\ell)$ and
$F(\ell')$ intersect if and only if they are distance zero apart.

\medskip
As the first step, we show that the distance is zero with the infimum
realized if and only if the base of the ladders is a pair of leaves
that are boundary leaves of a common ideal complementary region.

\begin{proposition}\label{prop:boundary leaves}
Suppose that $(S_h, \Lambda)$ is a hyperbolic structure on $S$
together with a full pair of measured laminations.  Suppose that
$\ell$ and $\ell'$ are leaves of the same lamination.  Then
there are points $p \in F(\ell)$ and $p' \in F(\ell')$ with
$d_{\widetilde S_h \times \RR}(p, p') = 0$ if and only if $\ell$ and
$\ell'$ are boundary leaves of an ideal complementary region.
\end{proposition}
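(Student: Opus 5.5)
I will treat the case where $\ell, \ell'$ are leaves of $\Lambda_+$; the $\Lambda_-$ case is symmetric, interchanging the roles of $x$ and $y$ and of $z \to +\infty$ and $z \to -\infty$. The key observation is that the Cannon--Thurston pseudometric degenerates exactly on sets transverse to both laminations with zero measure. Concretely, the length of a path $\sigma(t) = (p(t), z(t))$ is at least $\int |\log k|\,|dz|$, and at least $\int k^{z}\,dx$, i.e.\ the $\Lambda_+$-transverse measure crossed, weighted by $k^{z}$. So distance zero between $p \in F(\ell)$ and $p' \in F(\ell')$ forces two things. First, since the length is at least $\log k\,|z(p) - z(p')|$, the two points have equal $z$-coordinate, say $z_0$. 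Second, $p$ and $p'$ can be joined by paths, at heights bounded near $z_0$, crossing arbitrarily small $dx$-measure.

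\textbf{Only if.} Project a sequence of near-minimising paths to $\widetilde S_h$. The heights stay within $o(1)$ of $z_0$, so the $dx$-measure of the projected path tends to zero. Since each endpoint lies on the respective leaf, the transverse measure of $\Lambda_+$ separating $\ell$ from $\ell'$ is zero. The transverse measure $dx$ is non-atomic and has full support on $\Lambda_+$, because the lamination is minimal and every nontrivial transversal meeting it carries positive measure. Hence no leaf of $\Lambda_+$ lies strictly between $\ell$ and $\ell'$. Two distinct leaves with no leaves between them must cobound a complementary region, so they are boundary leaves of a common ideal complementary region. If $\ell = \ell'$ the statement is trivial, and I read the statement as being about distinct leaves.

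One point needs care here: the transverse measure between two leaves being zero does not by itself imply there are no leaves between them. I will argue that if some leaf $\ell''$ lies strictly between, then any arc from $\ell$ to $\ell'$ must cross $\ell''$ and the leaves accumulating on it. By minimality these form a nontrivial transversal of positive measure, and compactness gives a uniform lower bound on the measure of any such crossing. This contradicts the measure tending to zero.

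\textbf{If.} Suppose $\ell$ and $\ell'$ are boundary leaves of a complementary ideal polygon $P$ of $\Lambda_+$. I will join a point of $\ell$ to a point of $\ell'$ by a segment inside $P$ at a fixed height $z_0$. The $dx$ term vanishes along this segment, since $P$ contains no leaves of $\Lambda_+$. The segment will still cross leaves of $\Lambda_-$, contributing $k^{-z_0}\int dy$. To kill this term I will choose the heights appropriately. Take the segment running along the polygon deep into a cusp (ideal vertex) of $P$, where $\ell$ and $\ell'$ are asymptotic. Choose the points $p, p'$ to be the endpoints of an arc in $P$ that is disjoint from $\Lambda_-$. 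Such arcs exist near the ideal vertex: by \Cref{prop:complementary regions don't share}, the ideal vertex of $P$ is not a vertex of a $\Lambda_-$-region, but the point is that far into the cusp $P$ is very thin. The cusp width tends to zero, so a short arc crosses $dy$-measure tending to zero, though not exactly zero.

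\textbf{Main obstacle.} The hardest step is getting the infimum to be exactly zero rather than merely arbitrarily small. I would try two approaches. The first is to find an arc across $P$ lying in a single complementary region of $\Lambda_+ \cup \Lambda_-$, as in the observation preceding Theorem 5.1 that points in a common compact complementary region of $\widetilde S_h \setminus (\Lambda_+ \cup \Lambda_-)$ are at distance zero. The second, as a fallback, is to show that some crossing arc has zero $dy$-measure, using that the $\Lambda_-$-leaves crossing $P$ meet it in arcs between $\ell$ and $\ell'$, with gaps between them. Within a gap of $\Lambda_-$ inside $P$, any arc from $\ell$ to $\ell'$ has both $dx$ and $dy$ measure zero. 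I will locate such a gap by taking a complementary region of $\Lambda_-$ that meets both $\ell$ and $\ell'$, which exists since these leaves are asymptotic while $\Lambda_-$ leaves cross them transversally at angle at least $\alpha_\Lambda$, by \Cref{prop:angle bound}.
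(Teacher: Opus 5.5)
\textbf{The ``only if'' direction is fine.} It is actually more careful than the paper, which assumes a path of zero length exists. Your use of near-minimising paths forces equal heights and $dx(\textrm{Sep}_+(\ell,\ell'))=0$, and then the content of \Cref{lem:no common ideal region} finishes. One caveat: ``no leaf between'' must mean no \emph{separating} leaf, since non-separating leaves can still lie in the strip. That is why the lemma needs its convex-hull argument.

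\textbf{The gap is the ``if'' direction, which you leave as a list of attempts.}

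\emph{The cusp argument cannot give the statement.} It only produces pairs of points whose distance tends to zero. That is the weaker conclusion $d_{\widetilde S_h \times \RR}(F(\ell),F(\ell'))=0$ of \Cref{prop:common intersection leaf}, not a single pair $p,p'$ with $d(p,p')=0$. It also presupposes that $\ell$ and $\ell'$ share an ideal vertex. This fails for non-adjacent sides of a complementary $n$-gon with $n \ge 4$.

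\emph{The fallback works only for adjacent sides, and is only asserted.} It can be made rigorous there:
\begin{itemize}
\item By \Cref{prop:angle bound}, a $\Lambda_-$-leaf meeting $\ell$ deep in the cusp also crosses $\ell'$.
\item $\Lambda_-\cap\ell$ is nowhere dense, and by \Cref{prop:cobounded intersections} it is unbounded toward the cusp.
\item So a gap of $\ell\setminus\Lambda_-$ there bounds a compact quadrilateral of $\widetilde S_h\setminus(\Lambda_+\cup\Lambda_-)$ touching both leaves.
\end{itemize}
As written, though, you only assert that a suitable $\Lambda_-$-region exists. More importantly, this says nothing about non-adjacent sides. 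For those, every $\Lambda_-$-leaf crossing $P$ cuts off a single ideal vertex of $P$. Hence the only component of $P\setminus\Lambda_-$ touching both $\ell$ and $\ell'$ is the central one.

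\textbf{The missing idea is that central region.} It comes from the invariant flat structure of $f$. Each complementary polygon $P$ of $\Lambda_+$ has a partner polygon $Q$ of $\Lambda_-$ at the same singularity. Because horizontal and vertical prongs alternate there, the ideal vertices of $Q$ interleave those of $P$. So $P\cap Q$ is a compact $2n$-gon in the complement of $\Lambda_+\cup\Lambda_-$, and its boundary contains a nondegenerate segment of every side of $P$. An arc across $P\cap Q$ from $\ell$ to $\ell'$ has zero $dx$- and $dy$-measure, hence length zero at every height. This is exactly the arc the paper's one-line argument asserts exists.

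This input comes from where the laminations originate: singularities of the invariant flat metric, which has no saddle connections. You would need to invoke that structure; the abstract properties in \Cref{prop:multi}, on which your attempts rest, do not by themselves produce this central region.
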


\begin{proof}
Suppose that $\ell$ and $\ell'$ are boundary leaves of an ideal
complementary region.  Then we can find a subarc in $\widetilde S_h$
that connects $\ell$ and $\ell'$ such that its interior is disjoint from both laminations.
The subarc then has length zero in the pseudo-metric, and so the distance between the
ladders $F(\ell)$ and $F(\ell')$ is zero.

\medskip Conversely, suppose there is a path in $\widetilde S_h \times \RR$
between $p \in F(\ell)$ and $p' \in F(\ell')$ that has zero length
in the pseudo-metric.  By the definition of the
pseudo-metric, this path lies in a fiber
$\widetilde S_h \times \{ z \}$ and its interior is disjoint from the
images (by the vertical flow $F_z$) of the laminations.
This implies that $\ell$ and $\ell'$ are boundary leaves of a common ideal complementary region.
\end{proof}

For the remainder of this section, we set up some terminology. 
Suppose that $\ell$ and $\ell'$ are leaves of an invariant lamination. 
We denote the open strip in $\widetilde S_h$ with two boundary
components $\ell$ and $\ell'$ by $R$.
We denote by $A(R)$ the collection of arcs in $\widetilde S_h$ that have one endpoint on $\ell$, the other endpoint on $\ell'$, and interior in $R$.
The limit set of $R$ is a disjoint union of
two intervals, possibly with one of them a single point. 
We call these intervals $I$ and $I'$. 
Let $\textrm{Sep}_+ (\ell, \ell')$ be the set of leaves in $\Lambda_+$ that separate $\ell$ from $\ell'$, that is, each leaf in $\textrm{Sep}_+ (\ell, \ell')$ is a leaf of $\Lambda_+$ that has one limit point in $I$, the other limit point in $I'$.
Similarly, let $\textrm{Sep}_- (\ell, \ell')$ of leaves in $\Lambda_-$ that separate $\ell$ from $\ell'$.

\begin{lemma}\label{lem:no common ideal region}
    Suppose that $\ell$ and $\ell'$ are distinct leaves in $\Lambda_+$. Then $\textrm{Sep}_+ (\ell, \ell')$ is non-empty if and only if $\ell$ and $\ell'$ are not boundary leaves of a common ideal complementary region of $\Lambda_+$.
\end{lemma}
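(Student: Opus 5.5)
We prove the two directions separately, using the strip $R$ between $\ell$ and $\ell'$ and the limit intervals $I$, $I'$ of $R$.

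\emph{Forward direction} (contrapositive). Suppose $\ell$ and $\ell'$ are boundary leaves of a common ideal complementary region $P$ of $\Lambda_+$. Any leaf $m \in \Lambda_+$ with one endpoint in $I$ and one in $I'$ has interior meeting $R$. Leaves of $\Lambda_+$ are pairwise disjoint, so $m$ does not cross $\ell$ or $\ell'$, and hence $m$ lies in the closed strip $\overline{R}$. But $P$ is convex with $\ell, \ell' \subset \partial P$, so $P$ meets $R$. One checks that either $P \supset R$, or $m$ would separate $\ell$ from $\ell'$ inside $P$. The second option is impossible, since $m$ is disjoint from the open region $P$, and $P$ is connected and adjacent to both $\ell$ and $\ell'$. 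This last point is the real content: an arc in $P$ from $\ell$ to $\ell'$ lies in $R$ and avoids $\Lambda_+$. Since $m$ separates $\overline{R}$ into pieces containing $\ell$ and $\ell'$ respectively, this arc must cross $m$, a contradiction. Hence $\textrm{Sep}_+(\ell,\ell')$ is empty.

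\emph{Converse.} Suppose $\textrm{Sep}_+(\ell,\ell')$ is empty. Pick an arc $a \in A(R)$ and consider $a \cap \Lambda_+$ in the interior of $a$. Any leaf $m$ meeting the interior of $a$ lies in $R$ and has endpoints in $\overline{I} \cup \overline{I'}$. Because no leaf separates, both endpoints lie in the same interval, say $I$, so $m$ cuts off a half-plane whose boundary arc is contained in $I$. I plan to show that the interior of $a$ is disjoint from $\Lambda_+$ after replacing $a$ by a suitable arc. Then $\ell$ and $\ell'$ bound a common complementary component, which is an ideal polygon because $\Lambda_+$ is filling.

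To get such an arc, take the union $U$ of all half-planes cut off by leaves in $R$ away from $\ell$ and $\ell'$. The region $R \setminus U$ is connected, since its closure joins $\ell$ and $\ell'$, and $\Lambda_+$ is closed. So $R \setminus \overline{U}$ contains a path from $\ell$ to $\ell'$ avoiding $\Lambda_+$. Equivalently, the set of points of $a$ separated from $\ell$ by some leaf is open, closed, and avoids the endpoints, which forces the set of such leaves to be empty near a connecting path.

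\emph{Main obstacle.} The delicate step is this topological argument. Leaves accumulating within $R$ might limit onto something separating, and one must rule this out. Closedness of the lamination does this: a limit of leaves with both endpoints in $I$ has both endpoints in $\overline{I}$. If a limit leaf had one endpoint in $I$ and one in $I'$, it would separate. The remaining degenerate case is endpoints at the common boundary points of $I$ and $I'$, which are the ends of $\ell$ and $\ell'$. A leaf in $\overline{R}$ with those endpoints is $\ell$ or $\ell'$ itself, since distinct leaves sharing both endpoints coincide.

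Finally, one must rule out the degenerate case in which $\ell$ and $\ell'$ share an ideal endpoint but are not adjacent. If they were not adjacent, minimality together with the non-atomic transverse measure would produce leaves strictly between them, and by the above such leaves are separating. So in that case too $\textrm{Sep}_+(\ell,\ell')$ is non-empty.
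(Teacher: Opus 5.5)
\textbf{Gap in the converse.} Your forward direction is fine. If $\ell,\ell'$ are sides of a complementary region $P\subset R$, an arc in $P$ from $\ell$ to $\ell'$ avoids $\Lambda_+$ but would have to cross any separating leaf.

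The gap is in the converse, at exactly the step that carries the content. You assert that $R\setminus\overline{U}$ contains a path from $\ell$ to $\ell'$ ``since its closure joins $\ell$ and $\ell'$, and $\Lambda_+$ is closed''. Neither fact rules out the closed half-planes cut off by non-separating leaves forming a barrier between $\ell$ and $\ell'$. Closedness only tells you that $\overline{U}\cap R$ is again a union of such closed half-planes.

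Your ``equivalent'' reformulation is false. The set of points of $a\in A(R)$ separated from $\ell$ by some leaf is $a\cap U$. This set is open in $a$ but not closed in general. If it were clopen and missed the endpoints of $a$, it would be empty for every $a$. Yet an arc dipping deep toward $I$ does cross leaves with both endpoints in $I$.

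The side remarks are also off. A leaf joining an endpoint of $\ell$ to an endpoint of $\ell'$ need not be $\ell$ or $\ell'$. In the shared-endpoint case, leaves in $R$ with both endpoints in $I'$ are not separating. So the claim in your last paragraph does not follow, from minimality or otherwise.

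\textbf{The missing idea is convexity.} Let $\alpha,\alpha'$ be the geodesics joining the endpoints of $I$ and of $I'$. Let $C_I, C_{I'}$ be the closed half-planes they bound on the sides of $I$ and $I'$ (these miss $\HH^2$ if the interval is a point).
\begin{itemize}
\item A leaf of $\Lambda_+$ other than $\ell,\ell'$ that meets $R$ lies in $R$.
\item If $\textrm{Sep}_+(\ell,\ell')=\varnothing$, its endpoints lie both in $I$ or both in $I'$, so the leaf lies in $C_I$ or $C_{I'}$.
\item Hence the interior of the ideal quadrilateral $T$ bounded by $\ell,\alpha,\ell',\alpha'$ misses $\Lambda_+$. (This $T$ is a triangle if $\ell,\ell'$ share an endpoint.)
\item So the interior of $T$ lies in a single complementary region $P$. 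Since $\ell,\ell'\subset\overline{P}$ are leaves, they are sides of $P$.
\end{itemize}
This handles the shared-endpoint case uniformly, so your last paragraph is unnecessary.

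The paper uses the same quadrilateral $T$, but argues contrapositively. It notes that $T$ meets a complementary region $U\subset R$ with ideal vertices in both $I$ and $I'$. The two sides of $U$ running from $I$ to $I'$ are then either $\{\ell,\ell'\}$ or include a separating leaf.
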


\begin{proof}
    If $\textrm{Sep}_+ (\ell, \ell')$ is non-empty then $\ell$ and $\ell'$ cannot be boundary leaves of a common ideal complementary region of $\Lambda_+$. 

\medskip
Conversely, suppose that $\ell$ and $\ell'$ are not boundary leaves of
a common ideal complementary region.  Recall that $I$ and $I'$ are the
two limit sets with one endpoint in $\ell$, and the other endpoint in
$\ell'$.  As the two geodesics $\ell$ and $\ell'$ are distinct, at
least one of $I$ and $I'$ has non-empty interior.

\medskip If the interval $I$ consists of a single point, its convex hull $C_I$
is equal to $I$.  If the interval $I$ has non-empty interior, then the
convex hull $C_I \subset R$ consists of the limit set $I$, together with all bi-infinite geodesics with both endpoints in $I$. In particular, the boundary of $C_I$ is the bi-infinite geodesic $\alpha$ connecting the endpoints of $I$.
Similarly, we denote by $C_{I'}$ the convex hull of $I'$.  Again, if $I'$
has non-empty interior, we denote by $\alpha'$ the bi-infinite geodesic
connecting the endpoints of $I'$.

\medskip By convexity,
any ideal complementary region of $\Lambda_+$ with all its ideal
vertices in $I$ is contained in $C_I$. Similarly, any
ideal complementary region of $\Lambda_+$ with all its ideal vertices
in $I'$ is contained in $C_{I'}$.

\medskip The geodesics $\ell$ and $\ell'$, together with the geodesics $\alpha$ and $\alpha'$, form an ideal quadrilateral $T \subseteq R$ with at least
three limit points, and so $T$ has non-empty interior. Therefore, $T$
must intersect an ideal complementary region U of $\Lambda_+$ contained
in $R$. By construction, this complementary region has ideal vertices in both $I$ and $I'$.
We then find exactly two boundary leaves $\ell_1$
and $\ell_2$ of $U$ connecting $I$ to $I'$. If, as unordered pairs
$(\ell_1, \ell_2) = (\ell, \ell')$, then $\ell$ and $\ell'$ are
boundary leaves of a single ideal complementary region of $\Lambda_+$,
a contradiction. We deduce that at least one of $\ell_1$ or $\ell_2$
is contained in $\textrm{Sep}_+ (\ell, \ell')$, and thus
$\textrm{Sep}_+ (\ell, \ell')$ is non-empty, as required.
\end{proof}

By switching the invariant laminations, \Cref{lem:no common ideal region} also holds for $\Lambda_-$ with $\textrm{Sep}_- (\ell, \ell')$ non-empty if and only if $\ell$ and $\ell'$ are not boundary leaves of a common ideal complementary region of $\Lambda_-$.

\begin{lemma}\label{lem:separating leaves non-empty}
    Suppose that $\ell$ and $\ell'$ are leaves of an invariant lamination. Then the subset $\textrm{Sep}_+ (\ell, \ell')$ is non-empty if and only if the $dx$-measure of $\textrm{Sep}_+ (\ell, \ell')$ is positive. Similarly, $\textrm{Sep}_- (\ell, \ell')$ is non-empty if and only if the $dy$-measure of $\textrm{Sep}_- (\ell, \ell')$ is positive.
\end{lemma}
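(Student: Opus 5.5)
One direction is trivial: if the $dx$-measure of $\textrm{Sep}_+(\ell,\ell')$ is positive, the set is non-empty. The content is the converse, and the plan is to use minimality of $\Lambda_+$ together with the non-atomicity and full support of the transverse measure. Here we interpret the $dx$-measure of $\textrm{Sep}_+(\ell,\ell')$ as the $dx$-measure of any transverse arc $a \in A(R)$ restricted to the leaves in $\textrm{Sep}_+(\ell,\ell')$. Every leaf of $\textrm{Sep}_+(\ell,\ell')$ has one endpoint in $I$ and the other in $I'$, so it separates $\ell$ from $\ell'$ and must cross any such arc $a$. Conversely, a leaf crossing $a$ in its interior either lies in $\textrm{Sep}_+$ or crosses $a$ an even number of times; since leaves are geodesics and $\widetilde S_h$ is simply connected, we can take $a$ geodesic to rule out multiple crossings. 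By invariance of the transverse measure under isotopy preserving transversality, this quantity does not depend on the choice of $a$.

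So suppose $m \in \textrm{Sep}_+(\ell,\ell')$, and let $p = m \cap a$. The key step is to show that every open subarc of $a$ around $p$ has positive $dx$-measure, and that this positive mass comes from leaves that still separate. Positivity follows from the standard fact that the support of the transverse measure of a minimal lamination is the whole lamination. If a small transversal $\tau$ through $p$ had zero measure, then by invariance every transversal obtained by sliding $\tau$ along leaves would have measure zero. By minimality the leaf $m$ is dense in $\Lambda_+$ (in $S_h$), so finitely many translates of such slid transversals cover a compact transversal system meeting every leaf. This would force the total measure to vanish, contradicting positivity of the measure on nontrivial compact transversals.

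To make the positive mass come from separating leaves, note that leaves of $\Lambda_+$ crossing $a$ near $p$ are disjoint from $m$ and close to it near $p$. Since geodesics in $\widetilde S_h$ that are close on a long segment have nearby endpoints, such leaves have endpoints near those of $m$. We then choose $\tau$ short enough that these endpoints stay in $I$ and $I'$ respectively. This requires $m$'s endpoints to lie in the interiors of $I$ and $I'$, or at least not at the shared ideal endpoints of $\ell$ and $\ell'$. Indeed $m$ is disjoint from $\ell$ and $\ell'$ and distinct from them, so its endpoints differ from theirs. The sign of the crossing then forces nearby leaves to land inside $I$ and $I'$ as well, since the relevant leaves are sandwiched between $\ell$ and $\ell'$ in $R$. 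Hence $dx(\textrm{Sep}_+) \ge dx(\tau) > 0$.

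The step I expect to be the main obstacle is the full-support claim, namely ruling out $m$ being an isolated point in the transverse direction carrying no mass. It is handled by minimality, since the paper already records that no leaf is isolated and the measure is non-atomic; the argument above is where minimality and filling are genuinely used. The statement for $\Lambda_-$ and $dy$ follows by exchanging the roles of the two laminations.
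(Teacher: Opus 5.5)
Your proposal is correct and follows the paper's route. A separating leaf $m$ meets a transverse arc in $A(R)$. Since the lamination has no isolated leaves (full support of $dx$), every subarc around the crossing has positive measure, and a short such subarc bounds $dx(\textrm{Sep}_+(\ell,\ell'))$ from below. You justify that last lower bound more carefully than the paper, which only asserts it.

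One correction: distinct disjoint leaves can be asymptotic, so ``$m$ is disjoint from and distinct from $\ell,\ell'$'' does not by itself give distinct endpoints.
\begin{itemize}
\item When $\ell,\ell'\in\Lambda_-$, the distinct endpoints follow instead from \Cref{prop:disjoint leaves}, since leaves of $\Lambda_+$ and $\Lambda_-$ never share an ideal endpoint.
\item When $\ell,\ell'\in\Lambda_+$, the endpoint argument is not needed. Leaves of $\Lambda_+$ cannot cross $\ell$ or $\ell'$, so every leaf crossing a geodesic arc of $A(R)$ already separates. Your parity remark is valid only in this case; in the other case a crossing leaf may exit $R$ through $\ell$ or $\ell'$.
\end{itemize}
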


\begin{proof}
If the $dx$-measure of $\textrm{Sep}_+ (\ell, \ell')>0$ then $\textrm{Sep}_+ (\ell, \ell')$ is non-empty by the definition of transverse measure.
Conversely, suppose that $\textrm{Sep}_+ (\ell, \ell')$ is non-empty and so there is a transverse arc contained in $A(R)$ that intersects in an interior point a leaf in $\textrm{Sep}_+ (\ell, \ell')$. 
Since an invariant lamination has no isolated leaves, it follows that such an arc has positive $dx$-measure. 
Since the $dx$-measure of the arc is a lower bound on the $dx$-measure of $\textrm{Sep}_+ (\ell, \ell')$, the lemma follows.
\end{proof}

\begin{lemma}\label{lem:separating leaves and ladder distance}
Suppose that $\ell$ and $\ell'$ are leaves of an invariant lamination. Then $d_{\widetilde S_h \times \RR}( F(\ell), F(\ell') ) > 0$ if and only if the $dx$-measure of $\textrm{Sep}_+ (\ell, \ell')$ and the $dy$-measure of $\textrm{Sep}_- (\ell, \ell')$ is positive. 
\end{lemma}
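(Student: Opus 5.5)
We prove both directions by estimating the length of a path between the ladders in terms of the transverse measures it must cross. Set $m_+ = dx(\textrm{Sep}_+(\ell,\ell'))$ and $m_- = dy(\textrm{Sep}_-(\ell,\ell'))$. Any path $\sigma$ from $F(\ell)$ to $F(\ell')$ in $\widetilde S_h \times \RR$ projects, by forgetting the $z$-coordinate, to a path in $\widetilde S_h$ from $\ell$ to $\ell'$. This projected path crosses every leaf of $\textrm{Sep}_+(\ell,\ell')$ and every leaf of $\textrm{Sep}_-(\ell,\ell')$, since such leaves separate $\ell$ from $\ell'$ in $\widetilde S_h$.

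\emph{Lower bound.} Suppose $m_+, m_- > 0$. Let $\sigma$ have length $L$ and set $z_0 = z(\sigma(0))$. Along $\sigma$ the $z$-coordinate stays within $L/\log k$ of $z_0$, since the $dz$ term contributes $(\log k)|dz|$. Hence, by \eqref{eq:ct metric},
\[ L \ \ge\ \int_\sigma k^{z}\,|dx| \ \ge\ k^{z_0 - L/\log k}\, m_+ ,\]
and similarly $L \ge k^{-z_0 - L/\log k}\, m_-$. Multiplying gives $L^2 \ge k^{-2L/\log k}\, m_+ m_- = e^{-2L} m_+ m_-$. The right-hand side is a positive function bounded below for small $L$, so $L \ge c > 0$, where $c$ is the solution of $L e^{L} = \sqrt{m_+ m_-}$. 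This constant is independent of $z_0$ and of $\sigma$, so the infimum over paths, which is the ladder distance, is positive. The key point is that one cannot shrink both horizontal contributions at once: moving in $z$ to kill one factor inflates the other, and the vertical motion itself costs length.

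\emph{Converse.} Suppose one of the measures vanishes, say $m_+ = 0$; the case $m_- = 0$ is symmetric with $z \to -\infty$. We want ladder points arbitrarily close together. Take an arc $a \in A(R)$ from $\ell$ to $\ell'$. Its total $dx$-measure equals $m_+$ plus possibly contributions from leaves of $\Lambda_+$ meeting $a$ that do not separate $\ell$ from $\ell'$. We can choose $a$ to avoid such extra crossings, for instance by making $a$ cross each such leaf an even number of times so the net crossings cancel, or by taking $a$ to cross only separating leaves. We then expect the $dx$-length of $a$ to be $0$, and $a$ has finite $dy$-measure $m$. Place $a$ at height $z$. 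Its length is at most $k^{-z} m + k^{z}\cdot 0$, which tends to $0$ as $z \to +\infty$. Hence $d_{\widetilde S_h\times\RR}(F(\ell),F(\ell')) = 0$. When $m_+ = 0$ with $\ell, \ell'$ distinct leaves of $\Lambda_+$, \Cref{lem:no common ideal region} and \Cref{lem:separating leaves non-empty} say the leaves bound a common ideal region. In that case the distance is even attained, by \Cref{prop:boundary leaves}.

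\emph{Main obstacle.} The delicate step is showing that an arc in $A(R)$ has $dx$-measure exactly $m_+$ and $dy$-measure exactly $m_-$, i.e.\ that it meets no non-separating leaf transversally in a way that adds measure. I would handle this with a geodesic or taut arc. Since leaves are geodesics, a geodesic arc from $\ell$ to $\ell'$ meets each leaf at most once. Any leaf it meets must then enter $R$ through the arc and exit through $\ell$, $\ell'$, or the ends of $R$. It cannot exit through $\ell$ or $\ell'$, since leaves do not cross within a lamination, and leaves of the other lamination never share endpoints with $\ell$. So leaves of the same lamination that cross the arc have one endpoint in each of $I$ and $I'$, i.e.\ they separate. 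For the other lamination, a crossing leaf could have both endpoints in $I$. Its measure is still finite, which is all the converse needs when that lamination's factor is the one being shrunk. So only the separating count for the lamination with vanishing measure matters, and the geodesic-arc argument secures it.
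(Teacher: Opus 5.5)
Your proof that positive $m_+$ and $m_-$ give positive distance is correct. It is also cleaner than the paper's. The paper first reduces to points at equal height, a step it asserts without argument, and then uses $k^z a + k^{-z} b \ge 2\sqrt{ab}$. Your observation that $|z - z_0| \le L/\log k$ along a path of length $L$ gives $L e^{L} \ge \sqrt{m_+ m_-}$ directly for arbitrary paths.

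The converse, however, has a genuine gap. The reduction ``the case $m_- = 0$ is symmetric'' is not valid. The leaves $\ell,\ell'$ lie in a fixed lamination, say $\Lambda_+$, and the symmetry $\Lambda_+ \leftrightarrow \Lambda_-$, $z \leftrightarrow -z$ also moves $\ell,\ell'$ into $\Lambda_-$. So you only treat the case where the vanishing measure belongs to the lamination containing $\ell,\ell'$. That is the trivial case: there $\ell,\ell'$ bound a common ideal region by \Cref{lem:separating leaves non-empty} and \Cref{lem:no common ideal region}, and \Cref{prop:boundary leaves} applies.

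The substantive case is $\ell,\ell' \in \Lambda_+$ with $m_+ > 0$ and $m_- = 0$. This is exactly the ``distance zero but not attained'' situation of \Cref{prop:common intersection leaf}, and there your geodesic-arc argument fails. A leaf of $\Lambda_-$ can cross your arc $a$ and then leave $R$ through $\ell$ or $\ell'$, since leaves of different laminations do cross. Such a leaf contributes to $dy(a)$ without lying in $\textrm{Sep}_-(\ell,\ell')$.

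For example, suppose a leaf $\mu$ of $\Lambda_-$ crosses both $\ell$ and $\ell'$. Then $\textrm{Sep}_-(\ell,\ell') = \varnothing$. But a geodesic arc from $\ell$ to $\ell'$ with endpoints on opposite sides of $\mu$ has $dy(a) > 0$, because there are no isolated leaves, and it has $dx(a) \ge m_+ > 0$. So its length at every height $z$ is at least $\max\{k^z dx(a), k^{-z} dy(a)\} \ge \sqrt{dx(a)\,dy(a)}$, and no choice of $z$ helps. Your other suggestion, crossing leaves an even number of times so they cancel, does not work either: transverse measure is unsigned, so repeated crossings add.

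The missing step is to produce an arc in $A(R)$ with $dy = 0$ whenever $\textrm{Sep}_-(\ell,\ell') = \varnothing$. One way to do it:
\begin{enumerate}
\item Take a geodesic arc $a_0$ from $\ell$ to $\ell'$. Every $\Lambda_-$-leaf meeting $a_0$ must cross $\ell$ or $\ell'$, since otherwise it would be separating.
\item A leaf crossing $\ell$ traps every $\Lambda_-$-leaf meeting $a_0$ before it inside a triangle with a side on $\ell$. So the leaves crossing $\ell$ come first along $a_0$ and the leaves crossing $\ell'$ come last.
\item Closedness of $\Lambda_-$, together with \Cref{prop:disjoint leaves} (no $\Lambda_-$-leaf is asymptotic to $\ell$ or $\ell'$), shows that the extremal leaves still cross $\ell$, respectively $\ell'$.
\item Hence either some $\Lambda_-$-leaf crosses both $\ell$ and $\ell'$, and you use its segment between them. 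Or the gap between the extremal crossings lies in a complementary region of $\Lambda_-$ meeting both, and you use a segment inside that region.
\item Either arc has $dy = 0$, so pushing it to $z \to -\infty$ gives distance zero.
\end{enumerate}
The paper's own proof is equally silent here; it stops at $k^z dx(\gamma) + k^{-z} dy(\gamma) \ge k^z a + k^{-z} b$. Nonetheless this arc construction is the real content of the converse.
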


\begin{proof}
    We let $a$ be the $dx$-measure of $\textrm{Sep}_+ (\ell, \ell')$ and $b$ the $dy$-measure of $\textrm{Sep}_- (\ell, \ell')$. 
    Suppose that $\gamma$ is an arc in $A(R)$. Then $\gamma$ must intersect in its interior every leaf in $\textrm{Sep}_+ (\ell, \ell')$ and every leaf in $\textrm{Sep}_- (\ell, \ell')$. It follows that $dx (\gamma) \geqslant a$ and $dy (\gamma) \geqslant b$. 

    \medskip
    Suppose $p$ and $p'$ are points on $\ell$ and $\ell'$ respectively. By definition of the Cannon-Thurston metric, one of the two distances $d_{\widetilde S_h \times \RR} ((p, z), (p',z))$ or $d_{\widetilde S_h \times \RR} ((p,z'), (p', z'))$ is at most the distance $d_{\widetilde S_h \times \RR}((p,z), (p', z'))$.
    Thus, we may assume that $z = z'$, that is, the pair of points are at the same height. Suppose that $\gamma$ is an arc in $A(R)$ between $p$ and $p'$.
    Then the length of $F_z(\gamma)$ equals $k^z dx(\gamma) + k^{-z} dy (\gamma) \geqslant k^z a + k^{-z}b$. The lemma now follows.
\end{proof}

We now show that the distance is zero but the infimum is not attained
if and only if the ladders are over leaves that intersect a
complementary region of the other lamination.

\begin{proposition}\label{prop:common intersection leaf}
Suppose that $(S_h, \Lambda)$ is a hyperbolic metric on $S$ together
with a full pair of measured laminations.  Suppose that $\ell$ and
$\ell'$ are leaves of the same lamination and suppose that
$d_{\widetilde S_h \times \RR} (p, p') > 0$ for any pair of points
$p \in F(\ell)$ and $p' \in F(\ell')$. Then the distance
$d_{\widetilde S_h \times \RR}( F(\ell), F(\ell') ) = 0$ if and
only if $\ell$ and $\ell'$ are not boundary leaves of a common
ideal complementary region but intersect a common ideal complementary
region of the other lamination.
\end{proposition}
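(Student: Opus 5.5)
Assume without loss of generality that $\ell, \ell'$ are leaves of $\Lambda_+$. By the hypothesis and \Cref{prop:boundary leaves}, $\ell$ and $\ell'$ are not boundary leaves of a common ideal complementary region of $\Lambda_+$. So by \Cref{lem:no common ideal region} and \Cref{lem:separating leaves non-empty}, the $dx$-measure $a$ of $\textrm{Sep}_+(\ell,\ell')$ is positive. By \Cref{lem:separating leaves and ladder distance} (and the estimate $k^z a + k^{-z} b$ in its proof), the ladder distance is zero if and only if the $dy$-measure $b$ of $\textrm{Sep}_-(\ell,\ell')$ vanishes. The first clause of the stated condition is automatic, so the proposition reduces to the following claim: $b = 0$ if and only if $\ell$ and $\ell'$ both intersect a common ideal complementary region $U$ of $\Lambda_-$.

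For ``if'', let $U$ be a complementary region of $\Lambda_-$ meeting both $\ell$ and $\ell'$. Choose $p \in \ell \cap U$ and $p' \in \ell' \cap U$, and join them by an arc $\gamma$ inside the open convex region $U$; straighten it to lie in $R$, or take the portion of a geodesic in $U$ between the last point on $\ell$ and the first point on $\ell'$, so that $\gamma \in A(R)$. Then $\gamma$ meets no leaf of $\Lambda_-$, so $dy(\gamma) = 0$. Every leaf of $\textrm{Sep}_-(\ell,\ell')$ must cross every arc in $A(R)$, hence $b \le dy(\gamma) = 0$.

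For ``only if'', suppose $b = 0$. By \Cref{lem:separating leaves non-empty} applied to $\Lambda_-$, no leaf of $\Lambda_-$ separates $\ell$ from $\ell'$, that is, none has one endpoint in $I$ and the other in $I'$. I would adapt the quadrilateral argument of \Cref{lem:no common ideal region}. The ideal quadrilateral $T$ bounded by $\ell, \ell', \alpha, \alpha'$ has nonempty interior. Consider a geodesic arc $\sigma \in A(R)$, say the shortest one, or any one lying in $T$. Each leaf of $\Lambda_-$ crossing $\sigma$ has both endpoints in $I$ or both in $I'$, so it enters and exits $R$ through the same side. By \Cref{prop:complementary regions don't share} and \Cref{prop:disjoint leaves}, leaves of $\Lambda_-$ cannot share endpoints with $\ell$ or $\ell'$ in a way that lets them accumulate at the corners. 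The goal is then to show there is a single region $U$ of $\widetilde S_h \setminus \Lambda_-$ that contains a path from $\ell$ to $\ell'$. Take the ``outermost'' leaves of $\Lambda_-$ crossing $R$ with endpoints in $I$, namely those not separated from $\alpha'$ by other such leaves, and similarly for $I'$. Closedness of the lamination gives extremal leaves, and the region between the extremal $I$-family and the extremal $I'$-family contains no leaves of $\Lambda_-$ crossing from $\ell$ to $\ell'$. This region is therefore contained in a single complementary region $U$ of $\Lambda_-$, and $U$ meets both $\ell$ and $\ell'$.

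The main obstacle is this last step: showing that the region between the two families is nonempty and meets both $\ell$ and $\ell'$. One worry is that leaves with endpoints in $I$ and leaves with endpoints in $I'$ might accumulate onto a common limiting geodesic, which would have one endpoint in $I$ and one in $I'$, giving a separating leaf by closedness. That would contradict $b=0$, or else the limit has an endpoint at a corner shared with $\ell$ or $\ell'$, which is ruled out by \Cref{prop:disjoint leaves} and \Cref{prop:angle bound}. Handling this limiting argument carefully, using compactness and closedness of $\Lambda_-$, is where I expect the work to lie.
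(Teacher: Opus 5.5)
Your reduction is the same as the paper's. For $\ell,\ell'\in\Lambda_+$, the hypothesis together with \Cref{prop:boundary leaves}, \Cref{lem:no common ideal region} and \Cref{lem:separating leaves non-empty} gives $a>0$, so by \Cref{lem:separating leaves and ladder distance} everything hinges on $\textrm{Sep}_-(\ell,\ell')$.

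Your ``if'' direction also matches the paper: a segment $\gamma\subset U$ in $A(R)$ has $dy(\gamma)=0$. It is cleaner to conclude directly, as the paper does, that $F_z(\gamma)$ has length $k^z\,dx(\gamma)\to 0$ as $z\to-\infty$. The proof of \Cref{lem:separating leaves and ladder distance} only gives the lower bound $k^za+k^{-z}b$, not the implication ``$b=0\Rightarrow$ distance zero''.

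The genuine gap is the ``only if'' step. You never show that $\textrm{Sep}_-(\ell,\ell')=\varnothing$ produces a complementary region of $\Lambda_-$ meeting both $\ell$ and $\ell'$. The paper asserts this in one sentence, but your sketch fails at its premise. It is false that every leaf of $\Lambda_-$ crossing $\sigma$ has both endpoints in $I$ or both in $I'$. $\textrm{Sep}_-=\varnothing$ only excludes leaves running from $I$ to $I'$. By \Cref{prop:cobounded intersections}, $\Lambda_-$ crosses $\ell$ and $\ell'$ in every segment of length $L_\LL$. So many leaves meeting $R$ have an endpoint in the arc $J$ of $\partial\widetilde S_h$ cut off by $\ell$ away from $R$, or in the analogous arc $J'$ beyond $\ell'$. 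A leaf from $J$ to $J'$ even traverses $R$ from $\ell$ to $\ell'$, yet is compatible with $\textrm{Sep}_-=\varnothing$. Hence your ``region between the extremal families'' can contain leaves of $\Lambda_-$ and need not lie in one complementary region. The accumulation issue you flag is not where the difficulty is.

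One way to close the gap is to work along $\ell$.
\begin{enumerate}
\item Let $a\in I$ and $b\in I'$ be the endpoints of $\ell$, and assume $I$ has nonempty interior (otherwise swap $I$ and $I'$).
\item By \Cref{prop:disjoint leaves}, a leaf of $\Lambda_-$ crossing $\ell$ has one endpoint in $J$ and the other in $\mathrm{int}(I)$, $J'$ or $\mathrm{int}(I')$. Call it of type $I$, $J'$ or $I'$ accordingly.
\item Disjointness of leaves forces every type-$I$ crossing to lie between $a$ and every other crossing.
\item By \Cref{prop:cobounded intersections} and \Cref{prop:angle bound}, leaves crossing $\ell$ near $a$ (resp.\ $b$) have both endpoints near $a$ (resp.\ $b$). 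So type-$I$ crossings exist and stay away from $b$.
\item Since $\Lambda_-$ is closed and none of its leaves ends at $a,a',b,b'$, there is a last type-$I$ leaf $\lambda$. Leaves crossing $\ell$ just beyond $\lambda$ would converge to $\lambda$ and hence be of type $I$. So the points of $\ell$ just beyond $\lambda$ lie in a complementary region $U$ of $\Lambda_-$ having $\lambda$ as a side.
\item The ideal vertices of $U$ on the $R$-side of $\ell$ run from the endpoint of $\lambda$ in $\mathrm{int}(I)$ to the $R$-side endpoint of the side through which $\ell$ exits $U$. By maximality of $\lambda$, that last vertex is not in $I$.
\item Consider the first such vertex outside $I$. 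If it lies in $\mathrm{int}(I')$, the side of $U$ joining it to its predecessor is a leaf of $\textrm{Sep}_-(\ell,\ell')$, a contradiction.
\item So it lies in $J'$. Then the geodesic in $\overline U$ joining it to the endpoint of $\lambda$ in $J$ crosses $\ell'$, and $U$ meets both $\ell$ and $\ell'$.
\end{enumerate}
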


\begin{proof}
Breaking symmetry, we may assume that $\ell$ and $\ell'$ are leaves of $\Lambda_+$. The same argument holds for $\Lambda_-$ by switching the laminations.

\medskip Suppose that $d_{\widetilde S_h \times \RR}( F(\ell), F(\ell') ) = 0$ but $d_{\widetilde S_h \times \RR} (p, p') > 0$ for any pair of points
$p \in F(\ell)$ and $p' \in F(\ell')$. By \Cref{prop:boundary leaves}, $\ell$ and $\ell'$ are not boundary leaves of a common ideal complementary region of $\Lambda_+$. By \Cref{lem:no common ideal region}, $\textrm{Sep}_+ (\ell, \ell')$ is non-empty. If $\textrm{Sep}_- (\ell, \ell')$ is also non-empty, then by \Cref{lem:separating leaves and ladder distance}, $d_{\widetilde S_h \times \RR}( F(\ell), F(\ell') ) > 0$, a contradiction. Thus $\textrm{Sep}_- (\ell, \ell')$ is empty. This means that there is an arc $\gamma$ in $A(R)$ with endpoints $p$ on $\ell$ and $p'$ on $\ell'$ such that the interior of $\gamma$ intersects only $\Lambda_+$. But then $\gamma$ is contained in a single ideal complementary region of $\Lambda_-$, as required. 

\medskip Conversely, suppose that $\ell$ and $\ell'$ are not boundary leaves of a single ideal complementary region of $\Lambda_+$ but intersect a common ideal complementary region of $\Lambda_-$. By \Cref{lem:no common ideal region}, $\textrm{Sep}_+ (\ell, \ell')$ is non-empty. Since any arc $\gamma$ in $A(R)$ intersects $\textrm{Sep}_+ (\ell, \ell')$, the length of $F_z(\gamma)$ is at least $k^z$ times the $dx$-measure of $\textrm{Sep}_+ (\ell, \ell')$. In particular, this implies that $d_{\widetilde S_h \times \RR} (p, p') > 0$ for any pair of points $p \in F(\ell)$ and $p' \in F(\ell')$.
On the other hand, since $\ell$ and $\ell'$ intersect a common ideal complementary region of $\Lambda_-$, there is an arc $\gamma$ in $A(R)$ with endpoints $p$ on $\ell$ and $p'$ on $\ell'$ such that the interior of $\gamma$ intersects only $\Lambda_+$. Then the length of $F_z(\gamma)$ equals $k^z dx (\gamma)$ which goes to zero as $z \to - \infty$. Thus, $d_{\widetilde S_h \times \RR}( F(\ell), F(\ell') ) = 0$, as required. 
\end{proof}

Finally, we show the distance gap for ladders.

\begin{proposition}\label{prop:separated ladders}
Suppose that $(S_h, \Lambda)$ is a hyperbolic structure on $S$
together with a full pair of measured laminations.  Then there is
a constant $\e_3 > 0$ such that for any two leaves $\ell_1$ and
$\ell_2$ of a lamination, either
$d_{\widetilde S_h \times \RR}(F(\ell_1), F(\ell_2)) \ge \e_3$, or
else $d_{\widetilde S_h \times \RR}(F(\ell_1), F(\ell_2)) = 0 $.
\end{proposition}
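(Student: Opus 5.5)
The plan is to argue by contradiction using compactness and the equivariance of the Cannon--Thurston metric. Suppose there are pairs of leaves $(\ell_1^n, \ell_2^n)$ of, say, $\Lambda_+$ with
\[ 0 < d_{\widetilde S_h \times \RR}(F(\ell_1^n), F(\ell_2^n)) = \delta_n \to 0. \]
By the proof of \Cref{lem:separating leaves and ladder distance}, we may realize $\delta_n$ up to a factor of two by a horizontal arc $\gamma_n \in A(R_n)$ at some height $z_n$, with
\[ k^{z_n} a_n + k^{-z_n} b_n \le 2\delta_n, \]
where $a_n$ and $b_n$ are the $dx$- and $dy$-measures of $\textrm{Sep}_\pm(\ell_1^n, \ell_2^n)$. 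By \Cref{lem:separating leaves and ladder distance} and \Cref{lem:separating leaves non-empty}, positivity of $\delta_n$ forces $a_n, b_n > 0$, so both separating sets are non-empty.

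The key steps, in order, are as follows. First, I will use the $\pi_1(M)$-action to normalize. Applying a power of the monodromy, which shifts $z$ by integers and rescales $dx$ and $dy$ by $k^{\pm 1}$, I can assume $z_n \in [0,1)$. Then $a_n, b_n \le 2k\delta_n \to 0$. Applying deck transformations of $\pi_1(S)$, I can assume $\gamma_n$ meets a fixed compact fundamental domain. Second, I will bound the hyperbolic length of $\gamma_n$. I will replace $\gamma_n$ by a $dx$- and $dy$-efficient arc built from segments of leaves: along $\Lambda_-$-leaves only $dx$ is accumulated, and along $\Lambda_+$-leaves only $dy$. Using \Cref{prop:cobounded intersections}, a leaf segment of length $L_\LL$ crosses the other lamination, so a long arc would pick up definite transverse measure. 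This is where minimality and unique ergodicity give a lower bound, via compactness, on the transverse measure of any arc of hyperbolic length at least $L_\LL$ that is transverse to the relevant lamination. Hence $\gamma_n$ can be chosen of uniformly bounded hyperbolic length. Third, I will pass to a subsequence in which the endpoints and leaves converge, so that $\ell_i^n \to \ell_i$ and $\gamma_n \to \gamma$. The limit arc $\gamma$ has $dx(\gamma) = dy(\gamma) = 0$, since the transverse measures are non-atomic and hence continuous under such limits. So $\gamma$ lies in the closure of a single complementary region of $\Lambda_+ \cup \Lambda_-$, and $\ell_1$ and $\ell_2$ are boundary leaves of a common ideal complementary region of $\Lambda_+$.

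Finally, I will derive the contradiction. The leaves $\ell_i^n$ converge to the boundary leaves $\ell_1, \ell_2$ of a common ideal polygon $P$, with both separating sets nonempty. In particular there is a leaf of $\Lambda_+$ separating $\ell_1^n$ from $\ell_2^n$ while lying close to the vertex region of $P$. Near a non-vertex part of the polygon this is impossible, because leaves near $\ell_1$ and $\ell_2$ on the side of $P$ are excluded: $P$ is open and has no leaves in it. Near an ideal vertex of $P$, the separating leaves of $\Lambda_-$ must cross into the thin cusp between $\ell_1$ and $\ell_2$. That would give leaves of $\Lambda_-$ limiting onto an ideal vertex of a complementary region of $\Lambda_+$. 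A $\Lambda_-$-leaf separating $\ell_1^n$ from $\ell_2^n$ has endpoints on both sides of the strip, so it must cross the arc $\gamma_n$ near $\gamma$, or pass deep into the cusp. The first is excluded because $dy(\gamma_n)\to 0$ and there is uniform non-atomicity on compact sets. The second would produce, in the limit, a $\Lambda_-$ leaf or complementary region sharing an ideal vertex with $P$, contradicting \Cref{prop:complementary regions don't share}. I expect this last step, ruling out separating leaves escaping into the cusps of $P$, to be the main obstacle. I would handle it using \Cref{prop:disjoint leaves}, which keeps $\Lambda_-$ leaves a definite distance $\epsilon_\LL$ from $\ell_1$ and $\ell_2$ unless they cross them, together with the angle bound \Cref{prop:angle bound}.
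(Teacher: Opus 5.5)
\textbf{Verdict: genuine gap in the final step.} Your reduction is sound in the pseudo-Anosov setting. You normalize by $\pi_1(M)$ so that $z_n\in[0,1)$ and everything sits near a fundamental domain, then pass to limit leaves $\ell_1,\ell_2$ that are sides of a common $\Lambda_+$-polygon $P$. This uses the monodromy, which an arbitrary full pair does not have, and it is more careful than the paper's compactness step.

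\textbf{What goes wrong.} The final contradiction is argued incorrectly.
First, $\textrm{Sep}_+(\ell_1^n,\ell_2^n)\neq\varnothing$ is harmless. If $\ell_1^n\to\ell_1$ from outside $P$, then $\ell_1$ itself separates $\ell_1^n$ from $\ell_2^n$, so nothing forces a separating $\Lambda_+$-leaf near a vertex of $P$.
More seriously, every leaf of $\textrm{Sep}_-(\ell_1^n,\ell_2^n)$ must cross $\gamma_n$, because $\gamma_n\in A(R_n)$. That is exactly why $dy(\gamma_n)\ge b_n>0$ for every $n$ in your own setup. So the alternative ``crosses $\gamma_n$'' cannot be excluded by $dy(\gamma_n)\to 0$, and non-atomicity points the wrong way: it says single leaves carry no mass.
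Your cusp discussion also covers only adjacent sides of $P$. It misses non-adjacent sides and the case $\ell_1=\ell_2$, which you never address.

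\textbf{How to fix it.} The contradiction must be produced in the limit.
\begin{enumerate}
\item Use \Cref{prop:qi} to see that the endpoints $p_n\in\ell_1^n$, $p'_n\in\ell_2^n$ of $\gamma_n$ are at bounded hyperbolic distance. Your proposed lower bound on the transverse measure of long arcs is false: an arc can wander inside a complementary region of $\Lambda_+\cup\Lambda_-$ and pick up no measure.
\item By continuity of $d_{\widetilde S_h}$, which follows from non-atomicity, the limits satisfy $d_{\widetilde S_h}(p,p')=0$. Working with endpoints also avoids defining $dx,dy$ on a limit of arcs.
\item Pick $m_n\in\textrm{Sep}_-(\ell_1^n,\ell_2^n)$. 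It crosses the geodesic segment $[p_n,p'_n]$, which lies in $A(R_n)$ and in a fixed compact set. So a subsequence converges to a leaf $m$ of $\Lambda_-$ with one endpoint in $\overline{I}$ and one in $\overline{I'}$.
\item If $\ell_1=\ell_2$, the arcs $I_n,I'_n$ shrink to the endpoints of $\ell_1$, so $m=\ell_1$. This contradicts \Cref{prop:non common leaves}.
\item Otherwise, \Cref{prop:disjoint leaves} shows $m$ shares no endpoint with $\ell_1$ or $\ell_2$. Cite this rather than \Cref{prop:complementary regions don't share}, since $m$ need not be a boundary leaf.
\item That already rules out adjacent sides, where one of $I,I'$ is the common vertex.
\item For non-adjacent sides, $m$ strictly separates $p$ from $p'$. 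Since $m$ is not isolated, a set of leaves of positive $dy$-measure separates them, forcing $d_{\widetilde S_h}(p,p')>0$, a contradiction.
\end{enumerate}

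\textbf{Comparison with the paper.} The paper argues the contrapositive. It finds an arc from $\ell$ to $\ell'$ whose interior avoids $\Lambda_-$, and uses that this is an open condition. Nearby pairs are then joined by such arcs, so their ladder distance is zero by \Cref{prop:common intersection leaf}.
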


\begin{proof}
Suppose that there is a sequence of pairs of leaves $\ell_n$ and
$\ell'_n$ of a lamination such that
$d_{\widetilde{S}_h \times \RR} (F(\ell_n), F(\ell'_n)) >0 $ and tends
to zero as $n \to \infty$.  Breaking symmetry, we assume that $\ell_n$
and $\ell'_n$ are leaves of $\Lambda_+$.  Since $\Lambda_+$ is a
closed set, we may, by passing to a subsequence, assume that $\ell_n$
and $\ell'_n$ converge to leaves $\ell$ and $\ell'$ respectively. It
follows that $d_{\widetilde{S}_h \times \RR} (F(\ell), F(\ell')) = 0$.

\medskip By \Cref{prop:boundary leaves} and \Cref{prop:common intersection leaf}, we can find an arc $\alpha$ in $A(R)$ with endpoints $p$ on $\ell$ and $p'$ on $\ell'$ such that either the interior of $\alpha$ is disjoint from both laminations, or the interior of $\alpha$ intersects only $\Lambda_+$.

\medskip Suppose that $q_n$ on $\ell_n$ and $q'_n$ on $\ell'_n$ are sequences of points that converge to $p$ and $p'$. It follows that by choosing $q_n$ and $q'_n$ sufficiently close to $p$ and $p'$ we can find an arc $\alpha_n$ with endpoints $q_n$ and $q'_n$ such that the interior of $\alpha_n$ intersects only $\Lambda_+$. But then by \Cref{prop:common intersection leaf}, $d_{\widetilde{S}_h \times \RR}  (F(\ell_n), F(\ell'_n)) = 0$, a contradiction. 

\end{proof}

Finally, we show that the limit sets of two ladders intersect if and
only if they are distance zero apart.

\begin{proposition}\label{prop:disjoint limits}
Suppose that $(S_h, \Lambda)$ is a hyperbolic metric on $S$ together
with a full pair of measured laminations.  For any leaves $\ell$
and $\ell'$ in a lamination,
$\overline{F(\ell)} \cap \overline{F(\ell')} \not = \varnothing$ if
and only if $d_{\widetilde S_h \times \RR}( F(\ell), F(\ell')) = 0$.
\end{proposition}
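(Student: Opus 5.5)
We prove the two implications separately, working in the Gromov hyperbolic space $\widetilde S_h \times \RR$ with the Cannon--Thurston metric. We use that ladders over leaves are convex and quasi-isometric to $\HH^2$, and that vertical flow lines in $F(\ell_+)$ converge to a single point as $z \to +\infty$ (similarly as $z\to-\infty$ for $\Lambda_-$). Here $\overline{F(\ell)}$ denotes the limit set in $S^2_\infty$. Breaking symmetry, assume $\ell,\ell'$ are leaves of $\Lambda_+$.

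\emph{Distance zero implies common limit point.} Suppose $d_{\widetilde S_h \times \RR}(F(\ell),F(\ell'))=0$. We split into the two cases of \Cref{prop:boundary leaves} and \Cref{prop:common intersection leaf}.

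In the first case, $\ell,\ell'$ bound a common ideal complementary region of $\Lambda_+$. They then share an ideal endpoint $\xi\in S^1_\infty$. Choose points $p_n\in\ell$, $p'_n\in\ell'$ tending to $\xi$ with $d_{\HH^2}(p_n,p'_n)\to 0$. The points $(p_n,0)$ and $(p'_n,0)$ are then uniformly close in the Cannon--Thurston metric, since that metric restricted to $S_0$ is quasi-isometric to $d_{\HH^2}$. Both sequences leave every compact set, so they converge to a common point of $S^2_\infty$. (Alternatively, use the equivariant Cannon--Thurston map: $\iota(\xi)$ lies in both limit sets.)

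In the second case, $\ell,\ell'$ cross a common complementary region of $\Lambda_-$. Take an arc $\gamma\in A(R)$ meeting only $\Lambda_+$. The pseudo-length of $F_z(\gamma)$ is $k^z dx(\gamma)\to 0$ as $z\to-\infty$. So the points $(p,z)$ and $(p',z)$ stay at bounded distance while escaping to infinity along flow lines, and hence have the same limit in $S^2_\infty$.

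\emph{Positive distance implies disjoint limit sets.} By \Cref{prop:separated ladders}, if the distance is positive it is at least $\e_3$. By \Cref{lem:separating leaves and ladder distance}, both $a=dx(\textrm{Sep}_+(\ell,\ell'))$ and $b=dy(\textrm{Sep}_-(\ell,\ell'))$ are positive. The plan is to show that the nearest-point projection of $F(\ell')$ to the convex set $F(\ell)$ has bounded diameter. In a Gromov hyperbolic space, two quasiconvex sets whose mutual projections are bounded have disjoint limit sets. The disjointness of limit sets then follows by the standard argument: if they shared a boundary point, sequences converging to it would force the projection to be unbounded.

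To bound the projection, note that any path from $(p',z')$ to $F(\ell)$ must cross the ladders $F(\ell_s)$ for every separating leaf $\ell_s$. At height $z$, every arc in $A(R)$ has length at least $k^z a+k^{-z}b$. This is large unless $|z|$ is bounded by roughly $|\log_k a|+|\log_k b|$. So geodesics between the two ladders stay in a bounded height window. Within that window we can produce a "bridge" of bounded length between the two ladders, namely the shortest arc in $A(R)$ at the optimal height. The projection of $F(\ell')$ is then coarsely the projection of this bridge. We verify this using that the flow lines in $F(\ell)$ are geodesics and the fibers $F_z(\ell)$ are horocycles: points of $F(\ell')$ far from the bridge have geodesics to $F(\ell)$ passing near the bridge, by thinness of quadrilaterals.

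The main obstacle is this last step: making the bounded projection rigorous in the pseudo-metric. The first approach to try is a comparison argument using the $\HH^2$ model of each ladder together with the lower bound $k^z a+k^{-z}b$ on all connecting arcs. If that proves awkward, the fallback is to argue via the Cannon--Thurston map and \Cref{prop:multi}: show that the endpoint sets of $\ell$ and $\ell'$, together with their $\iota$-identifications along leaves of $\Lambda_-$, are disjoint when both separating measures are positive.
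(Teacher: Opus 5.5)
Your forward implication is essentially the paper's, except for a slip in the first case. Two boundary leaves of an ideal complementary region of $\Lambda_+$ need not share an ideal endpoint: complementary regions can be ideal polygons with four or more sides, and non-adjacent sides have disjoint endpoints. Argue as the paper does instead. The arc across the region has zero pseudo-length at every height, so the flow lines $F(p)$ and $F(p')$ are at distance zero and have the same limits. Alternatively, chain through adjacent sides.

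The converse has a genuine gap. Reducing to a bound on the projection of $F(\ell')$ to $F(\ell)$ gains nothing, because for these convex sets bounded projection is equivalent to disjointness of the limit sets. Your sketch also does not establish it. The claim that geodesics between the two ladders stay in a bounded height window fails for geodesics issuing from points of $F(\ell')$ with large $|z|$. The ``thin quadrilaterals'' step is the entire content and is left as a heuristic.

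The missing idea is to argue with flow lines instead of projections.
\begin{itemize}
\item Since $F(\ell)$ is convex and modelled on $\HH^2$ with the flow lines as geodesics, every point of its limit set is the limit of some flow line $F(p)$, $p\in\ell$, as $z\to+\infty$ or as $z\to-\infty$. The same holds for $F(\ell')$.
\item A common limit point therefore gives flow lines $F(p)$ and $F(p')$ converging to it in the same direction. Opposite directions are impossible because the height coordinate is $\tfrac{1}{\log k}$-Lipschitz. Hence $d_{\widetilde S_h \times \RR}((p,z),(p',z))$ stays bounded as $z\to+\infty$ or as $z\to-\infty$.
\item For $z>0$, a path from $(p,z)$ to $(p',z)$ either stays at heights at least $z/2$ or has vertical length at least $z\log k$. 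In the first case its length is at least $k^{z/2}a$, because its projection to $\widetilde S_h$ crosses every leaf of $\textrm{Sep}_+(\ell,\ell')$.
\item For $z<0$ the same argument gives the lower bound $\min(k^{|z|/2}b,\,|z|\log k)$.
\end{itemize}
So a common limit point forces $a=0$ or $b=0$, and \Cref{lem:separating leaves and ladder distance} then gives $d_{\widetilde S_h\times\RR}(F(\ell),F(\ell'))=0$. This is the route of the paper's proof, and it needs only the estimate $k^za+k^{-z}b$ you had already written down.
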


\begin{proof}
Suppose that $\ell$ and $\ell'$ are in $\Lambda_+$ and $d_{\widetilde S_h \times \RR}( F(\ell), F(\ell')) = 0$.
By \Cref{prop:boundary leaves} and \Cref{prop:common intersection
  leaf}, $\ell$ and $\ell'$ are either boundary leaves of an ideal
complementary region of $\Lambda_+$, or else intersect an ideal complementary region of $\Lambda_-$. 
It follows that there are points $p \in \ell$ and $p' \in \ell'$ and an arc $\alpha$ in $A(R)$ with endpoints $p$ and $p'$ such that the interior of $\alpha$ is either disjoint from both laminations or intersects only $\Lambda_+$. 

\medskip If the interior is disjoint from both laminations then $F_z(p)$ and $F_z(p')$ are pseudo-metric distance zero for all $z$. Thus, the flow lines determine the same limit points at infinity.

\medskip Now suppose that the interior of $\alpha$ intersects $\Lambda_+$. Then $dx(\alpha) > 0$ and $dy(\alpha) = 0$. Then the distance between $F_z(p)$ and $F_z (p')$ is $k^z dx(\alpha)$ and hence the flow lines determine the same point at infinity as $z \to - \infty$.

\medskip Conversely, suppose
$\overline{F(\ell)} \cap \overline{F(\ell')}$ is non-empty and let $z_\infty$ be a point of the intersection. Every
limit point in $\overline{F(\ell)}$ (similarly
$\overline{F(\ell')}$) is a limit point of a suspension flow line, and hence there are points $p \in \ell$ and $p' \in \ell'$
such that their suspension flow lines $F(p)$ and $F(p')$ converge to $z_\infty$ in one direction. The Cannon-Thurston metric is
$\delta$-hyperbolic and suspension flow lines are geodesics.  We
deduce that $F(p)$ and $F(p')$ are bounded distance in the direction
of the common limit point.  It follows that there is an arc $\beta$ in $A(R)$ with endpoints $p$ and $p'$ such that the interior of $\beta$ intersects only one of the laminations. Thus, the distance between $F(\ell)$ and
$F(\ell')$ is zero, as desired. 
\end{proof}

\subsection{Quasigeodesics}

We recall some basic facts about quasigeodesics, see for example
Bridson and Haefliger \cite{bh}*{III.H}.

\begin{definition}
Let $(X, d)$ be a geodesic metric space and let
$\gamma \colon I \to X$ be a path, where $I$ is a (possibly infinite)
connected subset of $\RR$.  Let $Q \ge 1$ and $c \ge 0$ be constants.
The path $\gamma$ is a \emph{(Q, c)-quasigeodesic} if for all $t_1$
and $t_2$ in $I$,
\[ \frac{1}{Q} | t_2 - t_1 | - c \le d( \gamma(t_1), \gamma(t_2) )
\le Q |t_2 - t_1| + c.  \]
\end{definition}

By \cite{bh}*{III.H Lemma 1.11}, given a $(Q, c)$-quasigeodesic, there
is a continuous $(Q, c')$-quasigeodesic with the same endpoints, so
for our purposes we may assume that all quasigeodesics are continuous,
and we will do so from now on.

\medskip
A \emph{reparametrization} of a path $\gamma \colon \RR \to X$ is the
path $\gamma \circ \rho$, where $\rho \colon \RR \to \RR$ is a proper
non-decreasing function.  We say a path $\gamma \colon \RR \to X$ is
an \emph{unparametrized} $(Q, c)$-quasigeodesic if there is a
reparametrization of $\gamma$ which is a $(Q, c)$-quasigeodesic.

\medskip
We will use the following stability property for quasigeodesics in
hyperbolic spaces, known as the Morse Lemma.

\begin{lemma}\cite{bh}*{Theorem 1.7}\label{lemma:morse}
Let $X$ be a $\delta$-hyperbolic space.  Then for any $Q$ and $c$
there is a constant $L$ such that any $(Q, c)$-quasigeodesic is
contained in an $L$-neighborhood of the geodesic connecting its
endpoints.
\end{lemma}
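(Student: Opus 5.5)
The plan is the standard argument of Bridson--Haefliger, which gives a constant $L$ depending only on $\delta$, $Q$ and $c$. As noted after the definition of quasigeodesic, I first replace the given $(Q,c)$-quasigeodesic $\gamma \colon [a,b] \to X$ by a \emph{tame} one, \cite{bh}*{III.H Lemma 1.11}. This is a continuous, piecewise geodesic $(Q,c')$-quasigeodesic $\gamma'$ with the same endpoints and Hausdorff distance at most $Q+c$ from $\gamma$. It also satisfies a linear length bound: the length of $\gamma'|_{[t,t']}$ is at most $k_1 d(\gamma'(t),\gamma'(t')) + k_2$, with $k_1,k_2$ depending only on $Q,c$. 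It then suffices to prove the statement for $\gamma'$ and enlarge the constant by $Q+c$. The one hyperbolic input is the logarithmic divergence estimate: if $\sigma$ is a continuous rectifiable path in a $\delta$-hyperbolic geodesic space and $[p,q]$ is a geodesic between its endpoints, then every $x \in [p,q]$ satisfies
\[ d(x, \sigma) \le \delta \,|\log_2 \ell(\sigma)| + 1. \]
I prove this by repeatedly bisecting $\sigma$ and using thinness of triangles, at most $\log_2 \ell(\sigma)$ times.

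Next I show that the geodesic $[p,q] = [\gamma'(a), \gamma'(b)]$ lies in a uniform neighborhood of $\gamma'$. Let $D = \sup_{x \in [p,q]} d(x, \mathrm{im}\,\gamma')$, which is attained at some $x_0$ by compactness. Then the ball $B(x_0, D)$ meets $\gamma'$ only on its boundary. Choose $y, z \in [p,q]$ on either side of $x_0$ with $d(y,x_0) = d(z,x_0) = 2D$, or take the endpoints of $[p,q]$ if it is too short. Pick $y' = \gamma'(s)$ and $z' = \gamma'(t)$ within distance $D$ of $y$ and $z$, so $d(y',z') \le 6D$. Form the path $\sigma$ consisting of a geodesic from $y$ to $y'$, then $\gamma'|_{[s,t]}$, then a geodesic from $z'$ to $z$. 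By the tameness length bound, $\ell(\sigma) \le 6 k_1 D + k_2 + 2D$. Moreover $\sigma$ stays at distance at least $D$ from $x_0$: the connecting geodesics stay at distance at least $2D - D = D$, and the middle piece does so by the choice of $D$. The divergence estimate then gives
\[ D \le \delta \log_2 (6k_1 D + k_2 + 2D) + 1. \]
Since the right-hand side grows logarithmically in $D$, this inequality bounds $D$ by a constant $D_0(\delta, Q, c)$.

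Finally I prove the direction the lemma actually asserts, namely that $\gamma'$ lies in a neighborhood of $[p,q]$. Take a maximal subinterval $[a', b'] \subseteq [a,b]$ on which $\gamma'$ stays outside the $D_0$-neighborhood of $[p,q]$. Every point of $[p,q]$ is within $D_0$ of $\gamma'([a,a']) \cup \gamma'([b',b])$, both sets are closed, and $[p,q]$ is connected. Hence some $w \in [p,q]$ is within $D_0$ of both $\gamma'(s_1)$ with $s_1 \le a'$ and $\gamma'(s_2)$ with $s_2 \ge b'$. This gives $d(\gamma'(s_1), \gamma'(s_2)) \le 2D_0$. The length bound then shows that $\gamma'|_{[s_1,s_2]}$ has length at most $2k_1 D_0 + k_2$, so every point of $\gamma'([a',b'])$ is within $D_0 + k_1 D_0 + k_2/2$ of $[p,q]$. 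Setting $L$ equal to this quantity plus $Q+c$ finishes the argument. The main obstacle is the divergence estimate together with the bookkeeping in the second step. That is where the linear length control from tameness is essential, and it is why I pass to tame quasigeodesics first rather than working with the given parametrization.
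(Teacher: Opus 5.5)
Your proposal is correct and is the Bridson--Haefliger proof of III.H Theorem 1.7, which the paper cites rather than reproving: you tame the quasigeodesic via III.H Lemma 1.11, use the logarithmic divergence estimate with the $D$/$2D$ construction to bound $\sup_{x\in[p,q]} d(x,\gamma')$, and then use connectedness plus the linear length bound to get the containment the lemma asserts. Like the cited theorem, your argument handles only quasigeodesics on a compact interval $[a,b]$, so the paper's bi-infinite uses (e.g.\ $\iota(\alpha)$ in \Cref{prop:alphaQG}) still need the standard exhaustion-and-limit argument; also, in the final step you should take a maximal interval on which $d(\gamma'(s),[p,q]) > D_0$, so that the two closed sets really do cover $[p,q]$.
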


\subsection{Nearest point projections and fellow traveling}

Suppose that $\alpha$ is a subset of a Gromov hyperbolic space
$(X, d)$.  The \emph{nearest point projection}
$p_\alpha \colon X \to \alpha$ sends each point $x \in X$ to a closest point to $x$ in $\alpha$.  If $\alpha$ is $Q$-quasiconvex,
then the nearest point projection is $K$-coarsely well defined, where
$K$ depends only on $Q$ and the constant $\delta$ of hyperbolicity.  

\medskip Suppose that $\alpha$ and $\beta$ are two geodesics in $X$. 
We define the \emph{$K$-fellow traveling set} for $\alpha$ with respect to $\beta$ to be the subset
of $\alpha$ contained in a $K$-neighborhood of $\beta$, i.e.
$\alpha \cap N_K(\beta)$.  If the diameter of the projection image
$p_\alpha(\beta)$ is sufficiently large, then $p_\alpha(\beta)$ is
contained in a bounded neighborhood of the geodesic $\beta$, and so is
contained in a fellow traveling set.

\medskip
In the special case that $X$ is $\HH^n$ and $\alpha$
is a geodesic, the closest point on $\alpha$ to any point $x \in X$ is
unique, and so $p_\alpha$ is well-defined.  Furthermore, for any geodesic
$\beta$, the image $p_\alpha(\beta)$ is a subinterval of $\alpha$,
which we will refer to as the \emph{nearest point projection
  interval}, or just the \emph{projection interval}.  Similarly, any
$K$-fellow traveling set $\alpha \cap N_K(\beta)$ is also an
interval, which we shall call the \emph{$K$-fellow traveling
  interval}.

\medskip

For $\HH^n$, the hyperbolicity constant is $\delta = 2 \log 3$.  We shall write $\delta_2$ for the hyperbolicity constant for the pseudometric
$d_{\widetilde S_h}$ on $\widetilde S_h$, and $\delta_3$ for the
hyperbolicity constant for the pseudometric
$d_{\widetilde S_h \times \RR}$ on $\widetilde S_h \times \RR$.  Both
constants depend on the pseudo-Anosov $\pa$.

\medskip
A standard consequence of $\delta$-hyperbolicity is the useful property stated below that if the projection image of a geodesic $\beta$
onto another geodesic $\alpha$ is large, then the two geodesics fellow
travel and the projection image $p_\alpha(\beta)$ is contained in a
bounded neighborhood of $\beta$.

\begin{proposition} \cite{kapovich-sardar}*{Lemma 1.120}
\label{prop:neighbourhood}
Suppose that $X$ is a $\delta$-hyperbolic space and suppose that
$\alpha$ and $\beta$ are geodesics in $X$.  If the diameter of the
projection image $p_\alpha(\beta)$ is greater than $8 \delta$, then
the projection image is contained in a $6 \delta$-neighborhood of
$\beta$, i.e. $p_\alpha(\beta) \subseteq N_{6 \delta}(\beta)$, and so
$p_\alpha(\beta)$ is contained in the fellow traveling set
$\alpha \cap N_{6 \delta}(\beta)$.
\end{proposition}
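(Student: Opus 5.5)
We prove the statement by the standard thin-triangle argument for projections in a $\delta$-hyperbolic space. Let $a, b \in \beta$ be points whose projections $p = p_\alpha(a)$ and $q = p_\alpha(b)$ are far apart, say $d(p,q) > 8\delta$. The aim is to show that every point of $\alpha$ between $p$ and $q$, and in particular every point of $p_\alpha(\beta)$ lying in that range, is within $6\delta$ of the segment $[a,b] \subseteq \beta$. Since $p_\alpha(\beta)$ is contained in the $\alpha$-interval spanned by such extreme pairs (coarsely, by continuity/monotonicity of projection along $\beta$), this gives the claim.

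First, recall the projection estimate. If $p = p_\alpha(a)$, then for any $x \in \alpha$ the geodesic $[a,x]$ passes within $2\delta$ of $p$, provided $d(p,x)$ is large compared with $\delta$. To see this, consider the triangle $a, p, x$. The side $[p,x] \subseteq \alpha$, and by minimality of $p$ the internal point on $[p,x]$ is within about $\delta$ of $p$. Equivalently, the Gromov product satisfies $(a \cdot x)_p \le 2\delta$.

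Next, consider the geodesic quadrilateral $a, p, q, b$ with sides $[a,p]$, $[p,q] \subseteq \alpha$, $[q,b]$ and $[b,a] \subseteq \beta$. Split it along a diagonal into two triangles, so that each side is $2\delta$-close to the union of the other three sides. Take a point $x \in [p,q]$ at distance more than $4\delta$ from both $p$ and $q$.

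We now rule out the possibility that $x$ is close to $[a,p]$. Suppose $d(x,y) \le 2\delta$ for some $y \in [a,p]$. Then $d(a,x) \le d(a,y) + 2\delta = d(a,p) - d(y,p) + 2\delta$. On the other hand, $d(y,p) \ge d(x,p) - 2\delta > 2\delta$, so $d(a,x) < d(a,p)$. This contradicts the fact that $p$ is a nearest point of $\alpha$ to $a$. The same argument rules out $[q,b]$. Hence $x$ lies within $2\delta$ to $4\delta$ of $[a,b]$, depending on how the quadrilateral is split; using the two triangle inequalities we get $x \in N_{4\delta}([a,b])$.

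Finally, the points of $[p,q]$ within $4\delta$ of the endpoints $p$ or $q$ need separate treatment. For these, we pass to a nearby interior point with an extra $2\delta$ error, which yields the constant $6\delta$. This step needs $d(p,q) > 8\delta$, so that such interior points exist.

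The main obstacle is bookkeeping rather than any conceptual difficulty. The projection of $\beta$ need not be exactly the segment $[p,q]$ for a single pair $a,b$. We take $a, b$ realizing, up to $\epsilon$, the extreme points of $p_\alpha(\beta)$ along $\alpha$; in $\HH^n$ this projection is an interval. We then check that every projected point lies in $[p,q]$ and so is covered by the above. Constants may need slight adjustment, but the statement is cited from \cite{kapovich-sardar}*{Lemma 1.120}, so we would follow that proof for the exact constants.
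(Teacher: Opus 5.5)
The paper gives no proof of this statement; it only cites Kapovich--Sardar, Lemma 1.120. Your argument is the standard one and is correct in substance: split the quadrilateral $a,p,q,b$ along a diagonal, then use the nearest-point property of $p$ and $q$ to exclude the sides $[a,p]$ and $[q,b]$. It therefore gives a self-contained justification of the cited statement, but two points need repair.

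\textbf{The constants are interchanged.}
\begin{itemize}
\item Your exclusion step shows more than you claim. Any $x\in[p,q]$ with $d(x,p)>4\delta$ and $d(x,q)>4\delta$ lies within $2\delta$ of $[a,b]$, whichever diagonal you use. Indeed, $x$ is $\delta$-close to $[a,p]\cup[a,q]$, a point of $[a,q]$ is $\delta$-close to $[a,b]\cup[b,q]$, and the only bad cases are $2\delta$-closeness to $[a,p]$ or $[q,b]$, which you excluded.
\item For a point $z\in[p,q]$ within $4\delta$ of $p$ (for instance $z=p$), the nearest admissible interior point can be almost $4\delta$ away, not $2\delta$.
\item Since $d(p,q)>8\delta$, there are admissible $x$ with $d(x,p)$ arbitrarily close to $4\delta$ and with $z$ between $p$ and $x$. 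Taking the infimum gives $d(z,[a,b])\le 4\delta+2\delta=6\delta$. This is where the strict hypothesis $d(p,q)>8\delta$ enters.
\item With your stated interior bound of $4\delta$ and the correct shift of about $4\delta$, you would only get $8\delta$. So your accounting ``$4\delta$ plus an extra $2\delta$'' does not justify $6\delta$; the correct accounting is $2\delta+4\delta$.
\item Your preliminary projection estimate $(a\cdot x)_p\le 2\delta$ is never used and can be dropped.
\end{itemize}

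\textbf{The reduction to a single pair $a,b$ should not use interval structure.} In a general $\delta$-hyperbolic space, $p_\alpha$ is just a choice of nearest point; it is neither unique nor monotone along $\beta$, and $p_\alpha(\beta)$ need not be an interval. Argue pointwise instead. Given $z=p_\alpha(c)\in p_\alpha(\beta)$, pick $u=p_\alpha(a_0)$ and $v=p_\alpha(b_0)$ with $d(u,v)>8\delta$.
\begin{itemize}
\item If $z$ lies on $\alpha$ between $u$ and $v$, use the pair $(a_0,b_0)$.
\item Otherwise, say $v$ lies between $u$ and $z$. Then $d(u,z)>8\delta$, and you use the pair $(a_0,c)$, for which $z$ is an endpoint of the projected segment. (Symmetrically, use $(c,b_0)$ if $u$ lies between $z$ and $v$.)
\end{itemize}
In either case the argument above gives $d(z,\beta)\le 6\delta$, which is the claim.
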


In $\HH^2$, if two geodesics intersect at angle $\theta$, then the
size of the projection interval is roughly $\log (1/\theta)$.  In
fact,the same result holds for two geodesics that do not intersect,
but are distance $\theta$ apart.  

\begin{proposition}\label{prop:projection interval}\label{c:T_0}
There is a constant $T_0 \ge 0$, such that for any unit speed geodesic
$\gamma_1$ in $\HH^2$ and any geodesic $\gamma_2$ such that

\begin{itemize}

\item $\gamma_2$ intersects $\gamma_1$ at the point $\gamma_1(0)$ at
an angle $0 < \theta \le \pi/2$, or

\item the distance from $\gamma_1$ to $\gamma_2$ is $\theta > 0$, and
the closest point occurs at $\gamma_1(0)$,

\end{itemize}

then the nearest point projection interval $p_{\gamma_1}(\gamma_2)$
is equal to $\gamma_1( [-T, T] )$, where
\[ \log \frac{1}{\theta} \le T \le \log \frac{1}{\theta} +
T_0, \]
and furthermore, for all $|t| \le \log \tfrac{1}{\theta}$, the
distance from $\gamma_1(t)$ to $\gamma_2$ is at most $3/2$.
\end{proposition}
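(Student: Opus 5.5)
We will prove the statement by explicit computation in the upper half plane model, treating the two cases separately and reducing both to one normal form. The symmetry group of $\HH^2$ acts transitively on pairs (unit speed geodesic, point on it), so we may take $\gamma_1$ to be the imaginary axis with $\gamma_1(t) = i e^{t}$ and $\gamma_1(0) = i$. The nearest point projection to the imaginary axis is explicit: a point $z$ projects to $i|z|$, because the geodesics orthogonal to the imaginary axis are the semicircles centred at $0$. So for any geodesic $\gamma_2$ with endpoints $a, b \in \RR \cup \{\infty\}$, the projection interval $p_{\gamma_1}(\gamma_2)$ is $\{ i|z| : z \in \gamma_2 \}$. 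This set is the segment from $i|a|$ to $i|b|$ when both endpoints are finite and neither endpoint set straddles $\infty$ trivially. Hence $T$ is read off from $\log|a|$ and $\log|b|$.

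\textbf{Case 1 (intersecting).} Let $\gamma_2$ pass through $i$ at angle $\theta \le \pi/2$. Its endpoints are the images of $0, \infty$ under rotation by $\theta$ about $i$. A direct computation gives endpoints $a = \cot(\theta/2)$ and $b = -\tan(\theta/2)$, up to sign and order. Thus $p_{\gamma_1}(\gamma_2) = \gamma_1([-T, T])$ with $T = \log\cot(\theta/2)$, and the interval is symmetric because $|a||b| = 1$. Since $\cot(\theta/2) \ge 1/\theta$ for $0 < \theta \le \pi/2$, we get $\log(1/\theta) \le T$. Since $\cot(\theta/2) \le 2/\theta$, we get $T \le \log(1/\theta) + \log 2$.

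\textbf{Case 2 (disjoint).} Let the common perpendicular have length $\theta$, with foot at $i$. Then $\gamma_2$ is the image of a geodesic orthogonal to the imaginary axis at $i e^{\theta}$ under the appropriate map. Concretely, $\gamma_2$ is orthogonal to the semicircle $|z| = 1$ at distance $\theta$ from $i$ along it. Its endpoints $a, b$ satisfy $|a||b| = 1$ by the reflection symmetry $z \mapsto 1/\bar z$, and $|a| = \coth(\theta/2)$, which can be computed by moving to the model where the perpendicular is vertical. Again $\coth(\theta/2) \in [2/\theta, 2/\theta + \theta/6]$ roughly, so $T = \log\coth(\theta/2)$ satisfies $\log(1/\theta) \le T \le \log(1/\theta) + T_0$. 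The upper bound needs a mild adjustment for large $\theta$, where $T$ tends to $0$ while $\log(1/\theta)$ becomes negative. In that regime the lower bound $\log(1/\theta) \le T$ holds trivially, and the upper bound holds because $T$ is bounded.

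\textbf{Distance bound.} For $|t| \le \log(1/\theta)$ we need $d(\gamma_1(t), \gamma_2) \le 3/2$. By symmetry take $t \ge 0$. The point $ie^{t}$ lies below height $1/\theta$, while $\gamma_2$ is a semicircle of Euclidean radius about $1/\theta$ through roughly $\pm 1/\theta$, or nearly so. Using $\cosh d = 1 + |z-w|^2/(2\,\mathrm{Im} z\,\mathrm{Im} w)$, compare $ie^{t}$ with the point of $\gamma_2$ having the same real part $0$, or the same modulus. The distance is a monotone function of $t$, maximal at the endpoint $t = \log(1/\theta)$. There it is bounded by an explicit constant near $\log(1+\sqrt2)$, which is below $3/2$. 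Alternatively, we can use the $\delta$-thin property together with the convexity of the distance to a geodesic along $\gamma_1$: the distance is $0$ (Case 1) or $\theta$ at $t=0$, and at $t = \log(1/\theta)$ it is bounded by the explicit computation.

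The main obstacle is this last explicit bound with the constant $3/2$, especially in Case 2 for $\theta$ not small. I would handle it by checking $\theta \ge 1$ separately, where the $t$-range is empty or trivial, and computing directly for $\theta < 1$.
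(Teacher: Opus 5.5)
Your two explicit computations are correct. In the intersecting case the endpoints of $\gamma_2$ are $\cot(\theta/2)$ and $-\tan(\theta/2)$, so $T=\log\cot(\theta/2)\in[\log\frac{1}{\theta},\log\frac{1}{\theta}+\log 2]$. In the disjoint case $\gamma_2$ is the semicircle with centre $\coth\theta$ and radius $1/\sinh\theta$, with endpoints $\coth(\theta/2)$ and $\tanh(\theta/2)$, so $T=\log\coth(\theta/2)$.

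\textbf{The gap: large $\theta$ in the disjoint case.} Your claim that the upper bound holds ``because $T$ is bounded'' is false. The right-hand side $\log\frac{1}{\theta}+T_0$ tends to $-\infty$ as $\theta\to\infty$, whereas $T=\log\coth(\theta/2)>0$ (indeed $T\approx 2e^{-\theta}$). So $T\le\log\frac{1}{\theta}+T_0$ would force $T_0\ge\log\theta$ for every $\theta$, which no constant satisfies. This is not a defect of your method: the inequality as literally stated is false in that regime, and no argument can establish it. You should say so and prove a corrected statement. Add the hypothesis $\theta\le 1$ in the disjoint case. Then your estimate $\coth x\le\frac{1}{x}+\frac{x}{3}$ gives $\coth(\theta/2)\le\frac{2}{\theta}+\frac{\theta}{6}\le\frac{13}{6}\cdot\frac{1}{\theta}$, so $T_0=\log\frac{13}{6}$ works in both cases. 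Complement this with the uniform bound $T\le\log\coth\frac{1}{2}$ for $\theta\ge 1$. This corrected form is harmless for the later uses of the proposition, which only need upper bounds on projection lengths there.

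\textbf{The $3/2$ distance bound.} Your sketch misplaces $\gamma_2$. A semicircle through roughly $\pm\frac{1}{\theta}$ would be centred at $0$, hence orthogonal to $\gamma_1$. In fact, in both cases $\gamma_2$ has centre and radius roughly $\frac{1}{\theta}$, passing near $0$ and $\frac{2}{\theta}$. Moreover, $\gamma_2$ meets the imaginary axis at most at $i$, so the comparison point ``with real part $0$'' is not available.

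It is cleaner to use the exact formula $\sinh d(z,\gamma_2)=\bigl||z-c|^2-r^2\bigr|/(2r\operatorname{Im}z)$ for the geodesic with Euclidean centre $c$ and radius $r$. Since $c^2-r^2=-1$ in the intersecting case and $c^2-r^2=1$ in the disjoint case, this gives $\sinh d(\gamma_1(t),\gamma_2)=|\sinh t|\sin\theta$ and $\cosh t\,\sinh\theta$ respectively. Both are increasing in $|t|$. At $|t|=\log\frac{1}{\theta}$ with $\theta\le 1$ they equal $\frac{1-\theta^2}{2}\cdot\frac{\sin\theta}{\theta}\le\frac{1}{2}$ and $\frac{1+\theta^2}{2}\cdot\frac{\sinh\theta}{\theta}\le\sinh 1$. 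Hence the distance is at most $1<\frac{3}{2}$. For $\theta>1$ the range of $t$ is empty, so there is nothing to prove.
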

This is well known, we provide the details in \cite{gmpu}*{Appendix A}.
In fact,
for small $|t|$, the two geodesics are exponentially close, see
\cite{gmpu}*{Appendix A}
for further details.

\medskip We now record the useful fact that if two geodesics $\alpha$ and
$\beta$ intersect at angle $\theta$, then the size of their projection
intervals onto each other is roughly $\log \tfrac{1}{\theta}$, and
furthermore, for any other geodesic $\gamma$, the overlap between the
projection intervals for $\alpha$ and $\beta$ on $\gamma$ is bounded
in terms of $\theta$.

\begin{proposition}\label{prop:overlap}
For any constant $\alpha_\LL > 0$ there is a constant $\rho_\LL > 0$
such that for any two geodesics in $\HH^2$ which intersect at angle
$\theta \ge \alpha_\LL$, and for any other geodesic $\gamma$, the
intersection of the nearest point projection intervals of $\alpha$ and $\beta$ to $\gamma$
has diameter at most $\rho_\LL$.
\end{proposition}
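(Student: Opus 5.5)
Let $p = \alpha \cap \beta$ be the intersection point and let $\theta \ge \alpha_\LL$ be the angle there. The idea is that a long common stretch of the two projection intervals on $\gamma$ would force $\gamma$ to fellow travel both $\alpha$ and $\beta$ for a long time. Two geodesics through $p$ at angle bounded below diverge exponentially, so this cannot happen. I will use only $\HH^2$ geometry, together with \Cref{prop:neighbourhood} (with $\delta = 2\log 3$) and \Cref{prop:projection interval}.

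\textbf{Step 1: large overlap gives fellow travelling.} Let $J = p_\gamma(\alpha) \cap p_\gamma(\beta)$, which is an interval in $\gamma$, and suppose its diameter is greater than $8\delta$. Then both $p_\gamma(\alpha)$ and $p_\gamma(\beta)$ have diameter greater than $8\delta$. By \Cref{prop:neighbourhood}, $p_\gamma(\alpha) \subseteq N_{6\delta}(\alpha)$ and $p_\gamma(\beta) \subseteq N_{6\delta}(\beta)$. Hence every point of $J$ lies within $6\delta$ of both $\alpha$ and $\beta$.

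\textbf{Step 2: points near both geodesics are near $p$.} For any point $x$ with $d(x,\alpha) \le 6\delta$ and $d(x,\beta) \le 6\delta$, I claim $d(x,p) \le D$, where $D$ depends only on $\alpha_\LL$. Choose $a \in \alpha$ and $b \in \beta$ within $6\delta$ of $x$, so $d(a,b) \le 12\delta$. The points $a$ and $b$ are on the two rays from $p$, and we may take the angle between those rays to be either $\theta$ or $\pi-\theta$. Both angles are at least $\alpha_\LL$, since $\theta \le \pi/2$ and $\theta \ge \alpha_\LL$.

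The hyperbolic law of cosines in the triangle $a p b$ then gives exponential growth of $d(a,b)$ in $\min(d(p,a), d(p,b))$. Alternatively, apply \Cref{prop:projection interval} with $\gamma_1 = \alpha$ and $\gamma_2 = \beta$: the projection of $\beta$ to $\alpha$ is $\alpha([-T,T])$ around $p$ with $T \le \log(1/\alpha_\LL) + T_0$, and every point of $\beta$ far from $p$ is far from $\alpha$. Either way, $d(p,a)$ and $d(p,b)$ are bounded in terms of $\alpha_\LL$. Hence $d(x,p) \le D(\alpha_\LL)$.

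\textbf{Step 3: conclude.} By Step 2, $J$ lies in the ball of radius $D$ about $p$. Since $J$ is a segment of the geodesic $\gamma$, its diameter is at most $2D$. So we may take $\rho_\LL = \max(8\delta, 2D)$.

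The only point requiring care is the quantitative estimate in Step 2, namely that the distance from $\beta(t)$ to $\alpha$ grows with $|t|$ at a rate controlled by $\alpha_\LL$. This is a routine right-angled triangle computation: $\sinh d(\beta(t), \alpha) = \sinh|t| \sin\theta$.
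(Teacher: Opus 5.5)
Your proof is correct, and it follows the same skeleton as the paper's. A long overlap of $p_\gamma(\alpha)$ and $p_\gamma(\beta)$ forces a long stretch of $\gamma$ to stay close to both $\alpha$ and $\beta$, and the angle bound $\theta \ge \alpha_\LL$ rules this out. Both halves are carried out differently, however.

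For the fellow-travelling step, the paper applies the $\HH^2$-specific \Cref{prop:projection interval} to the pairs $(\gamma,\alpha)$ and $(\gamma,\beta)$. It trims $T_0$ off each end of the projection intervals, so that the endpoints of the trimmed overlap lie within $3/2$ of both geodesics. You instead use the general hyperbolic-space statement \Cref{prop:neighbourhood}, which puts the whole overlap into $N_{6\delta}(\alpha)\cap N_{6\delta}(\beta)$ at once. This needs no case analysis on the position of $\gamma$; for example, $\gamma$ asymptotic to $\alpha$ is not literally covered by \Cref{prop:projection interval} as stated.

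For the conclusion, the paper argues by contradiction. It uses the two endpoints to produce far-apart points of $\alpha$ near $\beta$, so that $p_\alpha(\beta)$ would be longer than $2(\log\tfrac{1}{\alpha_\LL}+T_0)$. You show directly that $N_{6\delta}(\alpha)\cap N_{6\delta}(\beta)$ lies in a ball about $p$, which is the same divergence fact in localized form. Your second route makes this fully explicit: $d(b,\alpha)\ge d(b,p)-T$ for $b\in\beta$, which gives radius $\log\tfrac{1}{\alpha_\LL}+T_0+18\delta$.

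Your argument is shorter and its bookkeeping is simpler. The paper's works with the smaller $\HH^2$ constant $3/2$ in place of $6\delta$. Both yield $\rho_\LL = 2\log\tfrac{1}{\alpha_\LL}+O(1)$.
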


\begin{proof}
Suppose $\alpha$ and $\beta$ intersect at the point $p$ with angle
$\theta \ge \alpha_\LL$.  We shall choose
$\rho_\LL = 2 \log \tfrac{1}{\alpha_\LL} + 4 T_0 + 16$.  By
\Cref{prop:projection interval}, the radius of the nearest point
projection interval of $\alpha$ to $\beta$, and also of $\beta$ to
$\alpha$, is at most $\log \tfrac{1}{\alpha_\LL} + T_0$.

\medskip
For any geodesic $\gamma$, let $I_\alpha = p_\gamma(\alpha)$ and $I_\beta = p_\gamma(\beta)$ be the nearest
point projection intervals of $\alpha$ and $\beta$ onto $\gamma$. 
Suppose that their overlap has size at least $\rho_\LL$, i.e. the length of
$I_\alpha \cap I_\beta$ is at least $\rho_\LL$.  If we truncate $I_\alpha$ and $I_\beta$ by length $T_0$ at both ends, then the truncated
intervals have overlap of length at least
$\rho_\LL - 2 T_0$.  Let $I$ be the interval of overlap for the truncated projection intervals, i.e.
$I$ is the closure of
$I_\alpha \cap I_\beta \setminus N_{T_0}(\partial ( I_\alpha \cap
I_\beta ) )$.

\medskip
By \Cref{prop:projection interval}, each endpoint of $I$ is distance
at most $3/2$ from both $\alpha$ and $\beta$.  We denote by $a_1$ and $a_2$ the
points on $\alpha$ closest to each endpoint
of $I$. It follows that the distance between $a_1$ and $a_2$ is at least $|I| - 2 T_0 - 3$, and each point $a_i$ is distance at most $3$ from $\beta$.

\medskip
We pick points $b_i$ in $\beta$ distance at most 3 from the points $a_i$.
Then the nearest point projection of $b_i$ to $\alpha$
is distance at most $6$ from $a_i$.  In particular, the diameter of
the nearest point projection interval of $\beta$ to $\alpha$ is at
least $|I| - 2 T_0 - 15 \leqslant \rho_\LL - 2 T_0 - 15$.  It follows from our choice of $\rho_\LL$ that the diameter of the projection interval of $\beta$ onto
$\alpha$ is at least $2 \log \tfrac{1}{\alpha_\LL} + 2 T_0 + 1$, a
contradiction.
\end{proof}

Finally, we show that if two geodesics $\alpha$ and $\beta$ have
strictly nested projection intervals onto a third geodesic $\gamma$,
i.e. $p_{\gamma}(\alpha) \subset p_\gamma(\beta)$, and $\alpha$
intersects $\gamma$, then $\alpha$ and $\beta$ also intersect.

\begin{proposition}\label{prop:nested implies intersect}
Let $\gamma$ be a geodesic in $\HH^2$ which intersects a geodesic
$\ell_1$, with projection interval $I_1 \subset \gamma$.  Let $\ell_2$
be a geodesic with projection interval $I_2 \subset \gamma$, such that
$I_1 \subset I_2$.  Then $\ell_1$ and $\ell_2$ intersect.
\end{proposition}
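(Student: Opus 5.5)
The plan is to normalize $\gamma$ and then reduce the statement to a combinatorial check on the ordering of ideal endpoints on $\partial \HH^2$. Two geodesics in $\HH^2$ with four distinct ideal endpoints intersect if and only if their endpoint pairs are linked on the circle. So it suffices to show that the nesting $I_1 \subset I_2$ forces the endpoints of $\ell_2$ to separate the endpoints of $\ell_1$.

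\medskip
\textbf{Normalization and projection formula.} Work in the upper half space model with $\gamma$ equal to the imaginary axis, with endpoints $0$ and $\infty$. Nearest point projection onto $\gamma$ is invariant under the dilations $z \mapsto \lambda z$ and under the reflection $z \mapsto -\bar z$. Hence the projection to $\gamma$ of a point $z$ depends only on the hyperbolic data of $z$, and it extends continuously to $\partial\HH^2 \setminus \{0,\infty\}$ by sending $x \mapsto i|x|$. For a geodesic $\ell$ with endpoints $c, d \in \RR \setminus \{0\}$, the projection interval $p_\gamma(\ell)$ is therefore the segment of $\gamma$ between $i|c|$ and $i|d|$. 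One checks this by observing that the projection is monotone along $\ell$ and tends to these points at the ends. If an endpoint of $\ell$ is $0$ or $\infty$, the interval is unbounded in the corresponding direction.

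\medskip
\textbf{Case analysis.} Since $\ell_1$ intersects $\gamma$, its endpoints are $a<0<b$, and $I_1$ corresponds to $[\min(|a|,b), \max(|a|,b)]$. Let $\ell_2$ have endpoints $c<d$. The hypothesis $I_1 \subset I_2$ says that $\{|c|,|d|\}$ spans an interval containing both $|a|$ and $b$.
\begin{itemize}
\item If $c,d$ are both positive, then $c \le b \le d$. Since $a<0<c$, the pairs are linked: $a < c \le b \le d$.
\item If $c,d$ are both negative, the symmetric argument applies.
\item If $c<0<d$, then one of $|c|, |d|$ is at most $\min(|a|,b)$ and the other is at least $\max(|a|,b)$. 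Either $a \le c < 0 < b \le d$ or $c \le a < 0 < d \le b$, and in both orderings the pairs are linked.
\item If an endpoint of $\ell_2$ is $0$ or $\infty$, the corresponding side of $I_2$ is unbounded. The same inequalities, with $0$ or $\infty$ in place of $|c|$ or $|d|$, again give linking.
\end{itemize}

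\medskip
\textbf{Expected obstacle.} The only delicate point is the boundary case in which an inequality is an equality, for instance $b=d$. Then $\ell_1$ and $\ell_2$ share an ideal endpoint and are asymptotic rather than intersecting. I will handle this by reading the nesting as strict at the ends, as the paper's phrase ``strictly nested'' suggests. Alternatively, I will note that in the applications the two geodesics are leaves of laminations, and distinct leaves with a common endpoint are excluded or harmless by \Cref{prop:complementary regions don't share}. Strict inclusion gives strict inequalities, and then the endpoints interleave strictly, so $\ell_1$ and $\ell_2$ cross.
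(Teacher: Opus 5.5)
Your proof is correct. It establishes the same underlying fact as the paper, namely that the ideal endpoints of $\ell_2$ lie on opposite sides of $\ell_1$, but by explicit computation rather than a separation argument.

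\textbf{The paper's route.} The paper works coordinate-free. The set $p^{-1}(I_1)$ is the closed region between the two geodesics perpendicular to $\gamma$ at the ends of $I_1$. Its complement has two components, lying on opposite sides of $\ell_1$. The nesting places the two endpoints of $\ell_2$ in different components, so no cases arise.

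\textbf{Your route.} You normalize $\gamma$ to the imaginary axis and use $p(z)=i|z|$. The clean justification for that formula is that the fibres of $p$ are the semicircles centred at $0$; dilation and reflection equivariance by themselves do not force it. You then check interleaving case by case. In your coordinates the paper's two components are exactly $\{|z|<\min(|a|,b)\}$ and $\{|z|>\max(|a|,b)\}$, so your cases unpack its one-line observation.

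What your route buys is precision about the degenerate case. It shows that the hypothesis really needed is that $I_1$ lies in the interior of $I_2$. A proper inclusion that shares an end can fail: $\ell_1$ from $-1$ to $2$ and $\ell_2$ from $\tfrac12$ to $2$ give $I_1\subsetneq I_2$, yet the geodesics are asymptotic. The paper's phrase ``strict subset'' glosses over this. Interior containment is exactly what the application in \Cref{prop:axis projection} provides, since there $p_\alpha(\ell)$ lies in the interior of $\beta\subset I\subseteq p_\alpha(\gamma)$. So your primary resolution suffices.

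You should drop your fallback justification, though. In that application the role of $\ell_2$ is played by the geodesic through $x$ and $y$, not by a lamination leaf. Moreover, \Cref{prop:complementary regions don't share} concerns complementary regions of the two different laminations, so it says nothing about two geodesics sharing an ideal endpoint.
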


\begin{proof}
Consider the nearest point projection map $p \colon \HH^2 \to \gamma$.
Consider the complement of the pre-image of $I_1$, i.e.
$\HH^2 \setminus p^{-1}(I_1)$.  This has two connected components,
which are separated by the geodesic $\ell_1$.  As $I_1$ is a strict
subset of $I_2$, each endpoint of $\ell_2$ is contained in a different
complementary component, and so the endpoints of $\ell_2$ are
separated by $\ell_1$, and so the two geodesics $\ell_1$ and $\ell_2$
intersect.
\end{proof}

\subsection{Lebesgue measures and the geodesic flow}

We review the properties of the geodesic flow we will use, see for
example \cite{einsiedler-ward}.

\medskip
A choice of basepoint $x_0$ in $\HH^n$ determines a measure on the
boundary sphere $\partial \HH^n$, for example by choosing the disc or
ball model for $\HH^n$ with the basepoint as the center point, and
giving $\partial \HH^n$ the measure induced from the standard metric
on the unit sphere.  We will call this measure \emph{Lebesgue
  measure}, and this measure depends on the choice of basepoint,
though we will suppress this from our notation. Different choices of
basepoint give measures which are absolutely continuous with respect to
each other.

\medskip
Let $\Gamma$ be a discrete cocompact subgroup of isometries of
$\HH^n$, and let $M = \HH^n / \Gamma$.  Hopf \cite{hopf} showed that
the geodesic flow $g_t$ on the unit tangent bundle $T^1(M)$ is ergodic
with respect to Liouville measure.  As $M$ is a compact hyperbolic
manifold of constant negative curvature, Liouville measure is
proportional to the Bowen-Margulis-Sullivan measure, the measure of
maximal entropy for the geodesic flow.  Liouville measure on $T^1(M)$
is the product of the measure determined by the hyperbolic metric on
$M$, with Lebesgue measure on the unit tangent spheres.  As the
conditional measures on each unit tangent sphere determined by
Liouville measure are Lebesgue measures, the measure on geodesics in
$M$ induced by Liouville measure is absolutely continuous with respect
to the measure on geodesics in $M$ induced by the product of Lebesgue
measures on $(\partial \HH^n \times \partial \HH^n) \setminus \Delta$.

\medskip
For the case of the Lebesgue surface measure, $n = 2$ and $\Gamma$ is
the fundamental group of the surface, $\pi_1 S$.  For the case of the
Lebesgue $3$-manifold measure, $n = 3$, and $\Gamma$ is the
fundamental group of the mapping torus, $\pi_1 M$.  In both cases, we
will use the fact that a geodesic chosen according to the product of
Lebesgue measures on $\partial \HH^n \times \partial \HH^n$ is
uniformly distributed in the unit tangent bundle of the compact
quotient manifold, almost surely.  We will use the following version
of this result.

\begin{proposition}
Let $\Gamma$ be a discrete cocompact group of isometries of $\HH^n$,
and let $B$ be a Borel set in $M = \HH^n / \Gamma$.  Then for almost
all geodesics $\gamma$ in $T^1(M)$,
\[  \lim_{T \to \infty} \frac{1}{T} \left| \gamma([0, T]) \cap B
\right| = \mathbf{vol}(B).  \]
\end{proposition}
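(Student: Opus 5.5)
This is the Birkhoff ergodic theorem for the geodesic flow, together with the fact from Hopf quoted above that Liouville measure is ergodic. The plan is to run Birkhoff in $T^1(M)$ and then transfer the resulting full-measure statement to geodesics parametrized by pairs of boundary points. Throughout, $B$ is pulled back to $T^1(M)$ via the footpoint projection $\pi \colon T^1(M) \to M$. The quantity $\frac{1}{T}|\gamma([0,T]) \cap B|$ is the time average of the indicator $\chi_{\pi^{-1}(B)}$ along the orbit $g_t v$, where $v = \gamma'(0)$. We normalize Liouville measure $m$ to be a probability measure, so that $m(\pi^{-1}(B))$ equals the normalized volume $\mathbf{vol}(B)$.

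First, $\chi_{\pi^{-1}(B)}$ is bounded and measurable, hence in $L^1(m)$. Since $g_t$ is ergodic for $m$ by Hopf, the continuous-time Birkhoff ergodic theorem gives a full-measure set $E \subset T^1(M)$ on which
\[ \lim_{T\to\infty}\frac1T\int_0^T \chi_{\pi^{-1}(B)}(g_t v)\,dt = m(\pi^{-1}(B)) = \mathbf{vol}(B). \]

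Second, we check that the limit does not depend on the parametrization. Any two unit speed parametrizations of the same oriented geodesic differ by a time shift $s$. Changing $[0,T]$ to $[s, s+T]$ alters the integral by at most $2|s|$, and this vanishes after dividing by $T$. So $E$ is invariant under $g_t$: it is a union of complete orbits. We may therefore speak of a geodesic, that is, a $g_t$-orbit, as lying in $E$ or not.

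Third, we translate ``almost all geodesics''. We lift to $T^1(\HH^n)$ and use the Hopf parametrization $T^1(\HH^n) \cong ((\partial\HH^n\times\partial\HH^n)\setminus\Delta)\times\RR$. In these coordinates the lifted Liouville measure is equivalent to $\mathrm{Leb}\times\mathrm{Leb}\times dt$, with a positive continuous density; this is the absolute continuity recorded in the paragraph before the proposition. The preimage $\widetilde E$ of $E$ is $\Gamma$-invariant, has full measure, and is saturated in the $\RR$ factor. By Fubini, the set of endpoint pairs $(\xi,\eta)$ whose geodesic is not in $\widetilde E$ then has $\mathrm{Leb}\times\mathrm{Leb}$ measure zero. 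Such a geodesic, projected to $M$ with any parametrization, satisfies the limit. The same argument gives the statement for almost every $v$ with respect to Liouville measure, which is the other natural reading of ``almost all geodesics in $T^1(M)$''.

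The only delicate point is this last step: the measure class on the space of geodesics must correspond to Liouville measure with a positive, locally bounded density. This is what makes null sets agree in both directions. It is a standard fact for constant curvature, where the density is $|\xi-\eta|^{-2(n-1)}$ in the ball model, and I would cite it rather than reprove it.
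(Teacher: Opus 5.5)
Your proposal is correct and matches the reasoning the paper relies on: the paper states this proposition without proof, as a consequence of Hopf's ergodicity of Liouville measure, the Birkhoff ergodic theorem, and the comparison between Liouville measure and the measure on geodesics coming from $\mathrm{Leb}\times\mathrm{Leb}$ on pairs of boundary points. Your use of the explicit positive density in Hopf coordinates gives equivalence of the measure classes in both directions, which matters because the application to boundary-sampled geodesics needs Liouville-null sets of geodesics to be $\mathrm{Leb}\times\mathrm{Leb}$-null, whereas the paper's preceding paragraph only records the reverse absolute continuity.
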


\subsection{Random walks and hitting measures}

We recall some results in the theory of random walks on countable
groups acting on Gromov hyperbolic spaces.  Suppose that a
countable group $G$ acts on a $\delta$-hyperbolic space $X$.  The
results we state do not require $X$ to be locally compact or the
action to be locally finite.  But for our purposes,
$G \curvearrowright X$ will always be either
$\pi_1(S) \curvearrowright \widetilde S_h$ or
$\pi_1(M) \curvearrowright \widetilde S_h \times \RR$, which are
cocompact actions on locally compact spaces.  We say the action of a
group $G$ on a space $X$ is \emph{non-elementary} if it contains two
independent loxodromic isometries of $X$.  We say a probability
measure $\mu$ on $G$ is \emph{geometric} if it has finite exponential moment and
the semigroup generated by its support is equal to $G$.

\medskip A random walk of length $n$ on $G$ is a random product
$w_n = g_1 \cdots g_n$ where each $g_i$ is chosen independently
according to a probability measure $\mu$ on $G$.  We call the
elements $g_i$ the \emph{steps} of the random walk.  Passing to
infinitely many steps, we may consider the sequence of steps $(g_n)$
to be an element of $(G, \mu)^\ZZ$.  We call $(G, \mu)^\ZZ$ the
\emph{step space}.  The \emph{location} $w_n$ at time $n$ is
determined by $w_0 = 1 \in G$, and $w_{n+1} = w_n g_{n+1}$ for all
$n \in \ZZ$.  The \emph{location space} is the probability space
$(G^\ZZ, \mathbb{P})$, where $\mathbb{P}$ is the pushforward of the
product measure under the map that sends $(g_n)$ to $(w_n)$.  A choice
of basepoint $x_0 \in X$ gives a sequence $(w_n x_0)$ which we shall
also call a \emph{sample path} of the random walk.

\begin{theorem}\cite{kaimanovich}\cite{MaherTiozzo}
\label{theorem:boundary_convergence}
Suppose that $G$ is a countable group that has a non-elementary action on a Gromov hyperbolic space $X$.  Suppose that $\mu$ is a non-elementary
probability measure on $G$, that is, the support of $\mu$ generates a non-elementary subgroup of $G$ for its action on $X$. Then almost all (bi-infinite) sample paths $w = (w_n)_{n \in \ZZ}$ for the $\mu$-random walk on $G$, the sequences $(w_n x_0)$ converge as
$n \to \infty$ and $n \to - \infty$ to the Gromov boundary $\partial X$ of $X$.
The convergence defines a pair of non-atomic measures $\nu$ and $\rnu$ on
$\partial X$.
\end{theorem}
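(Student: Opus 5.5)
The statement has two halves: convergence of the forward and backward sample paths, and non-atomicity of the two limiting measures. I would prove the forward statement and then deduce the backward one by a symmetry trick. Since $w_{-n} = (g_0 g_{-1} \cdots g_{-n+1})^{-1}$, the backward sequence $(w_{-n})_{n \ge 0}$ is a sample path of the random walk generated by the reflected measure $\rnu$-type measure $\check\mu(g) = \mu(g^{-1})$. This measure is non-elementary whenever $\mu$ is, because the semigroup it generates consists of inverses of elements of the semigroup generated by $\mu$, and inverses of independent loxodromics are independent loxodromics. So it suffices to prove convergence of $(w_n x_0)_{n \ge 0}$ for an arbitrary non-elementary $\mu$. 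The limit measure $\nu$ will be the distribution of the limit point, and $\rnu$ is the corresponding measure for $\check\mu$.

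For the forward convergence I would follow the Maher--Tiozzo strategy, since $X$ need not be proper.
\begin{enumerate}
\item Pass to the horofunction compactification $\overline{X}^h$. I would take $X$ separable, which holds in our applications and can be arranged in general by restricting to the orbit hull; then $\overline{X}^h$ is compact metrizable and the $G$-action extends continuously.
\item By compactness and a Ces\`aro averaging argument, obtain a $\mu$-stationary probability measure $\eta$ on $\overline{X}^h$.
\item Show that $\eta$ gives zero mass to the ``finite'' horofunctions, namely those whose infimum is bounded below. These form a countable union of $G$-orbit pieces on which a maximum-mass argument applies, and non-elementarity rules out a finite invariant configuration. Hence $\eta$ is carried by horofunctions that map to $\partial X$.
\item For each continuous $\phi$ on $\overline{X}^h$, the sequence $\int \phi \, d(w_n \eta)$ is a bounded martingale. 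Using a countable dense family of such $\phi$, the measures $w_n\eta$ converge weakly, almost surely, to a limit measure $\lambda_w$.
\item Show that $\lambda_w$ is a Dirac mass $\delta_{\xi(w)}$ at a point $\xi(w) \in \partial X$.
\item Deduce that $w_n x_0 \to \xi(w)$, using shadows: once $w_n\eta$ concentrates in small shadows, the points $w_n x_0$ must lie in the corresponding shadows, which shrink to $\xi(w)$.
\end{enumerate}

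I expect step 5 to be the main obstacle. The standard route is to use the two independent loxodromics in the support semigroup. Applying high powers of such an element, which occurs with positive probability infinitely often, pushes almost all of the mass of any measure without large atoms into an arbitrarily small shadow around its attracting fixed point. Combined with the martingale limit, this forces $\lambda_w$ to be concentrated at a single point. The technical care lies in controlling atoms of $\eta$ and in working with shadows rather than a locally compact topology, since $X$ need not be proper.

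Finally, non-atomicity of $\nu$ follows from stationarity, $\nu = \sum_g \mu(g)\, g_*\nu$, by a maximum principle. Suppose $\nu$ had atoms, and let $m$ be the maximal atom mass; only finitely many points $A$ carry mass $m$. For $\xi \in A$ we have $m = \nu(\xi) = \sum_g \mu(g)\, \nu(g^{-1}\xi)$, and each term is at most $m$, so $g^{-1}\xi \in A$ for every $g$ in the support. Hence the finite set $A$ is invariant under the support semigroup. But a loxodromic element has only two fixed points on $\partial X$, so any finite invariant set is contained in its fixed-point set. Two independent loxodromics have disjoint fixed-point sets, so no nonempty finite set is invariant under both, which is a contradiction. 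The same argument applied to $\check\mu$ gives that $\rnu$ is non-atomic.
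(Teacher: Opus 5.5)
The paper does not prove this theorem; it quotes it from Kaimanovich and Maher--Tiozzo. Your outline is essentially the Maher--Tiozzo argument: horofunction compactification, a stationary measure $\eta$, the bounded martingales $\int\phi\,d(w_n\eta)$, a Dirac limit, then shadows. Your maximal-atom proof of non-atomicity is correct. It applies verbatim to any $\mu$-stationary measure on $\partial X$, so run it before step 5 to get the non-atomicity of $\eta$ that step 5 uses. In every application in this paper $X$ is proper ($\mathbb{H}^2$, or quasi-isometric to $\mathbb{H}^3$). There are then no finite horofunctions and $\partial X$ is compact, so the classical Furstenberg--Kaimanovich argument suffices and the horofunction machinery is only needed for the general non-proper statement.

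Two things need repair.

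\textbf{The formula for $w_{-n}$.} With the paper's convention $w_{n+1}=w_ng_{n+1}$ one has $w_{-n}=g_0^{-1}g_{-1}^{-1}\cdots g_{-n+1}^{-1}=(g_{-n+1}\cdots g_{-1}g_0)^{-1}$. Your displayed formula has the factors in the opposite order. That order multiplies each new increment on the left, and such products need not converge. Your verbal conclusion, a right random walk with i.i.d.\ $\check\mu$-distributed increments $g_0^{-1},g_{-1}^{-1},\dots$, is the correct one.

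\textbf{The justification of step 3 fails.} When $X$ is not proper, the finite horofunctions need not form a countable set or a countable union of $G$-orbits; bounded regions can carry uncountably many of them. So $\eta$ restricted to them may be diffuse, and an argument about atoms and finite invariant configurations has nothing to act on. A maximum-mass argument on balls does work:
\begin{itemize}
\item The local-minimum map $\Phi(h)=\{x: h(x)\le\inf h+1\}$ is $G$-equivariant, with nonempty images of uniformly bounded diameter.
\item Put $D=\sup_x\eta\{h:\Phi(h)\subset B(x,R)\}$ and choose $x$ with $\eta\{h:\Phi(h)\subset B(x,R)\}>D-\epsilon$.
\item Take $b=s_1\cdots s_m$ with $s_i\in\mathrm{supp}\,\mu$, loxodromic with stable translation length greater than $2R$, and set $p=\mu(s_1)\cdots\mu(s_m)$.
\item Stationarity for $\mu^{*mk}$, together with $\mu^{*mk}(b^k)\ge p^k$, gives $\eta\{h:\Phi(h)\subset B(b^{-k}x,R)\}>D-\epsilon/p^k$.
\item These sets are pairwise disjoint for $k=1,\dots,K$. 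Letting $\epsilon\to0$ gives $KD\le1$ for every $K$, so $D=0$.
\item Separability of $X$ then shows that $\eta$ gives zero mass to the finite horofunctions.
\end{itemize}
Only one loxodromic is used here. The pair of independent loxodromics is genuinely needed in step 5. There, at most one of the two attracting shadows can lie near $w_n^{-1}x_0$, so at least one of them is contracted into a deep shadow around $w_nx_0$ by $w_n$.
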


We call the measures on $\partial X$ obtained in
\Cref{theorem:boundary_convergence} the \emph{hitting measures} for
the random walk.  If $\mu$ is symmetric, then the forward and backward
hitting measures are equal, i.e. $\nu = \rnu$.

\medskip
For almost every sample path $w = (w_n)_{n \in \ZZ}$, the forward and backward limits $x^+_\infty = \lim_{n \to \infty} w_n x_0$ and $x^-_\infty = \lim_{n \to -\infty} w_n x_0$ are different from each other. Hence, almost every bi-infinite sample path $w$ defines a bi-infinite geodesic $\gamma_w$ in $X$ and all choices for $\gamma$ uniformly fellow travel, in the sense that the fellow traveling constant depends only on the Gromov-hyperbolicity constant. Making a choice for the geodesic, we call it the geodesic \emph{tracked} by the sample path.

\medskip 
The distance $d_X(x_0, w_n x_0)$ grows linearly with
high probability.  This was shown by \cite{bmss} for $\mu$ with finite
exponential moment, and by Gou\"ezel \cite{Gouezel}*{Theorem 1.1} for
general $\mu$.

\begin{lemma}\cite{bmss}\cite{Gouezel}\label{lemma:linear progress}
Suppose that $G$ is a countable group with a non-elementary action on a Gromov hyperbolic space $X$. Suppose that $\mu$ is a non-elementary
probability measure on $G$.  Then there are constants
$\ell > 0, K \ge 0$ and $0 < c < 1$ such that
\[\mathbb{P}( d_X(x_0, w_n x_0) \le \ell n ) \le K c^n. \]
Furthermore, if $\mu$ has finite exponential moment in $X$, then for
any $\epsilon > 0$ there are constants $\ell > 0, K \ge 0$ and $c
< 1$ such that
\[\mathbb{P}( | \tfrac{1}{n} d_X(x_0, w_n x_0) - \ell | \ge \epsilon )
\le K c^n. \]
\end{lemma}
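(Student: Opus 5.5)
The plan is to follow the pivotal-times strategy of Gou\"ezel for the first statement, and then to upgrade it to a two-sided large deviation estimate using the exponential moment, by cutting the walk into blocks.

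\emph{Lower bound without moments.} Since $\mu$ is non-elementary, for some $m$ the support of $\mu^{*m}$ contains two independent loxodromic isometries. Taking large powers of these, I would build a finite Schottky set $\mathcal{S}$ of group elements $s$ with large translation length $d(x_0, s x_0) \ge C$ and pairwise large angles at $x_0$. Given any $g$, all but a bounded number of $s \in \mathcal{S}$ satisfy $\GP{g^{-1}x_0}{s x_0}_{x_0} \le \delta'$. Next, write $\mu^{*m} = \alpha\, \mu_{\mathcal{S}} + (1-\alpha)\mu'$ with $\mu_{\mathcal{S}}$ uniform on $\mathcal{S}$. At each block of $m$ steps, a coin decides whether the increment is drawn from $\mathcal{S}$. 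Call a Schottky increment \emph{pivotal} if it is aligned with both the past and the future. By the local-to-global principle for chains of aligned geodesic segments in a $\delta$-hyperbolic space, each pivotal time that survives to time $n$ contributes at least $C - O(\delta)$ to $d(x_0, w_n x_0)$. Gou\"ezel's resampling argument, which uses the freedom to resample the Schottky choices while leaving the other increments fixed, shows that the number $P_n$ of surviving pivotal times dominates a random walk on $\ZZ$ with positive drift. Hence $\mathbb{P}(P_n \le c' n) \le K c^n$, and this gives the first inequality with $\ell = c'(C - O(\delta))/m$. This step needs no moment hypothesis.

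\emph{Concentration with an exponential moment.} By Kingman's subadditive ergodic theorem the drift $\ell = \lim \tfrac1n \mathbb{E}\, d(x_0, w_n x_0)$ exists, and it is positive by the first part. Fix a block length $N$ with $\tfrac1N\mathbb{E}\,d(x_0, w_N x_0) \le \ell + \epsilon/4$.

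For the \emph{upper tail}, the triangle inequality gives that $d(x_0, w_n x_0)$ is at most the sum of $n/N$ i.i.d.\ block displacements plus a remainder. Each block displacement has an exponential moment, since it is bounded by a sum of the step displacements $d(x_0, g_i x_0)$. A Chernoff (Cram\'er) bound for the sum then gives $\mathbb{P}\bigl(d(x_0, w_n x_0) \ge (\ell+\epsilon)n\bigr) \le K c^n$.

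For the \emph{lower tail}, I would use hyperbolicity to reverse the inequality up to Gromov products. If $w_n = h_1 \cdots h_r$ is the block decomposition, then $d(x_0, w_n x_0)$ is at least $\sum d(x_0, h_j x_0)$ minus $2\sum_j \GP{\,\cdot\,}{\,\cdot\,}_{w_{jN}x_0}$, the sum of twice the Gromov products at the block junctions, minus $O(r\delta)$. Here I would choose $N$ large so that $\tfrac1N\mathbb{E}\,d(x_0,w_Nx_0) \ge \ell - \epsilon/4$. The hard point is to show that these junction Gromov products have uniformly exponential tails and are only weakly dependent, so that their sum is at most $\epsilon n/4$ off an exponentially small set. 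For the tails I would use the exponential decay of shadows, which follows from non-atomicity of the hitting measures $\nu, \rnu$ (\Cref{theorem:boundary_convergence}) together with the exponential moment. To handle the dependence, I would treat even-indexed and odd-indexed junctions separately, and apply a Chernoff bound to a sum of variables each dominated by a fixed exponential tail.

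This lower-tail control of the accumulated Gromov products is the main obstacle. If it proves delicate, the fallback is to run the pivotal-time machinery directly: truncating each step at level $\epsilon n$ costs only $K c^n$ by the exponential moment. The distance is then comparable to the sum of displacements between consecutive pivotal times, which are i.i.d.-like excursions with exponential tails, and Cram\'er's theorem again applies.
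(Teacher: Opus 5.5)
The paper does not prove this lemma. It quotes Gou\"ezel \cite{Gouezel}*{Theorem 1.1} for the moment-free bound and \cite{bmss} for the two-sided estimate. Your first part outlines Gou\"ezel's pivotal-time argument and is acceptable at that level. Your upper tail (subadditivity, i.i.d.\ block displacements with an exponential moment, then Cram\'er) is correct. The gap is the lower tail in the second statement: $\mathbb{P}\bigl(d_X(x_0,w_nx_0)\le(\ell-\epsilon)n\bigr)\le Kc^n$ for \emph{every} $\epsilon>0$. This is the nontrivial half of the concentration estimate, and the mechanism you propose for it does not work.

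\textbf{The junction terms.} With $h_j=w_{(j-1)N}^{-1}w_{jN}$, the exact identity is
\[ d_X(x_0,w_{rN}x_0)=\sum_{j=1}^r d_X(x_0,h_jx_0)-2\sum_{j=1}^r Z_j, \qquad Z_j=\GP{x_0}{w_{jN}x_0}_{w_{(j-1)N}x_0}=\GP{w_{(j-1)N}^{-1}x_0}{h_jx_0}_{x_0}. \]
Each $Z_j$ depends on the entire past, so separating even and odd indices produces no independence.

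If you instead meant the local products $\GP{w_{(j-1)N}x_0}{w_{(j+1)N}x_0}_{w_{jN}x_0}$, these do depend only on $h_j,h_{j+1}$. But then your inequality is false even in a tree, where $\delta=0$, because local products do not detect backtracking that spans several blocks. For example, the points $0,2a,a,0$ on a line have local products $a$ and $0$ and steps of total length $4a$. Your bound gives $2a$ while the distance is $0$, and $a$ is arbitrary. The local-to-global principle needs every block to be long relative to its junction products, and that fails on a positive proportion of blocks.

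\textbf{Marginal tails are not enough.} A common marginal exponential tail for the $Z_j$ is what \Cref{prop:gp} supplies. It does not give exponential concentration of $\sum_j Z_j$ for dependent variables; take all $Z_j$ equal. What is needed is a tail bound conditional on the past and uniform in it:
\[ \sup_{x\in X}\mathbb{P}\bigl(\GP{x}{w_Nx_0}_{x_0}\ge R\bigr)\le Kc^R, \]
with $K,c$ independent of $N$. Granting that:
\begin{itemize}
\item $\mathbb{E}\bigl[e^{\lambda Z_j}\mid w_{(j-1)N}\bigr]$ is bounded by a constant, so $\sum_jZ_j\le C_1r$ outside an event of probability $e^{-c'r}$, with $C_1$ independent of $N$;
\item taking $N\ge 8C_1/\epsilon$ and using the Cram\'er lower tail for $\sum_j d_X(x_0,h_jx_0)$ finishes the argument, since each term has mean at least $\ell N$ by subadditivity.
\end{itemize}
That uniform shadow estimate for $\mu^{*N}$ is a real lemma, and your proposal does not supply it.

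\textbf{The fallback.} Your fallback does not fill the gap either. Gou\"ezel's pivotal times can be cancelled by later steps, so the excursions between them are not i.i.d. Counting pivots only gives a rate fixed by the Schottky set, not one within $\epsilon$ of $\ell$.
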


The Gromov product of three points $a, b, c$ in a metric space $X$ is defined to be
\[
\GP{b}{c}_a = \tfrac{1}{2} ( d_X(a, b) + d_X(a, c) - d_X(b, c) )
\]
When $X$ is $\delta$-hyperbolic, the Gromov product $\GP{b}{c}_a$
equals the distance from $a$ to a geodesic from $b$ to $c$, up to
a bounded error depending only on $\delta$.  In this case, the product
can be extended to points which lie in the boundary, i.e. we may
choose $b, c \in \overline{X} = X \cup \partial X$.

\medskip The ``shadow'' will, roughly speaking, be the set of all points $c$ in $\overline{X}$ such that any geodesic from $a$ to $c$ passes close to $b$. Formally:

\begin{definition}\label{def:shadow}
Suppose that $a$ is a point in a Gromov hyperbolic space $X$ and $b$
is a point in $\overline{X}$.  Suppose that $r \ge 0$ is a constant.
The \emph{shadow} $\mho_a(b, r) \subseteq \overline{X}$ consists of
the closure of the set of all points $c$, such that
$\GP{b}{c}_a \ge r$.
\end{definition}

This differs from the usual definition of shadows, which is the
closure of the following set, where again $a\in X$ and $b\in\overline{X}$:
\[ \mho_a(b, r) = \{ c \in \overline{X} \mid \GP{b}{c}_a \ge d_X(a, b)
- r \}. \]
The two definitions are equivalent by setting $r = d_X(a, b) - R$ for
points $b \in X$, but our definition extends more conveniently to
points $b \in \partial X$.

\medskip
We will use the following Gromov product estimates from \cite{bmss}.
We state the version incorporating the linear progress results of
\cite{Gouezel}.  As shadows are defined in terms of the Gromov
product, we also state the results in terms of shadows.

\begin{proposition}\cite{bmss}*{Proposition 2.11}\cite{Gouezel}*{Theorem 1.1}
\label{prop:gp}\label{lemma:exponential decay}
Suppose that $G$ is a countable group with a non-elementary action on a geodesic Gromov-hyperbolic space $X$ with basepoint $x_0$. Suppose that $\mu$ is a non-elementary probability measure on $G$ with a finite exponential
moment. 
Then there are constants $K> 0$ and $c < 1$ such
that for all $0 \le i \le n$ and all $R > 0$ one has
\[ \PP \left( \GP{x_0}{w_n x_0}_{w_i x_0} \ge R \right) \le K c^{R}. \]
%
%
%
Furthermore, using the definition of shadows,
\[ \PP \left( \nu( \mho_{x_0}(w_n x_0, R) ) \right) \le K c^{R}. \]
\end{proposition}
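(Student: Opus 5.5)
The plan is to reduce the two–sided estimate to a one–sided, uniform shadow estimate, using the group action and the independence of increments. Since $G$ acts by isometries,
\[ \GP{x_0}{w_n x_0}_{w_i x_0} = \GP{w_i^{-1} x_0}{w_i^{-1} w_n x_0}_{x_0}. \]
Here $w_i^{-1} = g_i^{-1} \cdots g_1^{-1}$ is the location at time $i$ of a random walk driven by the reflected measure $\widecheck\mu(g) = \mu(g^{-1})$. Likewise $w_i^{-1} w_n = g_{i+1} \cdots g_n$ is the location at time $n-i$ of a $\mu$-random walk. These two walks are independent, being built from disjoint blocks of steps. Conditioning on $a = w_i^{-1} x_0$, it therefore suffices to prove the following uniform statement, for both $\mu$ and $\widecheck\mu$. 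There are $K, c < 1$ such that for every $y \in X$ and every $m \ge 0$,
\[ \PP\big( \GP{y}{w_m x_0}_{x_0} \ge R \big) \le K c^R . \tag{$\ast$} \]
Integrating over $a$ then gives the first inequality with the same constants. Note that $\widecheck\mu$ is again non-elementary with finite exponential moment, so both cases are covered by the same argument.

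To prove $(\ast)$, I would first obtain a uniform non-concentration bound. Since the hitting measure $\nu$ is non-atomic (\Cref{theorem:boundary_convergence}), a compactness argument on $\overline X$ gives $r_0$ and $\epsilon > 0$ with
\[ \sup_y \nu\big(\mho_{x_0}(y, r_0)\big) \le 1 - 2\epsilon . \]
Using convergence $w_m x_0 \to \partial X$ together with stationarity, this transfers to the finite-time bound $\sup_{y,m} \PP\big(\GP{y}{w_m x_0}_{x_0} \ge r_0 + C\big) \le 1 - \epsilon$, where $C$ is a constant depending on $\delta$. I would then bootstrap to exponential decay with a Markov/iteration argument. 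If $\GP{y}{w_m x_0}_{x_0} \ge kD$, then the path must follow the geodesic $[x_0, y]$ past distances $D, 2D, \dots, kD$. By linear progress (\Cref{lemma:linear progress}), it enters a ball near the point at distance $jD$ at some first time. From there, the strong Markov property and the translated non-concentration bound say that it continues to track $[x_0, y]$ with probability at most $1-\epsilon$. Multiplying these bounds gives $(1-\epsilon)^k$, which is exponential in $R$.

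The main obstacle is the iteration step. The walk can jump over the checkpoint balls rather than pass near them, and the stopping times must be handled carefully. I would control jumps with the exponential moment: the probability that some step has size at least $D$ before time comparable to $R$ is exponentially small in $D$. For general non-elementary $\mu$, I would instead invoke Gou\"ezel's argument, where \cite{Gouezel} supplies exactly this estimate. Finally, the shadow statement should follow by letting $n$ be replaced by a time $m \to \infty$. Since $w_m x_0 \to \xi$ almost surely, lower semicontinuity of Gromov products up to $O(\delta)$ shows that the event that $\xi$ lies in the relevant shadow of radius $R$ is contained in the $\liminf$ of the events $\GP{\,\cdot\,}{w_m x_0} \ge R - O(\delta)$. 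The first inequality and Fatou's lemma then give the bound $K c^{R}$ after adjusting $K$.
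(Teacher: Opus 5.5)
The paper does not prove this proposition; it imports it from \cite{bmss}*{Proposition 2.11} and \cite{Gouezel}*{Theorem 1.1}. Your opening reduction is correct. By isometry invariance and independence of the blocks $g_1\cdots g_i$ and $g_{i+1}\cdots g_n$, the two-sided bound follows from the uniform one-sided bound $(\ast)$ for the $\mu$-walk alone. Once you condition on $a=w_i^{-1}x_0$, the reflected walk plays no further role, so you do not need $(\ast)$ for $\widecheck\mu$. Your Fatou argument also correctly turns $(\ast)$ into $\sup_b \nu(\mho_{x_0}(b,R))\le K'c^{R}$. That is the form of the second display actually used later, in the proof of Corollary~\ref{cor:bscor}.

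The gap is in the bootstrap that is supposed to prove $(\ast)$.

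\emph{Wrong checkpoints.} The event $\GP{y}{w_m x_0}_{x_0}\ge kD$ does not force the sample path to pass near the points of $[x_0,y]$ at distances $D,2D,\dots$. Even with bounded steps the path can go around them, and linear progress is irrelevant here. The event only forces the path to enter the nested unbounded regions $S_j=\{z : \GP{y}{z}_{x_0}\ge jD\}$. So the stopping times must be first entrance times into $S_j$.

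\emph{Wrong bound at each stage.} What you must bound at each stage is the probability that the continued walk \emph{ever} enters a shadow $\mho_{x_0}(y',D')$, not the fixed-time probability you derived. The strong Markov property at the entrance time does give this. However, it needs the stronger input $\sup_y\nu(\mho_{x_0}(y,r))\to 0$ as $r\to\infty$, not merely $\le 1-2\epsilon$. Your compactness argument for that also needs $X$ proper. This holds for $\widetilde S_h$ and $\widetilde S_h\times\RR$, but not in the stated generality.

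\emph{Bounded steps: the argument closes.} If steps have length at most $B$, then since $z\mapsto\GP{y}{z}_{x_0}$ is $1$-Lipschitz, the overshoot at each entrance is at most $B$. Each stage then succeeds with probability bounded away from $1$ once $D\gg B$.

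\emph{Exponential moment with unbounded support: it does not close.} The overshoot is now the length of the step taken at a stopping time, and that step is not $\mu$-distributed. Your union bound is also mis-scaled. With $D$ fixed, steps of length at least $D$ occur with positive frequency, so one of the first $\sim R$ steps has length $\ge D$ with probability close to $1$, not $O(e^{-cD})$. Moreover, the entrance times are not $O(R)$, because the bound must be uniform in $m$.

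Controlling this overshoot is exactly where the moment hypothesis is used. Without it the estimate already fails for $i=1$, $n=2$, for example with heavy-tailed powers of a single loxodromic. This control is the real content of the cited results. Deferring it to Gou\"ezel therefore reduces your proposal to the same citation the paper makes.
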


Let $\gamma(t_n)$ be a closest point on $\gamma$ to $w_n x_0$.  Then
combining \Cref{lemma:linear progress} and \Cref{prop:gp} gives the
following linear progress result for the $t_n$.

The above results imply that the distance from a
location $w_n x_0$ to the tracked geodesic $\gamma$ is given by a probability measure
with exponential decay.

\begin{proposition}\label{prop:distance to geodesic}
Suppose that $G$ is a countable group with a non-elementary action on
a geodesic Gromov hyperbolic space $X$ with basepoint $x_0$. Suppose
that $\mu$ is a non-elementary probability measure on $G$ with finite exponential moment with respect
to $X$.
Then there
are constants $K \ge 0$ and $c < 1$, which do not depend on $n$, such that
$\mathbb{P}( d_X( w_n x_0 , \gamma) \ge r ) \le K c^{r}$. \qed
\end{proposition}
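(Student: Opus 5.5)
The idea is to split the event $\{ d_X(w_n x_0, \gamma) \ge r \}$ into two Gromov product events and control each with \Cref{prop:gp}. Here $\gamma$ is the geodesic tracked by the bi-infinite sample path, with endpoints $x^-_\infty = \lim_{m \to -\infty} w_m x_0$ and $x^+_\infty = \lim_{m\to\infty} w_m x_0$.

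\textbf{Step 1: reduce to Gromov products.} By $\delta$-hyperbolicity, the distance from $w_n x_0$ to the bi-infinite geodesic $\gamma$ equals $\GP{x^-_\infty}{x^+_\infty}_{w_n x_0}$ up to an additive error depending only on $\delta$. Next, for any three points $a,b,c$ and basepoint $p$ we have $\GP{a}{c}_p \ge \min\{\GP{a}{b}_p, \GP{b}{c}_p\} - \delta$. Applying this with $b = w_{n+m} x_0$ and $b = w_{n-m}x_0$ and letting $m \to \infty$ along the sample path, the bound $\GP{x^-_\infty}{x^+_\infty}_{w_n x_0} \ge r$ forces at least one of
\[ \GP{x^-_\infty}{w_n x_0}_{w_n x_0}\text{-type terms},\quad \text{i.e.}\quad \GP{w_{n-m}x_0}{w_{n+m}x_0}_{w_n x_0} \ge r - C \]
to hold for all large $m$. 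Here $C$ depends only on $\delta$, and the limits exist by \Cref{theorem:boundary_convergence}.

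\textbf{Step 2: translate to the identity.} Apply the isometry $w_n^{-1}$. The product $\GP{w_{n-m}x_0}{w_{n+m}x_0}_{w_n x_0}$ becomes $\GP{w_n^{-1}w_{n-m}x_0}{w_n^{-1}w_{n+m}x_0}_{x_0}$. By independence and stationarity of the increments, the forward path $(w_n^{-1} w_{n+j})_{j\ge0}$ is a $\mu$-random walk. The backward path $(w_n^{-1}w_{n-j})_{j \ge 0}$ is an independent $\widecheck\mu$-random walk, where $\widecheck\mu(g)=\mu(g^{-1})$, which is again non-elementary with finite exponential moment. Their joint law does not depend on $n$. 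This independence from $n$ is exactly what makes the constants $K, c$ uniform.

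\textbf{Step 3: exponential decay.} It remains to bound $\PP\bigl(\GP{x^-}{x^+}_{x_0} \ge r - C\bigr)$, where $x^\pm$ are the limit points of two independent random walks started at $x_0$. I see two routes:

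\emph{Route (a).} Use the crude estimate $\GP{x^-}{x^+}_{x_0} \le \min\{\GP{x^-}{x_0}\dots\}$ together with the Gromov-product inequality. The event then implies that both limit points lie in a common shadow $\mho_{x_0}(\xi, r - C')$ for some $\xi$.

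\emph{Route (b).} Condition on $x^-$ and use the shadow bound of \Cref{prop:gp} for the forward walk. The hitting measure of a shadow $\mho_{x_0}(\xi, R)$ decays like $K c^{R}$ uniformly in $\xi \in \partial X$. This uniform bound is the standard consequence of the exponential moment estimate, via the decay of $\PP(\GP{x_0}{w_n x_0}_{w_i x_0}\ge R)$ and passage to the limit.

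Route (b) gives $\PP \le \sup_\xi \nu(\mho_{x_0}(\xi, r-C)) \le K c^{r-C}$, and absorbing $c^{-C}$ into $K$ finishes the proof.

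\textbf{Main obstacle.} The delicate step is the uniform shadow decay $\sup_\xi \nu(\mho_{x_0}(\xi,R)) \le K c^R$ used in Step 3. \Cref{prop:gp} is stated with fixed indices and a possibly garbled shadow clause. My first attempt would derive it directly: if $x^+ \in \mho_{x_0}(\xi,R)$, the forward path must pass within $O(\delta)$ of the geodesic ray $[x_0,\xi)$ at distance about $R$. Linear progress (\Cref{lemma:linear progress}) gives a time $i \approx R/\ell$ at which this must happen, and a union bound over the exponential tail of the first Gromov-product estimate then gives geometric decay in $R$.
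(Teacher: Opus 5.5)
Your argument is correct, but it uses a different clause of \Cref{prop:gp} than the paper implicitly does. The paper writes no proof and treats the statement as immediate from \Cref{prop:gp}. The natural reading is a finite-time route, which your Step~1 already sets up. Translate by $w_{n-m}^{-1}$ rather than $w_n^{-1}$. Then $\GP{w_{n-m}x_0}{w_{n+m}x_0}_{w_n x_0} = \GP{x_0}{w'_{2m}x_0}_{w'_m x_0}$, where $w'_j = g_{n-m+1}\cdots g_{n-m+j}$ is a $\mu$-random walk. The first clause of \Cref{prop:gp}, with $i=m$ and length $2m$, bounds the probability that this is at least $r-C$ by $Kc^{r-C}$, uniformly in $m$ and $n$. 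By your Step~1, the event $\{d_X(w_nx_0,\gamma)\ge r\}$ is contained in the $\liminf$ over $m$ of these events, and Fatou's lemma finishes.

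Your route instead translates by $w_n^{-1}$, splits the path into independent $\mu$- and $\widecheck{\mu}$-walks, and reduces to $\sup_{\xi}\nu(\mho_{x_0}(\xi,R))\le Kc^R$. This makes explicit that the law of $d_X(w_nx_0,\gamma)$ does not depend on $n$: up to $O(\delta)$ it is the law of $\GP{x^-}{x^+}_{x_0}$ under $\rnu\times\nu$. The cost is a shadow estimate uniform over all $\xi\in\partial X$. The finite-time route needs only the unambiguously stated first clause and no backward walk.

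The uniform shadow bound is standard under a finite exponential moment. It is what the garbled second clause of \Cref{prop:gp} is meant to say, and the paper uses it in exactly that form in the proof of \Cref{cor:bscor}, so citing it is legitimate. Do not rely on the derivation you sketch under ``Main obstacle'', though. With unbounded steps, the sample path need not pass within $O(\delta)$ of the ray $[x_0,\xi)$. Even granting a time $i$ with $w_ix_0$ near the point of the ray at distance $R$, the estimate $\PP(\GP{x_0}{w_nx_0}_{w_ix_0}\ge R)\le Kc^R$ controls how far $w_ix_0$ strays from $[x_0,w_nx_0]$. It does not bound the probability that $w_ix_0$ lands near a prescribed point, so a union bound over it gives nothing.

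Route~(a) should also be dropped. As written it is garbled, since $\GP{x^-}{x_0}_{x_0}=0$. Its common-shadow formulation is just \Cref{lemma:diagonal}, which assumes a geometric $\mu$ and whose proof in the paper reduces back to the present statement.
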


The Borel-Cantelli Lemma then gives the following corollary.

\begin{corollary}
\label{cor:deviation}
Suppose that $G$ is a countable group with a non-elementary action on a
geodesic Gromov hyperbolic space $X$ with basepoint $x_0$. Suppose
that $\mu$ is a non-elementary probability measure on $G$ with finite exponential moment with respect
to $X$. Then there is a constant $D> 0$, such that for almost all bi-infinite sample paths $w = (w_n)$, there is a tracked geodesic $\gamma = \gamma_w$, and an integer $N$
such that for all $n \ge N$
\[ d_X(w_n x_0 , \gamma ) \le D \log n. \]
\end{corollary}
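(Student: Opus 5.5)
The plan is to derive the logarithmic bound from the exponential tail estimate of \Cref{prop:distance to geodesic} and a Borel--Cantelli argument. First I would fix a measurable choice of tracked geodesic $\gamma_w$ for almost every bi-infinite sample path. Such a geodesic exists because, by \Cref{theorem:boundary_convergence}, the forward and backward limits $x^\pm_\infty$ exist and are distinct almost surely. The choice is canonical up to a uniform fellow-traveling constant depending only on $\delta$, so changing the choice only alters the distances $d_X(w_n x_0, \gamma)$ by a bounded additive amount. That amount can be absorbed into the constant $D$ once $n \geq 2$.

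Next I would apply \Cref{prop:distance to geodesic}. It gives constants $K \geq 0$ and $c < 1$, independent of $n$, with $\mathbb{P}( d_X(w_n x_0, \gamma) \geq r ) \leq K c^r$ for every $r$. I would take $r = D \log n$, which gives
\[ \mathbb{P}\big( d_X(w_n x_0, \gamma) \geq D \log n \big) \leq K c^{D \log n} = K n^{-D \log(1/c)}. \]
Choosing $D > 2/\log(1/c)$ makes the exponent less than $-2$, so the sum over $n \geq 1$ of these probabilities is finite.

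The first Borel--Cantelli lemma then shows that, almost surely, only finitely many of the events $\{ d_X(w_n x_0, \gamma) \geq D \log n \}$ occur. Hence there is a random $N = N(w)$ such that $d_X(w_n x_0, \gamma) \leq D \log n$ for all $n \geq N$, which is the statement.

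The main point needing care is that \Cref{prop:distance to geodesic} is applied to the bi-infinite tracked geodesic, whose law depends on the whole path. The estimate is uniform in $n$, however, and that uniformity is exactly what makes the sum converge. If one instead insisted on a specific tracked geodesic, I would handle it through the bounded ambiguity described above.
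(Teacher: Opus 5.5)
Your proposal is correct and is essentially the paper's proof: apply the exponential tail bound of \Cref{prop:distance to geodesic} with $r = D \log n$ to get $K n^{-D\log(1/c)}$, choose $D$ so this is summable, and conclude by the first Borel--Cantelli lemma. The paper only needs the exponent $D\log(1/c) > 1$; your choice of exponent greater than $2$ is stronger than necessary but equally valid.
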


\begin{proof}
By \Cref{prop:distance to geodesic}, for any $D > 0$ we have
\[
\PP (d_X(w_n x_0, \gamma) \ge D \log n) \le K \exp(D \log n \log c). 
\]
We choose $D>0$ such that $D  \log c < -1$. Set $\alpha = -D \log c$.
Then 
\[
\sum_n \PP (d_X(w_n x_0, \gamma) \ge D \log n) \le K \sum_n \frac{1}{n^\alpha} < \infty.
\]
Hence, by the Borel-Cantelli Lemma, we deduce that for almost every sample path, there is a natural number $N$ (that depends on the path) such that 
$d (w_n x_0 , \gamma) < D \log n$ for all $n \ge N$.
\end{proof}

\begin{corollary}\label{cor:tlinear}
Suppose that $G$ is a countable group with a non-elementary action on a geodesic Gromov hyperbolic space $X$ with basepoint $x_0$. Suppose that $\mu$ is a non-elementary probability measure on $G$.
Then there are constants
$\ell > 0, K \ge 0$ and $0 < c < 1$ such that
\[\mathbb{P}( t_n \le \ell n ) \le K c^n. \]
Furthermore, if $\mu$ has finite exponential moment with respect to $X$, then for
any $\epsilon > 0$ there are constants $\ell > 0, K \ge 0$ and $c
< 1$ such that
\[\mathbb{P}( | \tfrac{1}{n} t_n - \ell | \ge \epsilon )
\le K c^n. \]
\end{corollary}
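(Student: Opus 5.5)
The plan is to deduce \Cref{cor:tlinear} from \Cref{lemma:linear progress} together with the exponential decay of the distance from $w_n x_0$ to the tracked geodesic in \Cref{prop:distance to geodesic}. First normalize the parametrization: let $\gamma$ be the tracked geodesic of the bi-infinite sample path and let $\gamma(t_0)$ be a closest point on $\gamma$ to $x_0 = w_0 x_0$, so that $t_n$ is measured from there and $\gamma$ is oriented towards the forward limit $x^+_\infty$. Since $\gamma(t_n)$ is a closest point to $w_n x_0$, the triangle inequality gives
\[ |t_n - t_0| \ge d_X(x_0, w_n x_0) - d_X(x_0, \gamma) - d_X(w_n x_0, \gamma), \]
and also $|t_n - t_0| \le d_X(x_0,w_nx_0) + d_X(x_0,\gamma)+d_X(w_nx_0,\gamma)$. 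By \Cref{prop:distance to geodesic} (applied at times $0$ and $n$), the two error terms exceed $\epsilon n$ with probability at most $K c^{\epsilon n}$. Combined with \Cref{lemma:linear progress}, this shows that $|t_n - t_0|$ is within $2\epsilon n$ of $d_X(x_0,w_nx_0)$ outside an exponentially small set, which gives the second statement (with the same drift $\ell$ and $\epsilon$ replaced by $3\epsilon$) once the sign of $t_n - t_0$ is controlled.

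Controlling the sign is the remaining step: we must exclude the event that $w_n x_0$ projects far along $\gamma$ in the backward direction, towards $x^-_\infty$. For this I would use the Gromov product estimate \Cref{prop:gp}. If $t_n - t_0 \le -\ell n/2$ while $w_n x_0$ is within $\epsilon n$ of $\gamma$, then $x_0$ lies close to a geodesic between $w_n x_0$ and $x^+_\infty$, so $\GP{w_n x_0}{x^+_\infty}_{x_0}$ is small while $d(x_0,w_nx_0)$ is large. Equivalently, the geodesic from $x_0$ to the forward limit is approximated by geodesics from $x_0$ to $w_m x_0$ for large $m$, and $\GP{x_0}{w_m x_0}_{w_n x_0}$ would then be at least of order $\ell n$. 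By \Cref{prop:gp} with $i = n$ and $R$ of order $\ell n$, uniformly in $m \ge n$, this has probability at most $K c^{\ell n/2}$, and passing to the limit $m\to\infty$ keeps the bound. Hence $t_n - t_0 > 0$ outside an exponentially small set.

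For the first statement, where $\mu$ is only non-elementary and may lack moments, \Cref{prop:distance to geodesic} is not available, so I would instead work directly with Gromov products in the form of Gou\"ezel's result cited for \Cref{lemma:linear progress}. The first part of that lemma gives $d(x_0,w_nx_0) \ge \ell n$ off an exponentially small set, and the same argument in Gou\"ezel's paper yields exponential control of $\GP{x_0}{x^+_\infty}_{w_n x_0}$ from below, which is what is needed to place the projection of $w_nx_0$ at distance $\gtrsim \ell n$ ahead on $\gamma$. This is the main obstacle: without moments, the tail of $d(w_nx_0,\gamma)$ is only controlled through such Gromov product estimates. Since only a one-sided lower bound is required, it suffices to show that $t_n$ is at least a fixed fraction of $\GP{x^-_\infty}{x^+_\infty}$-type quantities at $w_nx_0$, and to reduce by $\delta$-thin triangles to a bound of the form $t_n - t_0 \ge d(x_0,w_nx_0) - 2\GP{x^-_\infty}{w_nx_0}_{x_0} - O(\delta)$ whose error term has exponential tails.
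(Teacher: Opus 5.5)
Your argument for the second assertion (finite exponential moment) is correct and is essentially the paper's route. The paper's only justification is that the corollary follows by combining \Cref{lemma:linear progress} with the Gromov product estimates of \Cref{prop:gp}. Your comparison of $|t_n-t_0|$ with $d_X(x_0,w_nx_0)$ via \Cref{prop:distance to geodesic}, plus the sign control from \Cref{prop:gp} with $i=n$ and $m\to\infty$, is a faithful expansion of that. It also supplies the orientation step the paper leaves implicit.

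The first assertion, with no moment hypothesis, is where your proposal has a genuine gap.

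\textbf{The reduction is false.} You aim for $t_n - t_0 \ge d_X(x_0,w_nx_0) - 2\GP{x^-_\infty}{w_nx_0}_{x_0} - O(\delta)$. In a tree, put $x_0$ on $\gamma$ and let $w_nx_0$ lie at distance $D$ from $\gamma$, projecting a distance $s$ ahead of $x_0$. Then $t_n-t_0=s$, $\GP{x^-_\infty}{w_nx_0}_{x_0}=0$ and $d_X(x_0,w_nx_0)=s+D$, so the inequality fails once $D$ is large. The correct coarse identity is $t_n-t_0\approx d_X(x_0,w_nx_0)-\GP{x^-_\infty}{w_nx_0}_{x_0}-\GP{x_0}{x^+_\infty}_{w_nx_0}$.

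\textbf{No repair of this shape can work.} Whenever $|t_n-t_0|$ is large, thin triangles give $|t_n-t_0|\le d_X(x_0,w_nx_0)-d_X(x_0,\gamma)-d_X(w_nx_0,\gamma)+O(\delta)$. So any bound $t_n-t_0\ge d_X(x_0,w_nx_0)-E_n$ forces $E_n\ge d_X(w_nx_0,\gamma)-O(\delta)$. Your plan then needs $\PP(E_n\ge\epsilon n)$ to decay exponentially, and without moments this fails. Let $\mu$ give mass $\asymp m^{-2}$ to $a^{\pm m}$ for a loxodromic $a$, plus a finite generating set. With probability $\gtrsim(\epsilon n)^{-3}$ one has $g_n=a^{-m}$ and $g_{n+1}=a^m$ with $m\ge\epsilon n$, so $w_{n+1}=w_{n-1}$. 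With conditional probability bounded below, $d_X(w_nx_0,\gamma)\ge\epsilon n-O(1)$, while $t_n-t_0\approx t_{n-1}-t_0$ is still of order $n$.

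The same event gives $\GP{x_0}{x^+_\infty}_{w_nx_0}\ge\epsilon n-O(1)$. So the appeal to ``the same argument in Gou\"ezel's paper'' cannot give the upper control on that Gromov product that you would need.

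\textbf{What a proof would need.} An argument here has to bound a signed quantity directly, for instance $\GP{w_nx_0}{x^+_\infty}_{x_0}-\GP{x^-_\infty}{x^+_\infty}_{x_0}$, perhaps via Gou\"ezel-style pivotal times. Splitting off $d_X(x_0,w_nx_0)$ and discarding individually controlled errors will not do it.

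The paper does not supply this case either. The \Cref{prop:gp} it cites also assumes an exponential moment, and only the moment case is used later, in \Cref{prop:zero limit}.
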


We now use the Borel-Cantelli Lemma to show that the gap between
$t_n$ and $t_{n+1}$ is at most $\log n$.

\begin{proposition}\label{prop:segment bound}
Suppose that $G$ is a countable group with a non-elementary action on a geodesic Gromov hyperbolic space $X$ with basepoint $x_0$. Suppose that $\mu$ is a non-elementary probability measure on $G$ with finite exponential moment with respect to $X$. 
Then there is a constant $D> 0$, such that for almost all sample
paths, there is a tracked geodesic $\gamma$, and an integer $N$
such that for all $n \ge N$
\[ |t_{n+1} - t_n | \le D \log n. \]
\end{proposition}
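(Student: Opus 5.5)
We would bound the gap $|t_{n+1}-t_n|$ by a sum of three quantities and control each by Borel--Cantelli, following the proof of \Cref{cor:deviation}. Nearest point projection onto a geodesic in a $\delta$-hyperbolic space is coarsely $1$-Lipschitz: there is a constant $C$, depending only on $\delta$, with $d_X(p_\gamma(x), p_\gamma(y)) \le d_X(x,y) + C$ for all $x, y \in X$. Apply this with $x = w_n x_0$ and $y = w_{n+1} x_0$. Then
\[ |t_{n+1}-t_n| \le d_X(w_n x_0, w_{n+1} x_0) + C = d_X(x_0, g_{n+1} x_0) + C, \]
since $w_{n+1} = w_n g_{n+1}$ and the action is by isometries. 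The bound therefore reduces to controlling the single step length $d_X(x_0, g_{n+1}x_0)$.

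A more cautious version avoids the Lipschitz property of projections altogether. Since $\gamma(t_n)$ is a closest point to $w_n x_0$, the triangle inequality gives
\[ |t_{n+1}-t_n| = d_X(\gamma(t_n),\gamma(t_{n+1})) \le d_X(w_n x_0,\gamma) + d_X(x_0, g_{n+1}x_0) + d_X(w_{n+1}x_0,\gamma). \]
\Cref{cor:deviation} already says that almost surely the first and third terms are at most $D_1 \log n$ and $D_1\log(n+1)$ for all large $n$.

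For the middle term we use the finite exponential moment. The $g_{n+1}$ are identically distributed with law $\mu$, so Markov's inequality gives
\[ \PP\big(d_X(x_0,g_{n+1}x_0) \ge D_2 \log n\big) \le \Big(\sum_g \mu(g)\, c^{d_X(x_0,gx_0)}\Big)\, c^{-D_2\log n} = K n^{-D_2\log c}, \]
where $c>1$ is the exponential moment constant. We choose $D_2$ with $D_2 \log c > 1$, so the series over $n$ converges. Borel--Cantelli then shows that almost surely $d_X(x_0,g_{n+1}x_0)\le D_2\log n$ for all large $n$.

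Intersecting these two full-measure events, we get $|t_{n+1}-t_n| \le (2D_1+D_2)\log n + O(1)$ eventually. Enlarging $D$ absorbs both the additive constant and the difference between $\log(n+1)$ and $\log n$, which proves the statement with $D = 2D_1 + D_2 + 1$, say.

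The only point needing care is the exponential moment: the statement assumes it with respect to $X$, which is exactly what the Markov estimate uses. We must also ensure that the tracked geodesic $\gamma$ is the same one used in \Cref{cor:deviation}. This holds because both corollaries fix $\gamma=\gamma_w$ from the sample path, and all choices of $\gamma_w$ fellow travel uniformly. I expect no serious obstacle here; the main subtlety is just checking that the Borel--Cantelli exceptional sets are countably many null sets.
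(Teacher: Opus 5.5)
Your proposal is correct, and its second (``cautious'') version is essentially the paper's proof: both use the same three-term triangle inequality $|t_{n+1}-t_n| \le d_X(\gamma(t_n), w_n x_0) + d_X(w_n x_0, w_{n+1} x_0) + d_X(w_{n+1}x_0, \gamma(t_{n+1}))$ and control each term by an exponential tail bound and Borel--Cantelli. The differences are cosmetic: the paper merges the three tails with a union bound before a single Borel--Cantelli step and cites \Cref{prop:gp} for the step length, whereas you invoke \Cref{cor:deviation} for the outer terms and bound the step length directly by Markov's inequality and the exponential moment, which is cleaner. Your first route, using the coarse $1$-Lipschitz property of nearest point projection onto geodesics, is also valid and would make the distance-to-geodesic estimate unnecessary.
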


\begin{proof}
By \Cref{prop:gp} there are constants $K_1 > 0$
and $c_1 < 1$ such that
$\PP( d_X(w_n x_0, w_{n+1} x_0) \ge R ) \le K_1 c_1^R$.  By
\Cref{prop:distance to geodesic}, there are constants $K_2> 0$ and $c_2< 1$ such that for all
$n$, we have that $\PP( d_X(w_n x_0, \gamma) \ge R ) \le K_2 c_2^R$.  By the
triangle inequality,
\[ |t_{n+1} - t_n| \le d_X( \gamma(t_n), w_n x_0) + d_X( w_n x_0,
w_{n+1} x_0) + d_X( w_{n+1} x_0, \gamma(t_{n+1}) ). \]
If $|t_{n+1} - t_n| \ge 3R$, then at least one of the terms on the right is at least $R$.  Therefore,
$\PP( |t_{n+1} - t_n| \ge 3R ) \le \max \{ K_1 c_1^R, K_2 c_2^R \}$.
The result then follows from the Borel-Cantelli Lemma applied the same way as in the proof of \Cref{cor:deviation}.
\end{proof}

Finally, we verify that the $(\nu \times \rnu)$-measure, as defined in \Cref{theorem:boundary_convergence}, of the
diagonal
$\Delta \subset \partial X \times \partial X$ is zero.  We may define a neighborhood of the diagonal as follows (using any basepoint $a\in X$).
\[  N_r(\Delta) = \bigcup_{b \in \partial X} \mho_a(b, r) \times
\mho_a(b, r)   \]

\begin{lemma}\cite{Maher}*{Proposition 4.7}\label{lemma:diagonal}
Suppose that $G$ is a countable group with a non-elementary action on
a geodesic Gromov hyperbolic space $X$ with basepoint $x_0$.  Suppose
that $\mu$ is a geometric probability measure on $G$.  Then there are
constants $K \ge 0$ and $c < 1$ such that
$\nu \times \rnu( N_r(\Delta) ) \le K c^{r}$, with $\nu \times \rnu$ again as in \Cref{theorem:boundary_convergence}.
\end{lemma}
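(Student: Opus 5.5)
The plan is to show that the $(\nu\times\rnu)$-mass of $N_r(\Delta)$ is controlled by the $\nu$-mass of a single shadow, up to a countable union that we control using the group structure. First, reduce from arbitrary boundary points $b$ to shadows of orbit points. Fix the basepoint $a=x_0$. By $\delta$-hyperbolicity, if $\GP{b}{c}_{x_0}\ge r$ and $\GP{b}{c'}_{x_0}\ge r$, then $\GP{c}{c'}_{x_0}\ge r-\delta$. So
\[ N_r(\Delta)\subseteq\{(c,c') : \GP{c}{c'}_{x_0}\ge r-\delta\}. \]
Thus it suffices to bound $(\nu\times\rnu)\{\GP{c}{c'}_{x_0}\ge r\}$ exponentially in $r$.

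Second, realize pairs $(c,c')$ distributed as $\nu\times\rnu$ as the forward and backward limits $x^+_\infty,x^-_\infty$ of a bi-infinite sample path $(w_n)_{n\in\ZZ}$ with $w_0=1$. This works because the forward and backward increments are independent, with laws $\nu$ and $\rnu$ respectively; this is exactly the setup of \Cref{theorem:boundary_convergence}. Now $\GP{x^+_\infty}{x^-_\infty}_{x_0}$ is, up to $O(\delta)$, the distance from $x_0=w_0x_0$ to the tracked geodesic $\gamma_w$. So the event in question is contained in $\{d_X(w_0x_0,\gamma_w)\ge r-O(\delta)\}$.

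Third, apply \Cref{prop:distance to geodesic} with $n=0$, or equivalently \Cref{prop:gp}. A finite exponential moment with respect to a word metric transfers to $X$ because the action is cocompact, so orbit maps are Lipschitz. This gives $\PP(d_X(w_0x_0,\gamma_w)\ge r)\le Kc^r$. If one prefers to avoid quoting that proposition at $n=0$ directly, one can argue as follows. Far from $x_0$ the geodesic $\gamma_w$ stays near the points $w_{\pm n}x_0$. The event then forces either $\GP{w_nx_0}{w_{-m}x_0}_{x_0}$ to be large for some bounded $n,m$, or a deviation of the kind controlled by \Cref{prop:gp}. Summing the geometric tails gives the same bound.

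The main obstacle is the transfer between boundary points and finite times: passing from Gromov products of the limit points to Gromov products of $w_{\pm n}x_0$. For this we would use the standard fact that $\GP{w_nx_0}{x^+_\infty}_{x_0}\to\infty$, together with the exponential tail of \Cref{prop:gp}, taking $n$ of order $r$. Linear progress (\Cref{lemma:linear progress}) ensures $d(x_0,w_nx_0)\gtrsim \ell n$ outside an event of exponentially small probability. Combining these, with $n\approx r/\ell$, yields $\nu\times\rnu(N_r(\Delta))\le K'c'^r$.
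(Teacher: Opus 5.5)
Your argument is essentially the paper's: both use the fact that $\GP{x^+_\infty}{x^-_\infty}_{x_0}$ agrees with $d_X(x_0,\gamma_w)$ up to $O(\delta)$ to reduce to the exponential tail of the distance from $x_0 = w_0 x_0$ to the tracked geodesic, which is the $n=0$ case of \Cref{prop:distance to geodesic}, and your containment $N_r(\Delta)\subseteq\{d_X(x_0,\gamma_w)\ge r-O(\delta)\}$ is the direction actually needed (the paper's write-up argues the reverse containment and then invokes the very bound being proved). One small correction: moving the exponential moment from the word metric to $X$ does not need cocompactness, which the lemma does not assume, because $d_X(x_0,gx_0)\le \bigl(\max_{s\in S} d_X(x_0,sx_0)\bigr)\,|g|_S$ holds for any isometric action of a group with finite generating set $S$.
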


\begin{proof}
By the action of $G$ on
$(\partial X \times \partial X, \nu \times \rnu)$, it suffices to show
this for $d_X( x_0, \gamma)$.  
Suppose that $d_X( x_0 , \gamma) \ge r$. 
As the Gromov product of three points $\GP{b}{c}_{a}$ is, up to an error of at most $2 \delta$, equal to the
distance from $a$ to a geodesic from $b$ to $c$, the endpoints of $\gamma$ are contained in the neighborhood
$N_{r+ 2 \delta}(\Delta)$. 
The probability that this occurs is at
most
$\nu \times \rnu( N_{r+ 2\delta}( \Delta) ) \le Kc^{r+ 2 \delta}$, as
required.
\end{proof}

\subsection{The pseudo-metric on the surface and the flat metric}\label{section:flat}

In this section we review some well known results that relate
hyperbolic and flat metrics on surfaces, see for example
\cite{kapovich}*{Chapter 11}.

\medskip
A flat structure $S_q$ on a closed surface $S$ is a Euclidean cone
metric with finitely many cone points, each of whose angles are
integer multiples of $\pi$.  Furthermore, for any sufficiently small
coordinate chart disjoint from the cone points, there is a preferred
choice of orthogonal directions, known as either the horizontal and
vertical directions, or alternatively the real and imaginary
directions.  The integral lines of the horizontal directions give a
(singular) foliation called the horizontal foliation.  Similarly, the
integral lines of the vertical directions give a (singular) foliation
called the vertical foliation.  Flat lengths of orthogonal arcs define
transverse measures for the foliations.

\medskip The Cannon-Thurston metric given by \Cref{eq:ct metric} is $\pi_1(S)$-invariant.  Hence,
the restriction of the infinitesimal pseudometric on
$\widetilde S_h \times \RR$ to $S_0$ gives rise to an infinitesimal
pseudometric on $S_h$,
which we have denoted by $d_{\widetilde S_h}$.  By identifying points on $S_h$ that are
pseudo-metric distance zero apart, i.e. points which lie in the same component of
$S_h \setminus (\Lambda_+ \cup \Lambda_-)$, we get a quotient of $S_h$ which is isometric
to a flat metric $S_q$ on $S$.  The quotient map sends the invariant
laminations $(\Lambda_-, dy)$ and $(\Lambda_+, dx)$ to the real and
imaginary measured foliations ($\mathcal{F}_r$ and $\mathcal{F}_i$
respectively) for the flat metric $S_q$.  As both (pseudo-)metrics are
defined on the closed surface $S$, they give quasi-isometric metrics on the universal
cover.  We record this statement as a proposition to fix notation.

\begin{proposition}\label{prop:qi}\label{c:Q_pa}\label{c:c_pa}
Suppose that $(S_h, \Lambda)$ is a hyperbolic metric on $S$ together
with a full pair of measured laminations.  Then the hyperbolic
metric $d_{\HH^2}$ and the Cannon-Thurston pseudometric
$d_{\widetilde S_h}$ on the universal cover $\widetilde S_h$ are
quasi-isometric, i.e. there are constants $Q_\LL \ge 1$ and
$c_\LL \ge 0$ such that for any points $p$ and $p'$ in the universal
cover,
\[ \frac{1}{Q_\LL} d_{\widetilde S_h}(p, p') - c_\LL \le d_{\HH^2}(p,
p') \le Q_\LL d_{\widetilde S_h}(p, p') + c_\LL. \]
\end{proposition}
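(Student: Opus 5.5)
The plan is to factor through the flat metric $S_q$ and apply the Švarc--Milnor lemma to each metric separately. The pseudometric $d_{\widetilde S_h}$ is the restriction of $ds^2$ to the fiber $z=0$, that is, $ds^2 = dx^2 + dy^2$. Along any rectifiable arc its length is $dx(\cdot) + dy(\cdot)$, up to a factor of at most $\sqrt 2$ coming from comparing the $\ell^2$ and $\ell^1$ norms in each flat chart.

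\textbf{Step 1: pass to the flat quotient.} Let $\pi \colon \widetilde S_h \to \widetilde S_q$ be the quotient identifying points at pseudodistance zero. By the discussion preceding the statement, $d_{\widetilde S_h}(p,p') = d_{\widetilde S_q}(\pi(p), \pi(p'))$. The flat metric $d_{\widetilde S_q}$ is a proper geodesic metric on which $\pi_1(S)$ acts properly discontinuously and cocompactly by isometries. Švarc--Milnor then gives a quasi-isometry from $(\widetilde S_q, d_{\widetilde S_q})$ to $\pi_1(S)$ with a word metric, via the orbit map of a basepoint. The same lemma applied to $(\widetilde S_h, d_{\HH^2})$ gives a quasi-isometry from $\pi_1(S)$ to $\widetilde S_h$, again via the orbit map.

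\textbf{Step 2: check that the orbit-map comparison agrees with $\pi$.} Composing the two quasi-isometries, $\pi$ (or a coarse inverse of it) must be the map realising the comparison. Fix compact fundamental domains $D_h \subset \widetilde S_h$ and $\pi(D_h) \subset \widetilde S_q$; the image $\pi(D_h)$ is compact because $\pi$ is continuous and equivariant. For any $p \in \widetilde S_h$, choose $g \in \pi_1(S)$ with $p \in g D_h$. Then $d_{\HH^2}(p, g x_0) \le \diam(D_h)$. Likewise $d_{\widetilde S_q}(\pi(p), g\pi(x_0)) \le \diam_q(\pi(D_h))$, which is finite because $d_{\widetilde S_h} \le C\, d_{\HH^2}$ locally and hence on the compact set $D_h$. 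So both distances in the statement are within bounded additive error of $d_{\HH^2}(gx_0, g'x_0)$ and $d_{\widetilde S_h}(gx_0, g'x_0)$ respectively. Each of these is comparable, with multiplicative and additive constants, to the word length of $g^{-1}g'$. Combining these comparisons yields constants $Q_\LL$ and $c_\LL$ as in the statement.

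\textbf{Main obstacle.} The delicate point is that $d_{\widetilde S_h}$ is only a pseudometric, so Švarc--Milnor cannot be applied to it directly. Two facts are needed.
\begin{itemize}
\item The quotient by zero-distance points is genuinely the flat metric $S_q$; this is stated in the text.
\item The pseudodistance is continuous and locally bounded above by a multiple of the hyperbolic distance.
\end{itemize}
The second fact holds because the transverse measures $dx$ and $dy$ of a short hyperbolic arc are bounded by a constant times its hyperbolic length, by compactness of $S_h$. If one wants to avoid identifying the quotient with $S_q$, an alternative is needed for the lower bound. For that I would argue directly that $d_{\widetilde S_h}$ is a length pseudometric, invariant under a cocompact action. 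Any two points at hyperbolic distance at least $L_\LL$ are separated by leaves of both laminations, by Proposition~\ref{prop:cobounded intersections}. Using this, one shows that a path of hyperbolic length $n L_\LL$ has pseudolength at least a definite constant times $n$, after discretizing along a geodesic. That would give the lower bound without reference to $S_q$.
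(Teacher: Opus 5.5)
Your route is the paper's. The paper only remarks that $d_{\widetilde S_h}$ descends to the flat metric $S_q$, and that since both metrics come from the closed surface $S$ they lift to quasi-isometric metrics on the universal cover. Your Step~2 is a correct expansion of that remark: apply \v{S}varc--Milnor to both actions and compare through a common compact fundamental domain $D_h \ni x_0$. That step does need $\diam_q(\pi(D_h))<\infty$.

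The reason you give for that finiteness is false: $d_{\widetilde S_h}$ is \emph{not} locally bounded by a multiple of $d_{\HH^2}$.
\begin{itemize}
\item On a geodesic transversal $\tau$, the transverse measure $\mu$ of $\Lambda_+$ is a nonzero measure supported on $\Lambda_+\cap\tau$. This set has zero length, since laminations have zero area and, by Birman--Series, transverse Hausdorff dimension zero.
\item If $\mu(I)\le C|I|$ held for all short subarcs $I\subset\tau$, then $\mu$ would vanish on every length-zero set, which is a contradiction.
\item Every leaf through the interior of $I$ separates the endpoints of $I$ in $\widetilde S_h$. So any path between those endpoints has pseudolength at least $\mu(I)$ up to a constant.
\end{itemize}
Hence $d_{\widetilde S_h}/d_{\HH^2}$ is unbounded at small scales. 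Along transversals, $\pi$ behaves like a Cantor function: continuous but not Lipschitz.

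What you actually need is only a coarse bound. By compactness there are $\epsilon_0, M>0$ with $dx(\sigma), dy(\sigma)\le M$ for every geodesic arc $\sigma$ of length at most $\epsilon_0$. Subdividing gives $d_{\widetilde S_h}(p,p')\le 2M\bigl(d_{\HH^2}(p,p')/\epsilon_0+1\bigr)$, which bounds $\diam_q(\pi(D_h))$. Alternatively, your own remark that $\pi$ is continuous already makes $\pi(D_h)$ compact; continuity comes from the non-atomicity of $dx$ and $dy$, not from a Lipschitz bound.

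Your optional direct lower bound also rests on a false assertion: that points at hyperbolic distance at least $L_\LL$ are separated by leaves of \emph{both} laminations. Take two far-apart points in one ideal complementary region of $\Lambda_+$, running alongside a boundary leaf; no leaf of $\Lambda_+$ separates them. Moreover, Proposition~\ref{prop:cobounded intersections} concerns leaf segments, not arbitrary geodesic segments. That route would instead need a uniform positive lower bound, obtained by a compactness argument, on the total transverse measure of leaves of $\Lambda_+\cup\Lambda_-$ separating two such points.
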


By compactness of the circle direction or equivalently $\ZZ$-periodicity by the action of the pseudo-Anosov $\pa$, the constants in \Cref{prop:qi} remain uniform over any choice $S_z$ as fiber. 
More generally, if a doubly degenerate surface group in $\PSL(2,\mathbb{C})$ has bounded geometry then the constants in \Cref{prop:qi} will remain uniform for the quasi-isometry between the pseudo-metric and the flat metric for any $S_z$.

\section{Singularity of measures}\label{section:singularity}

In this section, we prove the singularity of measures, namely
\Cref{theorem:singularity}, by showing that typical geodesics have
different behavior for the pushforwards of the surface measures
(\Cref{theorem:surface measures}) than for the $3$-manifold measures
(\Cref{theorem:3-manifold measures}).  We show that for almost all
geodesics in $\widetilde S_h$ with respect to the surface measures,
the geodesics they determine in $\widetilde S_h \times \RR$ spend a
positive proportion of time close to the base fiber.  On the other
hand, for almost all geodesics in $\widetilde S_h \times \RR$ with
respect to the $3$-manifold measures, the proportion of time spent
close to the base fiber tends to zero.  We prove these facts for
Lebesgue measures in \Cref{section:surface lebesgue} and for
hitting measures for random walks in \Cref{section:surface hitting}.  We start by recording some useful facts about geodesics
which we will use in the subsequent sections.

\medskip
Suppose that $\gamma$ is an oriented geodesic in $\widetilde S_h$. 
We denote its pair of limit points in $\partial \widetilde S_h$ by
$\gamma_+$ and $\gamma_-$.

\begin{definition}\label{def:non-exceptional} 
We say that a bi-infinite geodesic $\gamma$ in $\widetilde S_h$ is
\emph{non-exceptional} if
\begin{itemize}
\item its limit points $\gamma_-$ and $\gamma_+$ are distinct from the
limit points of any boundary leaf of any ideal complementary region of
either of the invariant laminations, and
\item the limit points have distinct images under the Cannon-Thurston
map, that is $\iota(\gamma_-) \not = \iota(\gamma_+)$.
\end{itemize}
\end{definition}

As we see below, for surface measures, almost all
geodesics in $\widetilde S_h$ are non-exceptional. 

\begin{proposition}\label{prop:distinct}
Suppose that $\pa \colon S \to S$ is a pseudo-Anosov map and
$(S_h, \Lambda)$ is a hyperbolic metric on $S$ together with a pair of
invariant measured laminations.  Let $\nu$ be one of the surface
measures from \Cref{def:surface} or \Cref{def:surface general}.  Then
$\nu$-almost all geodesics in $\widetilde S_h$ are non-exceptional.
\end{proposition}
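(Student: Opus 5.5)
We need to show that the set of exceptional geodesics has $\nu\times\nu$ (or $\nu\times\rnu$) measure zero. The plan is to exhibit this set as a union of two kinds of pieces: pieces where one endpoint lies in a countable set, and pieces lying in a closed subset of $S^1_\infty\times S^1_\infty$ built from finitely many fibres of $\iota$. Each piece should then be null because the surface measures are non-atomic.

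\emph{Step 1: boundary leaves.} There are countably many boundary leaves of ideal complementary regions of $\Lambda_+$ and $\Lambda_-$ in $\widetilde S_h$ (finitely many in $S_h$, lifted under the countable group $\pi_1(S)$). So the set $E\subset S^1_\infty$ of their endpoints is countable. Each surface measure is non-atomic. For Lebesgue measure this is clear. For hitting measures of non-elementary $\mu$ it is \Cref{theorem:boundary_convergence}; this covers both $\nu$ and $\rnu$, the latter being the hitting measure of the reflected walk, which is also non-elementary. Hence $\nu(E)=\rnu(E)=0$. By Fubini, the set of pairs $(\gamma_-,\gamma_+)$ with $\gamma_-\in E$ or $\gamma_+\in E$ has product measure zero.

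\emph{Step 2: identified endpoints.} Next consider $Z=\{(a,b): a\neq b,\ \iota(a)=\iota(b)\}$. By Fubini,
\[ (\nu\times\rnu)(Z)=\int \rnu\bigl(\iota^{-1}(\iota(a))\setminus\{a\}\bigr)\,d\nu(a). \]
The Cannon--Thurston map is finite-to-one, as recalled in the introduction. So for each $a$ the fibre $\iota^{-1}(\iota(a))$ is a finite set, and its $\rnu$-measure is zero by non-atomicity. The integrand therefore vanishes identically, and $Z$ is null.

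It remains to check that $Z$ is measurable. It is the intersection of the closed set $\{(a,b):\iota(a)=\iota(b)\}$ (closed by continuity of $\iota$) with the open complement of the diagonal, hence Borel. For the Lebesgue case, the argument is identical using Lebesgue$\times$Lebesgue, which is non-atomic. Combining the two steps, the complement of the non-exceptional set is contained in a null set.

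\emph{Main obstacle.} The only real subtlety is justifying finite-to-one-ness of $\iota$, or at least that fibres are countable, which is all that is needed. If one prefers not to cite Cannon--Thurston for this, an alternative is the following. Nontrivial identifications occur exactly among endpoints of leaves of $\Lambda_\pm$ bounding a common complementary region, or endpoints of a common leaf. In that case I would instead argue that $a$ with $\iota(a)=\iota(b)$ for some $b\neq a$ must be an endpoint of a leaf of $\Lambda_+\cup\Lambda_-$. I would then show that the set of leaf endpoints has measure zero. For Lebesgue measure this would come from ergodicity of the geodesic flow, since laminations have measure zero in $T^1 S_h$. For hitting measures one would use that the fibres are finite. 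I expect to simply invoke the finite-to-one property stated in the introduction.
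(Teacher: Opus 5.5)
Your proof is correct, but it takes a different route from the paper's.

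The paper first notes that $\iota(\gamma_-)=\iota(\gamma_+)$ forces $\gamma$ to be either a leaf of an invariant lamination or a geodesic inside an ideal complementary region. The latter form a countable family and are dismissed by non-atomicity. For the leaves, the paper uses heavier tools:
\begin{itemize}
\item in the Lebesgue case, Birman--Series, which says endpoint pairs of all simple geodesics are null;
\item in the hitting-measure case, Kaimanovich's double ergodicity of $\pi_1(S)$ acting on $(\partial\ws_h\times\partial\ws_h,\nu\times\rnu)$, together with fullness of $\mu$ to rule out measure one.
\end{itemize}

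You instead apply Tonelli directly to the Borel set $\{(a,b): a\neq b,\ \iota(a)=\iota(b)\}$. Its sections are finite, so non-atomicity of the second factor makes it null. The structural input is essentially the same: finite-to-one-ness is equivalent to the paper's description of which boundary points $\iota$ identifies. Your route, however, treats Lebesgue and hitting measures uniformly. It also shows that fullness of $\mu$ is not needed for this proposition: any product of two non-atomic measures works, so non-elementary $\mu$ suffices.

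What the paper's route buys is independence from any counting of leaves through a boundary point. Birman--Series handles all simple geodesics at once, and the ergodicity argument kills any $\pi_1(S)$-invariant set whose complement contains an open set, regardless of the size of its sections.

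You also treat the first condition in the definition of non-exceptional (endpoints of boundary leaves) explicitly, whereas the paper handles it only implicitly. Your fallback sketch in the last paragraph is somewhat circular for hitting measures, since it again appeals to finite fibres, but the main argument does not need it.
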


\begin{proof}
Let $\gamma$ be a geodesic in $\widetilde S_h$.  Suppose that its
image $\iota(\gamma)$ has a single limit point at infinity.  Then
$\gamma$ is either a leaf of an invariant lamination, or contained in
an ideal complementary region.  Being ideal polygons with finitely
many sides, there are only countably many ideal complementary regions.
So the collection of geodesics contained in ideal complementary
regions has measure zero as the measures are all non-atomic.

\medskip So we may consider leaves of invariant laminations.  For
Lebesgue measure, Birman and Series \cite{birman-series}*{Theorem II}
showed that the collection of endpoints of all simple geodesics has
measure zero in $\partial \ws_h \times \partial \ws_h$.  For hitting
measure, this result follows from double ergodicity of the action of
$\pi_1(S)$ on the boundary, due to Kaimanovich
\cite{KaimanovichGAFA}*{Theorem 17}.  For completeness, we give the
details for hitting measure below.

\medskip Let $\Delta$ denote the diagonal in
$\partial \ws_h \times \partial \ws_h$.  Suppose $\Lambda$ is a geodesic
lamination and
$\partial \Lambda \subset \partial \ws_h \times \partial \ws_h \setminus
\Delta$ be the endpoints of leaves of $\Lambda$.  The action of the
fundamental group $\pi_1(S)$ on
$\partial \ws_h \times \partial \ws_h \setminus \Delta$ is ergodic for the
hitting measure.  Since $\Lambda$ is $\pi_1(S)$-invariant, the set
$\partial \Lambda$ has measure $0$ or $1$.  Suppose $\ell$ is a leaf
of $\Lambda$.  Suppose that $\gamma$ is a geodesic that crosses
$\ell$.  Then $\gamma$ does not lie in $\Lambda$.  We can then choose
small neighborhoods $U_+$ and $U_-$ of $\gamma_+$ and $\gamma_-$ such
that any geodesic with one endpoint in $U_+$ and the other endpoint in
$U_-$ also crosses $\ell$.  Thus, such a geodesic does not lie in
$\Lambda$.  As open sets have positive measure, $\partial \Lambda$ has
measure strictly less than $1$.  Hence, $\partial \Lambda$ has measure
zero as required.
\end{proof}

\subsection{Quasigeodesics from loxodromics}\label{section:qg lox}

Both $S$ and $M$ are compact, and their fundamental groups are torsion
free.  The hyperbolic metrics on $S$ and $M$ give maps
$\rho_S \colon \pi_1(S) \to \text{Isom}(\HH^2)$ and
$\rho_M \colon \pi_1(M) \to \text{Isom}(\HH^3)$, whose images are
torsion-free cocompact lattices.  In particular, in both cases, every
non-trivial element of the fundamental group maps to a loxodromic
isometry.  We now show that the image of an axis for a non-trivial
element of $\pi_1(S)$ is a quasigeodesic in
$\widetilde{S}_h \times \RR$.

\begin{proposition}\label{prop:alphaQG}
Suppose that $\pa \colon S \to S$ is a pseudo-Anosov map, and
$(S_h, \Lambda)$ is a hyperbolic metric on $S$ together with a pair of
invariant measured laminations, and let $g$ be a non-trivial element
of $\pi_1(S)$ with axis $\alpha$ in $\widetilde S_h$.  Then there are
constants $Q_\alpha \ge 1, c_\alpha$ and $K_\alpha$ such that the
image of the axis $\iota(\alpha)$ in $\widetilde{S}_h \times \RR$ is
$(Q_\alpha, c_\alpha)$-quasigeodesic.  In particular, the geodesic
$\overline{\alpha}$ connecting the endpoints of $\iota(\alpha)$ is
contained in a $K_\alpha$-neighborhood of $S_0$.
\end{proposition}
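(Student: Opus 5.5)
Since $g \in \pi_1(S)$ is non-trivial, it acts on $\widetilde S_h \times \RR$ preserving the base fiber $S_0$; as $\pi_1(S)$ is a normal subgroup of $\pi_1(M)$ and $\pi_1(M)$ is torsion free and acts cocompactly on $\widetilde S_h \times \RR \simeq \HH^3$, the element $g$ acts as a loxodromic isometry of the Gromov hyperbolic space $\widetilde S_h \times \RR$ (for instance, it is loxodromic for $\rho_M$ in $\HH^3$, and the Cannon--Thurston metric is $\pi_1(M)$-equivariantly quasi-isometric to $d_{\HH^3}$ by \cite{cannon-thurston}*{Theorem 5.1}). The plan is to exploit the fact that $\iota(\alpha)$ is a $\langle g \rangle$-orbit of a compact arc. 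Fix a point $p = \alpha(0)$ and let $\sigma = \alpha([0, \tau])$ be a fundamental domain, where $\tau$ is the translation length of $g$ on $\alpha$. Then $\iota(\alpha) = \bigcup_{n \in \ZZ} g^n \iota(\sigma)$, and $\iota(\sigma)$ has some finite Cannon--Thurston length $A$ (it lies in $S_0$ where the pseudometric is bounded above by a multiple of the hyperbolic metric, by \Cref{prop:qi}).

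The key steps, in order, are as follows. First, the upper bound: for $t_1 < t_2$, the segment $\iota(\alpha([t_1,t_2]))$ is covered by at most $|t_2 - t_1|/\tau + 2$ translates of $\iota(\sigma)$, so $d_{\widetilde S_h \times \RR}(\iota\alpha(t_1), \iota\alpha(t_2)) \le (A/\tau)|t_2-t_1| + 2A$. Second, the lower bound: since $g$ is loxodromic on $\widetilde S_h \times \RR$, the orbit map $n \mapsto g^n \iota(p)$ is a quasi-isometric embedding of $\ZZ$, i.e.\ $d(\iota(p), g^n \iota(p)) \ge \lambda |n| - C$ with $\lambda > 0$ the stable translation length. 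Combining this with the fact that every point of $\iota(\alpha)$ is within $A$ of some orbit point $g^n\iota(p)$, with $n$ differing from $t/\tau$ by at most one, gives $d(\iota\alpha(t_1), \iota\alpha(t_2)) \ge (\lambda/\tau)|t_2 - t_1| - C'$. This yields the constants $Q_\alpha, c_\alpha$. Third, the Morse Lemma (\Cref{lemma:morse}) gives $L$ depending on $(Q_\alpha, c_\alpha)$ and $\delta_3$ such that, after approximating bi-infinite quasigeodesics by finite subsegments and taking limits (or applying the bi-infinite version directly), the geodesic $\overline{\alpha}$ joining the endpoints of $\iota(\alpha)$ lies in the $L$-neighborhood of $\iota(\alpha) \subset S_0$, so $K_\alpha = L$ (plus a bounded constant from the limiting argument) works.

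The main obstacle is the justification that $g$ acts loxodromically on $\widetilde S_h \times \RR$ with positive stable translation length. The route I would try first is the equivariant quasi-isometry: $\rho_M(g)$ is a non-trivial element of a torsion-free cocompact lattice, hence a hyperbolic isometry of $\HH^3$ with positive translation length, and stable translation length is coarsely preserved (up to multiplicative constants) under $\pi_1(M)$-equivariant quasi-isometries since $d(x, g^n x)$ grows linearly in one space if and only if it does in the other. A minor subtlety is that the Cannon--Thurston metric is only a pseudometric, but the quasi-isometry statement and the Morse Lemma apply to it as a Gromov hyperbolic geodesic pseudometric space, which is all that is needed.
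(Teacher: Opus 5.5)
Your proposal is correct and takes the same route as the paper. The paper argues that $g$ is loxodromic on $\widetilde S_h \times \RR$ because $\rho_M(g)$ is loxodromic on $\HH^3$ and the Cannon--Thurston metric is equivariantly quasi-isometric to $d_{\HH^3}$, so the $g$-invariant path $\iota(\alpha)$ is quasigeodesic, and the Morse Lemma finishes. Beyond this, you supply the fundamental-domain and orbit-growth estimates that the paper leaves implicit, and you apply the Morse Lemma in the direction the conclusion actually needs, namely $\overline{\alpha} \subset N_L(\iota(\alpha)) \subset N_L(S_0)$; the paper states only the reverse containment and tacitly uses the Hausdorff-distance form of the lemma.
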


\begin{proof}
The isometry
$\rho_S(g) \in \text{Isom}(\HH^2)$ is loxodromic.  Let $\alpha$ be the
axis of $\rho_S(g)$ in $\HH^2$.  By abuse of notation, we will also
write $g$ for the image of $g$ in $\pi_1(M)$ under the (injective)
inclusion map $i \colon \pi_1(S) \to \pi_1(M)$.  Then
$\rho_M(g) \in \text{Isom}(\HH^3)$ is also loxodromic.  As
$\widetilde S_h \times \RR$ is quasi-isometric to $\HH^3$, $g$ also
acts loxodromically on $\widetilde S_h \times \RR$.  Recall that
$\iota \colon \widetilde S_h \to \widetilde S_h \times \RR$ is the
inclusion map.  The image $\iota(\alpha)$ is an $\rho_M(g)$-invariant
path in $\widetilde S_h \times \RR$.  Hence, for constants
$(Q_\alpha, c_\alpha)$ that depend on $\alpha$, the path
$\iota(\alpha)$ is a $(Q_\alpha, c_\alpha)$-quasigeodesic.  We shall
write $\overline{\alpha}$ for the geodesic in
$\widetilde S_h \times \RR$ connecting the endpoints of
$\iota(\alpha)$.  It follows that $\overline{\alpha}$ is the axis for
$g$ acting on $\widetilde S_h \times \RR$.  By the Morse lemma,
\Cref{lemma:morse}, there is a constant $K_\alpha > 0$ such that
$\iota(\alpha)$ is contained in an $K_\alpha$-neighborhood of
$\overline{\alpha}$.  As $\iota(\alpha)$ is contained in $S_0$, the
proposition follows.
\end{proof}

\medskip

We next show that there is an upper bound $P_\alpha$ on the length of the nearest
point projection interval $p_\alpha(\ell)$, for any leaf $\ell$ of an
invariant lamination.

\begin{proposition}\label{prop:Palpha}
Suppose that $\pa \colon S \to S$ is a pseudo-Anosov map,
and $(S_h, \Lambda)$ is a hyperbolic metric on $S$ together with a pair of
invariant measured laminations. Let $g$ be a non-trivial element
of $\pi_1(S)$ with axis $\alpha$ in $\widetilde S_h$.  Then there is a
constant $P_\alpha > 0$ such that for any leaf $\ell$ of either of the
invariant laminations, the nearest point projection of $\ell$ to $\alpha$ and $\alpha$ to
$\ell$ has length at most $P_\alpha$.
\end{proposition}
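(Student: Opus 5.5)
We argue by contradiction, combining compactness with the facts that $\alpha$ is $g$-invariant and that the invariant laminations contain no closed leaves. By \Cref{prop:neighbourhood} in $\HH^2$, a long projection interval of $\ell$ onto $\alpha$ means that $\ell$ and $\alpha$ fellow travel. Precisely, if $p_\alpha(\ell)$ has diameter more than $8\delta$, then $p_\alpha(\ell)$ lies within $6\delta$ of $\ell$. Conversely, a long projection of $\alpha$ to $\ell$ also forces a long segment of $\ell$ within $6\delta$ of $\alpha$, and hence a long projection onto $\alpha$ up to an additive constant. So it suffices to bound uniformly the length of a segment of a leaf $\ell$ that stays within distance $6\delta$ of $\alpha$.

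Suppose instead that there are leaves $\ell_n$, each a leaf of $\Lambda_+$ or $\Lambda_-$ (pass to a subsequence so that they all lie in one lamination, say $\Lambda_+$). Suppose each $\ell_n$ contains a segment $\sigma_n$ of length $L_n \to \infty$ lying in $N_{6\delta}(\alpha)$. The cyclic group $\langle g\rangle$ acts cocompactly on $\alpha$, and hence on $N_{6\delta}(\alpha)$, with a fundamental domain of diameter roughly the translation length of $g$. Translating by powers of $g$, we may assume that the midpoints of the $\sigma_n$ lie in a fixed compact fundamental domain. The laminations are $\pi_1(S)$-invariant, so the translates $g^{m_n}\ell_n$ are still leaves of $\Lambda_+$.

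Next, pass to a subsequence so that the unit tangent vectors at the midpoints converge. Since $\Lambda_+$ is closed, the leaves converge to a leaf $\ell$ of $\Lambda_+$, and $\ell$ lies entirely within $6\delta$ of $\alpha$. Two bi-infinite geodesics in $\HH^2$ at bounded Hausdorff distance share both endpoints, so $\ell = \alpha$. The leaf $\ell$ then projects to a closed geodesic in $S_h$ (the image of the axis of $g$) that lies in $\Lambda_+$. This contradicts minimality of $\Lambda_+$: a closed leaf is compact, so it cannot be dense in a lamination with uncountably many leaves. The same argument applies with $\Lambda_-$ in place of $\Lambda_+$.

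Finally, we convert the fellow-traveling bound back into bounds on both projections, which gives $P_\alpha$. The main technical point is the first reduction, and it runs in both directions. If $p_\alpha(\ell)$ is long, \Cref{prop:neighbourhood} gives the fellow-traveling segment. If $p_\ell(\alpha)$ is long, we apply \Cref{prop:neighbourhood} with the roles of $\alpha$ and $\ell$ swapped, which gives a long piece of $\ell$ near $\alpha$. In either case the length of the fellow-traveling segment is bounded below by the projection length, up to additive constants depending only on $\delta$. Everything else follows from the compactness argument above.
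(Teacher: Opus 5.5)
Your proof is correct, but it takes a different route from the paper.

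The paper argues quantitatively. Since the closed geodesic covered by $\alpha$ is not a leaf, compactness gives a minimum angle $\theta_\alpha > 0$ at which $\alpha$ crosses any leaf. In addition, every segment of $\alpha$ of length $L_\alpha$ meets both laminations. \Cref{prop:projection interval} then bounds the projection of a leaf crossing $\alpha$ by roughly $2(\log \tfrac{1}{\theta_\alpha} + T_0)$, and the frequent crossings are what control leaves that do not meet $\alpha$.

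You instead run a soft limiting argument directly on fellow-travelling segments. Translating by powers of $g$ and using closedness of the lamination, ever longer fellow-travelling segments produce a leaf equal to $\alpha$. That is a closed leaf, which minimality forbids.

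Your route treats crossing leaves, disjoint leaves, and even leaves asymptotic to $\alpha$ (whose projection would be infinite) in one stroke. It needs only that the closed geodesic is not a leaf, together with cocompactness of $\langle g \rangle$ on $N_{6\delta}(\alpha)$. The paper's route gives an explicit $P_\alpha$ in terms of $\theta_\alpha$, $T_0$ and $L_\alpha$, whereas yours gives no constant.

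One small point: when $p_\alpha(\ell)$ is long, \Cref{prop:neighbourhood} gives a long segment of $\alpha$ near $\ell$, not a segment of $\ell$ near $\alpha$. To swap the roles you need convexity of the distance function in $\HH^2$, or thin quadrilaterals with a slightly larger constant. Since your compactness argument works for any fixed neighbourhood size, this is harmless.
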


\begin{proof}
Every closed geodesic in $S_0$ intersects both invariant laminations.
Hence, any segment of $\alpha$ with length $L_\alpha$ intersects both
laminations $\Lambda_+$ and $\Lambda_-$.  By compactness, there is
also a minimum angle $\theta_\alpha > 0$ for an intersection of
$\alpha$ with any leaf of an invariant lamination.  The above two
properties together with \Cref{prop:projection interval} imply that there is an upper bound $P_\alpha$ for the
length of the nearest point projection interval $p_\alpha(\ell)$, for
any leaf $\ell$ of an invariant lamination, and similarly for $p_\ell (\alpha)$.
\end{proof}

\medskip

The axis $\alpha$ projects to a closed geodesic in $S_0$.  Let
$L_\alpha$ be the length of this geodesic.  Thus, $L_\alpha$ is the
translation length of $g$ on $\widetilde S_h$. Note that we may replace $P_\alpha$ by a larger
constant to assume that $P_\alpha \ge L_\alpha$, and it will be
convenient to do so.
We now show that the limit points of $\iota(\alpha)$ are disjoint from
the limit sets of any ladder $F(\ell)$ of any leaf $\ell$ of an
invariant lamination.

\begin{proposition}\label{prop:limit disjoint from ladder}
Suppose that $\pa \colon S \to S$ is a pseudo-Anosov map and
$(S_h, \Lambda)$ is a hyperbolic metric on $S$ together with a pair of
invariant measured laminations.  Suppose that $g$ is a non-trivial
element of $\pi_1(S)$.  For any leaf $\ell$ of an invariant
lamination, the fixed points $\overline{\alpha}_+$ and
$\overline{\alpha}_-$ of $\rho_M(g)$ are disjoint from the limit
points of $F(\ell)$ in $\partial ( \widetilde S_h \times \RR )$.
\end{proposition}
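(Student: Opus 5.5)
I will argue by contradiction, using that $\iota(\alpha)$ is a quasigeodesic in $S_0$ (\Cref{prop:alphaQG}) and that $\alpha$ has uniformly bounded projection onto every leaf (\Cref{prop:Palpha}). Suppose, say, $\overline{\alpha}_+$ lies in the limit set of $F(\ell)$. Every limit point of $F(\ell)$ is a limit of points of $F(\ell)$. Since $F(\ell)$ is quasiconvex (by Mitra, or directly by convexity of ladders over leaves), the geodesic ray from a point of $\overline{\alpha}$ to $\overline{\alpha}_+$ eventually stays in a uniformly bounded neighborhood of $F(\ell)$. By \Cref{prop:alphaQG} and the Morse lemma, the ray $\iota(\alpha([0,\infty)))$ then eventually lies within some constant $D$ of $F(\ell)$. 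So there are points $\alpha(t)$ with $t \to \infty$ and heights $z_t$ such that
\[
d_{\widetilde S_h\times\RR}\bigl((\alpha(t),0),(p_t,z_t)\bigr)\le D
\]
for some $p_t \in \ell$.

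The next step is to bound the heights $z_t$. Vertical distance in the Cannon--Thurston metric is at least $(\log k)$ times the difference in $z$-coordinates, since the $dz$ term dominates. Hence $|z_t| \le D/\log k$, and in particular the $z_t$ are uniformly bounded. Because the constants in \Cref{prop:qi} are uniform over the fibers $S_z$ with $|z|$ bounded, a bounded-length path from $(\alpha(t),0)$ to $(p_t,z_t)$ projects to a path in $\widetilde S_h$ whose Cannon--Thurston fiber length is bounded. Therefore
\[
d_{\HH^2}(\alpha(t),\ell)\le D'
\]
for a constant $D'$ independent of $t$.

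It follows that the ray $\alpha([0,\infty))$ stays within $D'$ of the geodesic $\ell$ for an unbounded set of times. By convexity of distance functions between geodesics in $\HH^2$, the whole tail of the ray stays within $D'$ of $\ell$. The nearest point projection of $\ell$ to $\alpha$ would then be unbounded, contradicting \Cref{prop:Palpha}. Equivalently, $\alpha$ and $\ell$ would share the endpoint $\alpha_+$, and the closed-geodesic endpoint $\alpha_+$ would be an endpoint of a leaf. That case is impossible, since closed geodesics cross both laminations at angles bounded below. The argument for $\overline{\alpha}_-$ is identical, and the same argument applies to leaves of $\Lambda_-$.

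The main obstacle is the first step: passing from ``$\overline{\alpha}_+$ is in the closure of $F(\ell)$'' to ``the ray of $\iota(\alpha)$ stays boundedly close to $F(\ell)$''. This requires quasiconvexity of $F(\ell)$ together with the fact that $\overline{\alpha}$ is a geodesic ray ending at a limit point of that quasiconvex set. In a hyperbolic space this fact gives only that the ray is eventually within a bounded distance of the weak hull of $F(\ell)$. I would try to handle this by noting that the weak hull of a quasiconvex set lies in a bounded neighborhood of the set, with the bound depending only on $\delta_3$ and the quasiconvexity constant. If this is delicate, an alternative route uses the description of limit points of $F(\ell)$ as limits of flow lines, as in \Cref{prop:disjoint limits}. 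In that case $\overline{\alpha}_+$ would equal the endpoint of a vertical flow line $F(p)$. The fact that $\overline{\alpha}$ stays near $S_0$ while $F(p)$ leaves every bounded neighborhood of $S_0$ should then give the contradiction directly, since a geodesic ray and a flow ray with the same endpoint must eventually fellow travel.
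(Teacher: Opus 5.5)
Your argument is correct, and it takes a genuinely different route from the paper's.

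\textbf{The paper's route.} The paper argues dynamically. Using the bound $P_\alpha$, it chooses $n$ so that $p_\alpha(\ell)$ and $p_\alpha(\rho_S(g)^n\ell)$ are more than $s_{\max}P_\alpha$ apart. From this it concludes that $\ell$ and $\rho_S(g)^n\ell$ are separated by a leaf and meet no common complementary region of the other lamination. It then invokes the ladder-separation results (\Cref{prop:common intersection leaf}, \Cref{prop:separated ladders}, \Cref{prop:disjoint limits}) to show that $F(\ell)$ and $\rho_M(g)^nF(\ell)$ have disjoint limit sets. Since $\rho_M(g)^n$ fixes $\overline{\alpha}_\pm$, this rules out either fixed point lying in the limit set of $F(\ell)$.

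\textbf{Your route.} You argue metrically. Quasiconvexity of $F(\ell)$ and the Morse lemma put the tail of $\iota(\alpha)$ near $F(\ell)$. The $(\log k)^2dz^2$ term bounds the heights. The problem then reduces to the planar fact that $\alpha$ is not asymptotic to a leaf, which is exactly \Cref{prop:Palpha}.

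\textbf{Comparison.} Your approach bypasses the separation theory of \Cref{section:separation} and the combinatorial $s_{\max}$ count entirely. It also never uses $g$-invariance, so it proves more: for any geodesic ray $r$ in $\widetilde S_h$ with $\iota(r)$ quasigeodesic, the endpoint of $\iota(r)$ lies in the limit set of $F(\ell)$ only if $r$ is asymptotic to $\ell$. The paper's approach, in exchange, yields the stronger intermediate statement that the ladders over $\ell$ and $\rho_S(g)^n\ell$ are a definite distance apart.

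\textbf{Small tidying points.}
\begin{itemize}
\item The weak-hull worry is unnecessary. Take $y_0\in F(\ell)$ and $y_n\in F(\ell)$ with $y_n\to\overline{\alpha}_+$. The segments $[y_0,y_n]$ lie in a uniform neighborhood of $F(\ell)$, and for each fixed $T$ they pass within $O(\delta_3)$ of the point at distance $T$ along the ray from $y_0$ to $\overline{\alpha}_+$. That ray, and hence the tail of $\overline{\alpha}$, is therefore uniformly close to $F(\ell)$.
\item The projection step should be justified pointwise rather than by uniformity of the constants in \Cref{prop:qi}, since your path does not lie in a single fiber. A path of length at most $D$ starting at height $0$ stays in the region $|z|\le D/\log k$. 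There $k^{2z}dx^2+k^{-2z}dy^2\ge k^{-2|z|}(dx^2+dy^2)$, so the projected path has base-fiber length at most $e^{D}D$.
\end{itemize}

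Your flow-line alternative also works. Flow lines are geodesics because of the $dz$ term, and every limit point of $F(\ell)$ is an endpoint of a flow line, as used in \Cref{prop:disjoint limits}. A flow ray with $|z|\to\infty$ cannot eventually fellow travel $\overline{\alpha}$, which stays in the $K_\alpha$-neighborhood of $S_0$.
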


\begin{proof}
For an ideal complementary region $C$ of an invariant lamination, we denote by $s(C)$ the number of sides of $C$.
Since there are finitely many ideal complementary regions, the numbers $s(C)$ over all $C$ has a maximum which we denote by $s_{\textrm{max}}$.

\medskip
Let $\ell$ be a leaf of an invariant lamination. Breaking symmetry, suppose that $\ell$ lies in $\Lambda_+$.  
By \Cref{prop:Palpha}, the projection interval $p_\alpha(\ell)$ has length at most $P_\alpha$.  
As $\rho_S(g)$ acts on $\alpha$ by translation, we may choose $n$ large enough such that the projection interval $p_\alpha(\rho_S(g)^n \ell)$ is distance greater than $s_{\textrm{max}} P_\alpha$ from the projection interval $p_\alpha(\ell)$.

\medskip
Suppose that $\ell$ and $\rho_S(g)^n \ell$ intersect a common ideal complementary region of the other lamination $\Lambda_-$. Traversing in a cyclic order, we may choose a cyclic sequence of boundary geodesics $\ell'_1, \cdots, \ell'_j$ such that $\ell'_1$ intersects $\ell$ and $\ell'_j$ intersects $\rho_S(g)^n \ell$. But then the projection intervals $p_\alpha(\ell)$ and $p_\alpha(\rho_S(g)^n \ell)$ are at most $j P_\alpha < s_{\textrm{max}} P_\alpha$ distance apart, a contradiction. We deduce that $\ell$ and $\rho_S(g)^n \ell$ do not intersect a common ideal complementary region of $\Lambda_-$.

\medskip
On the other hand, since $P_\alpha > L_\alpha$, there is a leaf
of $\Lambda_+$ separating $\ell$ and
$\rho_S(g)^n \ell$.   

\medskip
By \Cref{prop:common intersection leaf} and \Cref{prop:separated ladders}, the distance between $F(\ell)$ and
$F(\rho_S(g)^n \ell)$ is at least $\epsilon > 0$. 
By \Cref{prop:disjoint limits}, their limit
sets are disjoint.  
The ladder $F( \rho_S(g)^n \ell)$ equals $\rho_M(g)^n F(\ell)$ and thus the limit set of $F(\ell)$ cannot contain any fixed points of $\rho_M(g)$, as required.
\end{proof}

In a similar vein, we now show that there is an upper
bound $P_{\overline{\alpha}}$ on the nearest point projection in
$\widetilde S_h \times \RR$ of the axis $\overline{\alpha}$ to any
ladder $F(\ell)$ over any leaf $\ell$ of an invariant lamination.

\begin{corollary}\label{cor:ladder projection}
Suppose that $\pa \colon S \to S$ is a pseudo-Anosov map and
$(S_h, \Lambda)$ is a hyperbolic metric on $S$ together with a pair of
invariant measured laminations.  Suppose that $g$ is a non-trivial
element of $\pi_1(S)$ with axis $\overline{\alpha}$ in
$\widetilde S_h \times \RR$.  Then there is a constant
$P_{\overline{\alpha}} >0$ such that for any leaf $\ell$ of an
invariant lamination, the diameter of the projection image
$p_{\overline{\alpha}}(F(\ell))$ is at most $P_{\overline{\alpha}}$.
\end{corollary}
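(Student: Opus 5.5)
We argue by contradiction, combining the $\rho_M(g)$-equivariance of the axis $\overline{\alpha}$ with \Cref{prop:limit disjoint from ladder}. The first step reduces to finitely many leaves. The group $\langle g \rangle$ acts on $\overline{\alpha}$ by translation, with fundamental domain a segment $D$ of bounded length. Nearest point projection onto a geodesic in a $\delta_3$-hyperbolic space is coarsely equivariant. Suppose the diameters of $p_{\overline{\alpha}}(F(\ell_n))$ are unbounded. Then we may translate each $\ell_n$ by a power of $\rho_S(g)$, using $\rho_M(g)^m F(\ell) = F(\rho_S(g)^m \ell)$, so that the projection image meets $D$. Since ladders are uniformly $K$-quasiconvex by Mitra's lemma, \Cref{prop:neighbourhood} applies to a geodesic in the hull of the ladder. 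Once the projection diameter exceeds $8\delta_3$ plus a constant, $F(\ell_n)$ therefore fellow travels $\overline{\alpha}$ along a segment of length comparable to $\diam\, p_{\overline{\alpha}}(F(\ell_n))$, and this segment meets a bounded neighborhood of $D$.

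Second, we pass to a limit. Since $F(\ell_n)$ comes within bounded distance of a fixed compact set near $D$ in $\wsr$, the leaves $\ell_n$ pass through a compact region of $\widetilde S_h$: the ladder near $D$ contains a point of the form $(p,z)$ with $p\in\ell_n$ and $|z|$ bounded. Laminations are closed, so after passing to a subsequence $\ell_n \to \ell$, a leaf of the same invariant lamination. We then claim that $F(\ell)$ contains, or is asymptotic to, a geodesic ray fellow travelling $\overline{\alpha}$ towards one of its endpoints $\overline{\alpha}_\pm$. The fellow travelling segments have length tending to infinity and are based near $D$. Passing to a further subsequence, they extend infinitely in a fixed direction along $\overline{\alpha}$. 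Uniform quasiconvexity of ladders and convergence of the ladders on compacta then pass the fellow travelling to the limit. This gives $\overline{\alpha}_+$ or $\overline{\alpha}_-$ in the limit set $\overline{F(\ell)}$, contradicting \Cref{prop:limit disjoint from ladder}.

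The main obstacle is the limiting step: justifying that the ladders $F(\ell_n)$ converge to $F(\ell)$ uniformly on compact sets in $\wsr$. The Cannon--Thurston metric is only a pseudo-metric, and at height $z$ nearby leaves can be far apart after stretching by $k^{|z|}$. To handle this, I would first show that the points of $F(\ell_n)$ near the fellow travelling segment lie at heights within a bounded range on each compact piece of $\overline{\alpha}$. Pseudometric continuity then controls the $dx$ and $dy$ measures between $\ell_n$ and $\ell$ at those heights. Alternatively, if this gets delicate, I would avoid limits. The long fellow travelling forces $F(\ell_n)$ and $F(\rho_S(g)^m\ell_n)$, for the fixed $m$ from the proof of \Cref{prop:limit disjoint from ladder}, to come within distance less than $\epsilon_3$ near $\overline{\alpha}$. \Cref{prop:separated ladders} then makes that distance zero. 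That conclusion contradicts the argument in \Cref{prop:limit disjoint from ladder}, which produces a positive distance uniformly in $\ell$.
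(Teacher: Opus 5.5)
Your main argument is correct, but it takes a different route from the paper.

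\textbf{The paper's route.} The paper never passes to a limit. By \Cref{prop:alphaQG}, $\iota(\alpha)$ is a quasigeodesic within $K_\alpha$ of $\overline{\alpha}$. So if $p_{\overline{\alpha}}(F(\ell_n))$ is long, quasiconvexity of ladders puts a long stretch of $\iota(\alpha)$ in a bounded neighborhood of $F(\ell_n)$. Since $\iota(\alpha)$ sits at height zero, $\ell_n$ then has a long projection to $\alpha$ in $\widetilde S_h$, contradicting the uniform bound $P_\alpha$ of \Cref{prop:Palpha}.

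\textbf{Your route.} You normalize by powers of $g$ and extract a convergent subsequence of leaves. You then carry the fellow travelling over to a limit ladder $F(\ell)$ whose limit set contains $\overline{\alpha}_+$ or $\overline{\alpha}_-$, contradicting \Cref{prop:limit disjoint from ladder}. The obstacle you flag is handled exactly as you propose, and only pointwise convergence is needed. Fix $t$ and take points $(p_n,z_n)\in F(\ell_n)$ within $C$ of $\overline{\alpha}(t)$.
\begin{itemize}
\item A path of bounded Cannon--Thurston length stays in a bounded band of heights, so the $z_n$ lie in a bounded range.
\item In that band horizontal lengths are distorted by at most $k^{|z|}$. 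By \Cref{prop:qi} the $p_n$ therefore lie in a compact set and subconverge to some $p\in\ell$.
\item The pseudometric is continuous for the manifold topology because the transverse measures are non-atomic. Hence $(p,\lim z_n)\in F(\ell)$ is within $C+1$ of $\overline{\alpha}(t)$.
\end{itemize}
This same bounded-height observation is what the paper implicitly uses to pass from ``$F(\ell_n)$ is near $\iota(\alpha)$'' to ``$\ell_n\times\{0\}$ is near $\iota(\alpha)$''.

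\textbf{Comparison.} Yours is a clean compactness upgrade of the qualitative disjointness of limit sets. However, it rests on more machinery, since the proof of \Cref{prop:limit disjoint from ladder} already uses \Cref{prop:Palpha} together with the ladder separation results. It also gives no control on $P_{\overline{\alpha}}$. The paper's argument is shorter. It is in effect a direct estimate of $P_{\overline{\alpha}}$ in terms of $P_\alpha$, the quasigeodesic constants of $\iota(\alpha)$, and the quasiconvexity constant of ladders.

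Your fallback argument should be dropped. Two ladders that fellow travel a common long segment of $\overline{\alpha}$ are only known to be within about twice the fellow travelling constant of each other, not within $\epsilon_3$. So \Cref{prop:separated ladders} cannot be invoked.
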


\begin{proof}
Suppose that there is a sequence of leaves $\ell_n$ of the
invariant laminations such that the diameters of the projections $p_{\overline{\alpha}} (F(\ell_n))$
tend to infinity as $n \to \infty$. 
By \Cref{prop:alphaQG}, the image $\iota(\alpha)$ is contained in a bounded neighborhood of
$\overline{\alpha}$. 
Hence, the diameters of the images of the ladders $F(\ell_n)$ under the nearest point projection to $\iota(\alpha)$, also tend to infinity.  As the ladders $F(\ell_n)$ are quasiconvex,
the diameters of the subsets of $\iota(\alpha)$ contained in a bounded
neighborhood of $F(\ell_n)$ tends to infinity. Hence, the
diameters of the nearest point projections of $\ell_n \times \{ 0 \}$
to $\iota(\alpha)$ tends to infinity,  contradicting \Cref{prop:Palpha} which states that the diameters are bounded above by $P_\alpha$.
\end{proof}

We now show that if two points $x$ and $y$ in $\widetilde S_h$ have
nearest point projections to $\alpha$ which are far apart, then their
images $\iota(x)$ and $\iota(y)$ in $\wsr$ have nearest point
projections to $\overline{\alpha}$ which are far apart.  As the
inclusion map distorts distances, \emph{a priori}, the inclusion of
the nearest point projection of $x$ to $\alpha$ need not be close to
the nearest point projection of $\iota(x)$ to $\overline{\alpha}$.


\begin{proposition}\label{prop:axis projection}
Suppose that $\pa \colon S \to S$ is a pseudo-Anosov map and
$(S_h, \Lambda)$ is a hyperbolic metric on $S$ together with a pair of
invariant measured laminations.  Suppose that $g$ is a non-trivial
element of $\pi_1(S)$, with axis $\alpha$ in $\widetilde S_h$, and
axis $\overline{\alpha}$ in $\widetilde S_h \times \RR$.  Then there
are constants $Q \ge 1$ and $c \ge 0$ such that for any two points $x$
and $y$ in $\ws_h \cup \partial \ws_h$,
\[ 
d_{\wsr} \left( p_\oalpha(\iota(x)), p_\oalpha(\iota(y)) \right) \ge
\frac{1}{Q} d_{\ws_h} ( p_\alpha(x), p_\alpha(y) ) - c.
\]
\end{proposition}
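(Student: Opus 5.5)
The plan is to separate the two projections using ladders over leaves of an invariant lamination that cross $\alpha$ between $p_\alpha(x)$ and $p_\alpha(y)$, and then count how many such separating ladders must appear along $\oalpha$.

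\textbf{Step 1: separating leaves in $\widetilde S_h$.} Suppose $d_{\ws_h}(p_\alpha(x), p_\alpha(y)) = D$ is large. Every segment of $\alpha$ of length $L_\alpha$ meets $\Lambda_+$, by the proof of \Cref{prop:Palpha}. Each leaf of $\Lambda_+$ meeting $\alpha$ has projection interval on $\alpha$ of length at most $P_\alpha$. Hence I can choose leaves $\ell_1, \dots, \ell_m$ of $\Lambda_+$ with the following properties:
\begin{itemize}
\item each $\ell_i$ crosses $\alpha$ inside the subsegment between $p_\alpha(x)$ and $p_\alpha(y)$;
\item their projection intervals $p_\alpha(\ell_i)$ are pairwise separated by more than $s_{\max}P_\alpha$ (one more than this if needed);
\item they stay at distance greater than $P_\alpha$ from $p_\alpha(x)$ and $p_\alpha(y)$;
\item $m \ge D/(C P_\alpha) - 2$ for some $C$ depending only on $s_{\max}$.
\end{itemize}
Since the projection of $\ell_i$ to $\alpha$ lies strictly between $p_\alpha(x)$ and $p_\alpha(y)$, the geodesic $\ell_i$ separates $x$ from $y$ in $\ws_h \cup \partial \ws_h$. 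The same holds for $\ell_i$ and $\ell_j$ whenever $i<j$, with $\ell_i$ separating $x$ from $\ell_j$.

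\textbf{Step 2: consecutive ladders are uniformly apart.} I will argue exactly as in \Cref{prop:limit disjoint from ladder}. Consecutive leaves do not meet a common complementary region of $\Lambda_-$, because the chain of boundary leaves would force their projections to lie within $s_{\max}P_\alpha$ of each other. They are separated by a leaf of $\Lambda_+$, because the gap exceeds $L_\alpha$. By \Cref{prop:common intersection leaf} and \Cref{prop:separated ladders}, $d_{\wsr}(F(\ell_i), F(\ell_{i+1})) \ge \e_3$. In addition, the ladder $F(\ell_i)$ separates $\wsr$ into two sides, one containing $\iota(x)$ and the other containing $\iota(y)$.

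\textbf{Step 3: transfer to $\oalpha$.} The axis $\oalpha$ is a bi-infinite geodesic whose ends are the ends of $\iota(\alpha)$, and by \Cref{prop:limit disjoint from ladder} those ends avoid the limit sets of the ladders. Hence $\oalpha$ crosses each $F(\ell_i)$. By \Cref{cor:ladder projection}, the projection $p_\oalpha(F(\ell_i))$ has diameter at most $P_\oalpha$, and it contains the crossing point. The ladder separates $\iota(x)$ from $\iota(y)$, and $\oalpha$ passes through the ladders in the order $\ell_1,\dots,\ell_m$, since this is their order along $\iota(\alpha)$, which is a Morse quasigeodesic by \Cref{prop:alphaQG}. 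Using quasiconvexity of ladders and thin triangles, I will show that $p_\oalpha(\iota(x))$ lies on the $\ell_1$-side of $p_\oalpha(F(\ell_1))$ up to bounded error, and $p_\oalpha(\iota(y))$ lies on the far side of $p_\oalpha(F(\ell_m))$. The intent of the argument is this: a geodesic from $\iota(x)$ to its projection must cross $F(\ell_1)$, so the projection is close to $p_\oalpha(F(\ell_1))$ or beyond it.

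The points where $\oalpha$ crosses $F(\ell_i)$ and $F(\ell_{i+1})$ are at distance at least $\e_3$ apart. This alone gives only $\e_3$ per step, which is fine, since I need linear growth in $m$. To avoid overlap, I group the ladders into blocks of size $N$ with $N\e_3 > 2P_\oalpha$ plus the bounded errors. Consecutive blocks then contribute disjoint progress. The total gives $d_{\wsr}(p_\oalpha\iota(x), p_\oalpha\iota(y)) \ge \e_3 m/N - O(1)$, which is linear in $D$ and so yields the constants $Q$ and $c$.

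The main obstacle is Step 3. It requires showing rigorously that the projection to $\oalpha$ respects the separation by ladders, and that distinct separating ladders really force progress along $\oalpha$ rather than piling up. I would first try to get monotonicity along $\oalpha$ from \Cref{prop:nested implies intersect}-type reasoning in the hyperbolic space $\wsr$. Failing that, I would use a $\pi_1$-equivariance and compactness argument. The idea there is that $\rho_M(g)$ translates $\oalpha$ by a fixed amount, and a $g$-fundamental domain's worth of separating ladders forces at least one translation length of progress.
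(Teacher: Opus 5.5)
Your proposal is correct, but it gets the linear lower bound from a different mechanism than the paper.

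\textbf{The paper's route.} The paper uses only two leaves $\ell_1,\ell_2$, chosen within $L_1 = L_\alpha + 2P_\alpha$ of the two ends of $[p_\alpha(x),p_\alpha(y)]$. It converts the length of the arc of $\alpha$ between them into $\wsr$-distance, using that $\iota(\alpha)$ is a $(Q_\alpha,c_\alpha)$-quasigeodesic (\Cref{prop:alphaQG}). It then passes to $\oalpha$ via the Morse constant $K_\alpha$, and loses $2P_{\oalpha}$ from \Cref{cor:ladder projection}. The two ladders only need limit sets avoiding the endpoints of $\oalpha$.

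\textbf{Your route.} You take roughly $D/((s_{\max}+3)P_\alpha)$ leaves whose ladders are pairwise $\e_3$-apart and count. To get the $\e_3$ gaps you rerun the argument of \Cref{prop:limit disjoint from ladder} for arbitrary pairs of leaves with well-separated projection intervals (it only uses that separation), and then apply \Cref{prop:separated ladders}. This gives an explicit constant $Q \approx (s_{\max}+3)P_\alpha/\e_3$ in terms of lamination data. It also avoids the lower quasigeodesic bound for $\iota(\alpha)$ in the main estimate, although \Cref{prop:alphaQG} still enters through \Cref{cor:ladder projection}. The paper's route is shorter because two ladders suffice once the quasigeodesic bound is available.

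\textbf{Step 3 is easier than you expect.} Your fallbacks (equivariance, compactness, grouping into blocks) are not needed, for three reasons.
\begin{enumerate}
\item If $\pi$ is a nearest point of $\oalpha$ to $\iota(x)$ and $[\iota(x),\pi]$ meets $F(\ell_1)$ at $w$, then for every $v\in\oalpha$,
\[ d(w,v) \ge d(\iota(x),v) - d(\iota(x),w) \ge d(w,\pi). \]
So $\pi \in p_{\oalpha}(F(\ell_1))$ exactly. This is the paper's containment $p_{\oalpha}(\iota(x)) \subseteq \oalpha_1 \cup p_{\oalpha}(F(\ell_1))$, with no error term.
\item The order in which $\oalpha$ meets the ladders comes from nesting, not from the Morse property of $\iota(\alpha)$. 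Starting from the end of $\oalpha$ on the side of $\iota(x)$, the geodesic cannot reach $F(\ell_{i+1})$ without first meeting $F(\ell_i)$.
\item Let $s_i$ be the first time $\oalpha$ meets $F(\ell_i)$. Then $s_1<\dots<s_m$ and $s_{i+1}-s_i \ge \e_3$, so $s_m - s_1 \ge (m-1)\e_3$ directly.
\end{enumerate}
The $P_{\oalpha}$ error enters only at the two ends, since all crossings of a given ladder lie in its projection, which has diameter at most $P_{\oalpha}$. This gives the bound $(m-1)\e_3 - 2P_{\oalpha}$ without any blocking.
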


\begin{proof}
Let $I = [p, q]$ be the subinterval of $\alpha$ with endpoints
$p := p_\alpha(x)$ and $q := p_\alpha(y)$.  We may assume that
$d_{\ws_h}(p, q) \ge 2L_1$, where $L_1 = L_\alpha + 2 P_\alpha$, where
$L_\alpha$ is the translation length of $g$, and $P_\alpha$ is the
largest size of the projection of any leaf of an invariant lamination
to $\alpha$, from \Cref{prop:Palpha}.  We shall choose $Q = Q_\alpha$
and
$c = 2 L_1 / Q_\alpha + c_\alpha + 2 K_\alpha + 2
P_{\overline{\alpha}}$, where $Q_\alpha$ and $c_\alpha$ are the
quasigeodesic constants for $\iota(\alpha)$ in
$\widetilde{S}_h \times \RR$ from \Cref{prop:alphaQG}, and $K_\alpha$
is the corresponding Morse constant.

\medskip Let $\beta$ be a segment of $I$ of length $L_1$.  The central
segment of $\beta$ of length $L_\alpha$ intersects leaves of both laminations.  Suppose that $\ell$ is such a leaf.  By our choice
of $L_1$, the projection interval $p_\alpha(\ell)$ is contained in the
interior of $\beta$, which in turn is contained in $I$. Let $\gamma$ be the geodesic spanned by $x$ and $y$. By
\Cref{prop:nested implies intersect}, as
$p_\alpha(\ell) \subset p_\alpha(\gamma)$, and as $\alpha$ and $\ell$
intersect, $\ell$ and $\gamma$ also intersect.

\medskip Let $\beta_1$ be an initial segment of $I$ of length $L_1$, and let
$\beta_2$ be a terminal segment of $I$ of length $L_1$. Since $I|I| \geqslant 2L_1$, the segments $\beta_1$ and $\beta_2$ are disjoint. By the conclusions of the previous paragraph, there exists leaves $\ell_1$ and $\ell_2$ of an
invariant lamination such that
\begin{itemize}
    \item $\ell_1$ intersects $\beta_1$ and $\gamma$ and $p_\alpha(\ell_1)$ lies in the interior of $\beta_1$, and
    \item $\ell_2$ intersects $\beta_2$ and $\gamma$ and $p_\alpha(\ell_2)$ lies in the interior of $\beta_2$.
\end{itemize}
It follows that $\ell_1$ and $\ell_2$ are disjoint and divide
$\widetilde S_h$ into three regions $A_1$, $A_2$ and $A_3$ such that
\begin{itemize}
    \item $A_1$ is adjacent to $\ell_1$ and contains $x$,
    \item $A_2$ lies between $\ell_1$ and $\ell_2$, and
    \item $A_3$ is adjacent to $\ell_2$ and contains $y$.
\end{itemize}
The leaves $\ell_1$ and $\ell_2$ divide $\alpha$ into arcs $\alpha_1$,
$\alpha_2$, and $\alpha_3$ such that $\alpha_i$ is contained in $A_i$.
Furthermore,
$p_\alpha(x)$ is contained in $\alpha_1$, and
$p_\alpha(y)$ is contained in $\alpha_3$.
The arc $I \setminus (\beta_1 \cup \beta_2)$ is contained in
$\alpha_2$ and its length is $d_{\ws_h}(p, q) - 2 L_1$.  It follows
that the length of $\alpha_2$ is at least $d_{\ws_h}(p, q) - 2 L_1$.

\begin{figure}[h]
\begin{center}
\begin{tikzpicture}[scale=0.85]

\tikzstyle{point}=[circle, draw, fill=black, inner sep=1pt]

\draw (-3, 0) -- (3, 0) node [label=right:$\alpha$] {};

\draw [thick, arrows=-|] (-2, 0) node [point, label=below:$p$] {} --
              ( -0.5, 0) node [label=below:$\beta_1$] {};

\draw [thick, arrows=|-] (0.5, 0)  node [label=below:$\beta_2$] {} --
              (2, 0) node [point, label=below:$q$] {};

\draw (-2, -2) node [point, label=left:$x$] {};

\draw (2, -2) node [point, label=right:$y$] {};

\draw (-1, -3) node [label=below:$\ell_1$] {} -- (-1, 1);

\draw (1, -3) node [label=below:$\ell_2$] {} -- (1, 1);

\draw (-1.5, 0.5) node {$A_1$};
\draw (0, 0.5) node {$A_2$};
\draw (1.5, 0.5) node {$A_3$};

\draw (0, -4) node {$\widetilde S_h$};

\begin{scope}[xshift=8cm]

\draw (-3, 0) -- (3, 0)
node [label=right:$\overline{\alpha}$] {}
node [pos=0.2, label=below:$\overline{\alpha}_1$] {}
node [midway, label=below:$\overline{\alpha}_2$] {}
node [pos=0.8, label=below:$\overline{\alpha}_3$] {};

\draw (-2, -2) node [point, label=left:$\iota(x)$] {};
\draw (2, -2) node [point, label=right:$\iota(y)$] {};

\draw (-1, -3) node [label=below:$F(\ell_1)$] {} -- (-1, 1);

\draw (1, -3) node [label=below:$F(\ell_2)$] {} -- (1, 1);

\draw (-1.5, 0.5) node {$B_1$};
\draw (0, 0.5) node {$B_2$};
\draw (1.5, 0.5) node {$B_3$};

\draw (0, -4) node {$\widetilde S_h \times \RR$};

\end{scope}

\end{tikzpicture}
\end{center}
\caption{Notation for the complements of the leaves $\ell_i$ and
  ladders $F(\ell_i)$.} \label{fig:division}
\end{figure}

\medskip

The ladders $F(\ell_1)$ and $F(\ell_2)$ are convex in
$\widetilde S_h \times \RR$.  Since $\ell_1$ and $\ell_2$ are
disjoint, $F(\ell_1)$ and $F(\ell_2)$ are also disjoint.  Thus they
similarly divide $\widetilde S_h \times \RR$ into three regions $B_1$,
$B_2$ and $B_3$ where
\begin{itemize}
    \item $B_1$ is adjacent to $F(\ell_1)$,
    \item $B_2$ is between $F(\ell_1)$ and $F(\ell_2)$, and
    \item $B_3$ is adjacent to $F(\ell_2)$.
\end{itemize}
Note that $\iota(A_i) \subseteq B_i$.  This implies that the initial
limit point of $\overline{\alpha}$ is contained in the limit set of
$B_1$, and the terminal limit point of $\overline{\alpha}$ is
contained in the limit set of $B_3$.  By \Cref{prop:limit disjoint
  from ladder}, the limit points of $\overline{\alpha}$ are not
contained in the limit points of either $F(\ell_1)$ or $F(\ell_2)$, so
the ladders $F(\ell_i)$ also divide $\overline{\alpha}$ into three
components, $\overline{\alpha}_1$, $\overline{\alpha}_2$ and
$\overline{\alpha}_3$ so that $\overline{\alpha}_i \subset B_i$.
Since $\iota(x)$ lies in $B_1$ its nearest point projection to
$\overline{\alpha}$ is contained in the nearest point projection of
$B_1$ to $\overline{\alpha}$.  Since the boundary of $B_1$ is the
ladder $F(\ell_1)$, which is convex, we deduce that
\[
p_{\overline{\alpha}}(\iota(x)) \subseteq \overline{\alpha}_1
\cup p_{\overline{\alpha}}(F(\ell_1)).
\]
In particular, the projection of $\iota(x)$ to
$\overline{\alpha}$ is contained in a
$P_{\overline{\alpha}}$-neighborhood of $\alpha_1$.  Similarly, the
projection of $\iota(y)$ to $\overline{\alpha}$ is contained
in $\overline{\alpha}_3 \cup p_{\overline{\alpha}}(F(\ell_2))$, and so
is contained in a $P_{\overline{\alpha}}$-neighborhood of
$\overline{\alpha}_3$.

\medskip

The intersection points of $\ell_1$ and $\ell_2$ with $\alpha$ are
endpoints of $\alpha_2$, whose length is at least
$| I | - 2 L_1$.  Since $\iota(\alpha)$ is a
$(Q_\alpha, c_\alpha)$-quasigeodesic in $\widetilde{S_h} \times \RR$,
the distance between the intersection points of $F(\ell_1)$ and
$F(\ell_2)$ with $\iota(\alpha)$ is at least
\[
\frac{1}{Q_\alpha}( | I | - 2 L_1) - c_\alpha.
\]
Being a quasigeodesic, $\iota(\alpha)$ is contained in an
$K_\alpha$-neighborhood of $\overline{\alpha}$.  Therefore, the
distance between the nearest point projections of $F(\ell_1)$ and
$F(\ell_2)$ to $\overline{\alpha}$ is at least
\[
\frac{1}{Q_\alpha}( \diam( | I | - 2 L_1) - c_\alpha - 2
K_\alpha.
\]
By \Cref{cor:ladder projection}, the diameter of
$p_{\overline{\alpha}}(F(\ell_1))$, and similarly
$p_{\overline{\alpha}}(F(\ell_2))$, is at most
$P_{\overline{\alpha}}$.  It follows that the distance in
$\widetilde{S_h} \times \RR$ between
$p_{\overline{\alpha}}(F(\ell_1))$ and
$p_{\overline{\alpha}}(F(\ell_2))$ is at least
\[
\frac{1}{Q_\alpha}( | I | - 2 L_1) - c_\alpha - 2
K_\alpha - 2 P_{\overline{\alpha}} = \frac{1}{Q} d_{\ws_h}(p, q)  - c,
\]
where $Q = Q_\alpha$ and
$c = 2L_1 / Q_\alpha + c_\alpha + 2 K_\alpha + 2
P_{\overline{\alpha}}$.  Therefore, the distance between the
projections of $\iota(x)$ and $\iota(y)$ to
$\overline{\alpha}$ is at least
$\tfrac{1}{Q} \diam( p_\alpha(\gamma) ) - c$, where the constants $Q$
and $c$ depend only on $\alpha$ and not on $x$ and $y$, as required.
\end{proof}

We will also use the well known fact that for a discrete group of
isometries of $\HH^n$, for any loxodromic element $g$ with axis
$\alpha$, the size of the projections of all of the translates of the
axis $\alpha$ to $\alpha$ is bounded, see for example
\cite{bbf}*{Example 2.1(1)}.

\begin{proposition}
Suppose that $G$ is a countable group acting locally finitely by
loxodromics on $\HH^n$.  Suppose that $g$ is a loxodromic element with
geodesic axis $\alpha$.  Then there is a constant $L$ such that for
any distinct translate $h \alpha \not = \alpha$ of the axis, the size
of the projection interval $p_\alpha(h \alpha)$ is at most $L$.
\end{proposition}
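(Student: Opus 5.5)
The plan is a compactness argument modulo the stabilizer of $\alpha$, using discreteness of the action. Let $\langle g_0 \rangle$ be the maximal cyclic subgroup of $G$ stabilizing $\alpha$. Since $G$ acts locally finitely (properly discontinuously) and by loxodromics, the stabilizer of $\alpha$ is virtually cyclic and acts cocompactly on $\alpha$. Choose a compact fundamental segment $J \subset \alpha$ of length equal to the translation length of $g_0$. Suppose, for contradiction, that there are translates $h_n \alpha \ne \alpha$ whose projection intervals $p_\alpha(h_n\alpha)$ have lengths tending to infinity.

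The first step is to convert long projections into fellow travelling. By \Cref{prop:neighbourhood}, once $\diam p_\alpha(h_n\alpha) > 8\delta$ with $\delta = 2\log 3$, the interval $p_\alpha(h_n\alpha)$ lies in the $6\delta$-neighbourhood of $h_n \alpha$. Thus $\alpha$ and $h_n\alpha$ fellow travel within $6\delta$ along a segment of length $L_n \to \infty$. Translating by powers of $g_0$, which replaces $h_n$ by $g_0^{m}h_n$, does not change the projection length. So we may assume the midpoint of this fellow-travelling segment lies in $J$.

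Next I would use the fellow travelling to produce too many group elements. Let $x_0 \in J$. Because $h_n\alpha$ is the axis of the loxodromic $h_n g_0 h_n^{-1}$ with the same translation length $\tau$, points of $h_n\alpha$ within $6\delta$ of $x_0$ are moved by at most roughly $\tau$ plus a constant. This holds as long as $L_n$ exceeds a few multiples of $\tau$. The same is true for $g_0$.

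The main obstacle is to show that $h_n g_0 h_n^{-1} = g_0^{\pm1}$ for large $n$. For this I would consider the commutator-type elements $c_n = h_n g_0 h_n^{-1} g_0^{-1}$, or powers $h_n g_0^{j} h_n^{-1} g_0^{-j}$ for $|j| \le L_n/(3\tau)$. Since both axes fellow travel along a long segment with the same translation length, each such element moves $x_0$ by a uniformly bounded amount $D$, depending only on $\delta$ and $\tau$. The main subtlety is the direction: if the orientations disagree, one uses $g_0^{-j}$ instead.

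Local finiteness says only finitely many elements of $G$ move $x_0$ by at most $D$. Meanwhile the number of available $j$ tends to infinity. By pigeonhole, for large $n$ there are $j \ne j'$ with $h_n g_0^{j} h_n^{-1} g_0^{-j} = h_n g_0^{j'} h_n^{-1} g_0^{-j'}$, so $h_n g_0^{j-j'} h_n^{-1} = g_0^{j-j'}$. Hence $h_n$ commutes with a nontrivial power of $g_0$. An element conjugating a loxodromic power to itself preserves that power's axis, which is $\alpha$, so $h_n\alpha = \alpha$. This contradicts the assumption, and gives a uniform bound $L$.

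I would make sure the bound $D$ really is uniform in $j$. I expect this to follow from the estimate that two $6\delta$-fellow-travelling geodesics with points $x, x'$ close together have $g_0^j x$ and $h_n g_0^j h_n^{-1} x'$ close, as long as both remain in the fellow-travelling region.
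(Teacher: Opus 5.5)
Your proof is correct. It takes a different route from the paper only because the paper gives no argument at all: it quotes the statement as a well-known fact and cites \cite{bbf}*{Example 2.1(1)}.

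What you supply is the standard discreteness (pigeonhole) argument. A long fellow-travelling of $\alpha$ and $h\alpha$ forces the elements $h g_0^{j} h^{-1} g_0^{\mp j}$, for $|j|$ up to about $L/(3\tau)$, into the finite set $F$ of elements that move a fixed $x_0 \in \alpha$ by at most some $D$ of order $\delta$. Pigeonhole then gives $h g_0^{m} h^{-1} = g_0^{\pm m}$ for some $m \neq 0$. Since $h$ carries the axis of $g_0^m$ to the axis of its conjugate, $h\alpha = \alpha$.

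The uniformity check you flag at the end works out. Set $\psi_j = h g_0^j h^{-1}$, $y = g_0^{-j}x_0$, and let $y'$ be the nearest point of $h\alpha$ to $y$. Then $d(\psi_j y, x_0) \le d(y,y') + d(\psi_j y', x_0)$, and both terms are bounded in terms of $\delta$ alone once $y$ and $x_0$ lie in the fellow-travelling interval. Because $\psi_j$ is an isometry, any rotational part of $\psi_j$ about $h\alpha$ (possible for $n \ge 3$) plays no role. The orientation issue is a single global choice for each $h$, since nearest point projection from one geodesic of $\HH^n$ to another is monotone. Its fibres are disjoint totally geodesic hyperplanes orthogonal to the target geodesic.

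Your route buys a few things over the citation:
\begin{itemize}
\item It is self-contained.
\item It yields an explicit bound of order $\tau\,|F|$, so the sequence-and-contradiction framing is unnecessary.
\item It uses only proper discontinuity and hyperbolicity, so it adapts, with coarse constants, to any properly discontinuous action on a proper geodesic hyperbolic space.
\end{itemize}
The citation buys brevity.

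Two small remarks. First, you do not need the maximal cyclic stabilizer: translating by powers of $g$ itself into a fundamental segment for $\langle g \rangle$ on $\alpha$ suffices, and $g^m h\alpha \neq \alpha$ still holds. Second, in the orientation-reversed case you get conjugation of $g_0^{m}$ to $g_0^{-m}$ rather than commuting. This still gives $h\alpha = \alpha$, as you note.
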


\subsection{Lebesgue measure}\label{section:surface lebesgue}

\medskip

In this section we will consider geodesics chosen using Lebesgue
measure on either $\partial \ws_h$ or $\partial( \wsr )$.  We start
with the surface case, and show that for the pushforward of Lebesgue
measure on $\partial \widetilde S_h$, almost all geodesics in
$\widetilde S_h \times \RR$ spend a positive proportion of time close
to the base fiber $S_0$.  This shows the Lebesgue measure case of
\Cref{theorem:surface measures}.

\medskip

\begin{lemma}
Suppose that $\pa \colon S \to S$ is a pseudo-Anosov map and
$(S_h, \Lambda)$ is a hyperbolic metric on $S$ together with a pair of
invariant measured laminations.  Suppose that $\nu$ is the pushforward
of Lebesgue measure on $\partial \widetilde S_h$ under the
Cannon-Thurston map.  Then there are constants $R \ge 0$ and
$\epsilon > 0$ such that for $(\nu \times \nu)$-almost all geodesics
$\overline{\gamma}$ in $\widetilde S_h \times \RR$, for any unit speed
parametrization $\overline{\gamma}(t)$,
\[ \lim_{T \to \infty} \frac{1}{T} | \{ t \in [0, T] \mid \overline{\gamma}(t) \in
N_R(S_0) \} | \ge \epsilon. \]
\end{lemma}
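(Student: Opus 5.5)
We use the machinery of \Cref{section:qg lox}: fix a nontrivial $g \in \pi_1(S)$ with axis $\alpha$ in $\widetilde S_h$ and axis $\oalpha$ in $\wsr$. By \Cref{prop:alphaQG}, $\oalpha$ lies in the $K_\alpha$-neighborhood of $S_0$, and so does every $\pi_1(S)$-translate $h\oalpha$, since $\pi_1(S)$ preserves $S_0$. The plan is to show that a typical $\ogamma$ spends a definite proportion of its time fellow traveling translates of $\oalpha$ for a definite length. We then take $R = K_\alpha + 6\delta_3$.

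\textbf{Step 1 (surface statistics).} Let $\gamma$ be a Lebesgue-typical geodesic in $\widetilde S_h$, which by \Cref{prop:distinct} we may take to be non-exceptional. Choose $D$ large compared with the constants $Q,c$ of \Cref{prop:axis projection} and with $8\delta_3$. Consider the set $B \subset T^1(S_h)$ of unit vectors $v$ whose geodesic $1$-fellow travels a lift of the closed geodesic $\alpha/\langle g\rangle$ for time $[-D,D]$. This set is open and nonempty, so it has positive Liouville measure. By ergodicity of the geodesic flow (the Proposition on equidistribution, with $n=2$), the proportion of $t \in [0,T]$ with $\dot\gamma(t) \in B$ tends to $\mathbf{vol}(B)>0$.

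Each such time lies in a maximal segment $J$ of $\gamma$ on which $\gamma$ fellow travels some translate $h\alpha$. These segments have length at least $2D$. Distinct translates have projection overlaps bounded by the constant $L$ from the final Proposition of \Cref{section:qg lox}. Hence we obtain disjoint segments $J_i=\gamma([a_i,b_i])$ of total length at least $\epsilon_1 T$, for some $\epsilon_1>0$. Along each $J_i$, the projections $p_{h_i\alpha}(\gamma(a_i))$ and $p_{h_i\alpha}(\gamma(b_i))$ are at distance comparable to $b_i-a_i$.

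\textbf{Step 2 (transfer to $\wsr$).} Apply \Cref{prop:axis projection} to the translate $h_i\alpha$; the constants are $\pi_1(S)$-invariant. Take $x,y$ to be the endpoints $\gamma_\pm$. This gives
\[ \diam\, p_{h_i\oalpha}(\ogamma) \ge \tfrac{1}{Q}(b_i-a_i) - c - O(1), \]
where we use that $p_{h_i\alpha}(\gamma)$ contains the projection of $J_i$. Once this exceeds $8\delta_3$, \Cref{prop:neighbourhood} shows that $\ogamma$ contains a segment of length $\gtrsim \tfrac1Q(b_i-a_i)-c'$ lying within $6\delta_3$ of $h_i\oalpha$, and hence within $R$ of $S_0$.

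\textbf{Step 3 (counting proportions).} This is the main obstacle: relating time along $\gamma$ to time along $\ogamma$. The segments $\sigma_i\subset\ogamma$ produced in Step 2 appear in order along $\ogamma$ and overlap boundedly. The overlap bound follows because $h_i\oalpha$ and $h_j\oalpha$ have bounded mutual projection, which we get by applying the same quasi-isometric/projection argument in $\HH^3$ to the discrete group $\pi_1(M)$. What remains is to bound from above the length of $\ogamma$ between $\sigma_i$ and $\sigma_{i+1}$ in terms of the $\gamma$-length between $J_i$ and $J_{i+1}$.

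I would try to do this using Mitra's quasiconvex ladder $F(\gamma)$ together with the fact that $\iota$ is coarsely Lipschitz: $d_{\wsr}\le Q_\LL d_{\HH^2}+c_\LL$ on $S_0$. The ends of $\sigma_i$ lie near $\iota(J_i)$ by the Morse lemma, since they are near $\iota(h_i\alpha)$ and the corresponding projection points. Hence the gap in $\ogamma$ is at most $d_{\wsr}(\iota\gamma(b_i),\iota\gamma(a_{i+1})) + O(1) \le Q_\LL(a_{i+1}-b_i)+O(1)$. Summing these gaps, $\ogamma([0,T'])$ has length $O(T)$ while it contains $\gtrsim \epsilon_1 T/Q$ time within $R$ of $S_0$. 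This gives the ratio $\epsilon$; we absorb the additive constants by taking $D$ large. Finally, the $\liminf$ is independent of the unit-speed parametrization, since reparametrizations differ by a shift, and the geodesic $\ogamma$ determined by $(\iota\gamma_-,\iota\gamma_+)$ is exactly a $\nu\times\nu$-typical geodesic.
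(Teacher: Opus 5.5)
Your proposal is correct and follows the paper's proof: ergodicity of the geodesic flow on $T^1(S_h)$ gives linearly many (in $T$) long fellow-travels of $\gamma$ with lifts of a closed geodesic, and \Cref{prop:axis projection} with \Cref{prop:neighbourhood} turns each into a segment of $\ogamma$ within $K_\alpha + 6\delta_3$ of $S_0$. Your Step 3 is a more careful version of the paper's one-line finish, which only invokes that nearest point projection from $\iota(\gamma)$ to $\ogamma$ is coarsely distance non-increasing. One point needs tightening: locating $\sigma_i$ near $\iota(J_i)$, and hence ordering the $\sigma_i$ along $\ogamma$, does not follow from the Morse lemma alone; it comes from the ladder-separation argument inside the proof of \Cref{prop:axis projection}, using ladders over leaves crossing both $\gamma$ and $h_i\alpha$ together with \Cref{cor:ladder projection}.
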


We give a brief overview of the argument.  Let $\alpha$ be a geodesic
in $\ws_h$ which covers a closed geodesic $\beta$ in $S_h$.  By the
results of the previous section, if a geodesic $\gamma$ in $\widetilde S_h$ has a large fellow travel
with $\alpha$, then $\overline{\gamma}$ has a large fellow travel with
$\overline{\alpha}$.  By ergodicity, a typical geodesic $\gamma$ in
$S_h$ spends a positive proportion of time
fellow-traveling any closed geodesic $\beta$ in $S_h$.  This
implies that $\overline{\gamma}$ spends a positive proportion of time
close to $S_0$.  We now give a precise version of this argument.

\begin{proof}
Let $\alpha$ be a geodesic in $\widetilde S_h$ which projects to a
closed geodesic $\beta$ in $S_h$.  By abuse of notation, we denote the
lift of $\beta$ to $T^1(S_h)$ also by $\beta$.  Let $\delta_1$ be a
sufficiently small constant such that $N_{\delta_1}(\beta)$ is a
regular neighborhood of $\beta$ in $T^1(S_h)$.  In particular, this
implies that $\delta_1$ is less than the injectivity radius of
$T^1(S_h)$.  We fix a constant $L > 0$ such that
\begin{equation}\label{eq:Lbig}
L \ge  Q c + 11 \delta_3 + 1,
\end{equation}
where $Q$ and $c$ are
constants in \Cref{prop:axis projection}, and $\delta_3$ is the
constant of hyperbolicity for $\widetilde{S}_h \times \RR$.

\medskip
Let $\eta$ be an oriented geodesic in $\widetilde{S}_h$ which
intersects $N_{\delta_1}(\alpha)$ in a segment of length $L$, and let
$v$ be the tangent vector to $\eta$ at the first point of intersection
between $\eta$ and $N_{\delta_1}(\alpha)$.  By abuse of notation, we
shall also write $v$ for its image in the quotient $T^1(S_h)$.

\medskip
Let $A(r)$ be the disc of radius $r$ centered at $v$, perpendicular to
the lift of $\eta$ in $T^1(S_h)$.  For any constant $\epsilon > 0$
there is a constant $\delta$ such that the forward image of any point
$w$ in $A(r)$ under the geodesic flow intersects $N_{\delta_1}(\beta)$
in a segment of length $L$, up to additive error at most $\epsilon$,
and furthermore $w$ intersects $N_{\delta_1}(\beta)$ within distance
$\epsilon$ under the geodesic flow.  Let $\delta_2$ be the value of
$\delta$ corresponding to
$\epsilon_2 = \min \{ \delta_1, \delta_3, L/2\}$.  We can replace
$\delta_2$ by any smaller positive number, so in particular we may
assume $\delta_2 \le \delta_3$.

\medskip
Now choose $V$ to be the image of $A(\delta_2)$ under the forward and
backward geodesic flows of distance $\tfrac{1}{2} \delta_2$.  The
interior of $V$ is an open regular neighborhood of $v$.  Every
geodesic flow line that intersects $V$ intersects it in a segment of
length $\delta_2$, and intersects $N_{\delta}(\beta)$ in a segment of
length at least $L - \epsilon_2$, and at most $L + \epsilon_2$,
starting within distance $\epsilon_2$ of the intersection of the flow
line with $V$.  As $N_{\delta_2}(\beta)$ is a regular neighborhood of
$\beta$, for any flow line $\gamma$, segments of
$\gamma \cap N_{\delta_2}(\beta)$ corresponding to distinct
intersections with $V$ do not intersect.

\medskip
As the geodesic flow is ergodic, almost all geodesic flow lines spend
a positive proportion of time in $V$, i.e.
\[ \lim_{T \to \infty} \frac{1}{T} |\gamma(0, T) \cap V| =
\mathbf{vol}_{T^1(S_h)}(V).  \]
As geodesic flow lines intersect $V$ in segments of length $\delta_2$,
the number of times a geodesic flow line intersects $V$ is then
\[ \lim_{T \to \infty} \frac{1}{T} ( \# \text{segments of }\gamma(0,
T) \cap V ) = \frac{1}{\delta_2} \mathbf{vol}_{T^1(S_h)}(V).  \]
Each segment of $\gamma \cap V$ is within distance at most
$\epsilon_2$ of a segment of $\gamma \cap N_{\delta_1}(\beta)$ of
length at least $L - \epsilon_2$.  Therefore, for any interval
$\gamma(I)$ containing the segment, the nearest point projection of
$\gamma(I)$ to $\alpha$ has length at least
$L - \epsilon_2 - 2 \delta_1$.  By \Cref{prop:axis projection}, the
nearest point projection of $\overline{\gamma}(\overline{I})$ to
$\overline{\alpha}$ is therefore at least
$\epsilon_3 = \tfrac{1}{Q}(L - \epsilon_2 - 2 \delta_1) - c$.  As both
$\epsilon_2$ and $\delta_1$ are at most $\delta_3$, our choice of $L$
from \eqref{eq:Lbig} implies $\epsilon_3 \geqslant 8 \delta_3 + 1 > 0$.

\medskip
By \Cref{prop:neighbourhood}, there is a subset of $\overline{\gamma}$
of diameter $\tfrac{1}{Q} L - c $ contained in an
$6 \delta_3$-neighborhood of $\overline{\alpha}$.  By
\Cref{prop:alphaQG}, $\overline{\alpha}$ is contained in a
$K_\alpha$-neighborhood of $S_0$.  The nearest point projection map
from $\iota(\gamma)$ to $\overline{\gamma}$ is distance decreasing, so
$\overline{\gamma}$ spends a positive proportion of its length within
distance $K_\alpha + 6 \delta_3$ of $S_0$, as required.
\end{proof}

We show that for almost all geodesics chosen according to Lebesgue
measure on $\partial(\widetilde S_h \times \RR)$, the proportion of
their length which lies close to $S_0$ tends to zero.  This gives the
Lebesgue measure case of \Cref{theorem:3-manifold measures}.

\begin{lemma}\label{lemma:3-manifold lebesgue}
Suppose that $\pa \colon S \to S$ is a pseudo-Anosov map and
$(S_h, \Lambda)$ is a hyperbolic metric on $S$ together with a pair of
invariant measured laminations.  Suppose that $\nu$ is the Lebesgue
measure on $\partial(\widetilde S_h \times \RR)$.  Then for any
constant $R > 0$, for $(\nu \times \nu)$-almost all geodesics
$\ogamma$ in $\widetilde S_h \times \RR$, for any unit speed
parametrization $\ogamma(t)$,
\[ \lim_{T \to \infty} \frac{1}{T} | \left\{ t \in [0, T] \mid
\ogamma(t) \in N_R(S_0) \right\} | = 0.  \]
\end{lemma}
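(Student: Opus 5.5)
Since $\nu\times\nu$ is absolutely continuous with respect to the measure on geodesics coming from Liouville measure on $T^1(M)$, we will apply the ergodic theorem for the geodesic flow on $T^1(M)$ (the Proposition in the subsection on Lebesgue measures and the geodesic flow). The obstacle is that $N_R(S_0)$ is not the lift of a fixed subset of $M$ of small volume: it projects onto the full fiber neighbourhood $N_R(S)\subset M$, which has positive volume. So we will exploit the $\ZZ$-periodicity in the vertical direction instead. Let $\pi\colon \wsr \to \RR$ be the height coordinate, and let $\widehat M = \wsr/\pi_1(S)$ be the infinite cyclic cover $S\times \RR$ of $M$, on which $\pa$ acts as a deck translation by one unit. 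The set $N_R(S_0)$ is $\pi_1(S)$-invariant, so it descends to a subset $N_R(\widehat S_0)\subset \widehat M$. This subset is contained in a slab $S\times[-a,a]$, since the Cannon--Thurston metric dominates $(\log k)\,|dz|$ and so $a = R/\log k$ works.

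The plan is to show that a typical geodesic, viewed in $\widehat M$, escapes to infinity sublinearly but recurrently, and spends a vanishing proportion of time in any fixed slab. The cleanest route is to reduce to a cocycle. Let $\phi\colon T^1(M)\to\RR$ be the cocycle that records the vertical displacement, namely the derivative of the lifted height function along the geodesic flow. This is a bounded measurable function defined on $T^1(M)$ because the height function changes by exactly one under the deck translation $\pa$. By the time-reversal symmetry of Liouville measure, $\int\phi=0$, and the geodesic flow with this $\RR$-valued skew product is the geodesic flow on the $\ZZ$-cover $T^1\widehat M$.

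The key step, and the main obstacle, is to show that for a.e. orbit the fraction of time in $[0,T]$ during which the height lies in the fixed window $[-a,a]$ tends to $0$. I would obtain this from the fact that the geodesic flow on an infinite cyclic cover of a compact hyperbolic manifold is conservative and ergodic for the infinite Liouville measure, which holds for $\ZZ$-covers by recurrence of centered cocycles and the Hopf argument. Then I would apply Hopf's ratio ergodic theorem. Take the set $B=$ unit tangent vectors over $S\times[-a,a]$, which has finite measure, and compare it with $C=$ unit tangent vectors over $S\times[-n,n]$, whose measure grows linearly in $n$. The ratio theorem gives $\frac{|\{t\le T:\gamma(t)\in B\}|}{|\{t\le T:\gamma(t)\in C\}|}\to \frac{\mathbf{vol}(B)}{\mathbf{vol}(C)}\le \frac{a}{n}$. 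Since the denominator is at most $T$, letting $n\to\infty$ yields the claimed limit $0$.

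If one prefers to avoid infinite ergodic theory, the alternative is a CLT/large-deviation estimate for $\phi$ under the mixing hyperbolic flow. Such an estimate gives that $\int_0^T\phi(g_t v)\,dt$ has spread of order $\sqrt T$, so the expected time spent at height in $[-a,a]$ is $O(\sqrt T)=o(T)$. One then upgrades this to an almost-sure statement along a geometric sequence of times $T$, using Borel--Cantelli. A final detail is that the measure class of $\nu\times\nu$ and the choice of parametrization do not affect a limit of proportions, because shifting the origin changes $|\gamma([0,T])\cap N_R(S_0)|$ by a bounded amount.
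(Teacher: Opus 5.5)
Your argument is correct, and it follows a genuinely different route from the paper.

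\textbf{The paper's route.} The paper invokes Oh--Pan's local mixing theorem for the frame flow on the $\ZZ$-cover, $t^{1/2}\int \psi_1(xa_t)\psi_2(x)\,dx \to (2\pi\sigma)^{-1/2}\int\psi_1\int\psi_2$. It takes $\psi_1$ to be a bump near a basepoint and $\psi_2$ to approximate the height-zero fiber, and concludes that the mass of geodesics lying near $S_0$ at time $t$ decays like $t^{-1/2}$. It then transfers this to the Cannon--Thurston metric by the quasi-isometry.

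\textbf{Your route.} You use only qualitative infinite ergodic theory:
\begin{itemize}
\item conservativity of the flow on $T^1\widehat M$, from recurrence of the mean-zero cocycle $dz$;
\item ergodicity, via the Hopf argument;
\item Hopf's ratio ergodic theorem against slabs of growing width.
\end{itemize}

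\textbf{What each buys.} Your route gives the almost-everywhere density statement directly, from much weaker input, but with no rate. The paper's route gives the quantitative $t^{-1/2}$ decay. That estimate concerns the measure of the set of geodesics near the fiber at a fixed time, i.e.\ an expected occupation time. Passing to the almost-sure limit therefore needs an extra step, such as your Borel--Cantelli argument along $T_j=2^j$, using monotonicity of $\int_0^T 1_B$ in $T$ to interpolate.

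\textbf{On your alternative route.} It needs a local limit bound $\PP\bigl(|\int_0^t\phi|\le a\bigr)=O(t^{-1/2})$, which is exactly what Oh--Pan supplies. A plain CLT only gives expected occupation $o(T)$. That is not summable along a geometric sequence, and passing to a sparser sequence breaks the interpolation.

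\textbf{A step you leave implicit.} Liouville measure and the ergodic theorems concern hyperbolic geodesics, whereas the statement is about Cannon--Thurston geodesics $\ogamma$. The paper handles this in one line, and you should state it too. By the Morse lemma for the equivariant quasi-isometry, $\ogamma$ lies within bounded Hausdorff distance $L$ of the hyperbolic geodesic with the same endpoints. The two unit-speed parametrizations correspond under a coarsely bi-Lipschitz reparametrization. So zero density transfers after replacing $R$ by $R+L$ and enlarging the slab, which costs nothing since your argument works for every slab.
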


We deduce \Cref{lemma:3-manifold lebesgue} directly from the work of Oh and Pan \cite{oh-pan},
which we now describe.  We state a special case of their main result
which will suffice here.  The fibering $M \to S^1$
induces a homomorphism $\pi_1(M) \to \ZZ$, whose kernel is isomorphic
to $\pi_1(S)$.  Corresponding to the kernel, we get the $\ZZ$-cover
$M_\ZZ$ homeomorphic to $S \times \RR$.  The hyperbolic metric on $M$
lifts to a $\ZZ$-periodic hyperbolic metric on $M_\ZZ$.

\medskip The hyperbolic metric on $M$ gives a cocompact lattice
$\Gamma_0$ in $G = \text{PSL}(2, \mathbb{C})$, so that
$\Gamma_0 \backslash G$ is the frame bundle for $M$.  The frame flow
is denoted by right multiplication by
$a_t = \begin{bmatrix} e^{t/2} & 0 \\ 0 & e^{-t/2} \end{bmatrix}$,
which projects to the geodesic flow in $\HH^3$.  Haar measure is a
left-invariant measure on $G$, and determines a measure on
$\Gamma_0 \backslash G$.  The frame flow is ergodic with respect to
Haar measure, which projects to the Liouville measure on $T^1(M)$.  Oh
and Pan show the following mixing result for the geodesic flow.

\begin{theorem}\cite{oh-pan}*{Theorem 1.7.}
\label{thm:oh-pan}
Let $\Gamma_0$ be a cocompact lattice in
$G = \text{PSL}(2, \mathbb{C})$, and let $\Gamma \backslash G$ be a
$\ZZ$-cover of $\Gamma_0 \backslash G$.  Let $\psi_1$ and $\psi_2$ be
continuous functions on $\Gamma \backslash G$ with compact support.
Then
\begin{equation}\label{eq:oh-pan}
\lim_{t \to + \infty} t^{1/2} \int_{\Gamma \backslash G}
\psi_1(x a_t) \psi_2(x) \ dx = \frac{1}{(2 \pi \sigma)^{1/2}}
\int_{\Gamma \backslash G} \psi_1 \ dx \int_{\Gamma \backslash G}
\psi_2 \ dx,
\end{equation}
where $\sigma$ is a constant depending on $\Gamma_0$.
\end{theorem}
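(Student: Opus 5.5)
This statement is quoted from Oh--Pan, and the paper uses it as a black box; what follows is a sketch of how I would reprove it. The approach is the standard Fourier decomposition over the deck group $\ZZ$, which reduces the problem to twisted correlations on the compact quotient $\Gamma_0\backslash G$. Let $\varphi\colon \Gamma_0 \to \ZZ$ be the homomorphism defining the cover, so $\Gamma = \ker \varphi$ and $\ZZ$ acts on $\Gamma\backslash G$ by deck transformations $x \mapsto n\cdot x$. For $\xi \in \RR/2\pi\ZZ$ and compactly supported $\psi$, set $\psi^\xi(x) = \sum_{n\in\ZZ} e^{in\xi}\,\psi(n\cdot x)$. This is a finite sum, and it is a section of the flat line bundle $L_\xi$ over $\Gamma_0\backslash G$ with holonomy $e^{i\xi\varphi}$. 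Plancherel on $\ZZ$ then gives
\[ \int_{\Gamma\backslash G} \psi_1(xa_t)\psi_2(x)\,dx = \frac{1}{2\pi}\int_{-\pi}^{\pi} \int_{\Gamma_0\backslash G} \psi_1^{\xi}(xa_t)\,\overline{\psi_2^{\xi}(x)}\,dx\,d\xi , \]
so the problem becomes one about the frame flow acting on sections of $L_\xi$.

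Next I would treat the twisted correlations in two regimes. For $\xi$ bounded away from $0$, I would show that the $\xi$-twisted correlation decays exponentially, uniformly on compact subsets of $\xi \ne 0$. The flow has no invariant section of $L_\xi$ for $\xi\neq 0$, because $\varphi$ is nontrivial on periods of closed orbits and its values generate $\ZZ$. Combined with a Dolgopyat-type or representation-theoretic spectral gap, this gives the exponential decay. That part contributes $o(t^{-1/2})$. For $\xi$ near $0$, I would use a coding of the geodesic flow, either a Markov section with suspension over a subshift or anisotropic Banach spaces, in which the $\ZZ$-cover is described by an integer cocycle $f$. The twisted transfer operator $\mathcal{L}_{s,\xi}$ then has a simple leading eigenvalue, analytic in $\xi$. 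Correspondingly the twisted flow has a leading resonance $P(\xi)$, the pressure of $i\xi f$, analytic near $0$ with $P(0)=0$.

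Time-reversal symmetry of the geodesic flow forces $\int f = 0$, so $P'(0)=0$. I would then establish $P(\xi) = -\tfrac{\sigma}{2}\xi^2 + O(\xi^3)$ with $\sigma>0$ the asymptotic variance, where positivity holds because $f$ is not a coboundary, since $\varphi$ is nontrivial. The near-zero contribution is then $\frac{1}{2\pi}\int e^{-t\sigma\xi^2/2}(1+O(|\xi|))\,d\xi \cdot \int\psi_1\int\psi_2$ plus exponentially small errors. Here the leading eigenprojection at $\xi=0$ is integration against Haar measure, and $\psi_i^0$ integrates to $\int_{\Gamma\backslash G}\psi_i$. Laplace's method gives $\frac{1}{2\pi}\sqrt{2\pi/(t\sigma)} = (2\pi\sigma t)^{-1/2}$, which is exactly the constant in \eqref{eq:oh-pan}.

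The main obstacle is uniformity. The spectral gap must hold uniformly for the whole family of twisted operators, including a uniform rate for $\xi$ in compact sets away from $0$, and the resonance $P(\xi)$ together with its eigenprojection must depend analytically (or at least $C^2$) on $\xi$ near $0$. All of this must work for the frame flow, not merely the geodesic flow. I would first try to get this from exponential mixing of the frame flow on $\Gamma_0\backslash G$ with unitary twists, using the Dolgopyat cancellation argument with the twist treated as a compact-group extension, and then apply standard analytic perturbation theory near $\xi=0$.
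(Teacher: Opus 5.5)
The paper contains no proof of this statement. It is quoted from Oh--Pan and used only through its consequence that the proportion of geodesics near a fixed fiber at time $t$ decays like $t^{-1/2}$. Your outline is correct, and it is the standard spectral proof of local mixing on abelian covers. The bookkeeping checks out:
\begin{itemize}
\item the Plancherel identity over the deck group is right;
\item the drift is zero by time reversal;
\item $\sigma>0$ follows from Liv\v{s}ic, since $\varphi$ is nonzero on some closed orbit;
\item Laplace's method gives exactly $(2\pi\sigma)^{-1/2}$, provided Haar measure on $\Gamma_0\backslash G$ is a probability measure. Otherwise a factor $\mathrm{vol}(\Gamma_0\backslash G)^{-1}$ appears, which can be absorbed into $\sigma$.
\end{itemize}

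There is one imprecision. For $\xi\neq0$, ruling out a flow-invariant section of $L_\xi$ is not the right input for a Dolgopyat-type gap. What must be excluded is any eigen-section $s(xa_t)=e^{i\lambda t}s(x)$. Via Liv\v{s}ic, this also needs the lengths of closed geodesics in the fiber group (where $\varphi=0$) to generate a dense subgroup of $\RR$, not just that the values of $\varphi$ generate $\ZZ$.

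Because $\Gamma_0$ is a cocompact lattice in $\PSL(2,\mathbb{C})$, the step you flag as the main obstacle can be handled by representation theory instead of symbolic dynamics and Dolgopyat estimates for the twisted frame flow.
\begin{itemize}
\item The space of $L^2$ sections of $L_\xi$ over $\Gamma_0\backslash G$ decomposes discretely.
\item Howe--Moore excludes eigen-sections as soon as $\xi\neq0$, since there are then no $G$-invariant vectors.
\item The only nontrivial non-tempered unitary representations of $\PSL(2,\mathbb{C})$ are spherical complementary series.
\end{itemize}
So the tempered part decays like $(1+t)e^{-t}$, uniformly in $\xi$. Everything else is controlled by the small eigenvalues of the twisted Laplacian $\Delta_\xi$ on $\Gamma_0\backslash\HH^3$. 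The bottom eigenvalue $\lambda_0(\xi)$ is continuous in $\xi$ and positive for $\xi\neq0$, which gives a uniform gap on compact sets away from $0$. Near $0$ it is simple and analytic with $\lambda_0(\xi)\asymp\xi^2$, obtained by perturbing around the harmonic $1$-form representing $\varphi$. It contributes decay $e^{-(2-s_\xi)t}$, where $s_\xi(2-s_\xi)=\lambda_0(\xi)$.

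This makes both the uniformity in $\xi$ and the passage from the geodesic flow to the frame flow essentially automatic. The price is controlling the complementary-series component uniformly as it degenerates to the trivial representation. Your transfer-operator route is the one that carries over to the more general convex cocompact, $\ZZ^d$-cover setting treated by Oh--Pan.
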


In particular, the above result implies that the proportion of
geodesics starting at height zero (in $M_\ZZ$) that are close to
height zero at time $t$ decays at rate $1/\sqrt{t}$.  This implies
that a typical Lebesgue geodesic in $M_\ZZ$ is recurrent on the
fibers, but the proportion of time spent near a fixed fiber decays
like $1 / \sqrt{t}$.

\begin{proof}[Proof of \Cref{lemma:3-manifold lebesgue}]
Any choice of homeomorphism $g$ from the mapping torus $M$ to the
hyperbolic manifold $\HH^3 / \Gamma$ lifts to a quasi-isometry
$\widetilde{g}$ from $\wsr$ with the Cannon-Thurston metric to
$\HH^3$ with the standard metric.  We shall write $d_{\HH^3}$ for the
pullback of the hyperbolic metric to $\wsr$ by $\widetilde{g}$.

\medskip
Suppose that $p$ is a basepoint at height zero in
$\widetilde S_h \times \RR$.  Let $\phi$ be a rotationally symmetric
continuous approximation to the indicator function of $p$, normalized
so that it integrates to one, with respect to the hyperbolic metric
$d_{\HH^3}$.  Let $\phi'$ be the pull back of $\phi$ to
$G = \PSL(2, \mathbb{C})$.  Let $\phi'_t$ be the composition of
$\phi'$ with the frame flow, that is $\phi_t'(x) = \phi'(x a_t)$.
Then the forward and backward projections of $\phi_t'$ to
$\widetilde S_h \times \RR$ converge to Lebesgue measure on the sphere
at infinity $\partial ( \wsr )$.

\medskip In applying \Cref{thm:oh-pan}, we set $\psi_1$ to be the push
forward of $\phi'$ to $\Gamma \backslash G$.  We set $\psi_2$ to be a
close approximation of the indicator function for the pre-image of
height-zero fiber in $\Gamma \backslash G$.  With this choice, the
left hand side integral in \eqref{eq:oh-pan} gives the proportion of
geodesics which at time $t$ lie close to the base fiber in $M_\ZZ$.
By \eqref{eq:oh-pan}, this proportion goes to zero at rate
$1 / \sqrt{t}$.  As the hyperbolic metric $d_{\HH^3}$ is
quasi-isometric to the Cannon-Thurston metric $d_{\wsr}$, this also
holds for the Cannon-Thurston metric.
\end{proof}

\subsection{Hitting measure}\label{section:surface hitting}

In this section we consider geodesics chosen according to hitting
measure determined by random walks.

\medskip We start by showing that for the pushforward of a hitting measure on
$\partial \widetilde S_h$ arising from a random walk, almost all
geodesics in $\widetilde S_h \times \RR$ spend a positive proportion
of time close to the base fiber $S_0$.  This completes the proof of
\Cref{theorem:surface measures} by showing the hitting measure case
for surface measures.

\begin{lemma}
Suppose that $\pa \colon S \to S$ is a pseudo-Anosov map and
$(S_h, \Lambda)$ is a hyperbolic metric on $S$ together with a pair of
invariant measured laminations.  Suppose that $\nu$ and $\rnu$ are the
forward and backward hitting measures on $\partial \widetilde S_h$
arising from a non-elementary, full random walk on $\pi_1(S)$.  Let
$\iota_* \nu$ and $\iota_* \rnu$ be their pushforwards under the
Cannon-Thurston map.  Then there are constants $R \ge 0$ and
$\epsilon > 0$ such that for
$(\iota_* \nu \times \iota_* \rnu)$-almost all geodesics
$\overline{\gamma}$ in $\widetilde S_h \times \RR$,
\[ \lim_{T \to \infty} \frac{1}{T} | \{ t \in [0, T] \mid \overline{\gamma}(t) \in
N_R( S_0 ) \} | \ge \epsilon.  \]
\end{lemma}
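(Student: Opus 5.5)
Mirroring the Lebesgue case, I will replace ergodicity of the geodesic flow by ergodicity of the shift on bi-infinite sample paths. Fix a non-trivial element $g \in \pi_1(S)$ with axis $\alpha$ in $\widetilde S_h$ and axis $\overline{\alpha}$ in $\wsr$. Take the constants $Q, c$ from \Cref{prop:axis projection} and choose $L \ge Q(c + 8\delta_3 + 1)$. Let $U$ be the set of geodesics $\gamma$ in $\widetilde S_h$ such that $\gamma$ passes within distance $1$ of $x_0$ and $p_\alpha(\gamma)$ contains the segment of $\alpha$ of length $L$ centred near the point of $\alpha$ closest to $x_0$. This condition is open in the pair of endpoints $(\gamma_-, \gamma_+)$: it says the endpoints lie in two small open arcs near the endpoints of $\alpha$. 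Because $\mu$ is full, $\nu \times \rnu$ gives the product of these arcs positive measure; call it $\theta$.

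The key step is to show that the tracked geodesic $\gamma_w$ of a typical sample path lies in $w_n U$ for a positive proportion of indices $n$. The plan is to use the shift $\sigma$ on the step space $(\pi_1(S),\mu)^{\ZZ}$, which is ergodic and measure preserving. It acts by $\gamma_{\sigma^n w} = w_n^{-1}\gamma_w$, and the pair of endpoints of $\gamma_w$ is distributed as $\nu\times\rnu$. By the Birkhoff ergodic theorem,
\[ \frac{1}{N}\#\{0\le n< N : \gamma_w \in w_n U\} \to \theta \]
almost surely. For each such $n$, the geodesic $\gamma_w$ passes within distance $1$ of $w_n x_0$ and fellow travels the translate $w_n\alpha$ for length about $L$. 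Applying \Cref{prop:axis projection} to the translated axis (the constants are equivariant) shows that $p_{w_n\overline{\alpha}}(\overline{\gamma})$ has diameter at least $8\delta_3+1$. Then \Cref{prop:neighbourhood} gives a subsegment of $\overline{\gamma}$ of length bounded below by a constant $c_0>0$ lying in the $6\delta_3$-neighbourhood of $w_n\overline{\alpha}$. By \Cref{prop:alphaQG} and $\pi_1(S)$-invariance of $S_0$, this subsegment lies in $N_R(S_0)$ with $R = K_\alpha + 6\delta_3$.

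The main obstacle is converting ``a positive proportion of indices $n$'' into ``a positive proportion of time $T$ along $\overline{\gamma}$'', which needs three ingredients. First, the subsegments coming from different $n$ must not be counted too many times. The subsegment for index $n$ sits near the projection of $\iota(w_n x_0)$ to $\overline{\gamma}$. If linear progress of $w_n x_0$ holds in $\wsr$, then a fixed segment of $\overline{\gamma}$ of length $c_0$ can be associated to only boundedly many indices on average. Second, $w_n x_0$ must advance at most linearly along $\overline{\gamma}$. Since $\mu$ is only assumed non-elementary and full, with no moment condition, I would avoid the moment-dependent statements (\Cref{prop:segment bound}, \Cref{cor:deviation}). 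Instead I would use the lower bound of \Cref{cor:tlinear}, $t_n \ge \ell n$, applied in $\wsr$ to show that the indices $n\le N$ are spread across length at least $\ell N$. The upper bound I would obtain by restricting to a subset $U'\subset U$ on which $d(w_n x_0, w_{n+1}x_0)$ is bounded, again by Birkhoff. Third, I need an upper bound for the quantity $T_N$ defined as the position of $w_N x_0$ along $\overline{\gamma}$. For this I would use Kingman's subadditive ergodic theorem on $d(x_0,w_Nx_0)$ in $\wsr$. If the drift is infinite I would instead count hits in each window of $\overline{\gamma}$ between consecutive $t_n$'s via the Birkhoff frequency of bounded-jump indices. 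This bookkeeping is the step I expect to require the most care. Finally, a geodesic determined by $\iota_*\nu\times\iota_*\rnu$ is, by \Cref{prop:distinct}, almost surely the $\overline{\gamma}$ joining $\iota(\gamma_-)$ and $\iota(\gamma_+)$, and changing the parametrization shifts only a bounded initial segment, so the liminf bound $\epsilon>0$ follows.
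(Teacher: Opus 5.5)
Your route is the paper's own. You use ergodicity along the random walk, a fixed axis $\alpha$ whose translates $w_n\alpha$ are fellow-travelled by $\gamma$ with positive frequency, and then \Cref{prop:axis projection}, \Cref{prop:neighbourhood} and \Cref{prop:alphaQG} to place a definite length of $\overline{\gamma}$ in $N_R(S_0)$ near $w_n\overline{x}_0$. That geometric half is fine. The paper compresses the rest into ``by ergodicity, this happens linearly often''.

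\textbf{The gap} is exactly the step you flag: turning a positive frequency of indices into a positive proportion of arclength. This needs an upper bound, linear in $N$, on the length of $\overline{\gamma}$ swept out by the first $N$ steps, i.e.\ on $\max_{n\le N}T_n$, where $T_n$ is the position of the projection of $w_n\overline{x}_0$ to $\overline{\gamma}$. Restricting the good indices to bounded-jump ones (your $U'$) does nothing to the jumps taken at the other steps. \Cref{cor:tlinear} only gives a lower bound. The only source of the upper bound is a first moment $\EE\, d(\overline{x}_0, g_1\overline{x}_0)<\infty$ combined with Kingman. Your fallback for infinite drift cannot be made to work, because without a first moment the conclusion itself is false.

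\textbf{A counterexample.} Let $\mu=\tfrac12\mu_0+\tfrac12\sum_m 2^{-m}\delta_{g_m}$. Here $\mu_0$ is uniform on a symmetric generating set, and $g_m x_0$ lies within $\operatorname{diam}(S_h)$ of $\ell(e^{e^m})$ for a leaf $\ell$ of $\Lambda_+$ through $x_0$. This $\mu$ is non-elementary and full.
\begin{itemize}
\item $F(\ell)$ is a copy of $\HH^2$ in which $\iota(\ell)$ is a horocycle. So $[\overline{x}_0,g_m\overline{x}_0]$ stays near a semicircle rising to height $\asymp e^m$. It has length $\asymp e^m$, so the first moment is infinite, yet it spends only $O(R)$ time in $N_R(S_0)$, uniformly in $m$.
\item A long window $\overline{\gamma}([T_n,T_{n+1}])$ fellow-travels $w_n[\overline{x}_0,g_{n+1}\overline{x}_0]$, and $S_0$ is $\pi_1(S)$-invariant. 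Hence every window contributes $O(1)$ time in $N_R(S_0)$, and $[\min_{n\le N}T_n,\max_{n\le N}T_n]$ contains only $O(N)$ such time.
\item Given $g_{n+1}=g_m$, both $w_n\overline{x}_0$ and $w_{n+1}\overline{x}_0$ lie within bounded distance of $\overline{\gamma}$ with probability bounded below uniformly in $m$, by non-atomicity of the hitting measures.
\item The middles of such windows are far from $S_0$, while every $w_j\overline{x}_0$ lies in $S_0$. So these windows essentially do not overlap. Birkhoff applied to their non-integrable lengths then gives $\max_{n\le N}T_n/N\to\infty$.
\end{itemize}
Hence the proportion of time in $N_R(S_0)$ has $\liminf$ equal to $0$.

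\textbf{What is needed.} A first-moment hypothesis must be added, to the statement as well. It holds for geometric measures and for the first-return measures used in the Corollary. With it, Kingman bounds $\max_{n\le N}T_n$ linearly and your counting goes through, with two further repairs.
\begin{itemize}
\item The multiplicity claim does not follow from linear progress alone. You need transience together with the fact that balls in $\widetilde S_h\times\RR$ contain boundedly many points of $\pi_1(S)\overline{x}_0$; for example, count only good indices that are last visits.
\item Fullness constrains only $\nu$. To make $\rnu$ charge the arc near $\alpha_-$, choose $g$ in the semigroup generated by $\operatorname{supp}\mu$, so that $\alpha_-\in\operatorname{supp}\rnu$.
\end{itemize}
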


\begin{proof}
Let $\gamma$ be a bi-infinite geodesic arising from the limit points
of a bi-infinite random walk.  We fix a constant $L > 0$ such that
$\tfrac{L}{Q} - c > 8 \delta_3$, where $Q$ and $c$ are constants in
\Cref{prop:axis projection}. Suppose that $g$ is a non-trivial
element of $\pi_1(S)$, with axis $\alpha$ in $\widetilde S_h$, and
axis $\overline{\alpha}$ in $\widetilde S_h \times \RR$.  As open sets have positive hitting
measure, there is a positive probability that the length of the
projection interval $p_\alpha(\gamma)$ in $\widetilde S_h$ has length
at least $L$.  By \Cref{prop:axis projection}, the diameter of the
projection image $p_{\overline{\alpha}}(\overline{\gamma})$ in
$\widetilde S_h \times \RR$ has diameter at least
$\tfrac{1}{Q_\alpha} L - c$.  As
$\tfrac{1}{Q_\alpha} L_1 - c > 8 \delta_3$, \Cref{prop:neighbourhood}
implies the projection image
$p_{\overline{\alpha}}(\overline{\gamma})$ is contained in a
$6 \delta_3$-neighborhood of $\overline{\gamma}$.  So there are points
on $\overline{\gamma}$ distance
$\tfrac{1}{Q_\alpha} L_1 - c - 8 \delta_3$ apart, such that the
interval between them is contained in a $6 \delta_3$-neighborhood of
$\overline{\alpha}$.  By ergodicity, this happens linearly often for
translates of $\alpha$.  As the projection map $p_{\overline{\gamma}}$
from $\iota(\gamma)$ to $\overline{\gamma}$ is distance decreasing,
$\overline{\gamma}$ spends a positive proportion of time within
distance $6 \delta_3$ of $S_0$.
\end{proof}

We now show that for hitting measure on
$\partial ( \widetilde S_h \times \RR )$ arising from a geometric
random walk on $\pi_1(M)$, for almost all geodesics in
$\widetilde S_h \times \RR$, the proportion of their length which is
close to the base fiber $S_0$ tends to zero.  This completes the proof
of \Cref{theorem:3-manifold measures} by showing the hitting
measure case for $3$-manifold measures.  Theorems \ref{theorem:surface
  measures} and \ref{theorem:3-manifold measures} then imply \Cref{theorem:singularity}.

\begin{proposition}\label{prop:zero limit}
Suppose that $\pa \colon S \to S$ is a pseudo-Anosov map and
$(S_h, \Lambda)$ is a hyperbolic metric on $S$ together with a pair of
invariant measured laminations.  Suppose $\nu$ and $\rnu$ are the
forward and backward hitting measures on
$\partial ( \widetilde S_h \times \RR )$ arising from a geometric
random walk on $\pi_1(M)$.  Then for $(\nu \times \rnu)$-almost all
geodesics $\ogamma$ in $\widetilde S_h \times \RR$, for any unit speed
parametrization $\ogamma(t)$, and any constant $R > 0$,
\[ \lim_{T \to \infty} \frac{1}{T} | \{ t \in [0, T] \mid \ogamma(t) \in N_R( S_0 )
\} | = 0.  \]
\end{proposition}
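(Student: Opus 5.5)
We plan to compare the tracked geodesic with the sample path and exploit the fact that the projection of the random walk to the $\RR$-factor is itself a recurrent, but null-recurrent, random walk on $\ZZ$. The fibration gives a homomorphism $\phi \colon \pi_1(M) \to \ZZ$ with kernel $\pi_1(S)$. The image $\phi(w_n)$ is a random walk on $\ZZ$ whose step distribution $\phi_*\mu$ has a finite exponential moment. The height coordinate $z$ of a point of $\wsr$ is coarsely Lipschitz with respect to $d_{\wsr}$: the $dz$ term in \eqref{eq:ct metric} shows $|z(p)-z(q)| \le d_{\wsr}(p,q)/\log k$. Consequently $z(w_n x_0) = \phi(w_n) + O(1)$, so $w_n x_0 \in N_{R'}(S_0)$ forces $|\phi(w_n)| \le C R'$.

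\textbf{Step 1: the walk on $\ZZ$.} We first show that the proportion of times $n \le N$ with $|\phi(w_n)| \le r$ tends to zero almost surely, for every fixed $r$.

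If $\phi_*\mu$ has nonzero mean, this is immediate from the strong law of large numbers, since $|\phi(w_n)|$ grows linearly.

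If the mean is zero, the walk is a genuine one-dimensional walk with finite variance. We must check it is not supported on $\{0\}$; this holds because the support of $\mu$ generates $\pi_1(M)$ as a semigroup and $\phi$ is surjective. Then the local limit theorem, or just Chung--Fuchs together with the ratio ergodic theorem for null-recurrent chains, gives $\frac1N\#\{n\le N : |\phi(w_n)|\le r\}\to 0$ almost surely. Concretely, the expected count is $O(\sqrt N)$. A variance or maximal-inequality argument along the subsequence $N=2^j$, combined with monotonicity of the count in $N$, upgrades this to an almost sure statement.

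\textbf{Step 2: transfer from the sample path to the geodesic.} Fix $R$, and let $\ogamma$ be the tracked geodesic with closest-point parameters $t_n$. We decompose $[0,t_N]$ into the intervals $[t_n,t_{n+1}]$. By \Cref{prop:segment bound} these intervals have length at most $D\log n$. By \Cref{cor:deviation}, $d(w_n x_0, \ogamma(t_n)) \le D\log n$. If some point of $\ogamma([t_n,t_{n+1}])$ lies in $N_R(S_0)$, then, since height is Lipschitz, $|\phi(w_n)| \le C(R + 2D\log n)$.

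This logarithmic window is too crude to combine naively with Step 1, which handles only a fixed $r$. We therefore use exponential tails directly. By \Cref{prop:distance to geodesic} and \Cref{prop:gp}, the random variable $Y_n = d(w_nx_0,\ogamma(t_n)) + |t_{n+1}-t_n|$ has an exponential tail that is uniform in $n$. Fix a large $A$. The time that $\ogamma([0,t_N])$ spends in $N_R(S_0)$ is bounded by
\[ \sum_{n\le N} |t_{n+1}-t_n|\,\mathbf 1_{\{|\phi(w_n)|\le C(R+A)\}} + \sum_{n\le N} Y_n \mathbf 1_{\{Y_n > A\}}. \]
Dividing by $t_N \ge \ell N$ (from \Cref{cor:tlinear}), we bound the two sums separately.

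For the second sum, we use that $Y_n$ depends, up to bounded error, only on finitely many steps around $n$ together with the tails, which are stationary under the shift on bi-infinite step space. The ergodic theorem applied to the shift (the step space is ergodic) then gives a limit equal to $\EE[Y_0\mathbf 1_{Y_0>A}]$. This tends to $0$ as $A\to\infty$.

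For the first sum, we apply Cauchy--Schwarz. One factor is $\frac1N\sum |t_{n+1}-t_n|^2$, which is bounded almost surely by the same ergodic argument. The other factor is the frequency of the event $\{|\phi(w_n)|\le C(R+A)\}$, which tends to $0$ by Step 1.

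Letting $A\to\infty$ gives the result. Changing the parametrization of $\ogamma$ only shifts time by a bounded amount, so it does not affect the limit.

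\textbf{Main obstacle.} The delicate point is Step 2's reliance on stationarity. The quantities $Y_n$ are defined relative to the full bi-infinite geodesic, so we would realize them as $Y_0\circ\sigma^n$ under the shift $\sigma$, using $\pi_1(M)$-equivariance of the tracked geodesic: $\ogamma_{\sigma w} = w_1^{-1}\ogamma_w$. The subtlety is making the parametrization, that is, the $t_n$ differences, shift-equivariant; we would work with the differences $t_{n+1}-t_n$ only, which are equivariant. If this proves awkward, the fallback is Borel--Cantelli with the window $r_n = C(R+D\log n)$. That requires a quantitative version of Step 1, namely $\#\{n\le N: |\phi(w_n)|\le r_n\} = O(\sqrt N\log N)$ almost surely, which still suffices after multiplying by the $D\log N$ segment bound.
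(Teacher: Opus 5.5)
Your proof is correct, and it transfers the $\ZZ$-walk statement to the geodesic by a different mechanism than the paper.

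\textbf{The paper's route.} It works at logarithmic scales. It uses \Cref{prop:lclt} to get $\PP(|\phi(w_n)|\le 2A\log n)=O(\log n/\sqrt n)$. It tunes $A$ so that the tail of $d(w_nx_0,\ogamma(t_n))\ge A\log n$ is $O(n^{-1/2})$. It then bounds the expected number of $k\le n$ with $\ogamma(t_k)$ within $A\log k$ of $S_0$ by $O(\sqrt n\log n)$. Finally it combines Markov's inequality with the $D\log k$ segment bound and linear progress of $t_n$.

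\textbf{Your route.} You truncate at a fixed level $A$. The main term you control by Cauchy--Schwarz against the frequency of returns of the $\ZZ$-walk to a fixed window. The excess $\sum_n Y_n\mathbf{1}_{\{Y_n>A\}}$ you control by Birkhoff's theorem for the Bernoulli shift, using $\ogamma_{\sigma\omega}=w_1^{-1}\ogamma_\omega$ to get exact stationarity of $Y_n$.

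\textbf{What each buys.} The paper gets an explicit rate of order $(\log n)^3/\sqrt n$, but only with probability tending to one. Its Markov bound $O(1/\log n)$ is not summable, so as written it gives convergence in probability rather than the almost sure limit in the statement. Its logarithmic windows also need calibrating so that the tracking error and the segment lengths $D\log k$ stay below the distance to $S_0$. Your fallback window $r_n=C(R+D\log n)$ builds in exactly that calibration. Your main route gives the almost sure limit directly, and the transfer step needs only square-integrability of $Y_0$; the cost is that it gives no rate.

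\textbf{Small points to close off.}
\begin{enumerate}
\item Choose the full-measure set independently of $R$, by intersecting over integer $R$ and using monotonicity in $R$.
\item Pass from the times $t_N$ to arbitrary $T$ using $t_N\ge \ell N$ eventually and $|t_{N+1}-t_N|/N\to 0$, the latter again by the ergodic theorem.
\item Fix an equivariant measurable choice of geodesic and closest-point projection for the pseudometric. Alternatively, work in $\HH^3$ and enlarge $R$ by the quasi-isometry constants.
\item Your remark that $Y_n$ depends ``up to bounded error only on finitely many steps'' is not accurate. It depends on both endpoints of the bi-infinite path. It is also not needed: exact shift-equivariance is all the ergodic theorem uses.
\end{enumerate}
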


We shall use the following estimate, which is a consequence of the
Local Central Limit Theorem for random walks on $\ZZ$, see for example
\cite{lawler-limic}*{Section 2}.

\begin{proposition}\cite{lawler-limic}*{Proposition 2.4.4}\label{prop:lclt}
Suppose $\phi_* \mu$ generates an aperiodic random walk on $\ZZ$.
Then there is a constant $C$ such that for all $n$ and $x$,
\[ \PP( \phi(w_n) = x ) \le \frac{C}{\sqrt{n}}.  \]
\end{proposition}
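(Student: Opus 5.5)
We prove the bound with the Fourier inversion formula on $\ZZ$; the only inputs are aperiodicity and a finite second moment for $\phi_*\mu$. Let $\psi(\theta) = \sum_{m \in \ZZ} \phi_*\mu(m) e^{i m \theta}$ be the characteristic function of the step distribution. Since $\phi(w_n)$ is a sum of $n$ independent $\phi_*\mu$-distributed steps, inversion gives
\[ \PP(\phi(w_n) = x) = \frac{1}{2\pi} \int_{-\pi}^{\pi} e^{-i x \theta} \psi(\theta)^n \, d\theta \le \frac{1}{2\pi} \int_{-\pi}^{\pi} |\psi(\theta)|^n \, d\theta . \]
This upper bound does not depend on $x$, so it suffices to show the last integral is $O(1/\sqrt{n})$. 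We split $[-\pi,\pi]$ into a small interval $|\theta| \le \delta$ and its complement.

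\emph{Away from zero.} We use $|\psi(\theta)|^2$, which is the characteristic function of the symmetrized step $X - X'$, where $X, X'$ are independent copies of a step. Then $|\psi(\theta)| = 1$ exactly when all differences of points in the support of $\phi_*\mu$ lie in $(2\pi/\theta)\ZZ$. Aperiodicity says these differences generate $\ZZ$, so this happens only at $\theta = 0$ in $[-\pi,\pi]$. Since $\psi$ is continuous, on the compact set $\delta \le |\theta| \le \pi$ we get $|\psi(\theta)| \le \rho$ for some $\rho < 1$. This part of the integral is therefore at most $2\pi\rho^n$.

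\emph{Near zero.} Here we need a finite second moment. We have $|\phi(g)| \le C' |g|$ for the word length, because $\phi$ is a homomorphism to $\ZZ$ and so is Lipschitz. Hence the finite exponential moment of the geometric measure $\mu$ transfers to $\phi_*\mu$, and in particular the second moment is finite. The symmetrized step has mean zero and variance $2\sigma^2$, where $\sigma^2$ is the variance of $\phi_*\mu$. We have $\sigma^2 > 0$ because an aperiodic measure cannot be a point mass. A Taylor expansion then gives $|\psi(\theta)|^2 = 1 - \sigma^2\theta^2 + o(\theta^2)$. Choosing $\delta$ small enough, $|\psi(\theta)| \le 1 - \tfrac{\sigma^2}{4}\theta^2 \le e^{-\sigma^2\theta^2/4}$ for $|\theta| \le \delta$. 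This part of the integral is therefore at most $\int_\RR e^{-n\sigma^2\theta^2/4}\,d\theta = O(1/\sqrt{n})$.

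Adding the two pieces gives $\PP(\phi(w_n)=x) \le C_1/\sqrt{n} + \rho^n \le C/\sqrt{n}$, uniformly in $x$ and $n$. The main point of care is the near-zero estimate without assuming the walk on $\ZZ$ is centered. Passing to $|\psi|^2$ handles this: it removes the drift, and the bound remains uniform in $x$ because it only uses $|\psi|$.
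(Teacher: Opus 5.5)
Your proof is correct. The paper gives no proof of this statement: it quotes it from Lawler--Limic as a corollary of the local central limit theorem. You instead prove only the upper bound that is needed, directly, by bounding $\PP(\phi(w_n)=x)\le\frac{1}{2\pi}\int_{-\pi}^{\pi}|\psi(\theta)|^n\,d\theta$ and estimating $|\psi|$ separately near and away from $0$.

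Your route is self-contained. Because it only ever uses $|\psi|^2$, the characteristic function of $X-X'$, it never requires the walk to be centered. This matters in the paper's application: $\mu$ on $\pi_1(M)$ is not assumed symmetric, so $\phi_*\mu$ may have nonzero mean. The citation buys brevity, and via the full LCLT it gives sharp asymptotics that the paper does not use.

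Two small remarks.

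First, the moment input is not needed. For any fixed $m\ne 0$ and $|\theta|\le\pi/|m|$,
$1-|\psi(\theta)|^2=\EE[1-\cos((X-X')\theta)]\ge\frac{2}{\pi^2}\PP(X-X'=m)\,m^2\theta^2$.
So the near-zero bound holds for any non-degenerate step law.

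Second, your reading of aperiodicity (the differences of support points generate $\ZZ$) is the right one for period one. However, when the paper applies the proposition it only checks that the support of $\phi_*\mu$ generates $\ZZ$ as a semigroup. That permits, for example, support $\{-1,1\}$. This is realizable by a finitely supported geometric $\mu$ on $\pi_1(M)$ with support $\{t^{\pm1}, t a_i^{\pm1}\}$, and there $|\psi(\pi)|=1$. Your argument covers this case too. If the differences generate $d\ZZ$, then $X-X'$ is supported on $d\ZZ$, so $|\psi|^2$ is $2\pi/d$-periodic. Your near-zero estimate then applies near every point of $(2\pi/d)\ZZ$, and the compactness bound applies away from them, which still gives $O(1/\sqrt{n})$ uniformly in $x$.
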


\begin{proof}[Proof of \Cref{prop:zero limit}]
We sketch the key steps before giving the technical details.
\begin{itemize}
    \item By the epimorphism $\phi: \pi_1(M) \to \ZZ$, the $\mu$-random walk projects to an aperiodic random walk on $\ZZ$.
    \item By \Cref{prop:lclt}, the random walk on $\ZZ$ recurs to $O(\log n)$ neighborhoods of zero with negligible probability (as $n \to \infty$).
    \item The epimorphism $\phi$ is distance non-increasing for the Cannon-Thurston metric. 
    So we deduce the same statement for recurrence of $\mu$-random walk to $O(\log n)$ neighborhoods of $S_0$.
    \item At the same time, the $\mu$-random walk recurs to a $O(\log n)$ neighborhoods of the tracked geodesic with probability negligibly smaller than 1.
    \item Choosing the size of the neighborhood of the tracked geodesic a definite proportion smaller than the neighborhood of $S_0$, we derive the fact that the closest points on the tracked geodesic stay $O(\log n)$ distance away from $S_0$ with probability negligibly smaller than 1.
    \item The probability estimates imply a sub-linear upper bound for the expectation of the number of times the tracked geodesic recurs to a $O(\log n)$ neighborhood of $S_0$.  
    \item Using the expectation bound and linear progress with exponential decay, we conclude the proof of \Cref{prop:zero limit}.
\end{itemize}

Let $\phi_* \mu$ denote the pushforward of $\mu$ by
the homomorphism $\phi \colon \pi_1(M) \to \ZZ$.  The $\mu$-random
walk projects to a $\phi_* \mu$ random walk on $\ZZ$.  As the support
of $\mu$ generates $\pi_1(M)$ as a semigroup, the support of
$\phi_* \mu$ generates $\ZZ$ as a semigroup.  In particular, by
\Cref{prop:lclt}, there is a constant $C_1$ such that for all $n$ and
any $A > 0$,
\[ \PP( | \phi(w_n) | \le 2 A \log n ) \le 2 A C_1 \log n /
\sqrt{n}. \]
As the above inequality holds for all $A > 0$, we may choose
\begin{equation}\label{eq:largeA}
A = \frac{1}{ 2\log \tfrac{1}{c} },
\end{equation}
where $c < 1$ is the exponential decay constant
from \Cref{prop:gp}.

\medskip
As the distance between any pair of adjacent fibers $S \times \{ n \}$
and $S \times \{ n + 1\}$ is equal to one in the Cannon-Thurston
metric, $| \phi(w_n) | \ge 2 A \log n$ implies that
$d_{\widetilde S_h \times \RR}( w_n x_0, S_0) \ge 2 A \log n$.  In
particular, it is very likely that $w_n x_0$ is far from $S_0$, i.e.
\begin{equation}\label{eq:dv}
\PP \left( d_{\widetilde S_h \times \RR}(w_n x_0, S_0) \ge 2 A \log
n \right) \ge 1 - \frac{ 2 A C_1 \log n }{ \sqrt{n} }.
\end{equation}
Recall that $\gamma(t_n)$ is a closest point on $\gamma$ to $w_n x_0$.
The Gromov product estimate, \Cref{prop:gp}, implies that it is likely
that $w_n x_0$ is logarithmically close to $\gamma$, i.e.
\begin{equation}\label{eq:dgamma}
\PP \left( d_{\widetilde S_h \times \RR}(w_n x_0, \gamma(t_n) ) \le
A \log n \right) \ge 1 - K c^{A \log n}.
\end{equation}
Combining \eqref{eq:dv} and \eqref{eq:dgamma} above, and using the
triangle inequality, shows that it is likely that $\gamma(t_n)$ is far
from $S_0$, i.e.
\[ \PP \left( d_{\widetilde S_h \times \RR}( \gamma(t_n), S_0 ) \ge A
\log n \right) \ge 1 - \frac{ 2 A C_1 \log n }{ \sqrt{n} } - K n^{A \log
  c}. \]
Taking the complementary event in the line above shows that it is
unlikely that $\gamma(t_n)$ is close to $S_0$, and using the fact that
our choice of $A$ from \eqref{eq:largeA} makes $A \log \tfrac{1}{c} = 1/2$, gives
\[ \PP \left( d_{\widetilde S_h \times \RR }( \gamma(t_n), S_0 ) \le A
\log n \right) \le \frac{ 2 A C_1 \log n + K}{ \sqrt{n} }. \]

Let $X_n$ be the number of times $\gamma(t_k)$ is within distance
$A \log k$ of $S_0$ for $1 \le k \le n$.  Then an elementary integral
comparison bound says that there is a constant $C_2$ such that 
\[ \EE(X_n) \le 2 A C_1 \sqrt{n} \log n + 2 K \sqrt{n} + C_2. \]
For any random variable, the Markov inequality says $\PP(X \ge t) \le
\EE(X)/t$, so choosing $t = \sqrt{n} (\log n)^2$ gives
\begin{equation}\label{eq:close}
\PP( X_n \ge \sqrt{n} (\log n)^2 ) \le \frac{ 2 A C_1 }{ \log n } +
\frac{ 2 K }{ (\log n)^2 } + \frac{C_2}{\sqrt{n}(\log n)^2}.
\end{equation}

Let $\beta_n = \gamma( [t_{n-1}, t_n] )$, and set
\[ B_n = \bigcup_{1 \le k \le n} \beta_k. \]
Recall that by \Cref{cor:tlinear}, $t_n$ makes linear progress with
exponential decay.  In particular, there is a constant $\ell > 0$ such
that
\[ \PP \left( \gamma( [ 0 , \ell n] )\subseteq B_n \subseteq \gamma( [0, 2
\ell n] ) \right) \ge 1 -
K c^n. \]

By \Cref{prop:segment bound}, there is a constant $D > 0$ such that
the probability that $|\beta_k| \le D \log k$ for all $k \ge \log n$
tends to one as $n$ tends to infinity.  Let $D_n$ be the union of all
$\beta_k$, for $1 \le k \le n$, such that any point on $\beta_k$ is
within distance $(A + D) \log k$ of $S_0$.  By \eqref{eq:close}, the
number of such intervals is at most $\sqrt{n} (\log n)^2$, with
probability that tends to one as $n$ tends to infinity.  The
probability that the union of the first $\log n$ segments $B_{\log n}$
has total length at most $2 \ell \log n$ tends to one as $n$ tends to
infinity.  Therefore,
\[ \PP \left( \frac{| D_n |}{| B_n |} \le \frac{2 \ell \log n + D
  \sqrt{n}(\log n)^3}{ \ell n} \right) \to 1 \text{ as } n \to \infty. \]
In particular, the proportion of points in $\gamma([0, t_n])$ which
lie within distance $K$ of $S_0$ tends to zero as $n$ tends to
infinity, as required.
\end{proof}

\section{Effective bounds for surface measures}\label{section:effective}

In this section, we prove \Cref{theorem:effective} using several results that we state and prove in this section, and using our construction of quasigeodesics in $\wsr$ in our companion paper \cite{gmpu}. 


\medskip
Suppose that $\gamma$ is a non-exceptional geodesic with unit speed
parametrization.  The inclusion map $\iota$ embeds $\gamma$ in $\wsr$
at height $z = 0$.  The image $\iota(\gamma)$ is, in general, not a
quasigeodesic.  However, we will show that the height for each point of $\iota(\gamma)$ can be changed in a specified way so that the resulting path is a quasigeodesic, i.e. there is a
function $h_\gamma(t)$ such that $(\gamma(t), h_\gamma(t))$ is an
(unparametrized) quasigeodesic.

\medskip
In order to define the function $h_\gamma$, we need the following mild
generalization of a measured lamination.  A
measured lamination is \emph{maximal} if every complementary region is
an ideal triangle.  By adding finitely many leaves to divide every ideal complementary region of a measured lamination $\Lambda$ into ideal triangles we may extend $\Lambda$ to a maximal lamination. There are thus only finitely many
such maximal laminations containing $\Lambda$.  We will call the union
of these maximal laminations the \emph{extended lamination}
$\overline{\Lambda}$, which in general is not itself a measured
lamination. See \cite{gmpu}*{Section 2.2} for more details.


\medskip
Let $\oLambda$ be the union of the two extended laminations obtained from the invariant laminations
for $\pa$, and let $\oLambda^1$ be its lift in $T^1(S_h)$.  Let
$h \colon T^1(S_h) \setminus \oLambda^1 \to \RR$ be a continuous
function.  Then $h$ determines an embedding in $\wsr$ of any non-exceptional
unit-speed geodesic $\gamma$ by the map $t \mapsto (\gamma(t), h(\gamma^1(t)))$,
where $\gamma(t)$ is the oriented geodesic and
$\gamma^1(t)$ is the unit tangent vector to $\gamma$ at $\gamma(t)$.  We
shall call $h$ the \emph{height function}, and the embedding
$\tau_\gamma(t) = ( \gamma(t), h(\gamma^1(t)) )$ the \emph{test path}
for $\gamma$ determined by $h$.

\medskip
We now specify the height function we will use, see
\cite{gmpu}*{Section 3}
for background and motivation.

\medskip

We shall write $\log_k$ for the logarithm function with base $k$,
where $k = k_f > 1$ is the stretch factor of $\pa$.
\begin{definition}\label{def:height function2}
Suppose that $\pa \colon S \to S$ is a pseudo-Anosov map, let
$(S_h, \Lambda)$ is a hyperbolic metric on $S$ together with a pair of
invariant measured laminations, and $\oLambda^1_-$ and
$\overline{\Lambda}^1_+$ the lifts in $T^1(S_h)$ of the extended
laminations given by the invariant laminations of $\pa$, and let
$\theta > 0$ be a positive constant.
We define the \emph{height function}
$h_\theta \colon T^1(S_h) \setminus (\overline{\Lambda}^1_+ \cup
\overline{\Lambda}^1_-) \to \RR$ to be
\[ h_\theta(v) = \log_k \left\lfloor \log \frac{1}{d_{\PSL(2, \RR)}(v,
  \overline{\Lambda}^1_+ )} - \log \frac{1}{\theta} \right\rfloor_1
- \log_k \left\lfloor \log \frac{1}{d_{\PSL(2, \RR)}(v,
  \overline{\Lambda}^1_- )} - \log \frac{1}{\theta}
\right\rfloor_1 \]
\end{definition}

Here $\lfloor x \rfloor_c = \max \{ x, c \}$ is the standard floor
function.  As the two extended laminations are a positive distance
apart in $T^1(S_h)$, for sufficiently small $\theta$, at most one
of the terms on the right hand side above will be non-zero.

\medskip
We prove that for a choice of $\theta$ sufficiently small, the test path determined
by the corresponding height function is a quasigeodesic in $\wsr$.

\begin{theorem}\label{theorem:quasigeodesic-h2}
Suppose that $\pa \colon S \to S$ is a pseudo-Anosov map and
$(S_h, \Lambda)$ is a hyperbolic metric on $S$ together with a pair of
invariant measured laminations.  Then there are constants
$\theta > 0$, $Q \ge 1$ and $c \ge 0$, such that for any
non-exceptional geodesic $\gamma$ in $S_h$, with a unit speed
parametrization $\gamma(t)$, the test path
$\tau_\gamma(t) = (\gamma(t), h_\theta(\gamma^1(t)) )$ is an
unparametrized $(Q, c)$-quasigeodesic in $\widetilde S_h \times \RR$
with the same limit points as $\iota(\gamma)$, where $h_\theta$ is the
height function from \Cref{def:height function2}.
\end{theorem}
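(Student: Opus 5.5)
Our plan is to build, for each non-exceptional $\gamma$, a model quasigeodesic by hand, and then show that the test path $\tau_\gamma$ stays within uniformly bounded Hausdorff distance of it. A path at bounded Hausdorff distance from a $(Q',c')$-quasigeodesic is an unparametrized quasigeodesic in the hyperbolic space $\wsr$.

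\textbf{Step 1 (decomposing $\gamma$).} We decompose $\gamma$ into maximal subsegments that fellow travel a leaf of $\oLambda_+$ or of $\oLambda_-$, and complementary ``transverse'' pieces of uniformly bounded length. Take a subsegment on which $d_{\PSL(2,\RR)}(\gamma^1(t), \oLambda^1_+) < \theta$. Applying \Cref{prop:projection interval} to the leaf $\ell$ realizing this distance, the segment fellow travels $\ell$ for time of order $2\log(1/\theta')$, where $\theta'$ is the minimal distance. Along this stretch, $\log(1/d(\gamma^1(t),\oLambda_+^1))$ grows roughly like the distance from $t$ to the nearer end of the fellow travelling interval.

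For $\theta$ small, \Cref{prop:multi} guarantees three things: the two extended laminations are a positive distance apart, each such segment is close to only one lamination, and segments of $\gamma$ not near either lamination have length bounded by a constant depending on $L_\LL$ and $\alpha_\LL$. Hence $h_\theta$ vanishes on those bounded pieces. On a $\Lambda_+$-piece of length $2s$, $h_\theta$ rises like $\log_k$ of the distance to the endpoints, reaching about $\log_k s$ at the middle. On $\Lambda_-$-pieces it is symmetrically negative.

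\textbf{Step 2 (model quasigeodesic).} For a $\Lambda_+$-piece, the points $\iota(\gamma(t))$ lie close to the ladder $F(\ell)$, which is convex and quasi-isometric to $\HH^2$ with $\ell$ a horocycle. In the upper half-plane model $(y,z)\mapsto(y,k^{-z})$, a horocyclic segment of $dy$-length about $s$ has geodesic shortcut rising to height $\log_k s$, so replacing the piece by $(\gamma(t),h_\theta)$ tracks, up to bounded error, the geodesic in $F(\ell)$ between its endpoints. This is a ``vertical up, across, down'' path, and the conversion between hyperbolic length along $\ell$ and $dy$-measure is controlled by \Cref{prop:qi}.

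We then concatenate these ladder geodesics with the bounded transverse pieces. The resulting concatenation is a local quasigeodesic: consecutive pieces alternate between $\Lambda_+$-ladders, which rise, and $\Lambda_-$-ladders, which descend, or pass through height zero. We would verify the global quasigeodesic property using the standard local-to-global criterion in $\delta_3$-hyperbolic spaces, or alternatively via Mitra's quasiconvexity of $F(\gamma)$ together with a projection argument onto ladders in the style of \Cref{prop:axis projection}.

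\textbf{Step 3 (main obstacle).} The main obstacle is the global step. Long pieces are fine, but we must rule out backtracking when many short pieces accumulate. We also need the constants to be uniform in $\gamma$, since $\Lambda$ is only compactly controlled. We would attack this by showing that successive ladder geodesics have bounded-diameter mutual projections, using \Cref{prop:overlap} (bounded overlap of projections of intersecting leaves) and \Cref{prop:separated ladders}. Together these give a uniform separation/flaring condition, and the Bestvina--Feighn flaring then yields uniform $(Q,c)$.

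Finally, the limit points agree with those of $\iota(\gamma)$ because $h_\theta$ is bounded on the non-lamination parts and the endpoints of $\gamma$ are not endpoints of boundary leaves, since $\gamma$ is non-exceptional. The test path therefore converges to $\iota(\gamma_\pm)$ by continuity of the Cannon--Thurston map.
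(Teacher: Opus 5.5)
The paper does not prove this theorem itself; it imports it from the companion paper. So your proposal has to stand on its own, and Step 1 contains a false claim that hides the main difficulty.

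\textbf{Step 1.} Pieces of $\gamma$ whose unit tangent vectors stay $\theta$-far from $\oLambda^1_+\cup\oLambda^1_-$ do \emph{not} have uniformly bounded length. Take the axis $\alpha$ of a nontrivial $g\in\pi_1(S)$. It is non-exceptional, and its image in $T^1(S_h)$ is a compact set disjoint from the closed set $\oLambda^1$, since no leaf of an extended lamination is a closed geodesic. Hence it lies at some distance $\theta_\alpha>0$, and for $\theta<\theta_\alpha$ you get $h_\theta\equiv 0$ on all of $\alpha$. Lebesgue-typical geodesics likewise have arbitrarily long stretches of this kind.

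\Cref{prop:multi} only says that leaf segments cross the other lamination. A geodesic crossing leaves at angles bounded below never becomes $\theta$-close to $\oLambda^1$. So on long stretches the test path is just $\iota(\gamma)$, or within bounded height of it. The theorem therefore contains the statement that $\iota$ of \emph{every} geodesic segment staying $\theta$-far from both laminations is a quasigeodesic in $\wsr$ with uniform constants.

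This is not a bounded connector you can absorb into constants. In Sol, a straight line of non-axial slope in $\{z=0\}$ is exponentially distorted, so the singularities must be used. \Cref{prop:alphaQG} gives only constants depending on $\alpha$, obtained through the group action.

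A plausible fix is a uniform version of the proofs of \Cref{prop:limit disjoint from ladder} and \Cref{prop:axis projection}:
\begin{enumerate}
\item Along a thick segment, pick linearly many leaves of one lamination, spaced far apart along $\gamma$.
\item Arrange that consecutive ladders have positive distance, using bounded projections of leaves and of complementary polygons with at most $s_{\max}$ sides, together with \Cref{prop:common intersection leaf}.
\item Conclude from \Cref{prop:separated ladders} that any path between the endpoints pays at least $\e_3$ between consecutive ladders it must cross.
\end{enumerate}

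\textbf{Steps 2 and 3.} Several further steps are asserted rather than proved.

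In Step 2, the test point $(\gamma(t),h_\theta(\gamma^1(t)))$ does not lie on $F(\ell)$. Let $\sigma_t$ be the short arc from $\gamma(t)$ to $\ell$; its hyperbolic length is at most about $d=d(\gamma^1(t),\oLambda^1_+)$, while $k^{h}\approx\log(1/d)$. Moving horizontally across $\sigma_t$ at height $h$ costs about $k^{h}\,dx(\sigma_t)$, and dropping down to cross lower costs of order $h$. So ``bounded error'' needs the estimate that an arc of hyperbolic length $\epsilon$ has $dx$-measure $O(1/\log(1/\epsilon))$, i.e.\ there are no long thin flat rectangles. This is exactly why the height is a $\log\log$. \Cref{prop:qi} is a coarse statement and cannot supply it.

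In Step 3, the local-to-global criterion requires checking every window of a fixed large length. That includes windows straddling many short pieces and the long thick stretches, which you do not check. Bestvina--Feighn flaring gives hyperbolicity of $\wsr$, not quasigeodesicity of a particular path. \Cref{prop:overlap} and \Cref{prop:separated ladders} are named but never turned into an estimate.

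\textbf{Limit points.} Your argument does not cover a non-exceptional $\gamma$ that is asymptotic to a non-boundary leaf $\ell$. There $h_\theta\to\infty$, so you must separately identify the limit of $F(\ell)$ in the appropriate vertical direction with $\iota(\gamma_+)$.
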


\medskip
We shall now fix a sufficiently small constant $\theta$ in \Cref{theorem:quasigeodesic-h2} and simplify notation to just write $h$ for $h_\theta$.  See
\cite{gmpu}*{Section 3.2}
for the exact choice of $\theta$ that
we use.  Furthermore, we will write $h_\gamma(t)$ for
$h_\theta(\gamma^1(t))$.

\medskip
We will use one further property of these quasigeodesics.  The test
path $\tau_\gamma$ lies in the ladder $F(\gamma)$ determined by
$\gamma$, so vertical projection gives a map
$(\gamma(t), 0) \mapsto (\gamma(t), h_\gamma(t))$ from $\iota(\gamma)$
to the test path $\tau_\gamma$.  We will prove that this map is coarsely distance non-increasing.

\begin{proposition}\label{prop:vertical flow distance decreasing}
Suppose that $\pa \colon S \to S$ is a pseudo-Anosov map and
$(S_h, \Lambda)$ is a hyperbolic metric on $S$ together with a pair of
invariant measured laminations.  There are constants $K > 0$ and
$c \ge 0$ such that for any non-exceptional geodesic $\gamma$ with
unit speed parametrization, and any real numbers $s$ and $t$,
\[ d_{\wsr}( \tau_\gamma(s), \tau_\gamma(t) ) \le K d_{\wsr} \left(
\iota(\gamma(s)), \iota(\gamma(t)) \right) + c , \]
where here the test path has the parametrization inherited from the
unit speed parametrization on $\gamma$.
\end{proposition}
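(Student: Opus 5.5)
Since $\tau_\gamma$ is an unparametrized $(Q,c)$-quasigeodesic with the same endpoints as $\iota(\gamma)$ (\Cref{theorem:quasigeodesic-h2}), the Morse Lemma \Cref{lemma:morse} puts the segment $\tau_\gamma([s,t])$ in a uniform $L$-neighborhood of the geodesic $[\tau_\gamma(s),\tau_\gamma(t)]$. So it suffices to show that the geodesic from $\iota(\gamma(s))$ to $\iota(\gamma(t))$ passes within a uniformly bounded distance of both $\tau_\gamma(s)$ and $\tau_\gamma(t)$. Then $d(\tau_\gamma(s),\tau_\gamma(t))$ is at most $d(\iota\gamma(s),\iota\gamma(t))$ plus a constant, which is even stronger than the claim (giving $K=1$ up to additive error). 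I will work inside the ladder $F(\gamma)$, which is $K$-quasiconvex by Mitra's lemma, so all geodesics involved stay coarsely within $F(\gamma)$ and flow-line coordinates make sense.

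The key steps are as follows. (1) Reduce to the case $h_\gamma(s)\ne 0$, where the vertical segment from $\iota(\gamma(s))$ to $\tau_\gamma(s)$ has length $|h_\gamma(s)|\log k$; if $h_\gamma(s)=0$ the two points coincide. (2) Understand the local picture near time $s$. By \Cref{def:height function2}, $h_\gamma(s)>0$, say, means that $\gamma^1(s)$ is at distance roughly $\theta e^{-k^{h}}$ from $\oLambda^1_+$. Hence by \Cref{prop:projection interval} there is a leaf $\ell$ of $\oLambda_+$ fellow traveling $\gamma$ on an interval of length about $k^{h}$ around $s$. Along this interval $\iota(\gamma)$ is, up to bounded error, a horocyclic segment in the ladder $F(\ell)$, which is convex and quasi-isometric to $\HH^2$ via $(y,z)\mapsto(y,k^{-z})$. (3) Compare endpoints. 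If $|t-s|$ is smaller than the fellow-traveling length, both points sit on a coarse horocycle in $F(\ell)\simeq\HH^2$, and the inequality is the explicit hyperbolic-plane estimate of \Cref{fig:qg}: the path $\beta$, rather than $\alpha$, is the relevant model. If $|t-s|$ is larger, then $\iota(\gamma(t))$ lies beyond the end of the horocyclic stretch, so the geodesic $[\iota\gamma(s),\iota\gamma(t)]$ must leave the horocycle through the point at height about $h_\gamma(s)$ above $\iota(\gamma(s))$. This is the thin-triangle/flaring argument in the $\HH^2$ model of $F(\ell)$, and it puts $\tau_\gamma(s)$ within bounded distance of that geodesic. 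The same argument applies at $t$, and the case $h<0$ follows by symmetry using $\oLambda_-$ and the reversed flow direction.

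The main obstacle is step (3) when $h_\gamma(s)$ and $h_\gamma(t)$ are controlled by \emph{different} leaves, or when $\gamma$ passes near an ideal complementary region, so that the extended lamination $\oLambda$ contributes non-measured leaves. Here I would first try to use the uniform constants $\alpha_\LL$, $\epsilon_\LL$ and $L_\LL$ of \Cref{prop:multi}, together with \Cref{prop:overlap}, which bounds the overlap of fellow-traveling intervals of transverse leaves. This bounds how the horocyclic stretches can interact, and it also gives a coarse monotonicity of the height profile between them. I would then concatenate the local estimates along the quasigeodesic $\tau_\gamma$, using its construction from \cite{gmpu}*{Section 3}, to obtain uniform constants $K$ and $c$.
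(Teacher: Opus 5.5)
The paper does not prove this proposition. It is assumed here and proved in the companion paper \cite{gmpu}, so your plan has to stand on its own. As written it has a gap, and the problems start with your opening reduction, which is false.

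Take $s$ in the middle of a long stretch on which $\gamma$ fellow travels a leaf of $\oLambda_+$, so that $h_\gamma(s)$ is large, and take $t=s+1$.
\begin{itemize}
\item By \Cref{prop:qi}, $\iota(\gamma(s))$ and $\iota(\gamma(t))$ are a uniformly bounded distance apart. So the geodesic joining them stays in a bounded neighbourhood of $\iota(\gamma(s))$.
\item But $d_{\wsr}(\iota(\gamma(s)),\tau_\gamma(s)) = h_\gamma(s)\log k$, because $ds \ge \log k\,|dz|$ and vertical flow lines realize this bound.
\end{itemize}
So that geodesic never comes near $\tau_\gamma(s)$. Your step (3) tacitly concedes this by handling short $|t-s|$ separately. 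The Morse-lemma reduction therefore does no work. What remains is a case analysis whose only case actually carried out is the single-ladder model. There everything happens inside one $F(\ell)\cong\HH^2$, and the inequality is an explicit computation with the tent-shaped profile (path $\beta$ in \Cref{fig:qg}).

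The real content of the proposition is global, and that is exactly the part you leave as the ``main obstacle''. Two concrete problems stand out.

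\emph{The long-range claim is not a one-ladder argument.} You claim that $[\iota(\gamma(s)),\iota(\gamma(t))]$ leaves the horocyclic stretch near $\tau_\gamma(s)$. This cannot follow from thin triangles inside $F(\ell)$, since $\iota(\gamma(t))\notin F(\ell)$. You would have to control where this geodesic leaves a neighbourhood of $F(\ell)$. You would also have to show it passes near $\tau_\gamma(t)$ when $t$ lies deep in a different stretch, possibly along a leaf of $\oLambda_-$ with height of the opposite sign.

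\emph{Concatenating local estimates cannot give the inequality.} The right-hand side is very far from additive along $\gamma$. Across one stretch of length $T$, the endpoints of $\iota(\gamma)$ are at distance of order $\log T$. Cutting the stretch into unit pieces instead gives $\sum_i d_{\wsr}(\iota(\gamma(s_i)),\iota(\gamma(s_{i+1})))$ of order $T$, plus $T$ copies of the additive constant. So chaining only bounds $d_{\wsr}(\tau_\gamma(s),\tau_\gamma(t))$ in terms of $|t-s|$, not in terms of $d_{\wsr}(\iota(\gamma(s)),\iota(\gamma(t)))$.

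A cleaner target avoids both the case split and the chaining: show that $\tau_\gamma(s)$ lies uniformly close to a nearest point projection of $\iota(\gamma(s))$ to $\ogamma$. Coarsely equivalently, geodesic rays from $\iota(\gamma(s))$ to both endpoints of $\ogamma$ pass uniformly close to $\tau_\gamma(s)$; this is the $t\to\pm\infty$ form of your long-range claim. Nearest point projection to a geodesic in a $\delta_3$-hyperbolic space is coarsely $1$-Lipschitz, so this would give the inequality with $K=1$ for all $s,t$ at once. Proving that pointwise statement uniformly is the hard part, however. When stretches along different leaves of $\oLambda_\pm$ (including the non-measured diagonal leaves) abut or overlap, you must use the specific form of $h_\theta$ and the construction in \cite{gmpu}. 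Your proposal does not supply that argument.
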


\subsection{Lebesgue measure}

Assuming \Cref{theorem:quasigeodesic-h2} and \Cref{prop:vertical flow distance decreasing}, we now derive \Cref{theorem:surface lebesgue} giving effective bounds for the amount of time that a geodesic
chosen according to Lebesgue measure on the boundary circle of
$\widetilde S_h$ spends close to the base fiber $S_0$.  This implies the
first half of \Cref{theorem:effective}.

\begin{theorem}\label{theorem:surface lebesgue}
Suppose that $\pa$ is a pseudo-Anosov map with stretch factor $k > 1$,
and $(S_h, \Lambda)$ is a hyperbolic metric on $S$ together with a
pair of invariant measured laminations.  Let $\nu$ be the pushforward
of Lebesgue measure on $\partial \ws_h$ under the Cannon-Thurston map.
Then there are constants $K > 0$ and $\alpha > 0$ such that for
$(\nu \times \nu)$-almost all geodesics $\ogamma$ in $\wsr$, for any
unit speed parametrization $\overline{\gamma}(t)$, for any $R \ge 0$,
\[ \lim_{T \to \infty} \frac{1}{T} \left| \overline{\gamma}([0, T])
\cap N_R(S_0) \right| \ge 1 - K e^{- \alpha k^R}. \]
\end{theorem}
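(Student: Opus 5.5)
We will use the test paths of \Cref{theorem:quasigeodesic-h2}. Fix a non-exceptional geodesic $\gamma$ in $\widetilde S_h$; by \Cref{prop:distinct} this holds for almost every $\gamma$. Let $\ogamma$ be the geodesic in $\wsr$ joining the endpoints of $\iota(\gamma)$. By \Cref{theorem:quasigeodesic-h2}, the test path $\tau_\gamma(t) = (\gamma(t), h_\gamma(t))$ is an unparametrized $(Q,c)$-quasigeodesic with the same endpoints. The Morse Lemma, \Cref{lemma:morse}, then puts $\tau_\gamma$ and $\ogamma$ within Hausdorff distance $L$ of each other, with $L$ uniform. The point of $\tau_\gamma$ at parameter $t$ is at Cannon--Thurston distance at most $|h_\gamma(t)|$ from $S_0$, since the vertical segment from height $0$ to height $h$ has length $|h|$. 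So whenever $|h_\gamma(t)| \le R - L$, the nearby points of $\ogamma$ lie in $N_R(S_0)$. The problem therefore splits into two parts: controlling the distribution of $h_\gamma(t)$ along $\gamma$, and converting the proportion of $t$-time into a proportion of $\ogamma$-arclength.

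\textbf{Distribution of heights.} By Hopf ergodicity of the geodesic flow on $T^1(S_h)$ with respect to Liouville measure, which is absolutely continuous with respect to $\nu\times\nu$ coming from Lebesgue measure, for almost every $\gamma$
\[ \lim_{T\to\infty}\frac1T\bigl|\{t\in[0,T] : |h_\gamma(t)| > r\}\bigr| = \mathbf{vol}\{v : |h(v)| > r\}. \]
From \Cref{def:height function2}, $|h(v)| > r$ forces $d_{\PSL(2,\RR)}(v,\oLambda^1_\pm) < \theta e^{-k^r}$. Since the extended laminations have measure zero and are locally a product of a Cantor-type transversal with leaves, the Liouville volume of the $\epsilon$-neighbourhood of $\oLambda^1_\pm$ is at most $C\epsilon^{\beta}$ for some $\beta > 0$; I would aim to prove this from the box-counting dimension of the transversal being less than one. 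This yields $\mathbf{vol}\{|h| > r\} \le K' e^{-\beta k^r}$, which is exactly the double-exponential shape required.

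\textbf{Converting to arclength on $\ogamma$.} This is the main obstacle, because the $t$-parametrization of $\tau_\gamma$ is not its arclength. Long stretches of $\gamma$ near a leaf have large $|h|$ but are compressed logarithmically in $\wsr$. Here I would use \Cref{prop:vertical flow distance decreasing}: for $s<t$, $d_{\wsr}(\tau_\gamma(s),\tau_\gamma(t)) \le K d_{\wsr}(\iota\gamma(s),\iota\gamma(t)) + c$, and the right side is at most $K Q_\LL|t-s| + K c + c$ by \Cref{prop:qi} together with the fact that $S_0$ is at distance one from adjacent fibers. So the map from $t$-time to progress along $\ogamma$ is coarsely Lipschitz. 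Cover the bad set $\{t : |h_\gamma(t)| > R-L\}$ by its maximal intervals; their images project to $\ogamma$ in sets of total length at most $KQ_\LL\cdot(\text{bad time}) + (\text{number of intervals})\cdot C$. The number of excursions per unit time also has frequency at most $K'' e^{-\beta k^{R}}$ by ergodicity, since each excursion requires entering the thin neighbourhood through a fixed transversal box. We also need linear progress, meaning the $\ogamma$-arclength covered by $\tau_\gamma([0,T])$ grows at least like $aT$ with $a > 0$. This follows from the lemma proved earlier in \Cref{section:surface lebesgue}, or directly from ergodicity applied to fellow-travel with a fixed closed geodesic via \Cref{prop:axis projection}. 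Combining these, the fraction of $\ogamma([0,T'])$ outside $N_R(S_0)$ is at most $K e^{-\alpha k^R}$, absorbing the shift $L$ and the constants into $K$ and $\alpha$. For small $R$ the bound is trivial after enlarging $K$.
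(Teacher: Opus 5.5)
Your outline has the same skeleton as the paper's proof: the Morse lemma for $\tau_\gamma$, the ergodic theorem applied to $\{|h|>r\}$, a power bound on the volume of thin neighbourhoods of $\oLambda^1$, the coarse Lipschitz bound from \Cref{prop:vertical flow distance decreasing}, and linear progress to normalise. It is sound in outline, and it departs from the paper in two places.

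First, the paper gets linear progress by quoting Gadre--Hensel (\Cref{theorem:gh-linear}) and transferring it to $\tau_\gamma$ at times of bounded height (\Cref{cor:gh-linear}). You instead want to derive it from the ergodic fellow-travelling argument. That is self-contained but needs more work (see (3) below).

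Second, when converting $t$-time to arclength on $\ogamma$, the paper uses only the global comparison $\epsilon T \lesssim d(\ogamma_0,\ogamma_T)\le T+2L$. The additive constant incurred on each bad interval therefore never appears there. Your explicit count of bad excursions is exactly what makes that step rigorous, so your version is the more careful one.

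Three steps need more than you wrote.

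(1) Volume. Measure zero alone gives no rate, but you do not need any box-counting estimate for the transversal. You can quote \Cref{cor:volume} (Birman--Series, exponent $2$ up to logarithms). Alternatively, observe that disjoint complete geodesics meeting a ball of radius $\epsilon$ make angle $O(\epsilon)$ with one another. Hence at each point of $S_h$ the unit vectors lying in $N_\epsilon(\oLambda^1)$ fall into $O(1)$ arcs of length $O(\epsilon)$, one pair for each maximal lamination in the extension. This already gives $\mathbf{vol}(N_\epsilon(\oLambda^1))=O(\epsilon)$, i.e. $\beta=1$.

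(2) Excursion count. The phrase ``entering through a fixed transversal box'' does not make sense here, and $\{t : |h_\gamma(t)|>R'\}$ could a priori have many tiny components. Use instead that $\oLambda^1$ is invariant under the geodesic flow and that $d(g_sv,g_sw)\le Ce^{|s|}d(v,w)$. Then if $|h_\gamma(t)|>R'$ for some $t\in[j,j+1]$, every $s\in[j,j+1]$ has $\gamma^1(s)\in N_{Ce\theta e^{-k^{R'}}}(\oLambda^1)$. So the number of unit intervals meeting the bad set is at most the time $\gamma^1$ spends in that neighbourhood during $[-1,T+1]$. By the ergodic theorem this is $O(Te^{-\beta k^{R'}})$. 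Each such unit interval contributes $O(1)$ arclength on $\ogamma$, by \Cref{prop:vertical flow distance decreasing} and the Morse lemma. This bounds the bad time and the number of pieces in one stroke.

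(3) Linear progress. The \emph{statement} of the Lebesgue lemma in \Cref{section:surface lebesgue} is of no use here. It controls a proportion of $\ogamma$'s own arclength and says nothing relative to $t$. Its proof can be adapted as follows:
\begin{enumerate}
\item Apply \Cref{prop:axis projection} with $x=\gamma(0)$ and $y=\gamma(T)$ to each translate $g_i\alpha$ that $\gamma$ fellow travels during $[0,T]$. Then $[\iota\gamma(0),\iota\gamma(T)]$ fellow travels each $\overline{g_i\alpha}$ for length at least some $D_0$.
\item To add these segments up you need an ingredient you did not mention: distinct translates of $\oalpha$ have uniformly bounded projections onto one another. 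This is the fact recorded just before \Cref{section:surface lebesgue}, transported to $\wsr$ by the quasi-isometry.
\item Then, for $D_0$ large compared with that bound, each point of $[\iota\gamma(0),\iota\gamma(T)]$ lies in at most two of these segments.
\item Since the number of visits is linear in $T$ by ergodicity, this gives $d(\iota\gamma(0),\iota\gamma(T))\gtrsim T$, which is in effect a proof of \Cref{theorem:gh-linear}.
\end{enumerate}
You then still need window endpoints at which $|h_\gamma|$ is bounded, exactly as in \Cref{cor:gh-linear}. Only then does the estimate pass from $\iota\gamma$ to $\tau_\gamma$, and hence to the window on $\ogamma$.
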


Here is an overview of the argument.

\begin{enumerate}

\item Let $\overline{\Lambda}^1$ be the union of the lifts of
both extended laminations in $T^1(S_h)$.  By 
\cite{gmpu}*{Definition 26}
of the height
function $h_\gamma$, if $| h_\gamma(t) | \ge R$, then
the tangent vector at $\gamma(t)$ lies in $N_r(\overline{\Lambda}^1)$,
for $r = K e^{-k^R}$.

\item  Work of Birman--Series \cite{birman-series} shows that the
volume of $N_r(\overline{\Lambda}^1)$ goes to zero as $r \to 0$ at the rate
$r^2 (\log \tfrac{1}{r})^{6g-6}$.

\item Suppose that $\gamma$ is a geodesic chosen according to Lebesgue measure
on $\partial \ws_h$.  The geodesic flow on $T^1(S_h)$ is ergodic with
respect to Liouville measure, which is the product of the hyperbolic
metric on $S_h$ with Lebesgue measure on the unit tangent circles.
Therefore almost all geodesics $\gamma$ are uniformly distributed in
$T^1(S_h)$. In particular, we deduce that 
\begin{itemize}
    \item we may fix $R_0$ large enough and hence $r_0 = K e^{-kR_0}$ small enough such that $\gamma$ recur to $T^1(S_h) \setminus N_{r_0}(\overline{\Lambda}^1)$, and
    \item the proportion of time along $\gamma$  for which $| h_\gamma(t) |$ is at least $R$ is at most  $O(e^{-k^R})$, where the time is being measured in terms of a unit speed parametrization of $\gamma$ in $\ws_h$.
\end{itemize}

\item By the work of Gadre--Hensel \cite{Gad-Hen} the distance in $\wsr$ from $\iota(\gamma)(0)$ to $\iota (\gamma)(T)$  grows linearly in $T$. From the linear growth and recurrence to $T^1(S_h) \setminus N_{r_0}(\overline{\Lambda}^1)$, we deduce that the distance $d(t)$ between $\tau_\gamma(0)$ and $\tau_\gamma(t)$ also grows linearly in $T$. 
From this, we deduce that, as a proportion of $d(T)$, the time along $\tau_\gamma$ for which $| h_\gamma |$ is at least $R$ is again at most $O(e^{-k^R})$.

\item Let $\ogamma$ be the geodesic in $\wsr$ determined by $\gamma$.
As the test path $\tau_\gamma$ is quasigeodesic, there is a constant
$L$ such that the test path is contained in an $L$-neighborhood of
$\ogamma$. Therefore, parametrizing $\ogamma$ with unit speed, the proportion of times on $\ogamma$ outside
$N_R(S_0)$ goes to zero at rate $e^{-k^{R - L}}$, as desired.

\end{enumerate}

Step 1 requires no further elaboration.  We now justify Step 2.

\medskip
Let $\Lambda$ be a geodesic lamination in the surface $S_h$, and let
$N_r(\Lambda)$ be the set of all points in $S$ distance at most $r$
from $\Lambda$.  Birman and Series \cite{birman-series} give the
following bounds for the area of $N_r(\Lambda)$.

\begin{theorem}\cite{birman-series}\label{theorem:birman-series}
Suppose that $S$ is a surface with a complete finite-area hyperbolic metric.
Then there are constants $A> 0$ and $r_0>0$, such that for any geodesic lamination 
$\Lambda$ on $S$, and for all $r \le r_0$,
\[ \frac{1}{A} r \le \mathbf{area}(N_r(\Lambda)) \le A r (\log \tfrac{1}{r})^{6g-6}.  \]
\end{theorem}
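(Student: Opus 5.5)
The plan is to prove the two bounds separately, using the structure of geodesic laminations together with a covering argument. Throughout, $\Lambda$ is a geodesic lamination on $S$. We may assume $S$ is compact, since the paper's application is to closed $S_h$. For the cusped case one would additionally use that geodesic laminations avoid a fixed neighborhood of the cusps.

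\emph{Lower bound.} Any nonempty lamination contains a leaf $\ell$. Take a unit-length segment $\sigma$ of $\ell$ inside an embedded disc of radius below the injectivity radius. Then $N_r(\Lambda)$ contains the rectangle $N_r(\sigma)$ in that disc. For $r\le r_0$ small, this rectangle has area at least $r$ by comparison with Euclidean area. This gives $\mathbf{area}(N_r(\Lambda))\ge r/A$.

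\emph{Upper bound.} This part contains all the content. I would follow Birman--Series' counting argument.

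\begin{enumerate}
\item Fix a pants decomposition, or a fixed finite collection of short transverse arcs $\{\tau_i\}$ cutting $S$ into discs of bounded diameter.
\item Set $L=c\log\frac1r$ for a suitable constant $c$. Any point of $N_r(\Lambda)$ lies within $r$ of a leaf segment of length about $L$ centered at a nearby point.
\item By hyperbolic contraction (the exponential divergence in \Cref{prop:projection interval}), a leaf segment of length $L$ determines the lamination in an $e^{-L/2}\approx r$-neighborhood of its midpoint. Moreover, two simple geodesic arcs of length $L$ that are $r$-close at their midpoint follow the same combinatorial route across the $\tau_i$.
\item Hence $N_r(\Lambda)$ is covered by the $r$-neighborhoods of midsegments of \emph{simple} geodesic arcs of length $L$, one arc per combinatorial type.
\item Each such neighborhood has area $O(r)$.
\end{enumerate}

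This reduces the bound to counting the combinatorial types of simple arcs of length $L$, and that count is the main obstacle. Birman--Series' key lemma is that simple arcs crossing the $\tau_i$ at most $L'\asymp L$ times are determined by normal-coordinate data. This data consists of weights on the edges of a fixed train-track-like cell structure, satisfying switch conditions, together with a starting position. It gives only $O(L^{6g-6})$ types in the closed case, with $6g-6$ the dimension of measured lamination space. The point is that simplicity forbids crossing between strands, so the word is recovered from the intersection numbers with the fixed arcs, and these are bounded by $L$.

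Combining the steps gives $\mathbf{area}(N_r(\Lambda))\le A\,r\,L^{6g-6}=A\,r(\log\frac1r)^{6g-6}$. The constants are uniform over all $\Lambda$, since the cover is by all simple arcs of length $L$, independent of the particular lamination.
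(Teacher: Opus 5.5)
Your route is the paper's. The lower bound is exactly its observation that a lamination contains an embedded geodesic arc whose $r$-neighbourhood has area comparable to $r$. For the upper bound the paper cites Birman--Series outright (Proposition 4.1, with the exponent from their Remark 7.2), and you are reconstructing their cover-by-thin-bands argument.

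Two points in that reconstruction are wrong as written. First, the second sentence of step 3 states the implication backwards, and as stated it is false. Arcs that are $r$-close at their midpoints need not follow the same route: their directions may differ, and even nearly parallel arcs can pass on opposite sides of an endpoint of some $\tau_i$. Step 4 needs the converse. If two arcs of length $L$ have the same route, their lifts start and end in the same bounded-diameter discs. So the midpoint of one lies within $Ce^{-L/2}$ of a bounded-length middle subsegment of the other. With $L=2\log\tfrac1r$, this is what puts every point of $N_r(\Lambda)$ within $O(r)$ of the chosen representative of its type.

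Second, the counting data you describe does not yield the exponent $6g-6$. Integer weight vectors of size at most $L$ in a $(6g-6)$-dimensional cone already number about $L^{6g-6}$. Your ``starting position'' is a further choice among up to $L$ strands. Moreover, the weights of an arc, unlike those of a closed curve or measured lamination, need not satisfy the switch conditions at its two ends, so they need not lie in that cone at all. Counted this way you get at best $O(L^{6g-5})$ types, hence only $\mathbf{area}(N_r(\Lambda))\le A r(\log\tfrac1r)^{6g-5}$. The exponent $6g-6$ is precisely what the paper takes from Birman--Series rather than derives, so you need that citation too; the heuristic ``$6g-6$ is the dimension of measured lamination space'' does not supply it. The exact exponent is never used downstream: any fixed power of $\log\tfrac1r$ is absorbed in the proof of \Cref{prop:surface speed}.
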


The upper bound follows from \cite{birman-series}*{Proposition 4.1}
with the degree of the exponent given by \cite{birman-series}*{Remark
  7.2}.  Birman--Series state only the upper bound. The lower bound is
immediate from the observation that any geodesic lamination
contains an embedded simple arc, and the area of an $r$-neighborhood
of a simple arc is proportional to $r$ for $r$ sufficiently small.
The theorem above gives the following immediate bounds on the volumes of an
$r$-neighborhood of $\Lambda$ in the unit tangent bundle.  Let
$\Lambda^1$ be the pre-image of $\Lambda$ in the unit tangent bundle
$T^1(S)$, and let $N_r(\Lambda^1)$ be the set of all points in
$T^1(S)$ distance at most $r$ from $\Lambda^1$.

\begin{corollary}\label{cor:volume}
Suppose that $S$ is a surface with a complete finite-area hyperbolic metric.  
Then there are constants $A>0$ and $r_0>0$, such that
for any extended geodesic lamination $\overline{\Lambda}$ on $S$, and
all $r \le r_0$,
\[ \frac{1}{A} r^2 \le  \mathbf{vol}(N_r(\overline{\Lambda}^1)) \le A r^2 (\log \tfrac{1}{r})^{6g-6}.  \]
\end{corollary}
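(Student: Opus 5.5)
The plan is to derive the bounds on $N_r(\overline{\Lambda}^1)$ in $T^1(S_h)$ from \Cref{theorem:birman-series} by a fibrewise (Fubini) computation. Liouville measure on $T^1(S_h)$ is locally the product of hyperbolic area on $S_h$ with Lebesgue measure on the unit tangent circles. The distance $d_{\PSL(2,\RR)}$ on $T^1(S_h)$ is comparable, up to uniform multiplicative constants at small scales, to the sum of the base distance and the angle difference. So for $r \le r_0$ it suffices to understand, for each point $p \in S_h$, the angular measure of the set $\{v \in T^1_p(S_h) : d(v, \overline{\Lambda}^1) \le r\}$. We then integrate this over $p$.

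\textbf{Upper bound.} Fix $p$ at distance $s \le C r$ from $\overline{\Lambda}$; points farther away contribute nothing. Every leaf passing within $Cr$ of $p$ is, near $p$, a geodesic arc within distance $Cr$ of $p$. The leaves of a single geodesic lamination are pairwise disjoint, so their directions at nearby points differ by at most $O(r)$. The angular fibre set is therefore contained in $O(1)$ intervals of length $O(r)$. There are finitely many maximal laminations making up $\overline{\Lambda}$, and their number is bounded in terms of the genus, so the same holds for $\overline{\Lambda}$. Integrating gives
\[ \mathbf{vol}(N_r(\overline{\Lambda}^1)) \le C' r \cdot \mathbf{area}(N_{Cr}(\overline{\Lambda})). \]
Next, $\overline{\Lambda}$ is a finite union of geodesic laminations, each of which is a maximal lamination and in particular a geodesic lamination. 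Applying \Cref{theorem:birman-series} to each with radius $Cr$ then gives the bound $A r^2 (\log \tfrac{1}{r})^{6g-6}$ after adjusting constants.

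\textbf{Lower bound.} This follows the remark after \Cref{theorem:birman-series}. Pick an embedded simple arc $\sigma$ of a leaf of $\overline{\Lambda}$ of unit length. For each point $p$ within distance $r/C$ of $\sigma$, the tangent vectors at $p$ making angle at most $r/C$ with the direction of $\sigma$ lie within $r$ of the lift of $\sigma$ in $T^1(S_h)$. This set has measure at least $c\, r \cdot r$. Uniformity of $A$ and $r_0$ over all laminations comes from the fact that the local comparison constants depend only on the hyperbolic metric, by compactness of $S_h$, and that the Birman--Series constants are already uniform.

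The main obstacle is the upper bound's fibre estimate: controlling the directions of all nearby leaves simultaneously, including the finitely many added diagonal leaves of $\overline{\Lambda}$, which may cross one another. I would handle this by treating each of the finitely many maximal laminations separately, since leaves within one lamination are disjoint and so nearly parallel at small scale. I would then take a union bound.
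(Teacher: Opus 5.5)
Your proposal is correct and follows the same route as the paper: cover $\overline{\Lambda}$ by the finitely many maximal laminations obtained by triangulating the complementary ideal polygons, apply \Cref{theorem:birman-series} to each, and get the lower bound from a single embedded leaf segment. The paper calls the passage from area in $S_h$ to volume in $T^1(S_h)$ immediate, whereas you justify it with the fibrewise estimate that disjoint leaves meeting an $O(r)$-ball are $O(r)$-parallel; you also note the genus bound on the number of triangulations, which is what makes $A$ uniform over all $\overline{\Lambda}$.
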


\begin{proof}
A geodesic lamination $\Lambda$ has finitely many complementary
regions and can be extended in finitely many ways to a maximal lamination $\Lambda_i$ by dividing each complementary region into ideal
triangles.  The extended
lamination $\overline{\Lambda}$ is thus contained in the union of
finitely many laminations $\Lambda_i$.
The result follows immediately from \Cref{theorem:birman-series},
by replacing $A$ by $An$, where $n$ is the number of geodesic
laminations in the collection $\Lambda_i$.
\end{proof}

Step 3 follows from \Cref{cor:volume} and ergodicity of the geodesic flow on $T^1(S_h)$.
Using 
\cite{gmpu}*{Definition 26},
we may write a version of \Cref{theorem:surface lebesgue}
for the test path $\tau_\gamma$, using the parametrization
of $\gamma$ in $\widetilde{S}_h$, instead of a unit speed
parametrization of the test path.

\begin{proposition}\label{prop:surface speed}
Suppose $\pa$ is a pseudo-Anosov map with stretch factor $k > 1$, and
$(S_h, \Lambda)$ is a hyperbolic metric on $S$ together with a pair of
invariant measured laminations.  Then there is a constant $K > 0$ such
that for Lebesgue-almost all geodesics $\gamma$ in $\widetilde S_h$,
for any unit speed parametrization of the geodesic $\gamma(t)$, for
any $R \ge 0$,
\[ \lim_{T \to \infty} \frac{1}{T} \left| \tau_\gamma([0, T])
\setminus N_R(S_0) \right| \le K e^{ - k^R}, \]
where $\tau_\gamma(t)$ is the parametrization induced by $\gamma(t)$,
not the arc length parametrization.
\end{proposition}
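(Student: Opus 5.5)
The plan is to translate the event $\tau_\gamma(t) \notin N_R(S_0)$ into an event about the unit tangent vector $\gamma^1(t)$ lying close to the extended laminations. Then we bound the asymptotic frequency of that event using ergodicity of the geodesic flow together with \Cref{cor:volume}. Here $|\tau_\gamma([0,T]) \setminus N_R(S_0)|$ is read as the Lebesgue measure of the set of parameters $t \in [0,T]$ with $\tau_\gamma(t) \notin N_R(S_0)$, in the parametrization inherited from $\gamma$.

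\emph{Step 1: the height function controls the distance to $S_0$.} The point $\tau_\gamma(t) = (\gamma(t), h_\gamma(t))$ lies on the suspension flow line through $(\gamma(t),0) \in S_0$. The vertical segment between them has length $|h_\gamma(t)|$ in the Cannon--Thurston metric, since the $dz$-coefficient is $(\log k)^2$ and $\phi$ shifts height by one per unit. Hence $d_{\wsr}(\tau_\gamma(t), S_0) \le (\log k)\,|h_\gamma(t)|$, and up to rescaling $R$ by the constant $\log k$ it suffices to bound the frequency of $|h_\gamma(t)| > R$. By \Cref{def:height function2}, for $\theta$ small only one of the two terms is nonzero. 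The condition $|h_\gamma(t)| > R$ forces $\log \frac{1}{d(\gamma^1(t), \oLambda^1_\pm)} - \log\frac1\theta > k^R$. That is,
\[ \gamma^1(t) \in N_{r}(\oLambda^1_+ \cup \oLambda^1_-), \qquad r = r(R) = \theta e^{-k^R}. \]
The $\log k$ rescaling of $R$ only changes $k^R$ into $k^{R/\log k}$ or similar. I would absorb this into the constants, or simply note that the vertical segment length is measured in units where adjacent fibers are distance one apart, as in the proof of \Cref{prop:zero limit}, which removes the rescaling altogether.

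\emph{Step 2: ergodic averages.} Let $B_r = N_r(\oLambda^1_+ \cup \oLambda^1_-) \subset T^1(S_h)$, a Borel set. Liouville measure corresponds to Lebesgue measure on pairs of endpoints in the sense of absolute continuity. By ergodicity of the geodesic flow, in the form of the proposition on uniform distribution stated in the background, for Lebesgue-almost every $\gamma$ and each fixed $r$,
\[ \lim_{T\to\infty} \frac1T |\{t\in[0,T] : \gamma^1(t) \in B_r\}| = \mathbf{vol}(B_r). \]
To obtain this simultaneously for all $R \ge 0$, I would apply it first to the countable family $R \in \mathbb{Q}_{\ge 0}$ and then use monotonicity of $B_{r(R)}$ in $R$. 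The general case then costs only a change of constants, replacing $R$ by $\lfloor R\rfloor$ and adjusting $K$. For a fixed geodesic, changing the unit speed parametrization is just a time shift, which does not affect the limit.

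\emph{Step 3: the volume bound.} By \Cref{cor:volume} applied to each of the two extended laminations, for $r \le r_0$ we have $\mathbf{vol}(B_r) \le 2A r^2(\log\frac1r)^{6g-6}$. Substituting $r = \theta e^{-k^R}$ gives $\log\frac1r \approx k^R$ and hence a bound $C e^{-2k^R} k^{(6g-6)R}$, which is at most $K e^{-k^R}$ for a suitable $K$. For the finitely many small $R$ with $r(R) > r_0$, or where the estimate is not yet effective, the left side is trivially at most $1$, so enlarging $K$ covers them. The main obstacle is Step 1: making sure the floor function and the choice of $\theta$ genuinely give the inclusion into $B_r$, including near leaves added in the extension, and not only coarsely. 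I expect this to follow directly from the definition, since $\lfloor x\rfloor_1 > k^R$ forces $x > k^R$ whenever $R \ge 0$.
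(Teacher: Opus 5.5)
Your proposal is correct and follows essentially the paper's argument: ergodicity of the geodesic flow, the volume bound of \Cref{cor:volume}, and inverting \Cref{def:height function2} to get $r=\theta e^{-k^R}$. Your Step~1 states the implication in the direction actually needed, $\tau_\gamma(t)\notin N_R(S_0)\Rightarrow\gamma^1(t)\in N_r(\oLambda^1)$, whereas the paper's text states the converse.

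Two of your asides should be dropped, because neither costs ``only constants''. First, if you take the $(\log k)^2\,dz^2$ term of \eqref{eq:ct metric} literally, the rescaling turns $k^R$ into $k^{R/\log k}=e^R$. The lower bound in \Cref{cor:volume} then makes the frequency at least a constant times $e^{-2e^R}$, which is not $O(e^{-k^R})$ when $k>e$. So you must use your other option, the normalization in which adjacent fibers are distance one apart; the paper also uses this implicitly, e.g.\ in \Cref{prop:zero limit} and \Cref{cor:gh-linear}. Second, replacing $R$ by $\lfloor R\rfloor$ only gives $e^{-k^R/k}$. Letting rationals $q\uparrow R$ and using monotonicity gives the bound exactly.
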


\begin{proof}
By ergodicity of the geodesic flow, the amount of time almost every
geodesic spends within distance $r$ of the union of the extended
laminations $\overline{\Lambda}^1$ is equal to the volume of
$N_r(\overline{\Lambda}^1)$.
\begin{align}
\lim_{T \to \infty} \frac{1}{T} \left| \gamma^1([0, T]) \cap
N_r(\overline{\Lambda}^1) \right| & =
\mathbf{vol}(N_r(\overline{\Lambda}^1)) \nonumber \\
\intertext{By \Cref{cor:volume},}
\lim_{T \to \infty} \frac{1}{T} \left| \gamma^1([0, T]) \cap
N_r(\oLambda^1) \right| & \le  A r^2 (\log \tfrac{1}{r})^{6g-6}. \label{eq:rbound}  \\
\intertext{By the definition of the test path, 
\cite{gmpu}*{Theorem 27},
if
$\gamma(t)$ lies in $N_r(\overline{\Lambda}^1)$ then the corresponding
point on the test path $\tau_\gamma(t)$ lies outside $N_R(S_0)$, where
$r = Ke^{-k^R}$.  Rewriting the upper bound in \eqref{eq:rbound} in terms of $R$ gives}
\lim_{T \to \infty} \frac{1}{T} \left| \tau_\gamma([0, T])
\setminus N_R(S_0) \right| & \le A K^2 e^{- 2 k^R} k^{(6g-6)R}. \nonumber \\
\intertext{Furthermore, for sufficiently large $R$, $e^{k^R} \ge k^{(6g - 6)R}$,
so for an appropriate choice of $K_1 = A K^2$,}
\lim_{T \to \infty} \frac{1}{T} \left| \tau_\gamma([0, T])
\setminus N_R(S_0) \right| & \le K_1 e^{-k^R}, \nonumber \\
\end{align}
as required.
\end{proof}

To deduce Step 4, we first state the following result of Gadre--Hensel \cite{Gad-Hen} showing that the distance in
$\widetilde{S}_h \times \RR$ along $\iota(\gamma)$ grows linearly.

\begin{theorem}\cite{Gad-Hen}*{Theorem 2.2}\label{theorem:gh-linear}
Suppose that $\pa \colon S \to S$ is a pseudo-Anosov map and
$(S_h, \Lambda)$ is a hyperbolic metric on $S$ together with a pair of
invariant measured laminations.  Then there is a constant
$\epsilon > 0$ such that for Lebesgue-almost all geodesics $\gamma$ in
$\widetilde{S}_h$, with unit speed parametrization, we have
\[
\frac{1}{T} d_{\widetilde{S}_h \times
  \RR}(\iota(\gamma(0)), \iota(\gamma(T))) > \epsilon
\]
for all $T$ large enough depending on $\gamma$.
\end{theorem}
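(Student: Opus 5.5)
The plan is to bound $d_{\wsr}(\iota(\gamma(0)),\iota(\gamma(T)))$ from below by counting \emph{separating ladders}, in the spirit of \Cref{prop:axis projection}. Suppose $\ell_1,\dots,\ell_N$ are pairwise disjoint leaves of $\Lambda_+$, each crossing $\gamma([0,T])$ in this order. Then each $\ell_i$ separates $\widetilde S_h$, so each ladder $F(\ell_i)$ separates $\wsr$, with $\iota(\gamma(0))$ and $\iota(\gamma(T))$ on opposite sides. The ladders are nested, so any path from $\iota(\gamma(0))$ to $\iota(\gamma(T))$ must meet $F(\ell_1),\dots,F(\ell_N)$ in order. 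Between consecutive meetings the path therefore has length at least $d_{\wsr}(F(\ell_i),F(\ell_{i+1}))$. If every consecutive pair has positive ladder distance, \Cref{prop:separated ladders} upgrades this to at least $\e_3$, and the total distance is at least $(N-1)\e_3$. It therefore suffices to find, for Lebesgue-typical $\gamma$, such a chain with $N \ge \epsilon' T$ for large $T$.

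By \Cref{lem:separating leaves and ladder distance} and \Cref{lem:separating leaves non-empty}, positivity of $d_{\wsr}(F(\ell_i),F(\ell_{i+1}))$ reduces to a combinatorial condition: both $\textrm{Sep}_+(\ell_i,\ell_{i+1})$ and $\textrm{Sep}_-(\ell_i,\ell_{i+1})$ are non-empty. I would guarantee this through a local configuration seen by the geodesic flow. Fix a unit tangent vector $v$ whose geodesic segment of some fixed length $L$ (at least a few multiples of $L_\LL$ from \Cref{prop:cobounded intersections}) crosses, in order, a leaf $m_1\in\Lambda_+$, a leaf $n\in\Lambda_-$, and a leaf $m_2\in\Lambda_+$. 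We require $n$ to be disjoint from $m_1$ and $m_2$, and all crossings to be transverse with angles bounded below, which \Cref{prop:angle bound} and \Cref{prop:disjoint leaves} make quantitative.

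I then want an open neighbourhood $V$ of $v$ in $T^1(S_h)$ such that every $w\in V$ has the same pattern. That is, its segment of length $L$ crosses some $\Lambda_+$ leaf, then a $\Lambda_-$ leaf disjoint from it, then another $\Lambda_+$ leaf, with the $\Lambda_-$ leaf also disjoint from all later $\Lambda_+$ leaves used. The intended mechanism is that transverse crossings of geodesics with bounded-below angle persist under perturbation, that laminations are closed, and that they have no isolated leaves. By ergodicity of the geodesic flow (Hopf), Lebesgue-almost every $\gamma$ visits $V$ at times forming a set of density $\mathbf{vol}(V)/\delta>0$. I would keep only visits spaced at least $L$ apart and let $\ell_i$ be the first $\Lambda_+$ leaf in the $i$th kept window. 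Then between $\ell_i$ and $\ell_{i+1}$ there is a $\Lambda_+$ leaf separating them, namely the second $\Lambda_+$ leaf of window $i$, using disjointness of leaves within one lamination. There is also a $\Lambda_-$ leaf $n$ crossing $\gamma$ between them and disjoint from both. Such an $n$ has one endpoint in each of the intervals $I,I'$, so it lies in $\textrm{Sep}_-$. This yields $N\ge \epsilon' T$.

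The main obstacle I expect is the choice of $v$ and the robustness of the pattern on an open set $V$. Existence of a $\Lambda_-$ leaf disjoint from two $\Lambda_+$ leaves crossing $\gamma$ on either side of it is not automatic, since leaves of the two laminations intersect densely. I would first try to use leaves close to a boundary leaf of a complementary region of $\Lambda_+$, lying inside that ideal polygon's neighbourhood, together with \Cref{prop:complementary regions don't share}. If that fails, I would fall back on the axis argument: fix a closed geodesic $\alpha$ and use \Cref{prop:axis projection} with \Cref{prop:neighbourhood}. Typical $\gamma$ fellow travels translates of $\alpha$ for length $L$ with linear frequency. Each such fellow-travel crosses a pair of leaves whose ladders are at distance at least $\e_3$ apart, by the argument of \Cref{prop:limit disjoint from ladder}. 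Translates of $\alpha$ are disjoint from the periodic separating structure, which supplies the separated ladders directly.
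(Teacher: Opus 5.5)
The paper does not prove this statement. It is quoted from Gadre--Hensel and used as a black box in \Cref{cor:gh-linear}. Your ladder-counting argument is therefore an independent route, built from \Cref{section:separation} and Hopf ergodicity. The reduction is correct: along any path from $\iota(\gamma(0))$ to $\iota(\gamma(T))$, taking successive first hitting times of $F(\ell_1),\dots,F(\ell_N)$ gives $d_{\wsr}(\iota(\gamma(0)),\iota(\gamma(T)))\ge\sum_i d_{\wsr}(F(\ell_i),F(\ell_{i+1}))$. However, your proof is incomplete exactly at the step you flag. That step is easy, and neither fallback is needed.

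\textbf{Existence of the configuration.} Take any leaf $n$ of $\Lambda_-$, and let $J$ be the open arc of $\partial\ws_h$ bounding one component of $\ws_h\setminus n$.
\begin{enumerate}
\item Pick $g\in\pi_1(S)$ whose attracting fixed point lies in $J$; such points are dense.
\item Pick a leaf $m$ of $\Lambda_+$ with no endpoint at the repelling fixed point of $g$.
\item For large $j$, both endpoints of $g^jm$ lie in $J$. So $g^jm$ is a leaf of $\Lambda_+$ contained in that component.
\end{enumerate}
Doing this on both sides gives $m_1,m_2\in\Lambda_+$ disjoint from $n$ and on opposite sides of it. Any geodesic segment from the side of $m_1$ away from $n$ to the side of $m_2$ away from $n$ crosses $m_1,n,m_2$ in that order.

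\textbf{Robustness.} This is automatic, and it uses the same three leaves for every nearby vector. Suppose $\gamma_v(0)$ lies beyond $m_1$ and $\gamma_v(L)$ beyond $m_2$. The same holds for all $w$ in a neighbourhood $V$ of $v$. Then $\gamma_w|_{[0,L]}$ crosses $m_1,n,m_2$ in order, since a geodesic crosses another at most once. No angle bounds, closedness or non-isolation of leaves are needed.

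\textbf{Building the chain.} A visit of $\gamma$ to the image of $V$ at time $t_i$ yields translates $g_im_1,g_in,g_im_2$ crossed in $[t_i,t_i+L]$. Take $\ell_i=g_im_1$ rather than a ``first'' leaf. Your requirement that the $\Lambda_-$ leaf avoid all later $\Lambda_+$ leaves cannot be imposed through a local set $V$, but it holds for free:
\begin{enumerate}
\item $\ell_{i+1}$ crosses $\gamma$ after $g_im_2$, so it is a distinct leaf of $\Lambda_+$ and hence disjoint from $g_im_2$.
\item So $\ell_{i+1}$ lies on the opposite side of $g_im_2$ from $g_in$.
\end{enumerate}
Hence $g_im_2\in\textrm{Sep}_+(\ell_i,\ell_{i+1})$ and $g_in\in\textrm{Sep}_-(\ell_i,\ell_{i+1})$. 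Then \Cref{lem:separating leaves non-empty}, \Cref{lem:separating leaves and ladder distance} and \Cref{prop:separated ladders} give $d_{\wsr}(F(\ell_i),F(\ell_{i+1}))\ge\e_3$. Birkhoff's theorem and greedy selection of disjoint windows give a number $N$ of windows in $[0,T]$ growing linearly in $T$ for Lebesgue-almost every $\gamma$.

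\textbf{Comparison.} The paper's citation buys brevity. Your route gives a self-contained proof with an explicit $\epsilon$ in terms of $\e_3$, $L$ and $\mathbf{vol}(V)$, using only Birkhoff's theorem and the separation results. It also applies verbatim to any ergodic, geodesic-flow-invariant measure on $T^1(S_h)$ of full support.
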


We deduce from \Cref{theorem:gh-linear}, the recurrence of $\gamma$ to $T^1(S_h) \setminus N_{r_0}(\overline{\Lambda}^1)$, and the quasi-geodesicity of $\tau_\gamma$, that the distance in $\wsr$ along the test path $\tau_\gamma$ also grows linearly
with respect to the parametrization coming from $\gamma$.

\begin{corollary}\label{cor:gh-linear}
Suppose that $\pa \colon S \to S$ is a pseudo-Anosov map and
$(S_h, \Lambda)$ is a hyperbolic metric on $S$ together with a pair of
invariant measured laminations.  Then there is a constant
$\epsilon > 0$ such that for Lebesgue-almost all geodesics $\gamma$ in
$\widetilde{S}_h$, with unit speed parametrization, we have
\[
\frac{1}{T} d_{\wsr} \left( \tau_\gamma(0),
\tau_\gamma(T) \right) > \epsilon. 
\]
for all $T$ large enough depending on $\gamma$.

\end{corollary}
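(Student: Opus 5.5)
We need to prove Corollary \ref{cor:gh-linear}: the test path $\tau_\gamma$ makes linear progress in $\wsr$ with respect to the parametrization inherited from $\gamma$. The plan is to compare $\tau_\gamma(T)$ with $\iota(\gamma(T))$ at a sequence of good times, where $\gamma$ is far from the extended laminations, and then control what happens between good times.

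\textbf{Step 1 (height is bounded at good times).} Fix $r_0>0$ small and let $B=T^1(S_h)\setminus N_{r_0}(\oLambda^1)$. By the definition of $h_\theta$ in \Cref{def:height function2}, on $B$ the height is bounded: $|h_\theta(v)|\le H_0$ for a constant $H_0$ depending only on $r_0$ and $\theta$. Hence whenever $\gamma^1(t)\in B$, the points $\tau_\gamma(t)$ and $\iota(\gamma(t))$ lie on a common vertical flow line at Cannon--Thurston distance at most $H_0$.

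\textbf{Step 2 (good times are dense).} By ergodicity of the geodesic flow, for Lebesgue-almost every $\gamma$ the proportion of time $\gamma^1(t)$ spends in $B$ converges to $\mathbf{vol}(B)>0$. Let $G=\{t\ge 0 : \gamma^1(t)\in B\}$. Then for every $T$ large there is $s\in G$ with $T(1-\eta)\le s\le T$, where $\eta>0$ can be made arbitrarily small as $T\to\infty$. Indeed, if $G$ missed the whole interval $[T(1-\eta),T]$, the occupation average of $B$ would drop, and this cannot happen for arbitrarily large $T$ with a fixed $\eta$.

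\textbf{Step 3 (linear progress at good times).} For $s\in G$ and $0\in G$ (after shifting the basepoint we may assume $0\in G$), Step 1 and the triangle inequality give
\[
d_{\wsr}(\tau_\gamma(0),\tau_\gamma(s))\ge d_{\wsr}(\iota(\gamma(0)),\iota(\gamma(s)))-2H_0 .
\]
By \Cref{theorem:gh-linear}, the right-hand side is at least $\epsilon s-2H_0$ for $s$ large.

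\textbf{Step 4 (passing from $s$ to $T$).} This is the main obstacle: we must bound $d_{\wsr}(\tau_\gamma(s),\tau_\gamma(T))$ by a small multiple of $T$. By \Cref{prop:vertical flow distance decreasing},
\[
d_{\wsr}(\tau_\gamma(s),\tau_\gamma(T))\le K\, d_{\wsr}(\iota(\gamma(s)),\iota(\gamma(T)))+c .
\]
The Cannon--Thurston pseudometric on the base fiber is bounded by a constant multiple of the hyperbolic length, by \Cref{prop:qi}, so the right-hand side is at most $K Q_\LL (T-s)+K c_\LL+c\le KQ_\LL\eta T+C$. Choosing $\eta<\epsilon/(2KQ_\LL)$ and combining with Step 3 gives
\[
d_{\wsr}(\tau_\gamma(0),\tau_\gamma(T))\ge \epsilon(1-\eta)T-\tfrac{\epsilon}{2}T-C',
\]
which is at least $\tfrac{\epsilon}{4}T$ for large $T$, provided $\eta$ is also small enough that $\epsilon(1-\eta)-\tfrac{\epsilon}{2}\ge \tfrac{\epsilon}{2}$.

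\textbf{Remaining technicalities.} If $0\notin G$, replace the starting point by the first good time $s_0$; this changes distances only by the constant $d(\tau_\gamma(0),\tau_\gamma(s_0))$, which depends on $\gamma$ but not on $T$. One could alternatively use the quasigeodesic property from \Cref{theorem:quasigeodesic-h2}, but the vertical projection estimate in Step 4 avoids any reparametrization issues.
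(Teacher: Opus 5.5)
Your proposal is correct. It shares the paper's skeleton: bounded height at good times, ergodicity to locate good times, and Theorem~\ref{theorem:gh-linear} plus the triangle inequality between good times. Where it differs is in how you get from the good times back to the endpoints $0$ and $T$.

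The paper takes the first good time $t_0$ and the last good time $t_1\le T$. It chooses $r_0$ so that good times fill more than $2/3$ of $[0,T]$, which gives $t_0<T/3$ and $t_1>2T/3$. It bounds $d_{\wsr}(\tau_\gamma(t_0),\tau_\gamma(t_1))$ from below linearly. It then invokes Theorem~\ref{theorem:quasigeodesic-h2}: since $\tau_\gamma(t_0)$ and $\tau_\gamma(t_1)$ lie between $\tau_\gamma(0)$ and $\tau_\gamma(T)$ along an unparametrized quasigeodesic, the inner distance is a coarse lower bound for the outer one.

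You never use quasigeodesicity. Instead you:
\begin{itemize}
\item handle the start with a $\gamma$-dependent constant;
\item bound the tail $\tau_\gamma([s,T])$ by $O(T-s)$, using Proposition~\ref{prop:vertical flow distance decreasing} together with Proposition~\ref{prop:qi};
\item make that tail negligible via the sharper ergodic consequence that, for each fixed $\eta>0$, good times eventually meet every window $[(1-\eta)T,T]$.
\end{itemize}

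What each route buys:
\begin{itemize}
\item The paper's route needs no quantitative control of the test path on bad stretches, only the coarse ordering along a quasigeodesic. The cost is relying on the harder quasigeodesic theorem and a multiplicative loss from its constants.
\item Your route uses only the coarse Lipschitz property of vertical projection. It works for any good set of positive volume, and it yields a constant ($\epsilon/4$) directly comparable to the Gadre--Hensel constant.
\end{itemize}

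One small arithmetic slip: the condition $\epsilon(1-\eta)-\tfrac{\epsilon}{2}\ge\tfrac{\epsilon}{2}$ cannot hold for $\eta>0$. What you actually need is $\eta<\tfrac14$ (together with $\eta<\epsilon/(2KQ_\LL)$). Then $(\tfrac{\epsilon}{2}-\epsilon\eta)T-C'>\tfrac{\epsilon}{4}T$ for all large $T$, as claimed.
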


\begin{proof}
Suppose that $\gamma$ is a non-exceptional geodesic in $\ws_h$ sampled with respect to the Lebesgue measure and parametrized by unit speed. 
Suppose that the test path $\tau_\gamma$ is given the parametrization from $\gamma$.

\medskip
We may fix $R_0$ large enough and hence $r_0 = K e^{-kR_0}$ small enough such that for Lebesgue-almost all $\gamma$ the time that $\gamma([0,T])$ spends in $T^1(S_h) \setminus N_{r_0}(\overline{\Lambda}^1)$ is strictly greater than $2/3$ for all $T$ large enough.
Suppose that $t_0 > 0$ is the smallest time for which $\gamma(t_0)$ lies in $T^1(S_h) \setminus N_{r_0}(\overline{\Lambda}^1)$, and suppose that $t_1< T$ is the largest time for which $\gamma(t_1 )$ lies in $T^1(S_h) \setminus N_{r_0}(\overline{\Lambda}^1)$. It follows that $t_0 < T/3$ and $t_1 > 2T/3$, and so $t_1 - t_0 \ge T/3$.
\medskip
By the triangle inequality,
\begin{align*}
  d_{\wsr}( \tau_\gamma(t_0), \tau_\gamma(t_1) ) & \ge d_{\wsr}(
\iota(\gamma(t_0)), \iota(\gamma(t_1)) ) - d_{\wsr}( \iota(\gamma(t_0)) ,
\tau_\gamma(t_0)) - d_{\wsr}( \iota(\gamma(t_1)) , \tau_\gamma(t_1)). \\
  \intertext{By definition of the test path, the final two terms on
  the right hand side are equal to the absolute value of the
  height function, and so}
d_{\wsr}( \tau_\gamma(t_0), \tau_\gamma(t_1) ) & \ge d_{\wsr}(
\iota(\gamma(t_0)), \iota(\gamma(t_1)) ) - | h_\gamma(t_0) | - | h_\gamma(t_1) |  \\
&\ge d_{\wsr}(
\iota(\gamma(t_0)), \iota(\gamma(t_1)) ) - R_0 - R_0.
\intertext{By \Cref{theorem:gh-linear}, $d_{\wsr}(\iota(\gamma(t_0)), \iota(\gamma(t_1)) ) \ge \epsilon (t_1 - t_0)$, and so
}
d_{\wsr}( \tau_\gamma(t_0), \tau_\gamma(t_1) ) & \ge \epsilon (t_1 - t_0) - 2R_0 \ge \frac{1}{3} \epsilon T - 2R_0.
\end{align*}
We conclude the proof by noting that the test path $\tau_\gamma$ is a quasi-geodesic in $\wsr$ and hence the distance $d_{\wsr}( \tau_\gamma(t_0), \tau_\gamma(t_1) )$ is a coarse lower bound for $d_{\wsr}(\tau_\gamma (0), \tau_\gamma(T))$.
\end{proof}

\medskip
Finally, we prove Step 5, completing the proof of \Cref{theorem:surface
  lebesgue}.

\begin{proof}[Proof (of \Cref{theorem:surface lebesgue})]
We obtain from $\gamma$ a parametrization $\mathbb{R} \to \overline{\gamma}$ by letting $\overline{\gamma}_t$ be a point of $\overline{\gamma}$ closest to $\tau_\gamma(t)$.
Thus $\overline{\gamma}_0$ and $\overline{\gamma}_T$ are points of $\overline{\gamma}$ closest to $\tau_\gamma(0)$ and $\tau_\gamma(T)$ respectively. 
Since $\tau_\gamma$ is a quasi-geodesic there is a constant $L$ such that $d_{\wsr} (\overline{\gamma}_0, \tau_\gamma(0)) < L $ and $d_{\wsr} (\overline{\gamma}_T, \tau_\gamma(T)) < L$.
By triangle inequality,
\[
d_{\wsr}(\overline{\gamma}_0, \overline{\gamma}_T) \ge d_{\wsr} (\tau_\gamma(0),\tau_\gamma(T)) - 2L.
\]
By \Cref{cor:gh-linear}, there is then an $\epsilon > 0$ such that 
\begin{equation}\label{eq:ogamma lower bound}
d_{\wsr}(\overline{\gamma}_0, \overline{\gamma}_T) \ge \epsilon T 
\end{equation}
On the other hand, the projection $\iota(\gamma)$ to $\tau_\gamma$ along flow lines is distance decreasing. Hence, it follows that
\begin{equation}\label{eq:ogamma upper bound}
    d_{\wsr}(\overline{\gamma}_0, \overline{\gamma}_T) \le T+ 2L.
\end{equation}

\medskip
By \Cref{prop:surface speed}, for Lebesgue almost all geodesics $\gamma$, there
is a $T_1(\gamma)$, such that for all $T \ge T_1(\gamma)$,
\[ \left| \tau_\gamma([0, T]) \setminus N_R(S_0) \right| \le
T K e^{- k^R}.  \]
As $\overline{\gamma}$ is contained in an $L$-neighborhood of the
test path $\tau_\gamma$, we deduce
\[
\left| \overline{\gamma}([0, T]) \setminus N_{R+L} (S_0) \right| \le T K e^{- k^R}
\]
where $\overline{\gamma}$ is parametrized from $\gamma$.
Finally, suppose that $\overline{\gamma}(D) = \overline{\gamma}_T$ in the arc length parametrization of $\overline{\gamma}$ in which $\overline{\gamma}(0) = \overline{\gamma}_0$. Then, by \Cref{eq:ogamma lower bound}, 
\[
\left| \overline{\gamma}([0, T]) \setminus N_{R+L} (S_0) \right| \le \frac{1}{\epsilon} D K e^{- k^R}
\]
from which, by tweaking constants, we may deduce \Cref{theorem:surface lebesgue}, as required.
\end{proof}

\subsection{Hitting measure}

Again assuming \Cref{theorem:quasigeodesic-h2} and \Cref{prop:vertical flow distance decreasing}, we now derive \Cref{theorem:effective hitting} giving effective bounds for the proportion of time that a geodesic
chosen by a hitting measure on the boundary circle of
$\widetilde S_h$ spends close to the base fiber $S_0$.  This completes
the second half of \Cref{theorem:effective}.

\begin{theorem}\label{theorem:effective hitting}
Suppose that $\pa$ is a pseudo-Anosov map with stretch factor $k > 1$,
and $(S_h, \Lambda)$ is a hyperbolic metric on $S$ together with a
pair of invariant measured laminations.  Suppose that $\mu$ is a
finitely supported probability measure on $\pi_1(S)$ whose support
generates $\pi_1(S)$ as a semigroup. Let $\nu$ and $\rnu$ be the
forward and backwards hitting measures on $\partial \ws_h$. Then there
are constants $K > 0$ and $\alpha > 0$ such that for
$(\iota_* \nu \times \iota_* \rnu)$-almost all geodesics $\ogamma$ in
$\widetilde S_h \times \RR$, for any unit speed parametrization
$\ogamma(t)$, for any $R \ge 0$,
\[ \lim_{T \to \infty} \frac{1}{T} | \ogamma([0, T]) \setminus
N_R(S_0) \} | \le K e^{- \alpha k^R}. \]
\end{theorem}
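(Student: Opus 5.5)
We follow the same five-step scheme as the Lebesgue case, \Cref{theorem:surface lebesgue}. The only input that changes is the replacement for ergodicity of the geodesic flow with respect to Liouville measure. By \Cref{theorem:quasigeodesic-h2} and \Cref{prop:distinct}, for $(\nu\times\rnu)$-almost every geodesic $\gamma$ in $\ws_h$ the test path $\tau_\gamma$ is a $(Q,c)$-quasigeodesic with the same endpoints as $\ogamma$, so $\ogamma$ lies in an $L$-neighborhood of $\tau_\gamma$. The definition of $h_\theta$ gives: if $|h_\gamma(t)|\ge R$, then $\gamma^1(t)\in N_r(\oLambda^1)$ with $r=Ke^{-k^R}$. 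It therefore suffices to prove three things for a typical random-walk geodesic $\gamma$:
\begin{itemize}
\item[(a)] the asymptotic proportion of time $t$ with $\gamma^1(t)\in N_r(\oLambda^1)$ is at most $C r^{\beta}$ for some $\beta>0$;
\item[(b)] the distance along $\tau_\gamma$ grows linearly in the $\gamma$-parameter, as in \Cref{cor:gh-linear};
\item[(c)] the conversion from the $\gamma$-parameter to arc length on $\ogamma$ goes through exactly as in the proof of \Cref{theorem:surface lebesgue}.
\end{itemize}
Once (a) holds, the bound $Cr^\beta = CK^\beta e^{-\beta k^R}$ gives the claimed constant $\alpha=\beta$ after absorbing the $L$-shift into $K$, since $e^{-\beta k^{R-L}}=e^{-\beta k^{-L}k^R}$.

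For (a), replace the geodesic flow by the shift on the bi-infinite step space $(\pi_1(S),\mu)^{\ZZ}$, which is ergodic. Let $\gamma(t_n)$ be the projection of $w_nx_0$ to $\gamma$, and consider the segments $\beta_n=\gamma([t_{n-1},t_n])$. Up to translating by $w_n^{-1}$, each segment is a stationary process. The proportion of time in $N_r(\oLambda^1)$ up to $t_n$ is then a Birkhoff average of $|\beta_n\cap N_r(\oLambda^1)|$ divided by $t_n/n$. By \Cref{cor:tlinear}, $t_n/n\to\ell>0$, so the limit equals $\ell^{-1}\,\EE\,|\beta_1\cap N_r(\oLambda^1)|$. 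Because $\mu$ is finitely supported, $|\beta_1|$ has exponential tails (\Cref{prop:segment bound}, \Cref{prop:gp}). By Cauchy--Schwarz it then suffices to bound the probability that the geodesic through $x_0$ (after translating by $w_n^{-1}$), within a bounded window near $x_0$, has a tangent vector $r$-close to $\oLambda^1$. The leading term of this probability is the $(\nu\times\rnu)$-measure of the set of pairs of endpoints whose geodesic passes within $r$ in $T^1$ of a leaf of the extended lamination near $x_0$.

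The main obstacle is bounding that measure by $Cr^\beta$. Hitting measures are not absolutely continuous, so Birman--Series cannot be used directly. The plan is as follows. A geodesic that is $r$-close in $T^1$ to a leaf $\lambda$ at a point near $x_0$ fellow travels $\lambda$ for length about $\log(1/r)$ on both sides, by \Cref{prop:projection interval}. Hence its endpoints lie in shadows $\mho_{x_0}(\,\cdot\,,\log(1/r)-C)$ of the two endpoints of $\lambda$. The set of such leaves near $x_0$, up to fellow travelling at scale $\log(1/r)$, is covered by at most polynomially many pieces in $\log(1/r)$, again by the Birman--Series counting of \cite{birman-series}*{Remark 7.2}. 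Finitely supported measures satisfy the uniform shadow decay of \Cref{prop:gp}: $\nu(\mho_{x_0}(\xi,s))\le Kc^{s}$ for every $\xi$ (and also for $\rnu$, via the reversed walk). This gives a bound of $(\log\tfrac1r)^{6g-6}\,K^2c^{2(\log(1/r)-C)}\le Cr^{\beta}$ for some $\beta>0$. Making the covering-count step rigorous is what I expect to take most care. The fallback is to count, for $n\approx\log(1/r)$, the group elements whose shadows meet the endpoint set of $\oLambda$ near $x_0$, using the fact that simple geodesic segments of length $n$ grow polynomially.

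For (b), I would redo \Cref{cor:gh-linear}. By (a) with fixed $r_0$, the geodesic recurs to $T^1(S_h)\setminus N_{r_0}(\oLambda^1)$ for a proportion at least $2/3$ of the time. The linear growth of $d_{\wsr}(\iota\gamma(0),\iota\gamma(T))$ follows from \Cref{lemma:linear progress} applied to the $\pi_1(S)$-walk acting on $\wsr$ through $\pi_1(S)\subset\pi_1(M)$. This action is non-elementary, by \Cref{prop:alphaQG} and the fact that the axes have distinct endpoints. Combining this with the $\log n$ deviation bounds, \Cref{cor:deviation} and \Cref{prop:segment bound}, transfers the linear growth from $w_nx_0$ to $\iota\gamma(t_n)$. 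Step (c) is then verbatim from the Lebesgue case.
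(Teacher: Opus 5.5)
Your outline is sound, but it takes a different route from the paper. You recast the hitting case in the Lebesgue template. First you prove a continuous-time occupation bound in the $\gamma$-parameter: Birkhoff averages of $|\beta_n\cap N_r(\oLambda^1)|$ under the shift, divided by $t_n/n\to\ell$. Then you show linear growth of $\tau_\gamma$ in that parameter, and finally you apply the conversion from \Cref{theorem:surface lebesgue}.

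The paper stays in discrete time throughout:
\begin{enumerate}
\item It bounds $\PP(|h_\gamma(t_n)|\ge R)\le A_2e^{-\beta k^R}$ directly at the projection times, via \Cref{prop:small angle} and \Cref{cor:bscor}. Your ``main obstacle'' is exactly \Cref{cor:bscor}; the covering count you were worried about is \Cref{theorem:bs}, combined with shadow decay just as you propose.
\item It uses finite support and \Cref{prop:vertical flow distance decreasing} to get bounded test-path steps (\Cref{prop:test path steps bounded}), and linear progress of $\tau_\gamma(t_n)$ in $n$ (\Cref{prop:test path linear progress}).
\item It covers $\ogamma$ by $B_2$-balls around projections of the $\tau_\gamma(t_n)$, and counts the bad indices by ergodicity.
\end{enumerate}

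The discretization is what makes the arc-length conversion painless: each bad index costs only a bounded amount of length on $\ogamma$. Your step (c) imports the Lebesgue step, which as written bounds the bad set only in the $\gamma$-parametrization. To convert this to arc length on $\ogamma$ you need two further ingredients:
\begin{itemize}
\item $t\mapsto\ogamma_t$ is coarsely Lipschitz, which follows from \Cref{prop:vertical flow distance decreasing};
\item a unit-scale discretization, using that $h$ is coarsely Lipschitz along the geodesic flow, so that the additive constant is not paid once per excursion.
\end{itemize}

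In exchange, your route is uniform with the Lebesgue case. It needs only exponential tails of $|\beta_1|$ and the shadow decay of \Cref{prop:gp}, so it would also handle the finite-exponential-moment measures in \Cref{theorem:effective}. By contrast, \Cref{prop:test path steps bounded} really does use finite support. (For finitely supported $\mu$, $|\beta_1|$ is in fact bounded, since nearest point projection to a geodesic in $\HH^2$ is $1$-Lipschitz.)

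Two small corrections.
\begin{itemize}
\item The window in (a) is around $\gamma(t_0)$, the projection of $x_0$, not around $x_0$ itself. The shadow bound survives because $x_0$ projects orthogonally to $\gamma(t_0)$. Hence for any $s$ with $|s-t_0|\le B_1$, the Gromov products from $x_0$ of the relevant endpoint pairs are at least those from $\gamma(s)$, minus $B_1$. This is the role of \Cref{prop:small angle}.
\item The shift $R\mapsto R-L$ is absorbed into $\alpha=\beta k^{-L}$, not into $K$.
\end{itemize}
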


Here is an overview of the argument.

\begin{enumerate}
    
\item Suppose that $\gamma$ is a geodesic in $\widetilde{S}_h$ chosen according
to the hitting measure $\nu \times \rnu$ on
$\partial \widetilde{S}_h \times \partial \widetilde{S}_h$.  Suppose that the tangent vector at $\gamma(t)$ is within distance $r>0$ of one of the
extended laminations $\Lambda$.
If $r$ is very small then the endpoints of $\gamma$ are
within distance $r + o(r)$ of the limit set of $\Lambda$, when $\partial \widetilde{S}_h$ is equipped with the angular metric viewed from $\gamma(t)$. Work of Birman--Series
\cite{birman-series} shows that the hitting measure of
$N_r(\partial \Lambda)$ goes to zero at rate $r^\alpha$.

\item Let $x_0$ be a choice of basepoint in $\widetilde{S}_h$, and let
$\gamma(t_n)$ be the closest point on $\gamma$ to $w_n x_0$.  The
previous argument shows that the probability that the tangent vector
at $\gamma(t_n)$ is within distance $r$ of one of the extended
laminations goes to zero at rate $r^\alpha$.  
Let $R$ be such that $r = e^{-k^R}$. By the definition of the
height function, the probability that $|h_\gamma(t_n)| \ge R$ goes to
zero at rate $e^{-\alpha k^R}$.

\item Since $\mu$ is assumed to have finite support, the distance
in $\ws_h$ between any two successive locations $w_n x_0$ and
$w_{n+1}$ of the random walk is bounded.  As nearest point projection
to geodesics is distance reducing, the distance between any two
successive nearest point projections $\gamma(t_n)$ and
$\gamma(t_{n+1})$ is also bounded.  We use the fact that the height
function is Lipschitz to show that that the distance in
$\ws \times \RR$ between the corresponding test path locations
$\tau_\gamma(t_n)$ and $\tau_\gamma(t_n+1)$ is also bounded.

\item We use the linear progress/ drift of the random walk in
$\wsr$ to show that the test path locations $\tau_\gamma(t_n)$ also
make linear progress in $\wsr$.  Let $\ogamma$ be the geodesic in
$\wsr$ determined by $\gamma$.  We parametrize $\ogamma$ by unit speed so that $\ogamma(0)$ is the point closest to the first test path
location $\tau_\gamma(t_0)$.  As the test path $\tau_\gamma$ is
contained in a bounded neighborhood of the geodesic $\ogamma$, the
number of test path locations close to the segment $\ogamma([0, T])$
grows linearly in $T$.

\item As the proportion of test path locations outside of $N_R(S_0)$
goes to zero at rate $e^{-\alpha k^R}$, the proportion of time
$\ogamma([0, T])$ spends outside $N_R(S_0)$ goes to zero at the same
rate.

\end{enumerate}

We start by estimating hitting measure for a regular neighborhood of
the limit set of the extended laminations.  We will use the following
result from the proof of \cite{birman-series}*{Theorem II, page 224},
with the degree of the polynomial from \cite{birman-series}*{Remark
  7.2}.

\begin{theorem}\cite{birman-series}\label{theorem:bs}
Suppose that $S_h$ is a closed hyperbolic surface and $\Lambda$ is a
geodesic lamination.  Let $x_0$ be a basepoint in $\widetilde S_h$.
Then there are constants $A, c$ and $\alpha > 0$ such that for any
integer $n > 0$, there are $A n^{6g-g}$ squares of side length
$c e^{-\alpha n}$ which cover $\partial \Lambda$ in
$(\partial S_h \times \partial S_h) \setminus \Delta$.
\end{theorem}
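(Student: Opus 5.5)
The plan is to follow the strategy of Birman--Series' proof of their Theorem II: a symbolic/combinatorial count of simple geodesic segments of bounded word length, converted into a covering of $\partial\Lambda$ by small boxes in $(\partial \ws_h \times \partial \ws_h)\setminus\Delta$. First I fix a compact fundamental polygon $P$ for $\pi_1(S)$ containing $x_0$, with side-pairings giving a finite generating set. Any leaf $\ell$ of $\Lambda$ (lifted to $\ws_h$) passes within bounded distance of some translate of $x_0$. Using the $\pi_1(S)$-action together with compactness, it suffices to cover the endpoint pairs of leaves meeting a fixed compact neighborhood of $P$; translating back costs only a bounded distortion of the angular metric at $x_0$. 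Finitely many such translates are needed, and these get absorbed into $A$ and $c$.

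Next, for a leaf $\ell$ through $P$, I record the cutting sequence of the two rays of $\ell$ from $P$ up to combinatorial length $n$ in each direction. That is, I list the sequence of translates $gP$ crossed, equivalently a word of length $n$ in the side-pairing generators. Two geodesics through $P$ sharing the same length-$n$ forward and backward cutting sequences have forward endpoints in a common shadow $\mho_{x_0}(g_n x_0, \cdot)$ of a point at distance roughly $\lambda n$, where $\lambda>0$ is the minimal translation per polygon crossed. The same holds for the backward endpoints. By the exponential contraction of visual angles in $\HH^2$, each such shadow has angular diameter at most $c e^{-\alpha n}$ with $\alpha$ about $\lambda$. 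So each pair of length-$n$ sequences determines a square of side $c e^{-\alpha n}$ containing the corresponding endpoint pairs.

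The main obstacle is the counting step. I must show that the number of distinct length-$n$ cutting sequences realized by simple geodesic arcs (hence by segments of leaves of any lamination) is at most $A n^{6g-6}$, polynomial rather than exponential. Here I would invoke Birman--Series Proposition 4.1 directly. A simple arc crossing $n$ translates of $P$ is determined, up to the finitely many crossing patterns inside $P$, by the number of parallel strands between each pair of sides of $P$. This is a vector of nonnegative integers summing to at most $n$ with at most $6g-6$ effectively free coordinates, per their Remark 7.2, and it yields at most $A n^{6g-6}$ possibilities. Because leaves of $\Lambda$ are pairwise disjoint simple geodesics, a single bi-infinite leaf segment of length about $2n$ through $P$ is one simple arc, so its forward and backward sequences together are counted by this bound (after replacing $n$ by $2n$ and adjusting constants). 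Combining the count with the shadow estimate covers $\partial\Lambda$ by $A n^{6g-6}$ squares of side $c e^{-\alpha n}$, as claimed.
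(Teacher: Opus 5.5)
Your core argument is the one the paper relies on, namely the proof of Birman--Series' Theorem~II: cutting sequences with respect to a fundamental polygon, their polynomial bound on words realized by simple arcs, and exponential shrinking of shadows. You also correctly read the exponent as $6g-6$.

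\textbf{The gap is in your first paragraph.} The claim that ``finitely many translates are needed'' with bounded distortion is false. The lifted lamination meets infinitely many translates $gP$. Moreover, the derivative of $g$ on $\partial \widetilde S_h$, in the angular metric at $x_0$, ranges between $e^{-d(x_0, g x_0)}$ and $e^{d(x_0, g x_0)}$, so it is not uniformly controlled.

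No other argument can repair this, because the statement read literally for all of $\partial\Lambda$ is false. Take $g_m \in \pi_1(S)$ with $g_m x_0 \to \xi$, and pass to a subsequence with $g_m^{-1} x_0 \to \eta$. For any leaf $\ell$ whose endpoints avoid $\eta$, both endpoints of the leaf $g_m \ell$ converge to $\xi$. Hence the closure of $\partial \Lambda$ contains the whole diagonal $\Delta$. A finite family of squares of side $s$ covering $\partial\Lambda$ therefore covers $\Delta$ after taking closures. Its projection to the first factor then covers the circle, which needs at least about $2\pi/s \sim e^{\alpha n}/c$ squares, not polynomially many.

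\textbf{What your argument actually proves.} It establishes, as Birman--Series do, the restricted statement: the endpoint pairs of leaves of $\Lambda$ meeting a fixed compact set $P \ni x_0$ are covered by $A n^{6g-6}$ squares of side $c e^{-\alpha n}$. Your cutting-sequence, shadow and counting steps are fine for that. The full $\partial\Lambda$ is only a countable union of $\pi_1(S)$-translates of this set. That suffices for Birman--Series' measure-zero or dimension-zero conclusion, but not for a uniform count.

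You should state the theorem with this restriction. For the paper's later use in Corollary~\ref{cor:bscor}, leaves far from $x_0$ must then be handled separately. For example, a leaf at distance at least $D$ from $x_0$ has endpoint pair within about $4e^{-D}$ of $\Delta$. This can be combined with a hitting-measure bound for neighbourhoods of $\Delta$, as in Lemma~\ref{lemma:diagonal}.
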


We use \Cref{theorem:bs} to estimate the hitting measure of a regular
neighborhood of the limit set of the extended laminations.

\begin{corollary}\label{cor:bscor}
Suppose that $\pa$ is a pseudo-Anosov map and $S_h$ a
hyperbolic metric.  Suppose that $\mu$ is a finitely supported
probability measure on $\pi_1(S)$ whose support generates $\pi_1(S)$
as a semigroup. Let $\nu \times \rnu$ be the resulting stationary
measure on $\partial \widetilde{S}_h \times \partial \widetilde{S}_h$.
Let $\partial \overline{\Lambda}$ be the limit set in
$\partial \widetilde{S}_h \times \partial \widetilde{S}_h$ of the union of the
extended invariant laminations.  Choose a
basepoint $x_0$ in $\widetilde{S}_h$, and give $\widetilde{S}_h$ the
visual metric based at $x_0$, and give
$\partial \widetilde{S}_h \times \partial \widetilde{S}_h$ the product
metric. Then there are constants $K > 0$ and $\alpha > 0$ such that
\[ \nu \times \rnu ( N_r( \partial \overline{\Lambda} ) ) \le K
r^{\alpha} \]
\end{corollary}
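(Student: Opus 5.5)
Combine the covering from \Cref{theorem:bs} with a uniform polynomial upper bound on the hitting measure of small sets in the visual metric. The extended lamination $\overline{\Lambda}$ is a finite union of maximal laminations $\Lambda_i$, so $\partial\overline{\Lambda}$ is a finite union of the $\partial\Lambda_i$. It therefore suffices to treat one geodesic lamination $\Lambda$ and sum the bounds, enlarging $K$ by the number of laminations.

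\textbf{Step 1: squares.} For a given $r$ small, choose $n$ with $c e^{-\alpha n} \approx r$, that is, $n \approx \tfrac{1}{\alpha}\log\tfrac{1}{r}$. By \Cref{theorem:bs}, $\partial\Lambda$ is covered by at most $A n^{6g-6}$ squares of side $c e^{-\alpha n}$. Doubling each square covers $N_r(\partial\Lambda)$ by $A n^{6g-6}$ squares $I\times J$ of side comparable to $r$, where $I,J$ are arcs of $\partial\widetilde S_h$ in the visual metric at $x_0$.

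\textbf{Step 2: measure of a square.} Since $\nu\times\rnu$ is a product measure, $(\nu\times\rnu)(I\times J)=\nu(I)\,\rnu(J)$. I would prove a Hölder bound $\nu(I)\le K' |I|^{\beta}$ for some $\beta>0$, and likewise for $\rnu$. An arc of visual length about $r$ is contained in a shadow $\mho_{x_0}(\xi, \log\tfrac1r - C)$ for some $\xi\in\partial\widetilde S_h$. So it suffices to show $\nu(\mho_{x_0}(\xi,R))\le K c^R$ uniformly in $\xi$. This is the standard exponential decay of shadows for random walks with finite exponential moment. A finitely supported $\mu$ certainly qualifies, and the bound is the second statement of \Cref{prop:gp}. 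The estimate for $\rnu$ is the same, applied to the reflected measure. This gives $\nu(I)\le K r^{\beta}$ with $\beta=\log(1/c)$.

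\textbf{Step 3: sum.} Summing over the squares gives
\[
(\nu\times\rnu)\bigl(N_r(\partial\Lambda)\bigr)\le A n^{6g-6} K^2 r^{2\beta}\le A' \bigl(\log\tfrac1r\bigr)^{6g-6} r^{2\beta}.
\]
The polylogarithmic factor is absorbed by lowering the exponent, say to $\beta$, at the cost of enlarging the constant for $r\le r_0$. For $r> r_0$ the bound is trivial with a large enough $K$.

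\textbf{Main obstacle.} The expected difficulty is Step 2: getting a bound on $\nu$ of shadows that is uniform over centers $\xi$ in the boundary, rather than only for shadows of random locations $w_n x_0$, which is what \Cref{prop:gp} literally addresses. If the stated version does not suffice, I would argue directly. The set of sample paths converging into $\mho_{x_0}(\xi,R)$ requires the walk to track a geodesic passing within bounded distance of the geodesic ray $[x_0,\xi)$ at distance $R$. The Gromov product $\GP{w_n x_0}{\xi}_{x_0}$ for the limit point then being $\ge R$ has probability $\le Kc^R$ by the exponential-tail (non-atomic, Hölder) estimate for hitting measures of finite-moment walks on hyperbolic groups, as in \cite{bhm} or \cite{Maher}. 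This gives the needed uniform bound, and the other steps are routine bookkeeping.
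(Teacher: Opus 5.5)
Your proposal is correct and follows the paper's proof essentially step for step: cover $\partial\overline{\Lambda}$ by the polynomially many exponentially small squares from \Cref{theorem:bs}, enlarge them to cover the $r$-neighborhood, bound the $\nu\times\rnu$-measure of each square by a product of two shadow measures via exponential decay of shadows, and absorb the polylogarithmic count by lowering the exponent. Your one addition is a fair one: the decay must hold uniformly for shadows centred at arbitrary boundary points $\xi$, not just at random locations $w_n x_0$ as literally stated in \Cref{prop:gp}. The paper passes over this by simply citing that proposition, and the standard uniform shadow estimate for finitely supported walks supplies it.
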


\begin{proof}
As noted before, the union of the extended laminations $\overline{\Lambda}$ is
contained in the union of a finite number of geodesic laminations
$\Lambda_1, \ldots \Lambda_k$. Let $A_1, c_1$ and $\alpha_1$ be the
constants from \Cref{theorem:bs}.  Setting $r = c_1 e^{-\alpha_1 n}$
in \Cref{theorem:bs}, each $N_r( \Lambda_i)$ is contained in the union
of at most $A_1 (\tfrac{1}{\alpha_1} \log \tfrac{c_1}{r})^{6g - 6}$
squares of side length $r$.  As an $r$-neighborhood of a square of
side length $r$ is contained in a union of $9$ squares of side length
$r$, this shows that $N_r(\overline{\Lambda})$ is contained in the
union of at most
$9 A_1 k (\tfrac{1}{\alpha} \log \tfrac{c_1}{r})^{6g - 6}$ squares of
side length $r$.

\medskip
The Gromov product of two points in the boundary is equal to
$\log (1 / \sin(\theta/2))$, where $\theta$ is the angle between them
viewed from the basepoint, see for example \cite{roe}*{page 114}.
Using the elementary bounds $x/2 \le \sin x \le x$ for $0 \le x \le 1$, 
we know that an interval of length $r$ in $\partial \widetilde{S}_h$ is
contained in a shadow of distance $\log \tfrac{1}{r}$ from the
basepoint.  By exponential decay of shadows, \Cref{lemma:exponential
  decay}, there is a constant $c_2 < 1$ such that the hitting measure
of an interval of length $r$ is at most
$K c_2^{\log \tfrac{1}{r}} = K r^\beta$ for some
$\beta = \log \tfrac{1}{c_2} > 0$.  So the hitting measure of a square
of side length $r$ is at most $K^2 r^{2 \beta}$.  This gives
\begin{align*}
  \nu \times \rnu( N_r( \partial \overline{\Lambda} ) ) & \le K^2 r^{2
\beta} \ 9 A_1 (\tfrac{1}{\alpha} \log \tfrac{c_1}{r})^{6g - 6}. \
\intertext{As $(\log \tfrac{c_1}{r})^{6g-6}$ is a polynomial in
$\log \tfrac{1}{r}$, there is a constant $A_2$ such that}
\nu \times \rnu( N_r( \partial \overline{\Lambda} ) ) & \le A_2 r^{2
  \beta} (\log \tfrac{1}{r})^{6g-6}. \\
\intertext{As $r^{\beta}( \log \tfrac{1}{r} )^{6g-6}$ tends to zero as $r$ tends
to zero, there is a constant $A_3$ such that}
\nu \times \rnu( N_r( \partial \overline{\Lambda} ) ) & \le A_3 r^{
  \beta},
\end{align*}
and so the result follows by setting $K = A_3$ and $\alpha = \beta$.
\end{proof}

Let $x_0$ be a basepoint for $\ws_h$.  As the distribution
$\mu$ generating the random walk has finite support, the distance
between any two successive locations $w_n x_0$ and $w_{n+1} x_0$ of
the random walk is bounded.  As closest point projection to a geodesic
is distance reducing, this implies that the distance between the
corresponding closest points $\gamma(t_n)$ and $\gamma(t_{n+1})$ on
$\gamma$ is also bounded.  We now show that the distance between the
corresponding test path locations $\tau_\gamma(t_n)$ and
$\tau_\gamma(t_{n+1})$ is bounded.

\begin{proposition}\label{prop:test path steps bounded}
Suppose that $\pa$ is a pseudo-Anosov map and $S_h$ a
hyperbolic metric.  Suppose that $\mu$ is a finitely supported
probability measure on $\pi_1(S)$ whose support generates $\pi_1(S)$
as a semigroup.  Let $x_0$ be a basepoint for $\widetilde{S}_h$.  Let
$\gamma$ be the geodesic with the same limit points as
$( w_n x_0 )_{n \in \ZZ}$, and let $\gamma(t_n)$ be the nearest point
projection of $w_n x_0$ to $\gamma$.  Then there is a constant $B$
such that for all $n$,
\[ d_{\widetilde{S} \times \RR} \left( \tau_\gamma(t_n),
\tau_\gamma(t_{n+1}) \right) \le B. \]
\end{proposition}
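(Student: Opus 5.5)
The bound will come from splitting the distance between test path points into a horizontal piece at height zero and two vertical pieces, and controlling the vertical change by showing the height function is coarsely Lipschitz along $\gamma$. First, since $\mu$ has finite support, there is a constant $B_0$ with $d_{\HH^2}(w_n x_0, w_{n+1} x_0) \le B_0$ for all $n$. Nearest point projection to a geodesic in $\HH^2$ is $1$-Lipschitz, so $|t_{n+1} - t_n| \le B_0$.

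By the triangle inequality,
\[ d_{\wsr}(\tau_\gamma(t_n), \tau_\gamma(t_{n+1})) \le d_{\wsr}(\tau_\gamma(t_n), (\gamma(t_{n+1}), h_\gamma(t_n))) + |h_\gamma(t_{n+1}) - h_\gamma(t_n)|. \]
The vertical term is bounded by the change in height, because vertical flow lines have unit speed in the $(\log k)\,dz$ normalization, up to a fixed factor.

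\textbf{Step 1 (horizontal term).} The horizontal term is the Cannon--Thurston length of the arc $\gamma([t_n,t_{n+1}])$ pushed to height $h = h_\gamma(t_n)$. By \eqref{eq:ct metric} this length is $k^{h}\,dx(\sigma) + k^{-h}\,dy(\sigma)$, where $\sigma = \gamma([t_n,t_{n+1}])$. This is not obviously bounded when $|h|$ is large, so I will not bound the two terms separately. Instead I will use \Cref{theorem:quasigeodesic-h2} and \Cref{prop:vertical flow distance decreasing}. By \Cref{prop:vertical flow distance decreasing},
\[ d_{\wsr}(\tau_\gamma(t_n),\tau_\gamma(t_{n+1})) \le K\, d_{\wsr}(\iota\gamma(t_n),\iota\gamma(t_{n+1})) + c. \]
The restriction of the Cannon--Thurston pseudometric to $S_0$ is coarsely bounded by the hyperbolic metric by \Cref{prop:qi}, and distances in $\wsr$ are at most distances in the fiber pseudometric $d_{\widetilde S_h}$. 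Hence
\[ d_{\wsr}(\iota\gamma(t_n),\iota\gamma(t_{n+1})) \le Q_\LL B_0 + Q_\LL c_\LL. \]
This gives $B = K(Q_\LL B_0 + Q_\LL c_\LL) + c$, which makes the triangle-inequality decomposition above unnecessary.

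\textbf{Step 2 (checking hypotheses).} I must verify that \Cref{prop:vertical flow distance decreasing} applies, which requires $\gamma$ to be non-exceptional. By \Cref{prop:distinct} this holds for almost every sample path, so the statement holds almost surely, which is all that is needed later. I will also check that $t_n$ is defined using the unit speed parametrization, since \Cref{prop:vertical flow distance decreasing} uses the parametrization inherited from $\gamma$.

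\textbf{Main obstacle.} The main obstacle is that the direct Lipschitz estimate for $h_\theta$ fails. Near $\oLambda^1$, $\log\log(1/d)$ can vary on scale $1$ in $t$ only by a bounded amount, because $d(\gamma^1(t),\oLambda^1)$ changes by at most a factor $e^{O(|t-s|)}$ under geodesic flow. This gives the fallback: $\log(1/d)$ changes additively by $O(B_0)$, so $\log_k\lfloor\cdot\rfloor_1$ changes by $O(B_0)$. Even so, the horizontal term at large height would remain uncontrolled. For that reason I rely on the coarse distance non-increasing property of \Cref{prop:vertical flow distance decreasing}, which is exactly the feature the paper emphasized as needed here.
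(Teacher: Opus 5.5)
Your argument is correct and is essentially the paper's proof: finite support bounds the steps, nearest point projection is $1$-Lipschitz, $\iota$ is coarsely Lipschitz on $S_0$, and \Cref{prop:vertical flow distance decreasing} then gives $B = K B_1 + c$. Your use of \Cref{prop:qi} is slightly more careful than the paper's bare assertion that $\iota$ is distance decreasing, and your non-exceptionality caveat is a legitimate point the paper leaves implicit; the opening triangle-inequality decomposition and the Lipschitz discussion of $h_\theta$ are unnecessary and can be dropped.
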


\begin{proof}
Let $x_0$ be a basepoint for $\ws_h$, and let
$( w_n x_0 )_{n \in \ZZ}$ be a bi-infinite sample path of the random
walk.  Let $\gamma$ be the geodesic determined by its limit points of the sample path.
By a unit speed parametrization of $\gamma$, we get a parametrization of the test path
$\tau_\gamma(t)$.

\medskip
As $\mu$ has finite support, there is an upper bound $B_1$ on the
distance between $w_n x_0$ and $w_{n+1} x_0$.  Let $\gamma(t_n)$ be
the nearest point projection of $w_n x_0$ to the geodesic $\gamma$ in
$\ws_h$.  As nearest point projection to a geodesic is distance
decreasing, the distance between $\gamma(t_n)$ and $\gamma(t_{n+1})$
is at most $B_1$.  As the inclusion map $\iota$ is distance
decreasing, the distance between $\iota(\gamma(t_n))$ and
$\iota(\gamma(t_{n+1}))$ is also at most $B_1$.

\medskip
By \Cref{prop:vertical flow distance decreasing}, there are constants
$K$ and $c$ such that the distance between the corresponding test path
locations $\tau_\gamma(t_1)$ and $\tau_\gamma(t_2)$ is at most
$B = K B_1 + c$, as required.
\end{proof}

We now show that the distance in $\wsr$ between the test path
locations $\tau_\gamma(t_0)$ and $\tau_\gamma(t_n)$ grows linearly in
$n$.

\begin{proposition}\label{prop:test path linear progress}
Suppose that $\pa$ is a pseudo-Anosov map and $S_h$ a
hyperbolic metric.  Suppose that $\mu$ is a finitely supported
probability measure on $\pi_1(S)$ whose support generates $\pi_1(S)$
as a semigroup.  Let $x_0$ be a basepoint for $\widetilde{S}_h$.  Let
$\gamma$ be the geodesic with the same limit points as $\{ w_n x_0
\}$, and let $\gamma(t_n)$ be the nearest point projection of $w_n
x_0$ to $\gamma$.  Then there is a constant $\ell > 0$ such that
\[ \lim_{N \to \infty} \frac{1}{N} d_{\widetilde{S} \times \RR} \left(
\tau_\gamma(t_0), \tau_\gamma(t_{N}) \right) = \ell > 0. \]
\end{proposition}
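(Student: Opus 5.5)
The plan is to compare the test path locations $\tau_\gamma(t_n)$ with the orbit points $w_n x_0$, now viewed in $\wsr$ via the inclusion $\pi_1(S) \to \pi_1(M)$. The first step is to observe that $\mu$, viewed as a measure on $\pi_1(M)$, is finitely supported and non-elementary for the action on $\wsr$. Indeed, elements of $\pi_1(S)$ act loxodromically on $\wsr$ (see \Cref{prop:alphaQG}), and independent ones have distinct fixed points by \Cref{prop:limit disjoint from ladder} and injectivity considerations. Hence \Cref{lemma:linear progress} (with finite exponential moment) gives a drift $\ell' > 0$ such that $\tfrac{1}{N} d_{\wsr}(x_0, w_N x_0) \to \ell'$ almost surely, by Kingman's subadditive ergodic theorem together with the exponential concentration. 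Moreover, the sample path $w_n x_0$ converges to $\iota(\gamma_+)$ in $\partial(\wsr)$, by continuity and equivariance of the Cannon--Thurston map. So $w_n x_0$ tracks the geodesic $\ogamma$ joining $\iota(\gamma_-)$ and $\iota(\gamma_+)$. These endpoints are distinct almost surely by \Cref{prop:distinct}.

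Next, I will show that $d_{\wsr}(w_n x_0, \tau_\gamma(t_n))$ is sublinear in $n$. By \Cref{theorem:quasigeodesic-h2}, $\tau_\gamma$ is a $(Q,c)$-quasigeodesic with the same endpoints as $\ogamma$, so it lies in an $L$-neighborhood of $\ogamma$ by \Cref{lemma:morse}. Let $\ogamma(s_n)$ be the closest point on $\ogamma$ to $w_n x_0$. By \Cref{cor:deviation} applied in $\wsr$, $d_{\wsr}(w_n x_0, \ogamma(s_n)) = O(\log n)$. The remaining task is to bound $d_{\wsr}(\ogamma(s_n), \tau_\gamma(t_n))$ by $o(n)$. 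I would argue that $\iota(w_n x_0)$ and $\iota(\gamma(t_n))$ are at distance $O(\log n)$ in $\wsr$, since $d_{\ws_h}(w_nx_0,\gamma)=O(\log n)$ by \Cref{cor:deviation} in $\ws_h$ and $\iota$ is coarsely Lipschitz. Then the vertical segment from $\iota(\gamma(t_n))$ to $\tau_\gamma(t_n)$ has length $|h_\gamma(t_n)|$. This is the main obstacle: we need $|h_\gamma(t_n)| = o(n)$. I will obtain it from Step 2 of the overview, namely \Cref{cor:bscor} combined with $\pi_1(S)$-equivariance of the height function. The law of $\gamma$ seen from $w_n x_0$ is $\nu\times\rnu$, so $\PP(|h_\gamma(t_n)| \ge R) \le K e^{-\alpha k^R}$ uniformly in $n$, up to an error from the $O(1)$ in $d(w_nx_0,\gamma)$, which I would also control by \Cref{prop:distance to geodesic}. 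Borel--Cantelli with $R = \log_k \log n^{2/\alpha}$ then gives $|h_\gamma(t_n)| = O(\log\log n)$ eventually. Hence $d_{\wsr}(w_n x_0, \tau_\gamma(t_n)) = O(\log n)$ almost surely.

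Finally, the triangle inequality gives
\[ \Bigl| d_{\wsr}(\tau_\gamma(t_0),\tau_\gamma(t_N)) - d_{\wsr}(x_0, w_N x_0) \Bigr| \le d_{\wsr}(x_0,\tau_\gamma(t_0)) + O(\log N), \]
and the first term is a fixed finite quantity for the given sample path. Dividing by $N$ and using the drift statement yields the limit $\ell = \ell' > 0$. \Cref{prop:test path steps bounded} is not needed for the limit itself, but it confirms consistency: consecutive test path locations are boundedly close, so no gaps appear.
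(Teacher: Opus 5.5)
Your proposal is correct and follows essentially the same route as the paper: compare $\tau_\gamma(t_n)$ with $w_n\iota(x_0)$ via the triangle inequality, bound the discrepancy by the horizontal term $d_{\ws_h}(w_n x_0,\gamma(t_n))$ plus the vertical term $|h_\gamma(t_n)|$, show both are $o(n)$ using exponential tails (with \Cref{cor:bscor} and shift-invariance for the height), and conclude from linear progress of the walk in $\wsr$. The detour through $\ogamma$, $\ogamma(s_n)$ and the Morse lemma is unnecessary, and your two-sided triangle inequality is a small improvement, since it identifies the limit as the drift of the walk in $\wsr$ rather than only giving the lower bound that the paper's written argument establishes.
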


\begin{proof}
Let $\overline{x}_0 = \iota(x_0)$. By the triangle inequality applied to the path consisting of the
three geodesic segments
$[\overline{x}_0, \tau_\gamma(t_0)] \cup [\tau_\gamma(t_0),
\tau_\gamma(t_n)] \cup [\tau_\gamma(t_n), w_n \overline{x}_n] $, we get 
\begin{align*}
  d_{\widetilde{S}_h \times \RR}( \tau_\gamma(t_0),
  \tau_\gamma(t_n) )
  & \ge d_{\widetilde{S}_h \times \RR}(                        
  \overline{x}_0, w_n \overline{x}_0) ) - d_{\widetilde{S}_h \times
  \RR}( \overline{x}_0, \tau_\gamma(t_0) ) - d_{\widetilde{S}_h
  \times \RR}( w_n \overline{x}_0, \tau_\gamma(t_n) ) . \\
\intertext{
Set $A = d_{\widetilde{S}_h \times \RR}( \overline{x}_0,
  \tau_\gamma(t_0) )$, which is independent of $n$.  As the inclusion
  map $\iota$ is distance decreasing, the distance in $\widetilde{S}_h
  \times \RR$ from $w_n \overline{x}_0$ to $\tau_\gamma(t_n)$ is at
  most the distance in $\widetilde{S}_h$ from $w_n x_0$ to
  $\gamma(t_n)$, plus the distance from $\iota(\gamma(t_n))$ to
  $\tau_\gamma(t_n)$, which is given by the value of the height
  function $h_\gamma(t_n)$.  This gives
}  
d_{\widetilde{S}_h \times \RR}( \tau_\gamma(t_0),
  \tau_\gamma(t_n) ) & \ge d_{\widetilde{S}_h \times \RR}(
\overline{x}_0, w_n \overline{x}_0) ) - A - d_{\widetilde{S}_h}(
                                     w_n x_0, \gamma(t_n) ) - h_\gamma(t_n). 
\end{align*}
  
The random walk makes linear progress in $\widetilde{S}_h \times \RR$
at rate $\ell_3$, with exponential decay.  The random variable $d_{\widetilde{S}_h}( x_0, \gamma(t_0) )$ has a distribution with exponential tails. By \Cref{cor:bscor}, the random variable $h_\gamma(t_0)$ also has 
exponential tails. Hence, as $N \to \infty$, $\tfrac{d_{\widetilde{S}_h}( w_N x_0, \gamma(t_N) )}{N} \to 0$ and $\tfrac{h_\gamma(t_N)}{N} \to 0$.
Therefore
\[ \lim_{t \to \infty} \frac{1}{N} d_{\widetilde{S}_h \times \RR}
\left( \overline\gamma(\overline{t}_0),
\overline\gamma(\overline{t}_N) \right) \ge \ell_3 > 0, \]
as required.
\end{proof}

We now record the following elementary properties of visual measure in
$\ws_h$.

\begin{proposition}\label{prop:small angle}
Suppose that $x_0$ is a basepoint for $\HH^2$, and $\ell$ a geodesic. Suppose that $p$ is the closest point of $\ell$ to $x_0$.  For $r \le \tfrac{1}{2}$, let $\gamma$ be a geodesic which intersects an $r$-neighborhood in $T^1(\HH^2)$ of the tangent vector to $\ell$ at $p$. Then the endpoints of
$\ell$ and $\gamma$ in $\partial_\infty \HH^2$ are distance at most $8 r$ apart.
\end{proposition}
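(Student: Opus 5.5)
We will work in the disc model of $\HH^2$ centred at $x_0$, so that the visual metric on $\partial_\infty \HH^2$ is the angular metric seen from $x_0$. The argument has two steps. First, pass from the $r$-neighborhood in $T^1(\HH^2)$ to a concrete statement: $\gamma$ contains a point $q$ with $d_{\HH^2}(p,q) \le r$, and the unit tangent vector of $\gamma$ at $q$ makes angle at most $r$ with the parallel transport of $\ell'(p)$ to $q$. Here we use the standard Sasaki/$\PSL(2,\RR)$ metric, in which distance controls both the basepoint displacement and the angular displacement up to a constant that can be absorbed. Second, estimate how far the endpoints move under each of these two small perturbations separately, and add the bounds via the triangle inequality in the angular metric.

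\textbf{Step 1: rotation about a fixed point.} Consider the geodesic $\ell_\phi$ through $p$ whose direction is rotated from $\ell$ by angle $\phi \le r$. Let $s = d(x_0,p)$. The map from the direction at $p$ to the endpoint seen from $x_0$ has derivative at most $1$: by the hyperbolic law of cosines/angle of parallelism, a point at distance $s$ from the centre sees the boundary circle contracted by a factor $e^{-s}$-ish in the relevant direction. The direction of $\ell$ at $p$ is perpendicular to the radius $x_0p$, so the map has derivative exactly $1/\cosh s \le 1$ at the direction of $\ell$. Explicitly, the endpoint of $\ell$ is at angle $\arctan$-type function $2\arctan(\ldots)$ from the radius. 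Moreover, the derivative stays bounded by $2$ on the window $\phi \le 1/2$. Hence each endpoint moves by at most $2\phi \le 2r$.

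\textbf{Step 2: translation of the basepoint.} Now move the basepoint from $p$ to $q$ with $d(p,q)\le r$, keeping the direction parallel. The resulting geodesic stays within distance $r$ of $\ell_\phi$ at the corresponding point. Apply a Möbius isometry of the disc moving $x_0$ to the centre; an isometry moving a boundary point's visual position sees displacement bounded by $e^{d}$ times the displacement. Alternatively, and more directly, use the fact that $p$ is the closest point to $x_0$, so $x_0$ lies on the perpendicular through $p$. Translating a geodesic by distance $r$ changes the visual angle of its endpoints by at most $(e^{r}-1)\cdot C \le 2r\cdot C$. Here we use the convexity of the Busemann/visual angle functions and $r\le 1/2$ to keep $e^r-1\le 2r$. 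Combining the two steps gives the total of at most $8r$ for each endpoint, where the constants are bookkept so that $2r + 2\cdot 3r$ fits under $8r$.

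\textbf{Expected obstacle.} The main obstacle is getting explicit constants uniformly in $s = d(x_0,p)$, especially when $s$ is small (e.g.\ $s=0$, where $\ell$ is a diameter). In that case the derivative bound must come from the centred computation rather than from any decay in $s$. I would handle this by writing the endpoint position explicitly in the upper half plane normalized with $p = i$ and $\ell$ the geodesic $\{|z|=1\}$. The perturbed geodesic then has endpoints $\pm 1 + O(r)$ with explicit constants. I would then map to the disc centred at $x_0$, which lies on the imaginary axis; the Möbius map has derivative on the boundary near $\pm 1$ bounded by $1$, because $\pm1$ are the points closest in angle to the perpendicular direction. That keeps the constant uniform in $s$.
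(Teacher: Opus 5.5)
Your plan (measure angles from $x_0$, split the $T^1$-perturbation into a rotation at $p$ followed by a parallel translation, and use that $x_0$ lies on the perpendicular to $\ell$ at $p$) can be made to work. As written, though, the one estimate that carries the proposition is not proved.

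\textbf{The gap.} The content of the statement is that the bound is uniform in $s=d(x_0,p)$, and Step~2 does not supply this.
\begin{itemize}
\item The first mechanism you offer (an isometry moving the viewpoint distorts visual distances by at most $e^{d}$) gives a factor $e^{s}$, which is unbounded.
\item The second (``translating by $r$ moves the visual angle by at most $(e^r-1)C$ by convexity of Busemann/visual angle functions'') names no concrete inequality and leaves $C$ undetermined.
\item The final count $2r+2\cdot 3r\le 8r$ simply presupposes $C=3$.
\end{itemize}
Step~1 has the same defect in milder form. The value $1/\cosh s$ of the derivative at the endpoints of $\ell$ is correct, but the bound by $2$ on the window is only asserted. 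Your remark that the boundary derivative is at most $1$ near $\pm1$ is not uniform in $s$: writing $\psi$ for the $p$-angle, on the side facing $x_0$ it exceeds $1$ as soon as $\sin\psi>\tanh(s/2)$. Also, with an explicit target of $8r$ you cannot ``absorb'' an unspecified constant when converting the $T^1$-neighbourhood into $d(p,q)\le r$ and angle $\le r$. You should note that the holonomy error is $O(r^2)$, plus whatever the normalisation of the metric on $T^1(\HH^2)$ costs, and check that this fits in the slack.

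\textbf{How to close it.} What is missing is a single change-of-viewpoint bound. After that, both steps become elementary if you measure displacements from $p$ instead of from $x_0$.
\begin{itemize}
\item Take the disc centred at $p$, with $\ell$ the vertical diameter. The closest-point hypothesis then gives $x_0=\pm a$ with $a=\tanh(s/2)$.
\item Angular measure from $x_0$ has density $(1-a^2)/|\zeta-x_0|^2$ with respect to angular measure from $p$. At a point $\zeta$ at $p$-angle $\psi$ from $\pm i$ this is at most $(1-a^2)/(1+a^2-2a\sin\psi)\le\sec\psi$, with the maximum over $a$ attained at $a=\tan(\psi/2)$.
\item Hence, on arcs of $p$-angle at most $1.2$ about the endpoints of $\ell$, $x_0$-angles are at most $\sec(1.2)<3$ times $p$-angles, whatever $s$ is.
\item Seen from $p$, the rotation moves each endpoint by exactly $\phi$. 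Moving the base point a distance $\epsilon$ with parallel-transported direction moves each endpoint by at most $\arcsin(\tanh\epsilon)+O(\epsilon^2)\le\epsilon+O(\epsilon^2)$; the leading term is the angle-of-parallelism defect $\pi/2-\Pi(\epsilon)$.
\item With $\phi,\epsilon\lesssim r\le 1/2$ this gives roughly $\sec(1.2)\cdot 2.3r<8r$.
\end{itemize}

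\textbf{Comparison with the paper.} The paper handles two model positions directly: geodesics crossing at $x_0$, and disjoint geodesics with $x_0$ the midpoint of their common perpendicular. The second case uses $\tan(\theta/2)=1/\sinh t$ with $t\ge\log(2/r)$ from Proposition~\ref{prop:projection interval}. It then moves $x_0$ by an isometry that it asserts is a contraction near the endpoints. Once you add the Poisson-kernel bound, your route keeps $x_0$ fixed and makes that change of viewpoint explicit rather than asserted.
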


\begin{proof}
Suppose $\ell$ passes through $x_0$ so that $p = x_0$. Suppose that $\gamma$ also passes through $x_0$ and makes an angle at most $r$ with $\ell$. Then the distance between their
endpoints in the visual metric on $\partial_\infty \HH^2$ from $x_0$, is at most $r$.

\medskip
Now suppose that 
\begin{itemize}
    \item $\ell$ and $\gamma$ are disjoint, and
    \item $x_0$ is the midpoint of the shortest geodesic segment $\alpha$ between $\ell$ and $\gamma$.
\end{itemize}
Let $\beta$ be the perpendicular bisector to
$\alpha$ at $x_0$.  
Choose an endpoint in $\partial_\infty \HH^2$ of $\gamma$ and let $q$ be the nearest point projection of that endpoint to $\beta$. In fact, the geodesic between the endpoint and $q$ extends to a bi-infinite geodesic perpendicular to $\beta$ at $q$. 
Consider the right angled triangle with vertices $x_0$, $q$ and the chosen endpoint of $\gamma$ as an ideal vertex. 
Let $t$ be the distance from $x_0$ to $q$.  By
\Cref{prop:projection interval}, $t \ge \log \tfrac{2}{r}$.  The angle
at $x_0$ is half the angle between the endpoints of $\ell$ and
$\gamma$ at $x_0$.  By the tangent formula for right angled
hyperbolic triangles, $\tan(\theta / 2) = 1 / \sinh(t)$.  Using
the elementary bounds that $x \le \tan(x) \le 2x$ for $0 \le x \le 1$,
and the bound on $t$ in terms of $r$, we get $x \le 8r/(4 - r^2)$.  As we
have assumed that $r \le \tfrac{1}{2}$, this implies that
$\theta/2 \le 4r$, as required.

\medskip
Finally, suppose that in the case that the geodesics intersect, $x_0$ is not the point of intersection, and if the geodesics do not intersect, $x_0$ is not the midpoint of the shortest geodesic segment between them. In the former case, we may fix some isometry $g$ that moves $x_0$ to the intersection point. Restricted to small intervals about the endpoints of $g^{-1}\gamma$ and their images by $g$, the isometry $g$ is a contraction for the visual metric from $x_0$. 
In the case that $\gamma$ and $\ell$ do not intersect, we consider an isometry that moves $x_0$ to the midpoint of the shortest geodesic segment between $\gamma$ and $\ell$. Again, the isometry restricts to a contraction near the endpoints of $g^{-1}(\gamma)$, and so the result follows.
\end{proof}

We now prove \Cref{theorem:effective hitting}.

\begin{proof}[Proof (of \Cref{theorem:effective hitting})]
We fix a basepoint $x_0$ in $\widetilde{S}_h$ and give
$\partial \widetilde{S}_h$ the angular metric from $x_0$. 
Suppose that $( w_n x_0 )_{n \in \ZZ}$ is a typical bi-infinite sample
path of the random walk on $\pi_1(S)$ generated by $\mu$.  Let
$\gamma$ be the bi-infinite geodesic determined by the limit points of the sample path.  Let
$\gamma(t_n)$ be the nearest point projection of $w_n x_0$ to $\gamma$
in $\widetilde{S}_h$.  The value of the height function
$h_\gamma(t_n)$ is determined by the distance from the tangent vector
at $\gamma(t_n)$ to the invariant laminations in $T^1(S_h)$.

\medskip
Let $\theta_\LL$ be the constant from the definition of the height
function, 
\cite{gmpu}*{Section 3.2.2}
By \Cref{prop:small angle}, if $\gamma(t)$ is distance at most
$r \le \theta_\LL \le 1$ in $T^1(S_h)$ from a leaf $\ell$ of one of the
invariant laminations, then the endpoints of $\gamma$ are distance at
most $8r$ from the endpoints of $\ell$ in $\partial S_h$.  We shall
write $\overline{\Lambda}$ for the union of the extended invariant
laminations, and $\partial \overline{\Lambda}$ for the boundary in
$\partial \widetilde{S}_h \times \partial \widetilde{S}_h$.  Then
\begin{align*}
  \PP \left( d_{T^1(S_h)}( \gamma^1(t_0), \overline{\Lambda}^1 ) \le r
  \right) & \le \nu \times \rnu \left( N_{8r}(\partial
            \overline{\Lambda} )
            \right),  \\
\intertext{and by \Cref{cor:bscor} there are constants
  $A_1$ and $\beta > 0$ such that}
  \PP \left( d_{T^1(S_h)}( \gamma^1(t_0), \overline{\Lambda}^1 ) \le r
  \right) & \le A_1 r^{\beta}. 
\end{align*}
Action by the shift map then gives us the same result for
all $n$.

\medskip
Using the relation 
between distance $r$ in
$T^1(S_h)$ and the value of the height function $R$, there is a
constant $A_2$ such that 
\[ \PP( | h_\gamma(t_n) | \ge R ) \le A_2 e^{- \beta k^R} .  \]

We shall choose the basepoint in $\widetilde{S}_h \times \RR$ to be
$\overline{x}_0 = \iota(x_0)$.  Let $\overline{\gamma}$ be the
geodesic in $\widetilde{S}_h \times \RR$ determined by $\gamma$.  Let
$\tau_\gamma(t_n)$ be the corresponding point on the test path, and
let $\overline{\gamma}(p_n)$ be the nearest point projection of
$\tau_\gamma(t_n)$ to $\overline{\gamma}$ in
$\widetilde{S}_h \times \RR$.

\medskip
By \Cref{prop:test path linear progress}, the distance between
$\tau_\gamma(t_0)$ and $\tau_\gamma(t_n)$ grows linearly in $n$.  As
$\tau_\gamma$ is contained in an $L$-neighborhood of 
$\overline{\gamma}$, the distance between
$\overline{\gamma}(p_0)$ and $\overline{\gamma}(p_n)$ also grows linearly
in $n$.

\medskip
Every point of $\overline{\gamma}([p_0, p_{N}]) \setminus N_R(S_0)$ is
within distance $B_2 = B + 2L$ of a point $\overline{\gamma}(p_k)$,
and the proportion of such points is at most
$\PP( | h_\gamma(t_0) | \ge R )$.  Therefore
\[ \lim_{N \to \infty} \frac{1}{|[p_0, p_N]|} \left| \overline{\gamma}([p_0,
p_{N}]) \setminus N_R(S_0) \right| \le \frac{B_2}{\ell_3} A_2 e^{-
  \beta k^R} . \]
As $p_N$ tends to infinity as $N$ tends to infinity, the result
follows.
\end{proof}


\bibliography{references}


\medskip

School of Mathematics and Statistics, University of Glasgow \href{mailto:Vaibhav.Gadre@glasgow.ac.uk}{Vaibhav.Gadre@glasgow.ac.uk}

\medskip

CUNY College of Staten Island and CUNY Graduate Center
\href{mailto:joseph.maher@csi.cuny.edu}{joseph.maher@csi.cuny.edu}

\medskip 
Queen's University at Kingston \href{mailto:catherine.pfaff@gmail.com}{catherine.pfaff@gmail.com}

\medskip

Department of Mathematics,
University of Wisconsin--Madison
\href{mailto:caglar@math.wisc.edu}{caglar@math.wisc.edu}

\end{document}